\providecommand{\keywords}[1]
{	
  \textbf{\textit{Keywords:}} #1
}
\newtheorem*{theorem*}{Theorem}
\newtheorem{theorem}{Theorem}[section]
\newtheorem{lemma}[theorem]{Lemma}
\newtheorem{corollary}[theorem]{Corollary}
\newtheorem{remark}[theorem]{Remark}
\newtheorem{definition}[theorem]{Definition}
\newtheorem{proposition}[theorem]{Proposition}
\def\del{\partial}
\def\dbar{\bar\partial}
\def\ddbar{\del\dbar}
\newcommand{\RR}{\mathbb{R}}
\def\del{\partial}
\DeclareMathOperator{\Ric}{Ric}
\def\NN{\mathbb{N}}
\def\RR{\mathbb{R}}
\def\ord{\mathrm{ord}}
\newcommand{\PSH}{\mathrm{PSH}}
\DeclareMathOperator{\vol}{vol}
\DeclareMathOperator{\usc}{usc}
\newcommand{\ddc}{\mathrm{dd}^{\mathrm{c}}}
\def\beq{\begin{equation}}
\def\eeq{\end{equation}}
\title{Twisted K\"ahler--Einstein metrics in big classes}
\author{Tam\'as Darvas (University of Maryland) \\ Kewei Zhang (Beijing Normal University) }
\date{\vspace{-0.5cm}}
\begin{document}
\maketitle

\begin{abstract}We prove existence of twisted  K\"ahler--Einstein metrics in big cohomology classes, using a divisorial stability condition. In particular, when $-K_X$ is big, we obtain a uniform Yau--Tian--Donaldson existence theorem for K\"ahler--Einstein metrics. 
To achieve this, we build up from scratch the theory of  Fujita--Odaka type delta invariants in the transcendental big setting, using pluripotential theory. We do not use the K-energy in our arguments,  and our techniques provide a simple roadmap to prove Yau--Tian--Donaldson existence theorems for K\"ahler--Einstein type metrics, that only needs  convexity of the appropriate Ding energy. As an application, we give a simplified proof of Li--Tian--Wang's existence theorem  in the log Fano setting.
\end{abstract}

\keywords{K\"ahler--Einstein metric,  geodesic ray, Yau--Tian--Donaldson theorem.}


\section{Introduction}

The study of canonical metrics in K\"ahler geometry goes back to early work of Calabi \cite{Cal57} and Yau's solution of the Calabi conjecture \cite{Y78}. The different versions of the Yau--Tian--Donaldson (YTD) conjecture predict that existence of canonical metrics is equivalent to various algebraic stability conditions of the underlying manifold. We refer to the textbooks \cite{Szebook,Tibook} for an introduction to this exciting topic.

Among canonical metrics, K\"ahler--Einstein (KE) type metrics are of particular interest, whose study revealed numerous deep connections between differential geometry and algebraic geometry.
Here we 
develop a transcendental pluripotential theoretic approach to  KE metrics, which not only gives simplified proofs of some existing YTD type theorems in the literature but also allows one to treat KE metrics even in transcendental big cohomology classes.  More specifically, we will
study divisorial stability and geodesic stability of big classes, and obtain a uniform Yau--Tian--Donaldson type existence theorem for twisted K\"ahler--Einstein metrics in this general setting.

We recall the definition of the Ricci curvature and KE type metrics in the big context. Let $(X,\omega)$ be a compact K\"ahler manifold, and $\theta$ a smooth closed $(1,1)$-form representing a big cohomology class $\{\theta\}$ in $H^{1,1}(X,\RR)$. All volumes and measures in this work are interpreted in the non-pluripolar context of \cite{BEGZ10} (see Section \ref{sec:psh-theory}). 

Given $u \in \mathcal E^1(X,\theta)$ (see Definition \ref{def: fullmass}) suppose that the non-pluripolar measure $\theta_u^n:= (\theta +\ddc u)^n$ is absolutely continuous with respect to $\omega^n$. If $ \log \big(\theta_u^n /\omega^n \big)$ is integrable, then we introduce $\Ric \theta_u$ as the following current, with the convention $\ddc:=i\partial\bar\partial/2\pi$:
\begin{equation}\label{eq: Ric_def}
\Ric \theta_u = \Ric \omega  - \ddc \log \frac{\theta_u^n}{\omega^n}.
\end{equation}

Now we define twisted K\"ahler--Einstein currents/metrics. 
Let $\eta$ be a smooth closed $(1,1)$-form representing the cohomology class for which the following decomposition holds:
\begin{equation*}\label{eq: c_1_decomp}
c_1(-K_X) = \{\theta\} + \{\eta\}.
\end{equation*}
Hodge theory provides $f \in C^\infty(X)$ such that $\Ric \omega = \theta + \eta + \ddc f$. Let $\psi$ be a quasi-plurisubharmonic function on $X$ and $\eta_\psi := \eta + \ddc \psi$ will be our twisting. We want to find $u \in \textup{PSH}(X,\theta)$, with minimal singularity type, satisfying the $\eta_\psi$-twisted KE equation:
\begin{equation*}\label{eq: KE_current}
\Ric  \theta_u = \theta_u + \eta_\psi.
\end{equation*}

Using the identity $\Ric \omega = \theta + \eta + \ddc f$ and  \eqref{eq: Ric_def} we reduce the twisted KE equation to the following highly degenerate complex Monge--Amp\`ere equation:
\begin{equation}\label{eq: KE_scalar equation}
(\theta + \ddc u)^n  = e^{-u +f - \psi} \omega^n.
\end{equation}


Already in the K\"ahler case it is well known that the above equation is not always solvable, with a whole host of algebraic criteria available, under the umbrella of K-stability. 

In the Fano case, when $\{\theta\}  = c_1(-K_X)$,  YTD type existence theorems were proved using the continuity method/Cheeger--Colding--Tian theory \cite{CDS15, DS16a,  Tian15, TiWa20}, K\"ahler--Ricci flow \cite{CSW} and the variational/non-Archimedean method \cite{BBJ21}. Recently the second author gave a proof using K\"ahler quantization  \cite{Zh21}. 

In the singular setting of log-Fano pairs, YTD type existence theorems were proved in \cite{Li20YTD,LTW21a,LTW21b,LXZ22}.  
After a resolution of singularities, this case is roughly equivalent with looking for twisted KE metrics in big and semi-positive classes.

Regarding greater generality, we circumvent the difficulties that typically arise in the big setting by developing an analytic approach to divisorial stability via pluripotential theory. Of course, the original definition of K-stability going back to Tian \cite{Ti97} and extended by Donaldson \cite{Don02} is not applicable, as our big class $\{\theta\}$ is not necessarily induced by a line bundle. However the more recent interpretation of K-stability in terms of divisorial data, going back to Fujita \cite{Fuj19} and Li \cite{Li17}, adapts naturally to our context. Indeed, based on the Blum--Jonsson interpretation of the Fujita--Odaka delta invariant \cite{BJ17, BoJ18,FO18}, rooted in the non-Archimedean approach to K-stability (see \cite[Definition 7.2]{BBJ21}), we define the twisted delta invariant of our data:
\begin{equation}\label{eq: delta_psi_def}
\delta_\psi(\{\theta\}):=\inf_E\frac{A_\psi(E)}{S_\theta(E)}.
\end{equation}
Here $
A_\psi(E):=A_X(E)-\nu(\psi,E)$ and the infimum is taken over all prime divisors $E$ over $X$, i.e., $E$ is a prime divisor inside a K\"ahler manifold $Y$ with $\pi: Y \to X$ being a proper bimeromorphic map. Recall that a prime divisor inside $Y$ is an irreducible analytic set of codimension $1$.

When $X$ is projective, one can equivalently only consider projective birational morphisms $\pi$, as  explained in \S\ref{sec:log-Fano}.
Here the log discrepancy $A_X(E)$ of $E$ is
$
A_X(E):=1+\mathrm{coeff}_E(K_Y-\pi^*K_X),
$
and $\nu(\psi,E)$ denotes the Lelong number of $\pi^*\psi$ at a very generic point of $E$ (see \eqref{eq:def-Lelong-number}).
The \emph{expected Lelong number} $S_\theta(E)$ of $\{\theta\}$ along $E$ is defined by
\begin{equation*}\label{eq: SE_def}
S_\theta(E):=\frac{1}{\vol(\{\theta\})}\int_0^{\tau_\theta(E)}\vol(\{\pi^*\theta\}-x\{E\})\mathrm dx,
\end{equation*}
where $\tau_\theta(E) := \sup\{\tau \in \RR \textup{ s.t. } \{\pi^*\theta\}-\tau\{E\} \textup{ is big}\}$ is the pseudoeffective threshold. We emphasize that the volume function $\vol(\cdot)$ is understood in the sense of \cite{BEGZ10}, but it coincides with the algebraic volume in case $\{\theta\}$ is in the N\'eron--Severi space \cite[Proposition 1.18]{BEGZ10}. Hence our delta invariant extends the one by Fujita--Odaka to the transcendental case. When $\eta=0$ and $\psi =0$, we use the much simpler notation $\delta: = \delta_0$.

\begin{definition}
We say $(X,\{\theta\},\eta_\psi)$ is $\eta_\psi$-twisted uniformly K-stable if $\delta_\psi(\{\theta\})>1$.
\end{definition}

Some remarks are in order, to motivate this definition.
In the Fano case, it is known by \cite[Theorem B]{BJ17} that $\delta(-K_X) > 1$ if and only if $(X,-K_X)$ is uniformly K-stable/Ding stable (as introduced in \cite{BHJ17,De16}/\cite{Ber16,BBJ21}). Recent work of Liu--Xu--Zhuang \cite{LXZ22} shows that $\delta(-K_X)>1$ if and only if $(X,-K_X)$ is K-stable.
This condition is further equivalent with \eqref{eq: KE_scalar equation} having a unique solution  \cite[Theorem A]{BBJ21}. The main result of this paper is establishing the historically ``harder" direction of this equivalence in the big context, in the presence of positive twisting current:

\begin{theorem}\label{thm: main_KE_twist_exist}
Suppose that $\eta_\psi \geq 0$. If $\delta_\psi(\{\theta\}) > 1$ then \eqref{eq: KE_scalar equation} has a solution $u \in \textup{PSH}(X,\theta)$ with minimal singularity, i.e., $\Ric \theta_u = \theta_u + \eta_\psi$.
\end{theorem}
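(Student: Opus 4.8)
The plan is to adopt the variational (or continuity-plus-compactness) strategy: produce a minimizer of the appropriate Ding-type functional on $\mathcal E^1(X,\theta)$, show that the twisted uniform K-stability hypothesis $\delta_\psi(\{\theta\})>1$ forces a coercivity estimate for this functional, and then argue that the minimizer is smooth enough (minimal singularity type) to actually solve \eqref{eq: KE_scalar equation}.

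\medskip

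\noindent\textbf{Step 1: The Ding functional and its minimizers.} First I would define, for $u \in \mathcal E^1(X,\theta)$, the twisted Ding functional
\[
\mathrm D_\psi(u) = -E(u) - \log \int_X e^{-u+f-\psi}\,\omega^n,
\]
where $E$ is the Monge--Amp\`ere energy normalized so that $\mathrm{d}E_u(v) = \int_X v\,\theta_u^n/\vol(\{\theta\})$. A direct computation of the Euler--Lagrange equation shows that critical points of $\mathrm D_\psi$ (with minimal singularity) are exactly solutions of the normalized version of \eqref{eq: KE_scalar equation}. The key structural fact I would invoke is that $\mathrm D_\psi$ is \emph{convex} along the (finite-energy) geodesics of $\mathcal E^1(X,\theta)$ — this is precisely the "convexity of the Ding energy" advertised in the abstract — which makes the minimization problem well-posed and uniqueness automatic once a minimizer is shown to be critical.

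\medskip

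\noindent\textbf{Step 2: From $\delta_\psi>1$ to coercivity.} This is the heart of the argument and, I expect, the main obstacle. I would use the Blum--Jonsson-type characterization of $\delta_\psi(\{\theta\})$ — which the paper builds "from scratch" in the transcendental big setting — to translate the strict inequality $\delta_\psi(\{\theta\})>1$ into a non-Archimedean statement: for every geodesic ray $(u_t)$ in $\mathcal E^1(X,\theta)$ emanating from a fixed potential, the asymptotic slope of $\mathrm D_\psi$ along the ray is bounded below by $(\delta_\psi - 1)$ times the asymptotic slope of the energy $E$ (equivalently, by a positive multiple of the non-Archimedean $J$- or $I$-functional of the ray). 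Concretely, one computes the "radial Ding functional" $\mathrm D_\psi^{\NA}$ on rays, identifies its entropy/log-canonical term with $\inf_E A_\psi(E)(\cdot)$ and its energy term with $S_\theta(E)$-type integrals, and reads off that $\mathrm D_\psi^{\NA} \ge (1 - \delta_\psi^{-1})\, (J^{\NA} \text{ or similar})$. Combined with the convexity from Step 1 and a compactness/closedness property of finite-energy geodesic rays, this yields the coercivity estimate $\mathrm D_\psi(u) \ge \epsilon\, J(u) - C$ on $\mathcal E^1(X,\theta)$ for some $\epsilon>0$. The delicate points here are: (a) making sense of and computing these non-Archimedean slopes purely pluripotential-theoretically, without the algebraic test-configuration machinery, since $\{\theta\}$ need not come from a line bundle; and (b) handling the twisting current $\eta_\psi \ge 0$ correctly in the Lelong-number term $\nu(\psi,E)$ — the positivity $\eta_\psi\ge 0$ is used to ensure the twisting contributes non-negatively and does not destroy the estimate.

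\medskip

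\noindent\textbf{Step 3: Existence and regularity of the minimizer.} Given coercivity, I would run the standard direct method: take a minimizing sequence, extract (by the compactness of $\{J \le C\}$-sublevels in $\mathcal E^1$, which holds in the big setting by the work underlying \cite{BEGZ10}-type estimates) a limit $u_\star \in \mathcal E^1(X,\theta)$, and use lower semicontinuity of $-E$ and upper semicontinuity of the $\log\int e^{-u+f-\psi}$ term (the latter requiring an integrability/uniform-skoda input, which is where $\psi$ being merely quasi-psh with $\eta_\psi \ge 0$ must be controlled) to conclude $u_\star$ is a genuine minimizer. Then one shows $u_\star$ is a critical point (sub-differentiability of the convex functional at the minimum forces the Euler--Lagrange equation in the weak sense), so $\theta_{u_\star}^n = c\, e^{-u_\star+f-\psi}\omega^n$ weakly; a scaling normalization absorbs $c$. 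Finally, to upgrade $u_\star$ to \emph{minimal singularity type}, I would invoke the regularity theory for degenerate complex Monge--Amp\`ere equations in big classes (Ko{\l}odziej-type $L^\infty$ estimates and the local regularity of \cite{BEGZ10}): since the right-hand side density $e^{-u_\star+f-\psi}$ is bounded above (because $-u_\star$ is bounded above and $f-\psi$ is bounded above as $\eta_\psi\ge 0$ only bounds $\psi$ from... — here one argues $\psi$ has analytic singularities or uses that the measure has finite mass), the potential $u_\star$ has the same singularity type as $V_\theta$, and local smoothness on the ample locus follows, giving $\Ric \theta_{u_\star} = \theta_{u_\star} + \eta_\psi$ as currents. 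Uniqueness, if desired, is immediate from strict convexity of $\mathrm D_\psi$ transverse to the obvious $\RR$-action.
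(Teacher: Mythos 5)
Your overall architecture coincides with the paper's: convexity of the twisted Ding functional coming from $\eta_\psi\geq 0$ via Berndtsson--P\u{a}un \cite{BePa08}, a translation of $\delta_\psi(\{\theta\})>1$ into positivity of the radial Ding functional along rays through a transcendental Fujita--Odaka-type calculus (this is the content of Theorems \ref{mthm: main_radial_ding} and \ref{thm:delta=sup-D>0}, built on the Ross--Witt Nystr\"om correspondence), a destabilizing-ray argument to get coercivity, the direct method to produce a minimizer solving the Euler--Lagrange equation (Proposition \ref{prop: proper_implies_minimizer}), and Ko\l odziej-type estimates for minimal singularities. So the route is essentially the paper's; however, two of your steps contain genuine problems, and the heart of Step 2 is under-specified.

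First, in Step 3 your justification that the density $e^{-u_\star+f-\psi}$ is ``bounded above'' is false: $u_\star$ and $\psi$ are unbounded below (only bounded above), so this density is in general unbounded, and the sentence where you try to repair this trails off. The correct mechanism, and the one used in Corollary \ref{cor: main_KE}, is that $\delta_\psi>1$ forces $c_\mu[V_\theta]>1$ (Lemma \ref{lem: c_delta_est} and Proposition \ref{prop:c-u=c-P-u}), so Guan--Zhou strong openness \cite{GuZh15} puts the density in $L^p$ for some $p>1$, after which \cite[Theorem B]{BEGZ10} yields minimal singularity type. Second, your closing claim that uniqueness is ``immediate from strict convexity transverse to the $\RR$-action'' is unjustified: in the big twisted setting only plain convexity along finite energy geodesics is known from \cite{BePa08}, and strict convexity modulo constants (Bando--Mabuchi type uniqueness) is exactly the open question of Berndtsson recorded in the paper; fortunately uniqueness is not part of the statement. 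Third, the passage in Step 2 from ray-wise positivity to coercivity is where the real work lies: the paper argues by contradiction (Theorem \ref{thm:non-proper-imply-destablizing-ray}), and there the $L^1_{\mathrm{loc}}$-limit of the unit-speed geodesic segments is a priori only a sublinear subgeodesic ray, which must be maximized via the extended Ross--Witt Nystr\"om correspondence without changing the radial quantities, while passing the slope inequalities to the limit requires $L^1$-continuity of the exponential part (Demailly--Koll\'ar \cite{DK01} plus openness); moreover one needs the strict quantitative bound of Proposition \ref{prop:delta>1=>geodesic-stable} (proved by a H\"older/$\alpha$-invariant interpolation), not merely the non-negativity in Theorem \ref{thm:delta=sup-D>0}, to exclude the destabilizing ray. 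Your appeal to ``compactness/closedness of finite-energy geodesic rays'' does not by itself supply these ingredients, though you correctly flagged this step as the main obstacle.
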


We note the following corollary, in the particular case when  $c_1(-K_X) = \{\theta\}$ and $\psi =0$:

\begin{corollary}\label{cor: main_KE_twist_exist}
Suppose that $-K_X$ is big. If $\delta(-K_X) > 1$ then \eqref{eq: KE_scalar equation} has a solution $u \in \textup{PSH}(X,\theta)$ with minimal singularity, i.e., $\theta_u$ is a KE metric satisfying $\Ric \theta_u = \theta_u$.
\end{corollary}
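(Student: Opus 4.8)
The plan is to obtain this corollary as the special case $\eta=0$, $\psi\equiv 0$ of Theorem \ref{thm: main_KE_twist_exist}, so that essentially all that is needed is to match up the two sets of hypotheses. Since $-K_X$ is big, the class $\{\theta\}:=c_1(-K_X)$ is big, and the decomposition $c_1(-K_X)=\{\theta\}+\{\eta\}$ forces $\{\eta\}=0$; I would take $\eta=0$ for the representative form and $\psi\equiv 0$ for the twisting potential. Then $\eta_\psi=\eta+\ddc\psi=0$, so in particular $\eta_\psi\geq 0$, and Hodge theory still supplies $f\in C^\infty(X)$ with $\Ric\omega=\theta+\ddc f$ (indeed one may even take $\theta=\Ric\omega$ and $f=0$). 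With these choices the twisted equation \eqref{eq: KE_scalar equation} reads $(\theta+\ddc u)^n=e^{-u+f}\omega^n$, the bona fide K\"ahler--Einstein equation for $c_1(-K_X)$, and the identity $\Ric\theta_u=\theta_u+\eta_\psi$ becomes $\Ric\theta_u=\theta_u$.

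Next I would verify that the divisorial stability hypotheses coincide. For the trivial twisting $\psi\equiv 0$ one has $\nu(0,E)=0$, hence $A_\psi(E)=A_X(E)$ for every prime divisor $E$ over $X$, so that by \eqref{eq: delta_psi_def},
\[
\delta_\psi(\{\theta\})=\inf_E\frac{A_X(E)}{S_\theta(E)},
\]
which is precisely $\delta(-K_X)$ under the convention $\delta:=\delta_0$ adopted in the excerpt when $\eta=0$ and $\psi=0$. Thus the assumption $\delta(-K_X)>1$ is literally the hypothesis $\delta_\psi(\{\theta\})>1$ of Theorem \ref{thm: main_KE_twist_exist}.

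Finally, applying Theorem \ref{thm: main_KE_twist_exist} yields $u\in\PSH(X,\theta)$ with minimal singularity solving $(\theta+\ddc u)^n=e^{-u+f}\omega^n$; since $u$ has minimal singularities, it lies in $\mathcal{E}^1(X,\theta)$ and $\log(\theta_u^n/\omega^n)=-u+f$ is integrable, so $\Ric\theta_u$ is well defined by \eqref{eq: Ric_def} and equals $\theta_u$, giving the asserted K\"ahler--Einstein current. By the standard regularity theory for non-pluripolar complex Monge--Amp\`ere equations, $\theta_u$ is moreover a smooth K\"ahler metric on the ample locus of $\{\theta\}$, hence a genuine KE metric there. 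I do not anticipate any real obstacle in this corollary: all of the substance is contained in Theorem \ref{thm: main_KE_twist_exist} --- whose proof runs through coercivity of the twisted Ding functional along geodesic rays (forced by $\delta_\psi>1$), existence of a minimizer, and the fact that the minimizer solves \eqref{eq: KE_scalar equation} --- and here one only needs the two bookkeeping identities above, namely $\delta_\psi(\{\theta\})=\delta(-K_X)$ and the reduction of \eqref{eq: KE_scalar equation} to the untwisted KE equation.
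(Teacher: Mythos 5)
Your proposal is correct and coincides with the paper's (essentially implicit) argument: the corollary is exactly the specialization $\{\theta\}=c_1(-K_X)$, $\eta=0$, $\psi\equiv 0$ of Theorem \ref{thm: main_KE_twist_exist}, with $\eta_\psi=0\geq 0$ and $\delta_\psi(\{\theta\})=\delta(-K_X)$ as you verify. The only caveat is your closing side remark about smoothness of $\theta_u$ on the ample locus: this is neither claimed by the paper nor needed for the statement, and in the general big setting such regularity is not ``standard,'' so it should be omitted or stated only as an expectation.
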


When $X$ is Fano, the argument of Corollary \ref{cor: main_KE_twist_exist} gives a novel pluripotential theoretic proof of the classical (uniform) YTD conjecture, in addition to the ones mentioned above. In fact, as we shall see in \S \ref{sec:ample-model}, the KE metrics found in Corollary \ref{cor: main_KE_twist_exist} are actually in one-to-one correspondence to the KE metrics on the `ample model' of $(X,-K_X)$.

One can produce examples with $-K_X$ big and $\delta(-K_X)>1$. Indeed, let $V$ be a del Pezzo surface with klt singularities and let $X\xrightarrow{\pi}V$ be the minimal resolution of $V$. Then $-K_X=-\pi^*K_V+\sum a_i E_i$, where $a_i\geq0$, $E_i$'s are $\pi$-exceptional curves, and $(E_i\cdot E_j)$ is negative definite. Note that $-K_X$ is big and $-K_X=-\pi^*K_V+\sum a_i E_i$ is the Zariski decomposition of $-K_X$. Also, $-K_X$ is not nef when the singularities of $V$ are worse than Du Val.
Now if $\delta(-K_V)>1$, then one can show that $\delta(-K_X)>1$ as well. Examples of such $V$  are constructed in  \cite{CPS21}. Hence Corollary \ref{cor: main_KE_twist_exist} is applicable in this case. 

We expect that the twisted KE metric found in Theorem  \ref{thm: main_KE_twist_exist} is unique, an open question going back to Berndtsson \cite[page 4]{Bern15}.  
Conversely, if a unique twisted KE metric exists, one can show that $\delta_\psi(-K_X) > 1$ (see Proposition \ref{prop:unique-imply-delta>1}). A similar result has been independently obtained by \cite{DeRe22}, who show that  if a unique twisted KE metric exists then $(X,-K_X)$ is uniformly Ding stable in terms of test configurations.


We believe the novelty of our work partly lies in the adaptability and robust nature of our transcendental techniques. Whenever one can prove the convexity of Ding functionals of certain canonical KE metrics, our methods yield a YTD type existence theorem in terms of the appropriate delta invariant. To provide evidence,  we give a much simplified proof of the main result of Li--Tian--Wang \cite{LTW21b} using our techniques. 
Remarkably, after the first version of our work appeared on the arXiv, in  \cite{Xu22}  C. Xu  noticed that by using results of Birkar--Cascini--Hacon--McKernan \cite{BCHM10} one can reduce uniform K-stability of $(X,-K_X)$ to uniform K-stability of a $\mathbb{Q}$-Fano ample model, which gives an algebraic proof of our Corollary 1.3, in addition to the transcendental one given in this work. In Section \ref{sec:log-Fano} we discuss the above connections in detail.

Compared to the seminal work of Berman--Boucksom--Jonsson \cite{BBJ21}, we do not require the convexity of K-energy functionals, nor do we need to approximate geodesic rays via test configurations. This is essential to our approach, as the K-energy functional is known to be convex only in the big and nef case \cite{DNL22}. In the general big case, even a suitable definition of K-energy is still missing to the authors' knowledge.

Our main ingredients  are the valuative criteria for integrability \cite{BBJ21,BFJ08}, (see \cite{Bo17} for an excellent learning source), 
the Guan--Zhou openness theorem \cite{GuZh15}, the relative pluripotential theory developed in \cite{DDNL4,DDNL2,DDNL5}, and the Ross--Witt Nystr\"om correspondence between (sub)geodesic rays and test curves  introduced in \cite{RWN14}, and studied further in \cite{DDNL3,DX20}.

That $\eta_\psi \geq 0$ guarantees that the twisted Ding functional is convex, as follows from the main result of \cite{BePa08}. This is used in our proof of  Theorem \ref{thm: main_KE_twist_exist} in one specific point, but the rest of the argument works in much more general context, allowing to treat many different problems  in the literature at the same time, as we now point out. 

To start, we consider a general quasi-plurisubharmonic (qpsh) function  $\psi$ on $X$ that does not necessarily satisfy  $\eta_\psi \geq 0$. In addition, let $\chi$ be another qpsh function on $X$ with analytic singularity type, and consider the following more general equation of twisted KE type, for $u \in \textup{PSH}(X,\theta)$ with minimal singularity type:
\begin{equation}\label{eq: KE_scalar equation_general}
(\theta + \ddc u)^n  = e^{-u +\chi - \psi} \omega^n.
\end{equation}
To make sense of this equation, we must assume that  $\int_Xe^{\chi-\psi}\omega^n<\infty$, i.e., $\chi -\psi$ is klt.
Clearly this is more general than \eqref{eq: KE_scalar equation}, as any $f \in C^\infty(X)$ is a qpsh function. When treating canonical metrics, it is natural to consider some type of continuity method. This is the case here as well, as we consider the following family of equations, for $\lambda > 0$:
\begin{equation}\label{eq: KE_cont equation}
(\theta + \ddc u)^n  = e^{-\lambda u + \chi - \psi} \omega^n.
\end{equation}
At least for small $\lambda >0$ the Guan--Zhou openness theorem \cite{GuZh15} gives $\int_Xe^{-\lambda v + \chi - \psi}\omega^n<\infty$ for all $v \in \mathcal E^1(X,\theta)$ (See Theorem \ref{thm: openness} and Proposition \ref{prop:c-u=c-P-u} below).

The expression on the right hand side of \eqref{eq: KE_cont equation} often appears in the literature and, after adding a constant, it is convenient to introduce the following Radon probability measure:
\begin{equation}
    \label{eq:def-mu}
    \mu := e^{\chi - \psi} \omega^n.
\end{equation}
As in \cite{BBEGZ16}, such measures $\mu$ are called \emph{tame}. Following \cite{Ding88}, one defines a functional, whose Euler-Lagrange equation is exactly \eqref{eq: KE_cont equation}. This is $\mathcal D_{\mu}^\lambda: \mathcal E^1(X,\theta) \to \RR$, the $\lambda$-Ding functional:
\begin{flalign}\label{def: twisted_Ding}
\mathcal D_\mu^\lambda(\varphi)=-\frac{1}{\lambda}\log\int_Xe^{-\lambda\varphi} \mathrm{d}\mu-I_\theta(\varphi)\text{ for }\varphi\in\mathcal E^1(X,\theta),
\end{flalign}
where $I_\theta(\cdot)$ is the Monge-Amp\`ere energy (see \eqref{eq: I_def}).  Our starting point is the following result that gives a formula for the slope of the $\lambda$-Ding functional along subgeodesic rays: 

\begin{theorem}\label{mthm: main_radial_ding}  Let $(0,\infty) \ni t \mapsto u_t \in \mathcal E^1(X,\theta)$ be a sublinear subgeodesic ray. Then
\begin{equation}\label{eq: Ding_slope}
\liminf_{t \to \infty} \frac{\mathcal  D_{\mu}^\lambda(u_t)}{t} = -\lim_{t \to \infty} \frac{I_\theta(u_t)}{t} + \sup\{\tau \ \in \RR \ : \  \int_X e^{-\lambda \hat u_\tau} \mathrm{d}\mu <\infty\}.
\end{equation}
\end{theorem}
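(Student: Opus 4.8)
The plan is to write $\mathcal D_\mu^\lambda(u_t)/t=-\frac{1}{\lambda t}\log\int_X e^{-\lambda u_t}\,\mathrm d\mu-\frac{I_\theta(u_t)}{t}$ and handle the two terms separately. Along a sublinear subgeodesic ray $t\mapsto u_t$ the map $t\mapsto I_\theta(u_t)$ has monotone curvature (it is concave) and is of sublinear growth, so the limit $\lim_{t\to\infty} I_\theta(u_t)/t$ exists and is finite; being convergent, this term can be pulled out of the $\liminf$. Hence it suffices to prove
\begin{equation*}
\liminf_{t\to\infty}\Big(-\tfrac{1}{\lambda t}\log\int_X e^{-\lambda u_t}\,\mathrm d\mu\Big)\;=\;L:=\sup\Big\{\tau\in\RR:\int_X e^{-\lambda\hat u_\tau}\,\mathrm d\mu<\infty\Big\},
\end{equation*}
where $\hat u_\tau=\big(\inf_{s\ge 0}(u_s-s\tau)\big)^*$ is the test curve attached to $(u_t)$ by the Ross--Witt Nyström correspondence; the upper regularization changes $\hat u_\tau$ only on a $\mu$-negligible pluripolar set, so the integrals are unambiguous, and since $\tau\mapsto\int_X e^{-\lambda\hat u_\tau}\,\mathrm d\mu$ is non-decreasing the set defining $L$ is an interval.

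For the inequality ``$\ge L$'' I would use only the trivial pointwise bound $u_t\ge\hat u_\tau+t\tau$, immediate from $\hat u_\tau=\inf_s(u_s-s\tau)\le u_t-t\tau$. Integrating $e^{-\lambda(\cdot)}\,\mathrm d\mu$ gives $\int_X e^{-\lambda u_t}\,\mathrm d\mu\le e^{-\lambda\tau t}\int_X e^{-\lambda\hat u_\tau}\,\mathrm d\mu$, and for any $\tau<L$ the last integral is a finite positive constant; thus $-\frac{1}{\lambda t}\log\int_X e^{-\lambda u_t}\,\mathrm d\mu\ge\tau-o(1)$, and letting $\tau\uparrow L$ concludes. (This also shows $\int_X e^{-\lambda u_t}\,\mathrm d\mu<\infty$ for every $t$ whenever $L>-\infty$.)

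For ``$\le L$'' I would fix $\tau_0>L$, chosen moreover outside the countable set of atoms of the pushforward $(\dot u_\infty)_*\mu$, where $\dot u_\infty(x):=\lim_{t\to\infty}u_t(x)/t=\sup_{t>0}\tfrac{u_t(x)-u_0(x)}{t}$ is the (measurable) terminal slope; by sublinearity $s\mapsto u_s(x)$ is non-increasing for $\mu$-a.e.\ $x$, and $\dot u_\infty(x)$ equals the pseudoeffective threshold of the fiberwise test curve. Since $\tau_0>L$, $\int_X e^{-\lambda\hat u_{\tau_0}}\,\mathrm d\mu=\infty$, and I split into two cases. If $\mu(\{\dot u_\infty<\tau_0\})>0$, pick $\alpha<\tau_0$ with $\mu(\{\dot u_\infty<\alpha\})>0$; then on this set $u_t(x)=u_0(x)+\int_0^t\dot u_s(x)\,\mathrm d s\le\sup_X u_0+t\alpha$, so by Chebyshev $\int_X e^{-\lambda u_t}\,\mathrm d\mu\ge e^{-\lambda(\sup_X u_0+t\alpha)}\mu(\{\dot u_\infty<\alpha\})$, whence $\limsup_t\big(-\tfrac{1}{\lambda t}\log\int_X e^{-\lambda u_t}\,\mathrm d\mu\big)\le\alpha<\tau_0$. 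If instead $\mu(\{\dot u_\infty<\tau_0\})=0$, then (as $\tau_0$ is not an atom) the divergence of $\int_X e^{-\lambda\hat u_{\tau_0}}\,\mathrm d\mu$ is carried by the set where the infimum defining $\hat u_{\tau_0}(x)=\inf_{s\ge 0}(u_s(x)-s\tau_0)$ is attained at some finite $s^\ast(x)$; partitioning this set by $s^\ast(x)\in[k-1,k)$ and using $u_k(x)\le u_{s^\ast(x)}(x)=\hat u_{\tau_0}(x)+s^\ast(x)\tau_0\le\hat u_{\tau_0}(x)+k\tau_0+|\tau_0|$, one obtains $\int_X e^{-\lambda u_k}\,\mathrm d\mu\ge c\,e^{-\lambda k\tau_0}m_k$ for a constant $c=c(\lambda,\tau_0)>0$, with $m_k:=\int_{\{s^\ast\in[k-1,k)\}}e^{-\lambda\hat u_{\tau_0}}\,\mathrm d\mu$ and $\sum_k m_k=\infty$; the elementary observation that $\sum_k m_k=\infty$ forces $\liminf_k(-\tfrac1k\log m_k)\le 0$ then gives $\liminf_t\big(-\tfrac{1}{\lambda t}\log\int_X e^{-\lambda u_t}\,\mathrm d\mu\big)\le\tau_0$. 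In both cases, letting $\tau_0\downarrow L$ yields ``$\le L$''. (The first case gives a genuine $\limsup$ bound, the second only a $\liminf$ bound — consistent with the $\liminf$ in the statement.)

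The main obstacle is the direction ``$\le L$'': one must convert the large-$t$ asymptotics of the subgeodesic ray into integrability of its test curve, i.e.\ establish the measurability and structure of $\dot u_\infty$ and $s^\ast$, identify precisely the subset of $X$ carrying the divergence of $\int_X e^{-\lambda\hat u_{\tau_0}}\,\mathrm d\mu$, and execute the case analysis carefully in the transcendental big setting — in particular dealing with the upper regularizations in the envelopes $\hat u_\tau$, which however only modify $\mu$-null sets and so do not affect the integrals. The inequality ``$\ge L$'' and the convergence of the energy term are routine.
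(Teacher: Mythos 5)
The main problem is in your case 2 of the direction ``$\le L$''. The assertion ``by sublinearity $s\mapsto u_s(x)$ is non-increasing for $\mu$-a.e.\ $x$'' is false: sublinearity only gives $u_t\le Ct$, and for instance $u_t=V_\theta+ct$ with $0<c\le C$ is a sublinear (sub)geodesic ray that is pointwise \emph{increasing}. Your second case relies essentially on this monotonicity through the inequality $u_k(x)\le u_{s^\ast(x)}(x)$ for $k\ge s^\ast(x)$; convexity of $s\mapsto u_s(x)$ alone gives no such upper bound, since the infimum being attained at $s^\ast$ only produces \emph{lower} bounds on $u_s$ for $s>s^\ast$. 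The repair is exactly the first step of the paper's proof, which you omit: both sides of \eqref{eq: Ding_slope} are invariant under $u_t\mapsto u_t+ct$ (the Legendre transform shifts, and the sup, the energy slope and $\mathcal D^\lambda_\mu$ shift consistently), so one may normalize so that $u_t\le V_\theta\le 0$ (equivalently $\tau^+_{\hat u}= 0$, or simply subtract $Ct$); a convex function of $t$ that is bounded above on $(0,\infty)$ is non-increasing, and then your case-2 estimate goes through. As written, however, that step fails. A smaller slip of the same kind: $t\mapsto I_\theta(u_t)$ is \emph{convex} along subgeodesics (not concave), and it is convexity together with the sublinear upper bound that gives existence and finiteness of $\lim_t I_\theta(u_t)/t$.

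Modulo this normalization, the rest of your argument is sound and is genuinely different from the paper's proof of ``$\le$'': you run a fiberwise case analysis on the terminal slope $\dot u_\infty$ and on the minimizing time $s^\ast$, partition by the windows $s^\ast\in[k-1,k)$, and use the elementary fact that $\sum_k m_k=\infty$ forces $\liminf_k(-\tfrac1k\log m_k)\le 0$. The paper instead fixes $p$ strictly below the liminf, integrates the decay of $\int_X e^{-\lambda u_t}\,\mathrm d\mu$ in $t$, applies Fubini--Tonelli to get $\int_X\int_0^{\infty}e^{\lambda(pt-u_t)}\,\mathrm dt\,\mathrm d\mu<\infty$, and bounds the inner integral from below by $e^{\lambda(p-1)}e^{-\lambda\hat u_p(x)}$ on a unit time interval near an approximate minimizer, concluding $\int_X e^{-\lambda\hat u_p}\,\mathrm d\mu<\infty$ directly; this avoids the case analysis, the attainment/measurability discussion for $s^\ast$, and the atom-avoidance for $(\dot u_\infty)_*\mu$ altogether (your handling of the usc regularization of $\hat u_\tau$ and of the ``$\ge L$'' direction matches the paper). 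Note also that the paper's standing hypothesis $\lambda\in(0,c_\mu[V_\theta])$ is what guarantees $L>-\infty$ via openness, a point your write-up leaves implicit.
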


For the definition of subgeodesic rays $\{u_t\}_t$ and their Legendre transforms $\{\hat u_\tau\}_\tau$ see Definition \ref{def: lin_subgeod} and \eqref{eq: Leg_transf_def_ray_test_curve}. Our $\lambda$-Ding functional is typically not convex along subgeodesics and we overcome this difficulty by building on ideas from \cite[Section 4]{DX20} to prove \eqref{eq: Ding_slope}. We refer to \cite[Theorem 1.3]{Ber16} and \cite[Theorem 5.4]{BBJ21} for similar flavour results.

On the heels of the above theorem it is convenient to introduce  
$$
\mathcal D_{\mu}^\lambda \{u_t\} := \liminf_{t \to \infty} \frac{\mathcal  D_{\mu}^\lambda(u_t)}{t},
$$and we will call this expression the radial $\lambda$-Ding functional of the subgeodesic ray $\{u_t\}_t$. 

Next we introduce a more general delta invariant associated to our data:
\begin{equation}\label{eq: delta_mu_def}
\delta_\mu=\delta_\mu(\{\theta\}):=\inf_E\frac{A_{\chi,\psi}(E)}{S_\theta(E)},
\end{equation}
where $
A_{\chi,\psi}(E):=A_X(E)+\nu(\chi,E)-\nu(\psi,E)$ and the $\inf$ is taken over prime divisors $E$ in smooth bimeromorphic models over $X$. Note that $A_{\chi,\psi}(E)>0$ by \cite[Theorem B.5]{BBJ21}.

The main technical ingredient in the proof of Theorem \ref{thm: main_KE_twist_exist} is the following result, relating $\delta_\mu$ to geodesic semistability of the $\lambda$-Ding functionals:
\begin{theorem}\label{thm:delta=sup-D>0}
With the notation from above we have
$$
\delta_\mu=\sup\{\lambda>0 \ | \ \mathcal{D}_{\mu}^\lambda\{u_t\}\geq 0\text{ for all sublinear subgeodesic ray }u_t\in\mathcal{E}^1(X,\theta)\}.
$$
\end{theorem}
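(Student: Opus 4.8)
The plan is to translate both sides into the language of test curves via the Ross--Witt Nyström correspondence, and then reduce the identity to a one-variable Jensen-type inequality matched against the valuative criterion for integrability. Fix $\lambda>0$ and a sublinear subgeodesic ray $\{u_t\}$, and let $\{\hat u_\tau\}$ be its Legendre transform, which is a test curve (each $\hat u_\tau$ is $\theta$-psh, and $\tau\mapsto\hat u_\tau$ is concave and nonincreasing). Theorem \ref{mthm: main_radial_ding} gives
\[\mathcal D_\mu^\lambda\{u_t\}=-\lim_{t\to\infty}\frac{I_\theta(u_t)}{t}+\sup\{\tau:\int_X e^{-\lambda\hat u_\tau}\,\mathrm d\mu<\infty\}.\]
Since $I_\theta$ is monotone and $\mathcal D_\mu^\lambda\{\cdot\}$ is unchanged under $u_t\mapsto u_t-ct$, I may replace $\{u_t\}$ by the maximal (weak geodesic) ray with the same test curve: this only increases $\lim_t I_\theta(u_t)/t$, leaves $\{\hat u_\tau\}$ and hence the integrability term untouched, and keeps the ray sublinear, so it suffices to argue over test curves. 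For a weak geodesic ray the relative pluripotential theory of \cite{DDNL4,DDNL2,DDNL5,DX20} supplies the slope formula $\lim_t I_\theta(u_t)/t=\int_{\RR}\tau\,\mathrm d\mu_{\hat u}(\tau)$, the barycenter of the probability measure $\mu_{\hat u}:=-\vol(\{\theta\})^{-1}\,\mathrm d\vol(\theta_{\hat u_\tau})$, while the valuative criterion for integrability of \cite{BBJ21,BFJ08} gives $\int_X e^{-\lambda\hat u_\tau}\,\mathrm d\mu<\infty$ if and only if $\lambda\,\nu(\hat u_\tau,E)<A_{\chi,\psi}(E)$ for every prime divisor $E$ over $X$; since $\tau\mapsto\nu(\hat u_\tau,E)$ is nondecreasing, the last term of the display equals $\inf_E\sup\{\tau:\lambda\,\nu(\hat u_\tau,E)<A_{\chi,\psi}(E)\}$.

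For ``$\delta_\mu\le\sup\{\dots\}$'' I fix $\lambda\le\delta_\mu$, a test curve, and a prime divisor $E$. Because $A_{\chi,\psi}(E)\ge\lambda S_\theta(E)$, it suffices to show that the barycenter of $\mu_{\hat u}$ is at most $\sup\{\tau:\nu(\hat u_\tau,E)\le S_\theta(E)\}$. The decisive structural fact is that $\tau\mapsto\nu(\hat u_\tau,E)$ is \emph{convex} (concavity of the test curve together with linearity of Lelong numbers), so its generalized inverse $G$ is concave. Rewriting both $\int\vol(\theta_{\hat u_\tau})\,\mathrm d\tau$ and $\vol(\{\theta\})\,S_\theta(E)=\int_0^{\tau_\theta(E)}\vol(\{\pi^*\theta\}-x\{E\})\,\mathrm dx$ via the layer-cake formula, and using the multiplicity bound $\vol(\theta_{\hat u_\tau})\le\vol(\{\pi^*\theta\}-\nu(\hat u_\tau,E)\{E\})$ to compare the two distribution functions, the estimate reduces to Jensen's inequality for the concave $G$, with $S_\theta(E)$ appearing precisely as the argument at which $G$ is evaluated. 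Taking $\inf_E$ yields $\mathcal D_\mu^\lambda\{u_t\}\ge 0$ for all sublinear subgeodesic rays, so $\lambda$ lies in the right-hand set.

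For the reverse inequality I fix $\lambda>\delta_\mu$ and pick a prime divisor $E$ with $A_{\chi,\psi}(E)<\lambda S_\theta(E)$, then test against the \emph{divisorial} test curve $\hat u^E_\tau$: the $\theta$-psh envelope with prescribed generic Lelong number along $E$ equal to $\tau-\tau_{\min}$ on $[\tau_{\min},\tau_{\min}+\tau_\theta(E)]$ and $\equiv-\infty$ beyond, normalized so that the attached weak geodesic ray is sublinear and lies in $\mathcal E^1(X,\theta)$. The crucial input is the \emph{orthogonality} of such envelopes, $\vol(\theta_{\hat u^E_\tau})=\vol(\{\pi^*\theta\}-\nu(\hat u^E_\tau,E)\{E\})$, which promotes the multiplicity bound to an equality; a change of variables then gives $\lim_t I_\theta(u^E_t)/t=\tau_{\min}+S_\theta(E)$, whereas the integrability term is at most $\sup\{\tau:\lambda(\tau-\tau_{\min})<A_{\chi,\psi}(E)\}=\tau_{\min}+A_{\chi,\psi}(E)/\lambda$. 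Hence $\mathcal D_\mu^\lambda\{u^E_t\}\le A_{\chi,\psi}(E)/\lambda-S_\theta(E)<0$, so $\lambda$ is not in the right-hand set, and the two inequalities give the claim.

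I expect the main obstacle to be the ``$\le$'' direction: one needs the slope formula $\lim_t I_\theta(u_t)/t=\int\tau\,\mathrm d\mu_{\hat u}$ and the multiplicity bound $\vol(\theta_{\hat u_\tau})\le\vol(\{\pi^*\theta\}-\nu(\hat u_\tau,E)\{E\})$ in the transcendental big category, with enough monotonicity and measurability in $\tau$ to legitimize the layer-cake rewriting, and one must check that the convexity of $\tau\mapsto\nu(\hat u_\tau,E)$ is compatible with the defining integral of $S_\theta(E)$ so that Jensen applies with exactly the right average (and that $A_{\chi,\psi}(E)>0$, so that all quantities are finite). The remaining ingredients --- the valuative criterion for integrability, the Ross--Witt Nyström dictionary and the reduction to weak geodesic rays, the orthogonality of divisorial envelopes, and the verification that the divisorial ray is sublinear and of finite energy --- are technical but should be within reach of the relative pluripotential theory and valuative tools recalled in the introduction.
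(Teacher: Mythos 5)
Your proposal is correct and follows essentially the same route as the paper: the Ding slope formula of Theorem \ref{mthm: main_radial_ding}, the valuative criterion of Theorem \ref{thm:c=inf-A/S}, the radial energy formula \eqref{eq:radial-I-formula-subgeo-rays} for test curves, the key bound $\nu(\hat u_{I_\theta\{u_t\}},E)\le S_\theta(E)$ obtained from convexity of $\tau\mapsto\nu(\hat u_\tau,E)$ together with the mass inequality $\int_X\theta^n_{\hat u_\tau}\le\vol(\{\pi^*\theta\}-\nu(\hat u_\tau,E)\{E\})$, and the divisorial test curves with $I_\theta\{u^E_t\}=S_\theta(E)$ for the reverse inequality. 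Your Jensen-inequality step for the concave generalized inverse is just a repackaging of the paper's supporting-line argument in Proposition \ref{prop:nu-psi-E<S-E} (the tangent at $\tau=I_\theta\{u_t\}$), and it requires the same separate treatment of the degenerate cases that the paper spells out --- the constant Lelong-number curve, the case $I_\theta\{u_t\}=\tau^+_{\hat u}$, and the critical-exponent boundary issues handled via Lemma \ref{lem:1/c-convex} and Proposition \ref{prop:c-u=c-P-u}.
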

When $\{\theta\}$ is ample and $\chi=0$, this result is proved in \cite{Zh21} using quantization. See also \cite[Proposition 4.5]{XZ20} for a similar statement in the log Fano setting, and \cite[Theorem 3.16]{BJ22} for a related result about ample classes, formulated using non-Archimedean language. Theorem \ref{thm:delta=sup-D>0}  generalizes these results to big cohomology classes in $H^{1,1}(X,\RR)$, yielding a universal analytic interpretation of the delta invariant.

Regarding the much more developed theory of KE metrics on manifolds of general type we note the important work \cite{BEGZ10}, that studies KE equations when $K_X$ is big, in which case a solution always exists. See \cite{BeGu14, EGZ09} for synergies between singular KE metrics and the minimal model program on varieties of general type. Regarding exciting developments on csck metrics we refer to \cite{BJ22,DeLe20} for connections with divisorial notions of K-stability.

Regarding  transcendental K-stability in the K\"ahler setting, we mention the works \cite{DeRo17,Dyr16}  exploring a cohomological notion of stability.

\paragraph{Future directions.} 
\vspace{-0.4cm}
To stay brief, we cannot treat several interesting related questions.

Though the condition $\eta_\psi \geq 0$ allows one to conjecture a Bando--Mabuchi uniqueness theorem \cite{BM87} for twisted KE metrics \cite[page 4]{Bern15}, it does come with a slight limitation. Indeed, when  \eqref{eq: KE_scalar equation} is solvable,  a positive klt current will exist in $c_1(-K_X)$. By Nadel vanishing, this implies that $H^2(X,\mathcal O_X)=0$, hence $\mathbb{ R}$-divisors exhaust the whole space $H^{1,1}(X,\mathbb{R})$. Since neither Theorem \ref{mthm: main_radial_ding} nor \ref{thm:delta=sup-D>0} uses this numerical condition, we wonder if $\eta_\psi\geq0$ can be omitted from Theorem \ref{thm: main_KE_twist_exist} as well. In this direction, \cite[Theorem 2.3]{Zh21} treats the ample case,  suggesting that this is possible.

As pointed out earlier, our definition of the delta invariant is rooted in the non-Archimedean approach to K--stability. As a result, our treatment has an algebraic/non-Archimedean interpretation that  can be expanded when $\{\theta\}$ is the Chern class of a big line bundle. We will explore this in the companion paper \cite{DXZ22}.


As is well known, in the (log) Fano case $\delta> 1$ implies that $X$ can not contain any nontrivial vector fields. See \cite{Berm22} for a related work using Gibbs stability in the big case.  It is desirable to extend our treatment to allow for a non-trivial automorphism group as well. This should be possible after incorporating ideas from the works \cite{His16,Li20YTD}.


Lastly, our approach seems adaptable to the case of canonical KE metrics with prescribed singularity type, as recently studied by Trusiani \cite{Tr20}. If the Ross--Witt Nystr\"om correspondence could be extended to this context, we suspect that the analogue of Theorem \ref{thm: main_KE_twist_exist} could be established, as the convexity of the corresponding Ding functional is known in that setting.

\paragraph{Organization.}\vspace{-0.4cm} In Section \ref{sec:pre} we recall some aspects of finite energy pluripotential theory and complex singularity exponents. In Section \ref{sec:rwncorre} we give a detailed exposition of the Ross--Witt Nystr\"om correspondence between rays and test curves in the big case. In Section \ref{sec:delta=geodesic-stability} we prove Theorem \ref{mthm: main_radial_ding}  and Theorem \ref{thm:delta=sup-D>0}. In Section \ref{sec:big-YTD} we prove Theorem \ref{thm: main_KE_twist_exist}. In Section \ref{sec:log-Fano} we give a short proof of the main result of Li--Tian--Wang \cite{LTW21b} and show how their result in turn implies our Corollary \ref{cor: main_KE_twist_exist}.

\paragraph{Acknowledgments.}\vspace{-0.4cm} 
We thank B. Berndtsson, R. Berman, S. Boucksom,   M. Jonsson, Y. Odaka, G. Tian, M. Xia, C. Xu, X. Zhou, X. Zhu for suggesting simplifications and other improvements to the presentation of the paper. The first named author was partially supported by an Alfred P. Sloan Fellowship and National Science Foundation grant DMS--1846942. The second named author is supported by NSFC grant 12101052 and the
Fundamental Research Funds 2021NTST10 and is grateful to G. Tian for numerous inspiring conversations related to this project.

\section{Preliminaries}
\label{sec:pre}

\subsection{Finite energy pluripotential theory}
\label{sec:psh-theory}
In this short subsection we recall the basics of finite energy pluripotential theory in big cohomology classes. For a more thorough treatment we refer to the articles \cite{BEGZ10}, \cite{DDNL3}, or the recent textbook \cite{GZ17}.

Let $(X,\omega)$ be a compact K\"ahler manifold of dimension $n$ and  $\theta$ a smooth closed $(1,1)$-form. A function $u: X \rightarrow \mathbb{R}\cup \{-\infty\}$ is called quasi-plurisubharmonic (qpsh) if locally $u= \rho + \varphi$, where $\rho$ is smooth and $\varphi$ is a plurisubharmonic (psh) function. We say that $u$ is $\theta$-plurisubharmonic ($\theta$-psh) if it is qpsh and $\theta_u:=\theta+\ddc u \geq 0$ as currents. We let $\PSH(X,\theta)$ denote the space of $\theta$-psh functions on $X$. 

The class $\{\theta\}$ is {\it big} if there exists $\psi\in \textup{PSH}(X,\theta)$ satisfying $\theta_\psi\geq \varepsilon \omega$ for some $\varepsilon>0$. We assume that  $\{\theta\}$ is big throughout this paper, unless specified otherwise.

 Given $u,v \in \textup{PSH}(X,\theta)$,  $u$ is more singular than $v$, (notation: $u \preceq v$) if there exists $C\in \mathbb{R}$  such that $u\leq v+C$. The potential $u$ has the same singularity as $v$ (notation: $u \simeq v$) if $u\preceq v$ and $v\preceq u$. 
The classes $[u]$ of this latter equivalence relation are called \emph{singularity types}. When $\{\theta\}$ is merely big, all elements of $\textup{PSH}(X,\theta)$ are very singular, and we distinguish the potential with minimal singularity:
\begin{equation}
    \label{eq:def-V-theta}
    V_\theta := \sup \{u \in \textup{PSH}(X,\theta) \textup{ such that } u \leq 0\}.
\end{equation}
A function $u\in \PSH(X,\theta)$ is said to have minimal singularity if it has the same singularity type as $V_{\theta}$, i.e., $[u]=[V_\theta]$.

We say that $[u]$ is an \emph{analytic singularity type} if it has a representative $u \in \textup{PSH}(X,\theta)$ that locally can be written as $u = c\log (\sum_{j} |f_j|^2) +g$, where $c>0$, $g$ is a bounded function and the $f_j$ are a finite set of holomorphic functions.
By a fundamental approximation theorem of Demailly there are plenty of $\theta$-psh functions with analytic singularity type. 

The \emph{ample locus} $\mathrm{Amp}(\theta)$ of $\{\theta\}$ is the set of points $x\in X$ such that there exists $u \in \textup{PSH}(X,\theta)$ with analytic singularity type, satisfying $\theta_u\geq\varepsilon\omega$ for some $\varepsilon>0$ and that $u$ is smooth around $x$. In particular, $V_\theta$ is locally bounded on $\mathrm{Amp}(\theta)$.

Let $\theta^1,...,\theta^n$ be smooth closed $(1,1)$-forms  and $\varphi_j \in \textup{PSH}(X,\theta^j)$, $j=1,...n$. Following Bedford-Taylor in the local setting \cite{BT82,BT76}, it has been pointed out in \cite{BEGZ10} that the sequence of positive measures
\begin{equation}\label{eq: k_approx_measure}
\mathbbm{1}_{\bigcap_j\{\varphi_j>V_{\theta^j}-k\}}\theta^{1}_{\max(\varphi_1, V_{\theta^1}-k)}\wedge \ldots\wedge \theta^n_{\max(\varphi_n, V_{\theta^n}-k)}
\end{equation}
converges weakly to the so called \emph{non-pluripolar product} 
$\theta^1_{\varphi_1 } \wedge\ldots\wedge\theta^n_{\varphi_n }.
$
The resulting positive Borel measure does not charge pluripolar sets. In the particular case when $\varphi_1=\varphi_2=\ldots=\varphi_n=\varphi$ and $\theta^1=...=\theta^n=\theta$ we will call $\theta_{\varphi}^n$ the \emph{non-pluripolar Monge-Amp\`ere measure} of $\varphi$, generalizing the usual notion of volume form, when $\theta_{\varphi}$ is a smooth K\"ahler form. 

As a consequence of Bedford-Taylor theory, the measures in \eqref{eq: k_approx_measure} all have total mass less than $\int_X \theta_{V_\theta}^n$, in particular, after letting $k \to \infty$ we notice that $\int_X \theta_{\varphi}^n \leq \int_X \theta_{V_\theta}^n$. What is more, it was proved in \cite[Theorem 1.2]{WN19} that for any $u,v \in \textup{PSH}(X,\theta)$  we have the following monotonicity result for the masses:
if $v \preceq u$ then $\int_X \theta_v^n \leq \int_X \theta_u^n.$

\begin{definition}\label{def: fullmass}We say that $u \in \textup{PSH}(X,\theta)$ is a full mass potential (notation: $u \in \mathcal E(X,\theta)$) if 
$\int_X \theta_u^n = \int_X \theta_{V_\theta}^n =: \vol(\{\theta\}).$
Moreover, we say that $u \in \mathcal E(X,\theta)$ has finite energy (notation: $u \in \mathcal E^1(X,\theta)$) if
$\int_X |u - V_\theta| \theta_{u}^n < \infty.$
\end{definition}

The class $\mathcal E^1(X,\theta)$  plays a central role in the variational theory of complex Monge--Amp\`ere equations, as detailed in \cite{BBEGZ16,BEGZ10} and later works. Here we only mention that the Monge--Amp\`ere energy $I_\theta$ naturally extends to this space with the usual formula:
\begin{equation}\label{eq: I_def}
I_\theta(u) = \frac{1}{\vol(\{\theta\}) (n+1)}\sum_{j = 0}^n \int_X (u - V_\theta) \theta_u^j  \wedge \theta_{V_\theta}^{n-j}, \ \ \ \ u \in \mathcal E^1(X,\theta).
\end{equation}
It is upper semi-continuous (usc) with respect to the $L^1$ topology on $\textup{PSH}(X,\theta)$.

\medskip
We recall some envelope notions that will be useful in this work.
Given any $f : X \to [-\infty,+\infty]$ the starting point is the envelope $P_\theta(f):=\textup{usc}(\sup\{v \in \textup{PSH}(X,\theta), \ v \leq f \})$, where $\mathrm{usc}$ denotes the upper-semicontinuous regularization. Then, for $u,v \in \textup{PSH}(X,\theta)$ we can introduce the ``rooftop envelope'' $P_\theta(u,v):=P_\theta(\min(u,v))$. This allows us to further introduce envelopes with respect to singularity type \cite{RWN14}:
$$P_\theta[u](v) := \textup{usc}\Big(\lim_{C \to +\infty}P_\theta(u+C,v)\Big).$$

It is easy to see that $P_\theta[u](v)$ depends on the singularity type $[u]$. When $v = V_\theta$, we simply write $P[u]:=P_\theta[u]:=P_\theta[u](V_\theta)$ and call this potential the \emph{envelope of the singularity type} $[u]$. It follows from \cite[Theorem 3.8]{DDNL2} that $\theta_{P[u]}^n \leq \mathbbm{1}_{\{P[u] =0\}} \theta^n$. 
Also, by \cite[Proposition 2.3 and Remark 2.5]{DDNL2} we have that $\int_X \theta_{P[u]}^n = \int_X \theta_u^n$.

With the help of these envelopes one can define a complete metric on $\mathcal E^1$. Indeed, as pointed out in \cite[Theorem 2.10]{DDNL1}, for $u,v \in \mathcal E^1(X,\theta)$ we have that $P(u,v) \in \mathcal E^1(X,\theta)$ and the following expression defines a complete metric on $\mathcal E^1(X,\theta)$ \cite[Theorem 1.1]{DDNL3}:
$$d_1(u,v) = I_\theta(u) + I_\theta(v) - I_\theta(P(u,v)).$$
In addition, $d_1$-convergence implies $L^1$-convergence of qpsh potentials \cite[Theorem 3.11]{DDNL3}.

\subsection{Complex singularity exponents and openness}

To start, we point out several notational differences compared to the closely related work of Berman--Boucksom--Jonsson \cite{BBJ21}. Stemming from our choice $\ddc := i \ddbar /2\pi$, our Ding functionals \eqref{def: twisted_Ding} are free of  factors of two compared to \cite[page 4]{BBJ21}. Moreover, for any qpsh function $\varphi$ on $X$ the \emph{complex singularity exponent} attached to our tame measure $\mu:= e^{\chi -\psi} \omega^n$ (recall \eqref{eq:def-mu}) is defined to be
$$
c_\mu[\varphi]:=\sup\bigg\{\lambda>0:\int_Xe^{-\lambda\varphi} \mathrm{d}\mu <\infty\bigg\}.
$$
And our definition of the Lelong number of a qpsh function $v$ at $x \in X$ is 
\begin{equation}
    \label{eq:def-Lelong-number}
    \nu(v,x) := \sup \{ c > 0  \textup{ s.t. }  v(z) - c \log|z-x|^2 \textup{ is bounded above near } x\}. 
\end{equation}
Given an irreducible analytic subset $E \subset X$, $\nu(v,E)$ is equal to the Lelong number of $v$ at a very general point of $E$ (by \cite{Siu74}).

On the level of potentials, our definition of singularity exponent and Lelong number differs from the ones used in \cite{BBJ21} by a factor of two. However on the level of $(1,1)$-currents, all definitions agree due to our choice of $\ddc$. As an upside, using our terminology, all the formulas/inequalities we derive in this work contain no extra factors of two.

Openness theorems go back to the work of Berndtsson \cite[Theorem 4.4]{Bern15o} on  Demailly--Koll\'ar's openness conjecture \cite{DK01}.
Not long after, Guan--Zhou \cite{GuZh15}  solved the strong openness conjecture of Demailly \cite{Dem01}, which will be used multiple times in this work. The version  below follows from (the proof of) \cite[Corollary B.2]{BBJ21} and the effective version of the strong openness theorem \cite[\S 3.3]{GuZh15} (see also \cite[Main Theorem (2)]{Hi14} and \cite[Corollary 1.2]{GLZ16}):

\begin{theorem}\label{thm: openness} Suppose that $\lambda >0$ and $u,u_j$ are qpsh functions on $X$ such that $u_j \nearrow u$ a.e., and $\int_X e^{-\lambda u} \mathrm{d}\mu <\infty$. Then there exists  $j_0$ such that  $\int_X e^{-\lambda u_j} \mathrm{d}\mu <\infty$ for $j \geq j_0.$
\end{theorem}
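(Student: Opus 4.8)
The plan is to derive this effective openness statement as a quantitative strengthening of the Guan--Zhou strong openness theorem, following the route of \cite[Corollary B.2]{BBJ21}. First I would reduce to the local picture: since $X$ is compact, cover $X$ by finitely many coordinate charts; on each chart write the qpsh function $u$ locally as $\rho + \varphi$ with $\rho$ smooth and $\varphi$ psh, absorb the smooth part and the smooth weight $e^{\chi - \psi}$ contributions where possible, and note that $\mu = e^{\chi-\psi}\omega^n$ is, after a harmless comparison, dominated by a constant times Lebesgue measure times $e^{-\psi}$ (with $\psi$ itself qpsh, hence locally psh up to a smooth term). So the claim becomes a statement about local integrability of $e^{-\lambda u_j}$ against a fixed finite measure, knowing $e^{-\lambda u}$ is integrable and $u_j \nearrow u$ a.e.

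Next I would invoke the effective strong openness theorem in the form of \cite[\S 3.3]{GuZh15} (equivalently \cite[Corollary 1.2]{GLZ16} or \cite[Main Theorem (2)]{Hi14}): there is an explicit constant, depending only on $\lambda$, $n$, and $\int_X e^{-\lambda u}\,d\mu$, controlling $\int_X e^{-\lambda' u}\,d\mu$ for $\lambda'$ slightly bigger than $\lambda$ — call it $\lambda' = (1+\delta)\lambda$ for a small $\delta > 0$ that the effective statement produces. The key is that this bound is \emph{uniform}: $\int_X e^{-(1+\delta)\lambda\, u}\,d\mu \leq M < \infty$. Then for the increasing sequence $u_j \nearrow u$, on the set $\{u_j > u - s\}$ (for any $s > 0$) one has $e^{-\lambda u_j} \leq e^{\lambda s} e^{-\lambda u}$, which is integrable; the remaining mass is carried by $\{u_j \leq u - s\}$, and there Hölder's inequality with exponents $(1+\delta)$ and $(1+\delta)/\delta$ gives
\[
\int_{\{u_j \leq u - s\}} e^{-\lambda u_j}\,d\mu \;\leq\; \Big(\int_X e^{-(1+\delta)\lambda u_j}\,d\mu\Big)^{\frac{1}{1+\delta}} \mu\big(\{u_j \leq u - s\}\big)^{\frac{\delta}{1+\delta}}.
\]
The first factor must be controlled uniformly in $j$; I would get that by applying the effective openness bound to each $u_j$ in place of $u$, using that $\int_X e^{-\lambda u_j}\,d\mu \leq \int_X e^{-\lambda u}\,d\mu$ (monotone convergence, since $u_j \leq u$) so the constant $M$ can be chosen independent of $j$. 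The second factor tends to $0$ as $j \to \infty$ for fixed $s$, because $u_j \nearrow u$ a.e. forces $\mu(\{u_j \leq u - s\}) \to 0$ by dominated convergence (the sets decrease to a $\mu$-null set, as $\mu$ puts no mass on $\{u = -\infty\}$, $\mu$ being tame/klt).

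Combining, for $j$ large the tail contribution is finite (indeed small) and the main contribution is bounded by $e^{\lambda s}\int_X e^{-\lambda u}\,d\mu < \infty$, so $\int_X e^{-\lambda u_j}\,d\mu < \infty$ for all $j \geq j_0$, which is the assertion. The main obstacle I anticipate is making the constant in the effective openness theorem genuinely uniform over the whole sequence $\{u_j\}$ simultaneously — one needs the effective bound to depend on the $u_j$ only through the single quantity $\int e^{-\lambda u_j}\,d\mu$ (and $n$, $\lambda$), not through finer data like Lelong numbers or the local structure, so that the monotone bound $\int e^{-\lambda u_j}\,d\mu \leq \int e^{-\lambda u}\,d\mu$ transfers; checking that the cited effective versions indeed give this, and patching the local effective estimates together over the finite cover of $X$ with uniform constants, is where the care is needed. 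Everything else is soft measure theory (Hölder, dominated convergence) plus the reduction to the local psh setting.
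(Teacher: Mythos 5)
Your plan breaks down at its central step: the uniform control of the first H\"older factor $\int_X e^{-(1+\delta)\lambda u_j}\,\mathrm{d}\mu$. You propose to get it by applying the effective openness bound to each $u_j$, ``using that $\int_X e^{-\lambda u_j}\,\mathrm{d}\mu \leq \int_X e^{-\lambda u}\,\mathrm{d}\mu$ by monotone convergence, since $u_j \leq u$''. That inequality is false, and in the wrong direction: $u_j \nearrow u$ a.e.\ with all functions qpsh forces $u_j \leq u$ everywhere, hence $e^{-\lambda u_j} \geq e^{-\lambda u}$ and $\int_X e^{-\lambda u_j}\,\mathrm{d}\mu \geq \int_X e^{-\lambda u}\,\mathrm{d}\mu$. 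Moreover the step is circular: to invoke any openness statement for $u_j$ at exponent $\lambda$ you must already know $\int_X e^{-\lambda u_j}\,\mathrm{d}\mu < \infty$, which is exactly the conclusion to be proved, and the quantity you actually need, $\int_X e^{-(1+\delta)\lambda u_j}\,\mathrm{d}\mu$ bounded uniformly in $j$, is strictly stronger. Knowing only the improved integrability of the limit, $\int_X e^{-(1+\delta)\lambda u}\,\mathrm{d}\mu < \infty$, together with $u_j \leq u$, gives no pointwise upper bound whatsoever on $e^{-\lambda u_j}$ (the $u_j$ are the \emph{more} singular functions), so soft H\"older/measure-theoretic manipulation cannot close the gap. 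The uniform bound on the approximants in terms of data of the limit (and of $\|u_j-u\|_{L^1}$) is precisely the nontrivial content of the effective strong openness theorem in its increasing-sequence form (\cite[\S 3.3]{GuZh15}, \cite[Main Theorem (2)]{Hi14}, \cite[Corollary 1.2]{GLZ16}, in the spirit of Demailly--Koll\'ar's semicontinuity), proved by Ohsawa--Takegoshi-type estimates; the intended argument, as in the proof of \cite[Corollary B.2]{BBJ21}, quotes that statement directly for the sequence $\lambda u_j + \psi \nearrow \lambda u + \psi$ rather than trying to re-derive it from the scaled-exponent openness of $u$ alone. In your write-up the key input has in effect been assumed.

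A secondary imprecision: in the local reduction you dominate $\mu$ by a constant times $e^{-\psi}$ times Lebesgue measure. This is harmless for the integrals you want to bound, but it destroys the hypothesis: $\int_X e^{-\lambda u}\,\mathrm{d}\mu < \infty$ carries the vanishing factor $e^{\chi}$ and is strictly weaker than $\int_X e^{-\lambda u - \psi}\,\omega^n < \infty$, so after your reduction the conclusion you would need ($\int e^{-\lambda u_j - \psi}\,\mathrm{d}V < \infty$ locally) no longer follows from the data. The correct local model keeps the analytic singularities of $\chi$ as a numerator of the form $|F|^{2c}$ with $F$ holomorphic, which is exactly why the theorem assumes $\chi$ has analytic singularity type and why the cited openness statements are formulated with a holomorphic numerator; the psh weight is then $\lambda u_j + \psi$ (up to a smooth term), increasing to $\lambda u + \psi$, and the effective statement applies on each chart, with compactness of $X$ giving a single $j_0$.
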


The above theorem implies that $c_\mu[\varphi]$ is always positive. The following result (that follows from \cite{BBJ21,BFJ08} and Guan--Zhou openness \cite{GuZh15}) gives a more precise valuative characterization of $c_\mu[\cdot]$.

\begin{theorem}
\label{thm:c=inf-A/S}
For any qpsh function $\varphi$ on $X$ one has
\begin{equation}\label{eq: c_mu_eq}
c_\mu[\varphi] = \inf _E \frac{A_{\chi,\psi}(E)}{\nu(\varphi,E)},
\end{equation}
where $
A_{\chi,\psi}(E):=A_X(E)+\nu(\chi,E)-\nu(\psi,E)$ and the infimum is taken over all prime divisors $E$ in smooth bimeromorphic models over $X$.
\end{theorem}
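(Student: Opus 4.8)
The plan is to prove the identity $c_\mu[\varphi] = \inf_E A_{\chi,\psi}(E)/\nu(\varphi,E)$ in two inequalities, reducing the problem to the case $\mu = \omega^n$ (i.e. $\chi=\psi=0$) via the valuative descriptions of the Lelong numbers $\nu(\chi,E)$, $\nu(\psi,E)$. Concretely, write $\mu = e^{\chi-\psi}\omega^n$ and observe that for a fixed bimeromorphic model $\pi\colon Y\to X$ with $E\subset Y$ a prime divisor, the klt/integrability condition $\int_X e^{-\lambda\varphi}d\mu<\infty$ pulls back to an integrability statement on $Y$ for the current $\pi^*(\lambda\varphi - \chi + \psi)$ against $\pi^*\omega^n$, twisted by the relative canonical divisor $K_{Y/X}$. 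The upshot is that the finiteness of $\int_X e^{-\lambda\varphi}d\mu$ should be governed, divisor by divisor, by the condition $\lambda\,\nu(\varphi,E) - \nu(\chi,E) + \nu(\psi,E) < A_X(E)$, i.e. $\lambda < A_{\chi,\psi}(E)/\nu(\varphi,E)$, which is exactly \eqref{eq: c_mu_eq}.

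First I would prove the ``$\le$'' direction, which is the easy, purely local/necessary-condition half. Fix any prime divisor $E$ over $X$ on a model $\pi\colon Y\to X$. If $\lambda > A_{\chi,\psi}(E)/\nu(\varphi,E)$, then pulling everything back to $Y$ and restricting to a transversal disk to $E$ at a very general point (where all three Lelong numbers along $E$ are attained, by \cite{Siu74}), the function $\pi^*\varphi$ has a $\nu(\varphi,E)\log|z|^2$ singularity, $\pi^*\chi$ a $\nu(\chi,E)\log|z|^2$ singularity, $\pi^*\psi$ a $\nu(\psi,E)\log|z|^2$ singularity, and $\pi^*\omega^n$ vanishes to order $2(A_X(E)-1)$ transversally along $E$. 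A one-dimensional Fubini computation in the transversal variable then shows $\int e^{-\lambda\varphi}d\mu = \infty$ near that point, so $c_\mu[\varphi]\le \lambda$; letting $\lambda\downarrow A_{\chi,\psi}(E)/\nu(\varphi,E)$ and taking the inf over $E$ gives the bound. (One must be slightly careful that the Lelong numbers of $\pi^*\varphi,\pi^*\chi$ along $E$ are genuinely attained generically and that the three singular contributions add, which is where the ``very general point'' hypothesis is used.)

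The ``$\ge$'' direction is the substantive half and is where I expect the main obstacle to lie. Here one must show that if $\lambda < A_{\chi,\psi}(E)/\nu(\varphi,E)$ for \emph{every} prime divisor $E$ over $X$, then $\int_X e^{-\lambda\varphi}d\mu < \infty$. The natural route is to invoke the valuative criterion for integrability of \cite{BBJ21,BFJ08} (cf. \cite{Bo17}): the integrability of $e^{-\lambda\varphi}$ against $\mu$ is equivalent to a log-discrepancy-type inequality $A_\mu(v) > \lambda\, v(\varphi)$ holding for all (quasi-monomial, or all divisorial) valuations $v$, where $A_\mu(v) = A_X(v) + \nu(\chi,v) - \nu(\psi,v)$ is the log discrepancy relative to the tame measure $\mu$ and $v(\varphi) = \nu(\varphi,v)$ is the generic Lelong number of $\varphi$ along $v$. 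The point is that it suffices to test this on \emph{divisorial} valuations — which is a nontrivial input, coming either from the fact that the relevant functionals are computed by divisorial valuations (as in the Blum–Jonsson / Boucksom–Favre–Jonsson theory and \cite[Appendix B]{BBJ21}) together with Guan–Zhou strong openness \cite{GuZh15}, which upgrades the ``open'' valuative condition to honest integrability. So the proof strategy is: (i) restate the desired inequality as the valuative criterion; (ii) cite \cite[Appendix B]{BBJ21}/\cite{BFJ08} to reduce integrability to the inequality over all quasi-monomial valuations; (iii) use that quasi-monomial valuations are limits/combinations of divisorial ones and that both $A_{\chi,\psi}$ and $\nu(\varphi,\cdot)$ behave well under this, so that checking divisors suffices; and (iv) apply openness \eqref{thm: openness} to pass from the strict-but-infimal condition to genuine integrability. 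The main difficulty is the bookkeeping in step (iii)–(iv): guaranteeing that the infimum over prime divisors really does control the infimum over all quasi-monomial valuations appearing in the valuative criterion, and handling the tame-measure correction terms $\nu(\chi,E)$, $\nu(\psi,E)$ uniformly — this is exactly the content that \cite[Appendix B]{BBJ21} was designed to provide, so the cleanest presentation is to cite it and then spell out the (short) deduction.
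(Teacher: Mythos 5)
Your plan is sound, and for the substantive direction it is the same as the paper's: the paper deduces \emph{both} inequalities directly from the valuative integrability criterion \cite[Theorem B.5]{BBJ21} (whose uniform $(1+\varepsilon)$-gap already encodes Guan--Zhou strong openness), which is exactly the tool your steps (i)--(iv) invoke. Where you genuinely differ is the easy inequality $c_\mu[\varphi]\le\inf_E A_{\chi,\psi}(E)/\nu(\varphi,E)$: the paper again just applies Theorem B.5 (integrability of $e^{-\lambda\varphi}\,\mathrm{d}\mu$ forces the divisorial inequalities, then let $\varepsilon\searrow 0$), while you propose a local Fubini/slicing computation at a very general point of $E$. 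That is a legitimate, more hands-on alternative, but the delicate point is not that Lelong numbers are ``attained'': a Lelong number only ever gives \emph{upper} bounds on a qpsh function, which is all you need for $\varphi$ and $\psi$ (restrict to generic transversal disks), but for divergence you also need the \emph{lower} bound $e^{\pi^*\chi}\gtrsim |z_1|^{2\nu(\chi,E)}$ near generic points of $E$, and this requires the standing hypothesis that $\chi$ has analytic singularity type; for a general qpsh $\chi$ this bound (and the slicing argument) can fail.

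In the hard direction, the one step you leave implicit is the only place the argument could silently break: Theorem B.5 characterizes $\int_X e^{\chi-\psi-\lambda\varphi}\omega^n<\infty$ by a uniform gap $A_X(E)+\nu(\chi,E)\ge(1+\varepsilon)\bigl(\nu(\psi,E)+\lambda\nu(\varphi,E)\bigr)$ for all $E$, whereas the hypothesis $\lambda<\inf_E A_{\chi,\psi}(E)/\nu(\varphi,E)$ only provides a gap on the $\lambda\nu(\varphi,E)$ part. Divisor-wise strict inequalities alone are not equivalent to integrability; the uniform $\varepsilon$ is essential. To cover the $\nu(\psi,E)$ part one must additionally use that $\mu$ itself is tame, i.e.\ $\int_X e^{\chi-\psi}\omega^n<\infty$, which by Theorem B.5 yields $A_X(E)+\nu(\chi,E)\ge(1+\tau)\nu(\psi,E)$ uniformly; combining the two families of inequalities (a two-line manipulation in the paper, choosing $\tau<\varepsilon$) produces the required uniform factor in front of $\nu(\psi,E)+\lambda\nu(\varphi,E)$, after which B.5 gives integrability. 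With that combination spelled out, your proposal is complete and matches the paper's argument for this direction.
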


If on the right hand side of \eqref{eq: c_mu_eq} we have $\nu(\varphi,E)=0$, then we use the convention $1/0 = \infty$. No ambiguity will arise from this, as follows from the proof below.

\begin{proof} 
Let  $\lambda\in(0,c_\mu[\varphi])$. By \cite[Theorem B.5]{BBJ21}, for small enough $\varepsilon>0$ we have
$$
\nu(\chi,E)+A_X(E)\geq(1+\varepsilon)(\nu(\psi,E)+\lambda\nu(\varphi,E))\geq\nu(\psi,E)+(1+\varepsilon)\lambda\nu(\varphi,E)
$$
for any prime divisor $E$ over $X$. Letting $\varepsilon \searrow 0$ and then $\lambda\nearrow c_\mu[\varphi]$, we arrive at 
$$
c_\mu[\varphi] \leq \inf _E \frac{A_{\chi,\psi}(E)}{\nu(\varphi,E)}.
$$

For the other direction, let $\lambda\in\big(0,\inf_E\frac{A_{\chi,\psi}(E)}{\nu(\varphi,E)}\big)$ (note that $\inf_E\frac{A_{\chi,\psi}(E)}{\nu(\varphi,E)}$ is indeed positive by the previous step). 
So there is some $\varepsilon>0$ such that
$$
\nu(\chi,E)+A_X(E)\geq\nu(\psi,E)+(1+\varepsilon)\lambda\nu(\varphi,E)
$$
for all $E$.
On the other hand, we have $\int_Xe^{\chi-\psi}\omega^n=\int_X\mathrm{d}\mu<\infty$. Using \cite[Theorem B.5]{BBJ21} again, for any small $\tau>0$ we have that
$$
\nu(\chi,E)+A_X(E)\geq(1+\tau)\nu(\psi,E)
$$
holds for all $E$. Summarizing, we find that
$$
(1+\tau)(\nu(\chi,E)+A_X(E))\geq(1+\tau+\tau^2)\nu(\psi,E)+(1+\varepsilon)\lambda\nu(\varphi,E).
$$
Or equivalently,
$$
\nu(\chi,E)+A_X(E)\geq\frac{1+\tau+\tau^2}{1+\tau}\nu(\psi,E)+\frac{1+\varepsilon}{1+\tau} \lambda \nu(\varphi,E).
$$
Finally, choosing $\tau$ such that $\tau<\varepsilon$, we get
$
\nu(\chi,E)+A_X(E)\geq(1+\varepsilon^\prime)(\nu(\psi,E)+\lambda\nu(\varphi,E))
$
for some $\varepsilon^\prime>0$, which holds for all $E$. Using \cite[Theorem B.5]{BBJ21} one more time we obtain $c_\mu[\varphi]\geq\lambda$, implying the other direction.
\end{proof}

Theorem \ref{thm:c=inf-A/S} has the following consequence.
\begin{lemma}
\label{lem:1/c-convex}
Let $\{\psi_{\tau}\}_{\tau\in I}$ be a family of qpsh functions on $X$, where $I\subset\RR$ is some connected open interval. Assume that $I\ni\tau\mapsto\psi_\tau(x)$ is concave for any $x\in X$. Then the map
$
I\ni\tau\mapsto 1/c_\mu[\psi_\tau]
$
is convex, so in particular it is also continuous.
\end{lemma}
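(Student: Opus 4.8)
The plan is to prove the convexity inequality
\[
\frac{1}{c_\mu[\psi_{\tau_\lambda}]}\ \le\ \frac{\lambda}{c_\mu[\psi_{\tau_0}]}+\frac{1-\lambda}{c_\mu[\psi_{\tau_1}]},\qquad \tau_\lambda:=\lambda\tau_0+(1-\lambda)\tau_1,
\]
for arbitrary $\tau_0,\tau_1\in I$ and $\lambda\in(0,1)$. Continuity on the open interval $I$ is then automatic: by Theorem \ref{thm: openness} one has $c_\mu[\psi_\tau]>0$ for every $\tau$, so $1/c_\mu[\psi_\tau]$ is a finite-valued convex function on an open interval, hence continuous. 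The only input I need, besides positivity of $c_\mu$, is the hypothesis written pointwise as $\psi_{\tau_\lambda}(x)\ge\lambda\psi_{\tau_0}(x)+(1-\lambda)\psi_{\tau_1}(x)$ for all $x\in X$. I would begin by recording the reformulation $1/c_\mu[\psi]=\inf\{s>0:\int_X e^{-\psi/s}\,\mathrm{d}\mu<\infty\}$, valid because $\mu$ is a finite measure, $\psi$ is bounded above, and the set $\{\lambda>0:\int_X e^{-\lambda\psi}\,\mathrm{d}\mu<\infty\}$ is an interval with right endpoint $c_\mu[\psi]$.

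For the main step, fix $s_i>1/c_\mu[\psi_{\tau_i}]$ for $i=0,1$, so that $\int_X e^{-\psi_{\tau_i}/s_i}\,\mathrm{d}\mu<\infty$, and put $s:=\lambda s_0+(1-\lambda)s_1$. Exponentiating the pointwise concavity inequality gives $e^{-\psi_{\tau_\lambda}/s}\le e^{-\lambda\psi_{\tau_0}/s}\,e^{-(1-\lambda)\psi_{\tau_1}/s}$ pointwise on $X$. I would then apply H\"older's inequality with the conjugate exponents $p:=s/(\lambda s_0)$ and $q:=s/((1-\lambda)s_1)$; these satisfy $1/p+1/q=1$ and $p,q\in(1,\infty)$ by construction, and one checks $\lambda p/s=1/s_0$, $(1-\lambda)q/s=1/s_1$, so
\[
\int_X e^{-\psi_{\tau_\lambda}/s}\,\mathrm{d}\mu\ \le\ \Big(\int_X e^{-\psi_{\tau_0}/s_0}\,\mathrm{d}\mu\Big)^{1/p}\Big(\int_X e^{-\psi_{\tau_1}/s_1}\,\mathrm{d}\mu\Big)^{1/q}\ <\ \infty .
\]
Hence $1/c_\mu[\psi_{\tau_\lambda}]\le s=\lambda s_0+(1-\lambda)s_1$, and letting $s_i\searrow 1/c_\mu[\psi_{\tau_i}]$ yields the claimed inequality.

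The computation is essentially routine, so I do not expect a genuine obstacle; the only thing needing care is the degenerate bookkeeping, which I would dispatch directly: the cases $\lambda\in\{0,1\}$ are trivial, and if some $c_\mu[\psi_{\tau_i}]=+\infty$ the corresponding $s_i$ ranges over all of $(0,\infty)$ and the argument runs verbatim with $1/c_\mu[\psi_{\tau_i}]$ read as $0$. As an alternative to the H\"older argument, one can instead invoke the valuative formula of Theorem \ref{thm:c=inf-A/S}, which gives $1/c_\mu[\psi_\tau]=\sup_E \nu(\psi_\tau,E)/A_{\chi,\psi}(E)$ over prime divisors $E$; since a supremum of convex functions is convex, it suffices to show $\tau\mapsto\nu(\psi_\tau,E)$ is convex. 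Evaluating at a common very general point $\xi$ of $E$ (the exceptional loci attached to the three parameters $\tau_0,\tau_1,\tau_\lambda$ form a countable union of proper analytic subsets, so such $\xi$ exists), this reduces to the standard monotonicity and additivity of Lelong numbers: $\nu(\cdot,\xi)$ reverses the partial order of qpsh functions and satisfies $\nu(av+bw,\xi)=a\,\nu(v,\xi)+b\,\nu(w,\xi)$ for $a,b\ge0$, whence $\nu(\psi_{\tau_\lambda},\xi)\le\nu(\lambda\psi_{\tau_0}+(1-\lambda)\psi_{\tau_1},\xi)=\lambda\nu(\psi_{\tau_0},\xi)+(1-\lambda)\nu(\psi_{\tau_1},\xi)$. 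Either route gives the lemma; I would present the H\"older one as it is self-contained modulo Theorem \ref{thm: openness}.
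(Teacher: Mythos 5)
Your H\"older argument is correct and is genuinely different from the paper's proof. The paper argues via the valuative formula of Theorem \ref{thm:c=inf-A/S}: it writes $1/c_\mu[\psi_\tau]=\sup_E \nu(\psi_\tau,E)/A_{\chi,\psi}(E)$, notes that pointwise concavity in $\tau$ forces $\nu(\psi_{\lambda\tau_1+(1-\lambda)\tau_2},E)\le\lambda\nu(\psi_{\tau_1},E)+(1-\lambda)\nu(\psi_{\tau_2},E)$ for every prime divisor $E$, and concludes since a supremum of affine-in-$\tau$ bounds of convex functions is convex — i.e.\ exactly your ``alternative route,'' so that part of your proposal reproduces the paper's proof (and your care in choosing a common very generic point of $E$ is not even needed, since the generic Lelong number inequality follows from the pointwise one). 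Your primary route, by contrast, works directly with the definition of $c_\mu$: the reformulation $1/c_\mu[\psi]=\inf\{s>0:\int_X e^{-\psi/s}\,\mathrm{d}\mu<\infty\}$ is legitimate (qpsh functions are bounded above and $\mu$ is finite, so the integrability set is a down-closed interval), the exponent bookkeeping $1/p+1/q=1$, $\lambda p/s=1/s_0$, $(1-\lambda)q/s=1/s_1$ checks out, and the limit $s_i\searrow 1/c_\mu[\psi_{\tau_i}]$ gives the convexity inequality; finiteness and hence continuity on the open interval then follow from positivity of $c_\mu$, which you correctly source to Theorem \ref{thm: openness}. What each approach buys: yours is more elementary and self-contained — it avoids the valuative machinery of Theorem \ref{thm:c=inf-A/S} (hence the appeal to \cite[Theorem B.5]{BBJ21}) and would apply verbatim to any finite measure with qpsh inputs — whereas the paper's route is a two-line corollary of the theorem it has just established and keeps the exposition within the valuative framework used throughout the rest of the argument.
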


\begin{proof}
For $\tau_1,\tau_2\in I$ and $\lambda\in[0,1]$, concavity reads
$
\psi_{\lambda\tau_1+(1-\lambda)\tau_2}\geq\lambda\psi_{\tau_1}+(1-\lambda)\psi_{\tau_2}.
$
Let $E$ be any prime divisor over $X$. Then
$
\nu(\psi_{\lambda\tau_1+(1-\lambda)\tau_2},E)\leq\lambda\nu(\psi_{\tau_1},E)+(1-\lambda)\nu(\psi_{\tau_2},E),
$
so that
$$
\frac{\nu(\psi_{\lambda\tau_1+(1-\lambda)\tau_2},E)}{A_{\chi,\psi}(E)}\leq\frac{\lambda\nu(\psi_{\tau_1},E)}{A_{\chi,\psi}(E)}+\frac{(1-\lambda)\nu(\psi_{\tau_2},E)}{A_{\chi,\psi}(E)}.
$$
Taking sup over all $E$, by Theorem \ref{thm:c=inf-A/S}, we conclude the result.
\end{proof}

Lastly we recall a consequence of Theorem \ref{thm: openness} and \cite[Theorem 1.3]{DDNL2}:

\begin{proposition}
\label{prop:c-u=c-P-u}
$c_\mu[u] = c_\mu[P[u]]$ for any $u \in \textup{PSH}(X,\theta)$. In particular $c_\mu[v] = c_\mu[V_\theta]$ for any $v \in \mathcal E(X,\theta)$.
\end{proposition}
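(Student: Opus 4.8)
The plan is to prove the two inequalities $c_\mu[u]\le c_\mu[P[u]]$ and $c_\mu[P[u]]\le c_\mu[u]$ separately, and then read off the ``in particular'' statement. The first inequality is formal. First I would observe that $u-\sup_X u$ is a legitimate competitor in the envelopes defining $P[u]$: it is $\theta$-psh, it is $\le 0$ and hence $\le V_\theta$, and it satisfies $u-\sup_X u\le u+C$ for $C$ large, so $u-\sup_X u\le P_\theta(u+C,V_\theta)$; letting $C\to\infty$ and regularizing gives $u\le P[u]+\sup_X u$. Consequently $e^{-\lambda P[u]}\le e^{\lambda\sup_X u}\,e^{-\lambda u}$ pointwise, so finiteness of $\int_X e^{-\lambda u}\,\mathrm d\mu$ forces finiteness of $\int_X e^{-\lambda P[u]}\,\mathrm d\mu$, whence $c_\mu[u]\le c_\mu[P[u]]$.

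The substance is the reverse inequality, and this is where the strong openness theorem enters. I would fix $\lambda\in(0,c_\mu[P[u]])$, so that $\int_X e^{-\lambda P[u]}\,\mathrm d\mu<\infty$, and set $u_j:=P_\theta(u+j,V_\theta)$. This is an increasing sequence in $\textup{PSH}(X,\theta)$ bounded above by $V_\theta$, and by the definition of $P[u]$ (together with \cite[Theorem 1.3]{DDNL2}) its increasing limit agrees with $P[u]$ off a pluripolar set; since $\mu$ is tame, hence absolutely continuous with respect to $\omega^n$, it charges no pluripolar set, so $u_j\nearrow P[u]$ $\mu$-a.e. Theorem~\ref{thm: openness} then supplies an index $j_0$ with $\int_X e^{-\lambda u_{j_0}}\,\mathrm d\mu<\infty$. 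The key final step is the one-sided bound $u_{j_0}=P_\theta(\min(u+j_0,V_\theta))\le u+j_0$, which gives $e^{-\lambda u}\le e^{\lambda j_0}e^{-\lambda u_{j_0}}$ and hence $\int_X e^{-\lambda u}\,\mathrm d\mu<\infty$, i.e.\ $\lambda\le c_\mu[u]$. Letting $\lambda\nearrow c_\mu[P[u]]$ yields $c_\mu[P[u]]\le c_\mu[u]$, and combining the two inequalities proves $c_\mu[u]=c_\mu[P[u]]$.

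For the last assertion, I would take $v\in\mathcal E(X,\theta)$ and use the mass identity $\int_X\theta_{P[v]}^n=\int_X\theta_v^n=\vol(\{\theta\})$ recalled above: thus $P[v]$ is itself a full mass potential dominated by $V_\theta$, and such a potential (of the form $P[\cdot]$) must equal $V_\theta$; see \cite{DDNL2}. Applying the first part to $v$ then gives $c_\mu[v]=c_\mu[P[v]]=c_\mu[V_\theta]$.

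I expect the only genuine obstacle to be the middle paragraph: to apply Theorem~\ref{thm: openness} one needs an \emph{increasing} approximation of $P[u]$ converging $\mu$-almost everywhere, and it is the envelope construction $P_\theta(u+j,V_\theta)$ — with tameness of $\mu$ neutralizing the pluripolar ambiguity in the usc regularization — that furnishes it, after which the elementary domination $P_\theta(u+j_0,V_\theta)\le u+j_0$ carries integrability from the approximant back to $u$ itself. (As an alternative one can bypass openness entirely and deduce $c_\mu[u]=c_\mu[P[u]]$ directly from Theorem~\ref{thm:c=inf-A/S}, since the bounds $u-\sup_X u\le P[u]$ and $u\le P[u]+\sup_X u$ pull back to any smooth bimeromorphic model to give $\nu(u,E)=\nu(P[u],E)$ for every prime divisor $E$ over $X$, so the two infima defining $c_\mu[u]$ and $c_\mu[P[u]]$ coincide.)
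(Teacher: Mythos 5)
Your main argument is correct and is essentially the paper's proof: the paper likewise observes that $P(V_\theta,u+C)\nearrow P[u]$ a.e., that these approximants are singularity-comparable to $u$ (you use the one-sided bounds $u-\sup_X u\le P[u]$ and $P_\theta(u+C,V_\theta)\le u+C$, which is all that is needed), applies Theorem~\ref{thm: openness} to pass between $P[u]$ and the approximants, and settles the ``in particular'' part by quoting $P[v]=V_\theta$ for $v\in\mathcal E(X,\theta)$ from \cite[Theorem 1.3]{DDNL2}.

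One caveat: the parenthetical ``alternative'' at the end does not work as written. The two bounds you invoke there are the same inequality rearranged, and they only give $\nu(P[u],E)\le\nu(u,E)$ for every prime divisor $E$ over $X$; via Theorem~\ref{thm:c=inf-A/S} this recovers only the easy inequality $c_\mu[u]\le c_\mu[P[u]]$. The reverse inequality $\nu(u,E)\le\nu(P[u],E)$ would require a domination of the form $P[u]\le u+C$, i.e.\ $P[u]\preceq u$, which fails in general: for any $v\in\mathcal E(X,\theta)$ not of minimal singularity type one has $P[v]=V_\theta$, and the asserted Lelong-number equality then amounts to the nontrivial statement that full-mass potentials have the same Lelong numbers as $V_\theta$ along all divisors over $X$ --- exactly the kind of conclusion the proposition is meant to deliver, not an input. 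So the openness argument in your middle paragraph should be regarded as the actual proof, and the valuative shortcut should be dropped or substantially justified.
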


\begin{proof} Recall that $P(V_\theta, u + C)\nearrow P[u]$ a.e. As $[P(V_\theta, u + C)] = [u]$  for any $C\in \RR$, Theorem \ref{thm: openness} immediately gives that $c_\mu[u] = c_\mu[P[u]]$. By \cite[Theorem 1.3]{DDNL2} we have that $P[v] = V_\theta$ for any $v \in \mathcal E(X,\theta)$. The last statement immediately follows.
\end{proof}

\section{The extended Ross--Witt Nyström correspondence}\label{sec:rwncorre}

We give a  precise correspondence between finite energy geodesic rays and certain maximal test curves in the big case. This theory was initiated by Ross and Witt Nystr\"om in \cite{RWN14} in the K\"ahler case, and developed further in \cite{DDNL3} and \cite{DX20}. 

\begin{definition}\label{def: lin_subgeod}
A sublinear subgeodesic ray is a subgeodesic $(0,\infty) \ni t \mapsto u_t \in \PSH(X, \theta)$ (notation: $\{u_t\}_t$) such that $u_t \to_{L^1} u_0: = V_\theta$ as $t \to 0$, and there exists $C\in \mathbb{R}$ such that $u_t(x) \leq C t, \ t\geq 0$, $x \in X$. In addition, $\{u_t\}_t$ is of finite energy if $u_t \in \mathcal E^1(X,\theta), \ t \geq 0$.
\end{definition}
Using $t$-convexity, we obtain some immediate properties of sublinear subgeodesic rays:
\begin{lemma}\label{lem: -inf_est_subgeod} Suppose that $\{u_t\}_t$ is a sublinear subgeodesic ray. Then the set $\{u_t >-\infty \}$ is the same for any $t >0$. In particular, for any $x \in X$ the curve $t \mapsto u_t(x)$ is either finite and convex on $(0,\infty)$, or equal to $-\infty$ on this interval.
\end{lemma}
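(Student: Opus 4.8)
I would prove Lemma~\ref{lem: -inf_est_subgeod} by reading the claim directly off convexity in the radial variable of the function associated to the ray on a product. First I would recall what ``subgeodesic'' means: to $\{u_t\}_t$ one associates the $S^1$-invariant function $U$ on $X\times\mathbb{A}$, where $\mathbb{A}:=\{w\in\CC:|w|>1\}$, defined by $U(x,w):=u_{\log|w|}(x)$, and the subgeodesic condition is precisely that $U$ is $\pi_X^*\theta$-psh on $X\times\mathbb{A}$, with $\pi_X\colon X\times\mathbb{A}\to X$ the projection. In particular $U$ is the genuine (usc, $[-\infty,\infty)$-valued) $\pi_X^*\theta$-psh representative, not merely an object defined up to a pluripolar set.

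Next, fix $x\in X$ and restrict $U$ to the complex submanifold $\{x\}\times\mathbb{A}$. Since $\pi_X$ is constant on this submanifold, $(\pi_X^*\theta)|_{\{x\}\times\mathbb{A}}=0$, so the restriction $w\mapsto u_{\log|w|}(x)$ is an honest subharmonic function on the annulus $\mathbb{A}$, unless it is $\equiv-\infty$ there; this is the standard fact that the restriction of a psh function to a connected complex submanifold is psh or identically $-\infty$, with no exceptional set, since it is a composition with a holomorphic map. As $U$ is rotation-invariant in $w$, so is this restriction, and a rotation-invariant subharmonic function on $\mathbb{A}$ is a convex function of $t=\log|w|$. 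Hence $t\mapsto u_t(x)$ is either convex on $(0,\infty)$ or $\equiv-\infty$ there.

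To finish, I would invoke the elementary fact that a convex function on an open interval with values in $[-\infty,\infty)$ which is finite at one point is finite on the whole interval: otherwise, if it were $-\infty$ at some $t_1\neq t_0$ while finite at $t_0$, choosing $t_2$ on the far side of $t_0$ from $t_1$ and writing $t_0$ as a convex combination of $t_1$ and $t_2$ would force the value $-\infty$ at $t_0$. Applied on $(0,\infty)$ this upgrades the dichotomy to: for each $x$, either $u_t(x)\in\RR$ for all $t>0$ (and $t\mapsto u_t(x)$ is convex), or $u_t(x)=-\infty$ for all $t>0$. Consequently $\{u_t>-\infty\}$ equals $\{x\in X:\ t\mapsto u_t(x)\text{ is finite}\}$ for every $t>0$, proving both assertions. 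I do not expect a genuine obstacle; the one point requiring care is that the restriction-to-a-slice argument be carried out with the genuine $\pi_X^*\theta$-psh representative $U$, so that no exceptional pluripolar set of ``bad'' $x$'s can appear, and this is built into the definition of a subgeodesic ray. (Neither the sublinear bound $u_t\leq Ct$ nor the initial condition as $t\to 0$ is needed here.)
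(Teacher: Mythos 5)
Your proof is correct, and it is essentially the argument the paper has in mind: the paper states this lemma without proof as an immediate consequence of $t$-convexity, which is exactly what you establish by restricting the $\pi_X^*\theta$-psh complexification of the ray to the slice $\{x\}\times\mathbb{A}$, obtaining a rotation-invariant subharmonic (or $\equiv-\infty$) function, hence convexity in $t=\log|w|$, and then the elementary dichotomy for $[-\infty,\infty)$-valued convex functions. Your closing remarks — that the genuine psh representative rules out exceptional pluripolar sets of bad slices, and that neither the sublinearity nor the $t\to 0$ condition is used — are accurate.
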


A \emph{psh geodesic ray} is a sublinear subgeodesic ray that additionally satisfies the following maximality property: for any $0 < a < b$, the subgeodesic $(0,1) \ni t \mapsto v^{a,b}_t:=u_{a(1-t) + bt} \in \PSH(X,\theta)$ can be recovered in the following manner:
\begin{equation}\label{eq: vabt_eq}
v^{a,b}_t:=\sup_{h \in \mathcal{S}}{h_t}\,,\quad t \in [0,1]\,,
\end{equation}
where $\mathcal{S}$ is the set of subgeodesics $(0, 1) \ni  t \to h_t \in \PSH(X,\theta)$ with $\lim_{t \searrow 0}h_t\leq  u_a$ and $\lim_{t \nearrow 1}h_t\leq u_b$. 
The space of \emph{finite energy (psh) geodesic rays} will be denoted by 
$$\mathcal R^1(X,\theta).$$

\begin{remark} \label{rem: sup_ray}Let $\{u_t\}_t$ be a psh geodesic ray. Due to \cite[(13)]{DDNL3}  the map $t \mapsto \sup_X u_t=\sup_X (u_t - V_\theta)=\sup_{\textup{Amp}\{\theta\}} (u_t - V_\theta), \ t > 0$ is linear, using an approximation argument via decreasing geodesic segments with minimal singularity type.
\end{remark}

Making small tweaks to \cite[Definition 5.1]{RWN14}, we now give the definition of test curves:

\begin{definition}
A map $\mathbb{R}\ni \tau \mapsto \psi_\tau\in \PSH(X, \theta)$ is a \emph{psh test curve}, denoted $\{\psi_\tau\}_{\tau}$, if \vspace{0.15cm}\\
(i) $\tau\mapsto \psi_\tau(x)$ is concave, decreasing and usc for any $x\in X$. \vspace{0.15cm}\\
(ii) $\psi_\tau\equiv -\infty$ for all $\tau$ big enough, and $\psi_\tau$ increases a.e. to $V_\theta$ as $\tau \to -\infty$.
\end{definition}
Note that this definition is more general than the one in \cite{RWN14} (where the authors only considered potentials with small unbounded locus), more general than the one in \cite{DDNL3} (where the authors considered only bounded test  curves, see below), and more general than the one in \cite[Section 3.1]{DX20} (where the authors only consider K\"ahler classes). Moreover, condition (ii) allows for the introduction of the following constant:
\begin{equation}\label{eq: psi+_tau_def}
\tau_\psi^+ := \inf\{\tau \in \mathbb{R} : \psi_\tau \equiv -\infty\}\,.
\end{equation}

We adopt the following convention: psh test curves will always be parametrized by $\tau$, whereas rays will be parametrized by $t$. Hence $\{\psi_t\}_t$ will always refer to some kind of ray, whereas  $\{\phi_\tau\}_\tau$ will refer to some type of test curve. As pointed out below, rays and test curves are dual to each other, so one should think of the  parameters $t$ and $\tau$ to be dual to each other as well.

\begin{definition}\label{def: test_curves}
\leavevmode A psh test curve $\{\psi_\tau \}_\tau$  can have the following properties: \vspace{0.1cm}\\
(i) $\{\psi_{\tau}\}_\tau$ is \emph{maximal} if $P[\psi_\tau] =\psi_\tau$ for any $\tau \in \mathbb{R}$.\vspace{0.1cm}\\
(ii) $\{\psi_{\tau}\}_\tau$ is a \emph{finite energy test curve} if
    \begin{equation*}\label{eq: fetestcurve_def}
        \int_{-\infty}^{\tau^+_\psi} \left( \int_X \theta_{\psi_\tau}^n-\int_X \theta_{V_\theta}^n \right) \,\mathrm{d}\tau >-\infty\,.
    \end{equation*}
(iii) We say $\{\psi_{\tau}\}_\tau$ is \emph{bounded} if $\psi_\tau = V_\theta$ for all $\tau$ negative enough. In this case, one can introduce the following constant, complementing \eqref{eq: psi+_tau_def}:
 \begin{equation*}\label{eq: psi-_tau_def}
\tau_\psi^- := \sup\left\{\,\tau \in \mathbb{R} : \psi_\tau \equiv V_\theta\,\right\}\,.
\end{equation*}
\end{definition}
In the above definition, we followed the convention $P[-\infty]=-\infty$.
Note that bounded test curves are clearly of finite energy.

 We recall the \emph{Legendre transform}, that will establish the duality between various types of maximal test curves and geodesic rays. Given a convex function $f:[0, +\infty)\rightarrow \mathbb{R}$, its Legendre transform is defined as 
 \begin{equation*}
 \hat f(\tau):= \inf_{t \geq 0} (f(t)-t\tau)=\inf_{t > 0} (f(t)-t\tau)\,,\quad \tau\in \mathbb{R}\,.
 \end{equation*}
The \emph{(inverse) Legendre transform} of a decreasing concave function $g:\mathbb{R}\rightarrow \mathbb{R}\cup \{-\infty\}$ is
\begin{equation*}\label{eq: inverse_Lag_tran_def}
\check{g}(t):=\sup_{\tau \in \mathbb{R}} (g(\tau)+t\tau)\,, \quad t \geq 0\,.
\end{equation*}
There is a sign difference in our choice of Legendre transform compared to the convex analysis literature, however this choice will be more suitable for our discussion.

We recall that, for every $\tau \in \mathbb{R}$ we have  $\hat{\check{g}}(\tau) \geq g(\tau)$ with equality if and only if $g$ is additionally $\tau$-usc. Similarly, $\check{\hat{f}}(t) \leq f(t)$ for all $t\geq 0$ with equality if and only if $f$ is $t$-lsc.   In general, $\hat{\check{g}}$ is the $\tau$-usc envelope of $g$, and $\check{\hat{f}}$ is the $t$-lsc envelope of $f$. These properties are called the involution property of the Legendre transform. 

Starting with a psh test curve $\{\psi_\tau\}_\tau$, our goal is to construct a geodesic/subgeodesic ray by taking the $\tau$-inverse Legendre transform. The first step is the next proposition, essentially proved in \cite{Da17}:

\begin{proposition} \label{lem: Legendre_usc} Suppose $\{\psi_\tau\}_\tau$ is a psh test curve. Then $\sup_{\tau}(\psi_\tau(x) + t\tau)$ is usc with respect to $(t,x)\in(0,\infty)\times X$.
\end{proposition}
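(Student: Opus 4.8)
The plan is to show that the function $\Phi(t,x) := \sup_{\tau \in \RR}(\psi_\tau(x) + t\tau)$ is the supremum of a family of functions each of which is usc on $(0,\infty) \times X$, and then to argue that the sup itself is usc by exploiting a uniform upper bound that reduces the sup to a ``locally finite'' one in a suitable sense. First I would fix a psh test curve $\{\psi_\tau\}_\tau$ and recall the basic structural facts: each $\psi_\tau \in \PSH(X,\theta)$, so for fixed $\tau$ the function $(t,x) \mapsto \psi_\tau(x) + t\tau$ is usc (indeed continuous in $t$, usc in $x$) on $(0,\infty) \times X$. A countable supremum of usc functions need not be usc, so the content is to control the behavior as $\tau \to \tau_\psi^+$ from below and as $\tau \to -\infty$.

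The key estimate I would establish is a two-sided control. On one side, since $\psi_\tau \nearrow V_\theta \leq 0$ as $\tau \to -\infty$ and $\psi_\tau \le \psi_{\tau'} \le V_\theta$ for $\tau \ge \tau'$, we have $\psi_\tau(x) + t\tau \le V_\theta(x) + t\tau \to -\infty$ as $\tau \to -\infty$ for each fixed $t > 0$; moreover on a neighborhood $U$ of a given point where we test upper semicontinuity, and for $t$ ranging in a compact subinterval $[a,b] \subset (0,\infty)$, this divergence is uniform, so the sup over $\tau \le -M$ is $\le \sup_U V_\theta + a\cdot(-M)$, which is $-\infty$-like (very negative) uniformly. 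On the other side, for $\tau$ near $\tau_\psi^+$: here I would use that $\psi_\tau \equiv -\infty$ for $\tau \ge \tau_\psi^+$, but more importantly that the family $\{\psi_\tau\}$ is uniformly bounded above (all $\le V_\theta \le 0$) and the parameter interval contributing finite values is effectively $(-\infty, \tau_\psi^+)$. Combining, for any compact set of $(t,x)$ and any $\varepsilon > 0$, the supremum defining $\Phi$ is attained up to $\varepsilon$ over a compact range $\tau \in [-M, \tau_\psi^+ - \delta]$ of the parameter — except one must be careful near $\tau_\psi^+$ itself, where I would use concavity of $\tau \mapsto \psi_\tau(x)$ to show the relevant sup is continuous from the left in $\tau$, so no mass is lost by excluding a right-neighborhood of $\tau_\psi^+$ (the only subtlety is when $\Phi(t,x) = +\infty$, which is excluded since $\psi_\tau \le 0$ and the $\psi_\tau$ vanish identically past a finite $\tau$, forcing $\Phi(t,x) \le t\tau_\psi^+$).

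Once the supremum is reduced to a compact parameter range $[-M, N]$ (locally in $(t,x)$), I would invoke the classical fact that the supremum of a family $\{g_\tau\}_{\tau \in [-M,N]}$ of usc functions indexed by a \emph{compact} parameter space is usc, provided $(\tau, t, x) \mapsto g_\tau(t,x) = \psi_\tau(x) + t\tau$ is jointly usc on $[-M,N] \times [a,b] \times U$. Joint upper semicontinuity in $(\tau,t,x)$ follows because $\psi_\tau(x)$ is usc jointly in $(\tau, x)$ — this is exactly where the concavity and monotonicity of $\tau \mapsto \psi_\tau$ together with $\theta$-psh-ness in $x$ enter, and it is essentially the statement proved in \cite{Da17} that I would cite — while $t\tau$ is continuous. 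This is the step I expect to be the main obstacle: pinning down joint upper semicontinuity of $(\tau,x) \mapsto \psi_\tau(x)$ and correctly handling the boundary parameter value $\tau_\psi^+$ where potentials degenerate to $-\infty$, so that the reduction to a compact parameter interval is legitimate and loses nothing. The rest is a routine $\varepsilon/\delta$ argument: given $(t_0, x_0)$ and a sequence $(t_j, x_j) \to (t_0, x_0)$, pick near-optimal $\tau_j$ in the compact range, pass to a convergent subsequence $\tau_j \to \tau_\infty$, and use joint upper semicontinuity to bound $\limsup_j \Phi(t_j, x_j) \le \psi_{\tau_\infty}(x_0) + t_0 \tau_\infty \le \Phi(t_0, x_0)$.
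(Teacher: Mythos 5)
Your overall strategy---cut the parameter range to a compact interval and take the supremum of a jointly usc family over a compact index set---is viable, and in fact more elementary than the paper's argument, but as written it has a gap exactly at the point you yourself flag as the main obstacle: the joint upper semicontinuity of $(\tau,x)\mapsto\psi_\tau(x)$ is asserted and outsourced to a citation of \cite{Da17}. That reference is essentially the statement being proved (the paper says the proposition is ``essentially proved'' there, and reproduces a full proof precisely because the big-class setting is more general), so a blind proof cannot lean on it. The good news is that the missing lemma is true and has a short direct proof which uses only monotonicity and $\tau$-usc (concavity is not needed): fix $(\tau_0,x_0)$ and $c>\psi_{\tau_0}(x_0)$; since $\tau\mapsto\psi_\tau(x_0)$ is decreasing and usc, it is left-continuous at $\tau_0$, so there is $\tau'<\tau_0$ with $\psi_{\tau'}(x_0)<c$; by upper semicontinuity of $\psi_{\tau'}$ in $x$ there is a neighborhood $U\ni x_0$ with $\psi_{\tau'}<c$ on $U$; then for every $\tau>\tau'$ and $x\in U$ one has $\psi_\tau(x)\le\psi_{\tau'}(x)<c$, so $\psi<c$ on the neighborhood $(\tau',\infty)\times U$ of $(\tau_0,x_0)$. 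With this in hand your compactness step goes through, and two of your worries disappear: you need not excise a right-neighborhood of $\tau_\psi^+$, since the definition requires $\tau$-usc at $\tau_\psi^+$ as well (even when $\psi_{\tau_\psi^+}\equiv-\infty$), so you may simply take the compact window $[-M,\tau_\psi^+]$; and the ``attained up to $\varepsilon$'' phrasing should be replaced by the decomposition $\sup_\tau=\max\big(\sup_{\tau\in[-M,\tau_\psi^+]},\,\sup_{\tau\le -M}\big)$, where the tail is $\le -aM$ for $t\ge a>0$ because $\psi_\tau\le V_\theta\le 0$; letting $M\to\infty$ then also handles points where the supremum equals $-\infty$, at which there is no uniform near-attainment.

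For comparison, the paper takes a genuinely different route: it complexifies, sets $u(s,z)=\sup_\tau(\psi_\tau(z)+\tau\,\mathrm{Re}\,s)$ on $S\times X$, and proves $u=\usc u$ using negligibility (capacity zero) of $\{u<\usc u\}$, Fubini slicing, convexity of $t\mapsto u_t(z)$, the involution property of the Legendre transform, and Kiselman's minimum principle. That argument directly exhibits $u$ as a $\pi^*\theta$-psh function, which is what is used afterwards to see that $\{\check\psi_t\}_t$ is a subgeodesic. Your elementary route, once repaired as above, gives the usc statement, and the psh conclusion then follows for free: $u$ is a supremum of $\pi^*\theta$-psh functions, locally bounded above, so if it is usc it coincides with its usc regularization and is itself $\pi^*\theta$-psh. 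So the repaired argument is a legitimate, simpler alternative; the defect of the proposal as submitted is only that its pivotal claim was left unproved.
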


Since $\tau^+_\psi <\infty$ and $\psi_\tau \leq V_\theta, \tau \in \mathbb R$, we note that $ \sup_{\tau} (\psi_\tau + t\tau) \leq V_\theta + t\tau^+_\psi$ for $t \geq 0$. Even if true, it is not clear how to interpret upper semi-continuity at $(t,x) \in \{0\} \times X$. This is due to the fact that $V_\theta(x) =\sup_{\tau}\psi_\tau(x) $  a.e. $x \in X$, but not everywhere (!) in our definition of psh test curve.

\begin{proof} Let $S = \{ \textup{Re } s > 0\}$ be the right open half plane. 
We consider the usual complexification of the inverse Legendre transform: 
\[
u(s,z) := \sup_{\tau} (\psi_\tau(z) +  \tau\textup{Re } s)\,,\quad (s,z) \in S \times X\,.
\]
Also, $u_t(x): = u(t,x) \leq V_\tau +  t\tau^+_\psi \leq t\tau^+_\psi, t >0$. Clearly, $\usc u \in \PSH(S\times X,\pi^*\omega)$, where $\usc u$ is the usc regularization of $u$ on $S \times X$, where $\pi:S\times X\rightarrow X$ is the natural projection.
It will be enough to prove that  $\usc u=u$. 

We introduce the set $E = \{ u < \usc u\} \subseteq S \times X$. As both $u$ and $\usc u$ are $\mathbb{R}$-invariant in the imaginary direction of $S$, it follows that $E$ is also $\mathbb{R}$-invariant, i.e., there exists $B \subseteq (0,\infty) \times X$ such that $E = B \times \mathbb R$.

As $E$ has Monge--Ampère capacity zero, it follows that $E$ has Lebesgue measure zero. By Fubini's theorem $B \subseteq (0,\infty) \times X$ has Lebesgue measure zero as well. For $z \in X$, we introduce the $z$-slices of $B$:
\[
B_z = B \cap \left((0,\infty) \times \{z\}\right)\,.
\]
By Fubini's theorem again, we have that $B_z$ has Lebesgue measure $0$ for all $z \in X \setminus F$, where $F \subseteq X$ is some set of Lebesgue measure $0$. 

By slightly increasing $F$, but not its zero Lebesgue measure, we can additionally assume that $u_t(z)> -\infty$ for all $t >0$ and $z \in X \setminus F$ (indeed, at least one potential $\psi_\tau$ is not identically equal to $-\infty$). 

Let $z \in X \setminus F$. We argue that $B_z$ is in fact empty. By our assumptions on $F$, both maps $t \mapsto u_t(z)$ and $t \mapsto (\usc u)(t,z)$ are locally bounded and convex (hence continuous) on $(0,\infty)$. As they agree on the dense set $(0,\infty) \setminus B_z$, it follows that they have to be the same, hence $B_z=\emptyset$. This allows to conclude that 
\begin{equation}\label{eq: a.e._id1}
\inf_{ t> 0} [u_t(x) - \tau t] = \chi_\tau :=\inf_{ t> 0} \left[{(\usc u)}(t,x) - \tau t\right]\,, \quad \tau \in \mathbb{R}  \textup{ and } z \in X \setminus F\,.
\end{equation}
By duality of the Legendre transform $\psi_\tau(x) = \inf_{t > 0} [u_t(x) - t\tau]$ for all  $x \in X$ and $\tau \in \mathbb R$ (using the $\tau$-usc property of $\tau \mapsto \psi_\tau$). From this and \eqref{eq: a.e._id1} it follows that $\psi_\tau=\chi_\tau$ a.e. on $X$, for all $\tau \in \mathbb{R}$. Since both $\psi_\tau$ and $\chi_\tau$ are $\theta$-psh (the former by definition, the latter by Kiselman's minimum principle \cite[Theorem~I.7.5]{De12}), it follows that $\psi_\tau\equiv\chi_\tau$ for all $\tau \in \mathbb{R}$. 

Consequently, applying the $\tau$-Legendre transform to the $\tau$-usc and $\tau$-concave curves $\tau \mapsto \psi_\tau$ and $\tau \mapsto \chi_\tau$, we obtain that $t \mapsto u_t(x)$ is the $t$-lsc envelope of $t \mapsto \usc u(t,x)$ on $(0,\infty)$ for all $x \in X$. hence, Lemma   \ref{lem: -inf_est_subgeod}  gives that $u_t(x) = \textup{usc } u(t,x)$ on $(0,\infty) \times X$.
\end{proof}

Given a sublinear subgeodesic ray $\{\phi_t\}_t$ (psh test curve $\{\psi_\tau\}_\tau$), we can associate its (inverse) Legendre transform at $x \in X$ as 
\begin{equation}\label{eq: Leg_transf_def_ray_test_curve}
    \begin{aligned}
\hat \phi_\tau(x) := \inf_{t>0}(\phi_t(x) - t\tau)\,,& \quad \tau \in \mathbb R\,,\\
\check \psi_t(x) := \sup_{\tau \in \mathbb R}(\psi_\tau(x) + t\tau)\,,& \quad t> 0\,.
\end{aligned}
\end{equation}

Our next theorem describes a duality between various types of rays and maximal test curves, extending various particular cases from \cite{DDNL3,RWN14}:
\begin{theorem}\label{thm: max_test_curve_ray_duality}
The Legendre transform $\{\psi_\tau\}_\tau \mapsto \{\check \psi_t\}_t$ gives a bijective map with inverse $\{\phi_t\}_t \mapsto \{\hat \phi_\tau\}_\tau$ between:\vspace{0.1cm}\\
(i) psh test curves and sublinear subgeodesic rays,\vspace{0.1cm}\\
(ii) maximal psh test curves and psh geodesic rays,\vspace{0.1cm}\\
(iii)\cite{DDNL3,RWN14} maximal bounded test curves and geodesic rays with minimal singularity type. In this case, we additionally have that
        $
        V_\theta + \tau_\psi^- t \leq {\check \psi_t}\leq V_\theta + \tau_\psi^+t\,, \quad t \geq 0\,.
        $
        
\end{theorem}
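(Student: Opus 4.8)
The plan is to establish the three bijections in Theorem \ref{thm: max_test_curve_ray_duality} by exploiting the involution property of the Legendre transform together with Proposition \ref{lem: Legendre_usc} and the pluripotential-theoretic properties of the envelopes $P_\theta[\cdot]$. The overall strategy: first prove (i), the coarsest correspondence, then refine it successively to (ii) and (iii) by matching the ``maximality'' on the test-curve side with the ``geodesic'' property on the ray side, and then the ``boundedness'' with the ``minimal singularity'' property.

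\textbf{Step 1: the correspondence (i).} Start with a psh test curve $\{\psi_\tau\}_\tau$. By Proposition \ref{lem: Legendre_usc}, $\check\psi_t(x) = \sup_\tau(\psi_\tau(x)+t\tau)$ is usc on $(0,\infty)\times X$, and for each fixed $x$ the map $t\mapsto \check\psi_t(x)$ is convex (sup of linear functions) with $\check\psi_t \leq V_\theta + t\tau_\psi^+$, so the ray is sublinear. That each $\check\psi_t \in \PSH(X,\theta)$ follows since it is a sup of $\theta$-psh functions and is usc. As $t\searrow 0$, using condition (ii) in the definition of psh test curve ($\psi_\tau \nearrow V_\theta$ a.e.\ as $\tau\to-\infty$) one checks $\check\psi_t \to V_\theta$ in $L^1$; here one uses that $\psi_\tau \leq V_\theta$ for all $\tau$ so $\check\psi_t \leq V_\theta$, combined with a lower bound forcing convergence. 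Conversely, given a sublinear subgeodesic ray $\{\phi_t\}_t$, the Legendre transform $\hat\phi_\tau(x) = \inf_{t>0}(\phi_t(x)-t\tau)$ is, for each $x$, concave, decreasing and usc in $\tau$ (infimum of affine functions, plus an usc-regularity check), it is $\theta$-psh in $x$ by Kiselman's minimum principle, and the sublinearity bound $\phi_t \leq Ct$ forces $\hat\phi_\tau \equiv -\infty$ for $\tau > C$, while $\phi_t \to_{L^1} V_\theta$ as $t\to 0$ gives $\hat\phi_\tau \nearrow V_\theta$ a.e.\ as $\tau\to-\infty$. That these two assignments are mutually inverse is precisely the involution property of the Legendre transform, applied fiberwise in $x$: $\tau\mapsto\psi_\tau(x)$ is $\tau$-usc by hypothesis, so $\widehat{\check\psi}_\tau = \psi_\tau$, and in the reverse direction one needs $t\mapsto\phi_t(x)$ to be $t$-lsc, which holds because a subgeodesic ray is automatically continuous/convex in $t$ on $(0,\infty)$ by Lemma \ref{lem: -inf_est_subgeod} (so $t$-lsc on the open ray, and the relevant envelope is taken over $t>0$).

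\textbf{Step 2: refining to (ii).} Here I must show $\{\psi_\tau\}_\tau$ is maximal (i.e.\ $P_\theta[\psi_\tau] = \psi_\tau$ for all $\tau$) if and only if $\{\check\psi_t\}_t$ is a psh geodesic ray. The key point is that the maximality property \eqref{eq: vabt_eq} defining a psh geodesic ray — that each segment $v^{a,b}_t$ equals the sup of subgeodesics with the prescribed endpoint bounds — corresponds under Legendre duality to $\psi_\tau$ being the largest $\theta$-psh function with its singularity type that lies below the data, which is exactly $P_\theta[\psi_\tau]$. Concretely, the envelope construction $v^{a,b}_t = \sup\{h_t : h\in\mathcal S\}$ is itself a ``double Legendre transform'' type operation, and I would argue that applying the forward Legendre transform, then the maximal-segment envelope, then the inverse transform, is the identity on a test curve precisely when each $\psi_\tau = P_\theta[\psi_\tau]$. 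This is where I expect to lean on the analysis in \cite[Section 3]{DX20} (K\"ahler case) and \cite{DDNL3}, reproving the envelope/Legendre commutation in the big setting. The technical care is that $P_\theta[\cdot]$ is the envelope \emph{of the singularity type}, and one must verify it interacts correctly with the $\tau$-concavity — e.g.\ that $\tau \mapsto P_\theta[\psi_\tau]$ is still concave and usc, so that the ``maximalization'' $\{\psi_\tau\}\mapsto\{P_\theta[\psi_\tau]\}$ lands back among psh test curves.

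\textbf{Step 3: refining to (iii), and the main obstacle.} For the bounded case, if $\{\psi_\tau\}_\tau$ is maximal and bounded with $\psi_\tau = V_\theta$ for $\tau \leq \tau_\psi^-$ and $\psi_\tau \equiv -\infty$ for $\tau > \tau_\psi^+$, then directly from the definition $\check\psi_t(x) = \sup_{\tau\in[\tau_\psi^-,\tau_\psi^+]}(\psi_\tau(x)+t\tau)$, and using $V_\theta + \tau_\psi^- t \leq \psi_{\tau_\psi^-} + \tau_\psi^- t \leq \check\psi_t$ on one side and $\psi_\tau \leq V_\theta$ together with $\tau \leq \tau_\psi^+$ on the other, one gets the sandwich $V_\theta + \tau_\psi^- t \leq \check\psi_t \leq V_\theta + \tau_\psi^+ t$, so $\check\psi_t$ has minimal singularity type for every $t$. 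Conversely, a geodesic ray with minimal singularity type has each $\phi_t \simeq V_\theta$, which via $\hat\phi_\tau = \inf_t(\phi_t - t\tau)$ forces $\hat\phi_\tau = V_\theta$ for $\tau$ very negative (the $\inf$ is attained/dominated near $t=0$) and $\hat\phi_\tau = -\infty$ for $\tau$ large (sublinearity), giving boundedness; this case is already essentially in \cite{DDNL3,RWN14} and I would cite it. The main obstacle, and the part requiring genuine work, is Step 2 — establishing in the transcendental big setting that the ``maximal test curve $\leftrightarrow$ psh geodesic ray'' correspondence is exact, i.e.\ that the Legendre transform intertwines the operation $P_\theta[\cdot]$ applied levelwise with the geodesic-envelope operation \eqref{eq: vabt_eq} applied to segments. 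The subtlety is that $V_\theta$ is only locally bounded on $\mathrm{Amp}(\theta)$ and psh test curves need not have small unbounded locus, so the Bedford–Taylor arguments from the K\"ahler case must be replaced by the relative pluripotential machinery of \cite{DDNL2,DDNL3}; in particular one must ensure that the a.e.\ identities in Proposition \ref{lem: Legendre_usc} upgrade to everywhere identities after taking envelopes, which is exactly where the usc-regularization and the $P_\theta[\cdot] = $ envelope-of-singularity-type identities do their work.
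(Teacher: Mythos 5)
Your Step 1 and Step 3 are fine and essentially track the paper: (i) is exactly the combination of Proposition \ref{lem: Legendre_usc}, Kiselman's minimum principle, Lemma \ref{lem: -inf_est_subgeod} and the usc/lsc involution property, and (iii) is quoted from \cite{DDNL3,RWN14}, with your sandwich bound $V_\theta+\tau_\psi^- t\le\check\psi_t\le V_\theta+\tau_\psi^+ t$ being the same elementary computation.

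The genuine gap is Step 2, which is where the new content of the theorem lies, and your text only restates what has to be proved (``the Legendre transform intertwines $P_\theta[\cdot]$ with the segment envelope \eqref{eq: vabt_eq}'') without supplying a mechanism, deferring instead to \cite{DX20,DDNL3}. Two concrete steps are missing. First, the forward direction: for a psh geodesic ray $\{u_t\}_t$ one must show $\hat u_\tau=P_\theta[\hat u_\tau](V_\theta)=P_\theta[\hat u_\tau]$; in the paper this is imported from \cite[Proposition 5.1]{Da17}, whose proof uses only the maximum principle \eqref{eq: vabt_eq}, and nothing in your outline replaces it. Second, the converse: the paper does not prove any general ``commutation'' of $P_\theta[\cdot]$ with the geodesic envelope; it argues by contradiction. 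If $\{\check\psi_t\}_t$ failed \eqref{eq: vabt_eq} on some segment $[a,b]$, one replaces that segment by the envelope to obtain a larger sublinear subgeodesic $\{\phi_t\}_t$; after normalizing $\tau^+_\psi=0$ the ray is $t$-decreasing, which gives the key comparison $\hat\phi_\tau\le\psi_\tau+\tau(a-b)$ for all $\tau\le 0$. This inequality is what makes the maximality hypothesis usable: it shows $\hat\phi_\tau$ has the same singularity type as $\psi_\tau$ and lies below $V_\theta$ up to a constant, so $P_\theta[\psi_\tau]=\psi_\tau$ forces $\hat\phi_\tau\le\psi_\tau$, hence $\hat\phi_\tau=\psi_\tau$, hence $\phi_t=\check\psi_t$, a contradiction. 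Your ``double Legendre transform'' heuristic produces neither the normalization-plus-monotonicity trick nor the singularity-type comparison, and without the latter one cannot even invoke maximality (which only controls potentials that are more singular than, or comparable to, $\psi_\tau$ and bounded by $V_\theta$). As written, part (ii) — and with it the bijectivity claim of the theorem — is not established.
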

\begin{proof}

We prove (i). This is essentially \cite[Proposition~4.4]{DDNL3}, where an important particular case was addressed. Let $\{\psi_\tau\}_\tau$ be a psh test curve. Then $\check \psi_t \in \PSH(X, \theta)$ for all $t>0$ due to  Proposition~\ref{lem: Legendre_usc}. We also see that $ \check \psi_t \leq V_\theta + t \tau^+_\psi$, and $\check \psi_t \to_{L^1} V_\theta$ as $t \to 0$, proving that $\{\check{\psi}_t\}_t$ is a sublinear subgeodesic.

For the reverse direction, let $\{\phi_t\}_t$ be a sublinear subgeodesic ray. Then $\hat \phi_\tau \in \PSH(X, \theta)$ or $\hat \phi_\tau \equiv -\infty$ for any $\tau \in \mathbb R$ due to Kiselman's minimum principle. By properties of Legendre transforms and Lemma~\ref{lem: -inf_est_subgeod}, we get that $\tau \mapsto \hat \phi_\tau(x)$ is $\tau$-usc, $\tau$-concave and decreasing. Due to sublinearity of $\{\phi_t\}_t$ we get that $\hat \phi_\tau \equiv -\infty$ for $\tau$ big enough. Lastly $\psi_\tau \nearrow V_\theta$ a.e. as $\tau \to -\infty$, since $\phi_t \to_{L^1} V_\theta$ as $t \to 0$.

We prove (ii). Let $\tau \in \RR$ and $\{u_t\}_t$ a psh geodesic ray. From \cite[Propisition 5.1]{Da17} (that only uses the maximum principle \eqref{eq: vabt_eq}) we obtain that $\hat u_\tau = P[\hat u_\tau](V_\theta)= P[\hat u_\tau](0)=P[\hat u_\tau]$. Since $\{u_t\}_t$ is sublinear,  the curve $\{ \hat u_\tau\}_\tau$ is a maximal psh test curve. 

Conversely, let $\{\psi_\tau\}_\tau$ be a maximal psh test curve. We will show that the sublinear subgeodesic $\{\check \psi_t\}_t$ is a psh geodesic ray. By elementary translation properties of the Legendre transform we can assume that $\tau^+_\psi =0$, in particular $\{\check \psi_t\}_t$ is $t$-decreasing. 

Now assume by contradiction that $\{\check \psi_t\}_t$ is not a psh geodesic ray. Comparing with \eqref{eq: vabt_eq}, there exists $0 < a < b$ such that 
\[
\check \psi_{(1-t)a + tb} \lneq \chi_t:=\sup_{h \in \mathcal{S}} h_t\,,\quad t \in [0,1]\,,
\]
where $\mathcal{S}$ is the set of subgeodesics $(a, b) \ni  t \mapsto h_t \in \PSH(X, \theta)$  satisfying $\displaystyle\lim_{t \to a+} h_t \leq  \check \psi_a$ and $\displaystyle\lim_{t \to b-} h_t \leq  \check \psi_b$. Now let $\{\phi_t\}_t$ be the sublinear subgeodesic such that $\phi_t := \check \psi_t$ for $t \not\in (a,b)$ and $\phi_{a(1-t) + bt} := \chi_t$ otherwise.

Trivially, $\check \psi_t \leq \phi_t \leq 0$, hence by duality, $\psi_\tau \leq \hat \phi_\tau \leq 0$ and $\tau^+_\psi = \tau^+_{\hat \phi}=0$.  However, comparing with \eqref{eq: Leg_transf_def_ray_test_curve}, we claim that $ \hat \phi_\tau \leq  \psi_\tau + \tau(a-b)$ for any $\tau \in \mathbb R$. Since $\tau^+_\psi = \tau^+_{\hat \phi}=0$, we only need to show this for $\tau \leq 0$. For such $\tau$ we indeed have 
\[
\inf_{t \in [a,b]}(\phi_t - t \tau) \leq \phi_b - b \tau = \check \psi_b - b \tau  \leq \inf_{t \in [a,b]}(\check \psi_t - t \tau) +  (a-b) \tau\,,
\]
where in the last inequality we used that $t \mapsto \check \psi_t$ is decreasing.

By the maximality of $\{\psi_\tau\}_\tau$, we obtain that $\hat \phi_\tau \leq P[\psi_\tau] = \psi_\tau$. As we already pointed out the reverse inequality above, we conclude that $\hat \phi_\tau = \psi_\tau$. The (inverse) Legendre transform now gives that $\check \psi_t=\phi_t$, a contradiction. Hence $\{\psi_t\}_t$ is a psh geodesic ray.

The duality of  (iii)  is simply \cite[Theorem~1.3]{DDNL3}.
\end{proof}

Given a finite energy (sub)geodesic ray $\{u_t\}_t$ we know that $t \to I_\theta(u_t)$ is (convex) linear \cite[Theorem 3.12]{DDNL1}, allowing to introduce the following radial Monge--Amp\`ere energy:
\begin{equation}\label{eq: I-rad-subgeodesic}
I_\theta\{u_t\} := \lim_{t \to \infty} \frac{I_\theta(u_t)}{t}.
\end{equation}

Before proving the duality between maximal finite energy test curves and rays, we point out the following approximation result:

\begin{proposition} \label{prop: psh_ray_bounded_approx}Let $\{ u_t\}_t$ be a psh geodesic ray. Then there exists a sequence of geodesic rays $\{u^j_t\}$ with minimal singularity type such that $u^j_t \searrow u_t$ and $\theta_{\hat u^j_\tau}^n \to \theta_{\hat u_\tau}^n$ weakly for $j \to \infty$ for all $t \geq 0$ and $\tau \in \RR$.
\end{proposition}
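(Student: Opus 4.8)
The plan is to dualize the statement through the extended Ross--Witt Nyström correspondence (Theorem~\ref{thm: max_test_curve_ray_duality}), realizing the approximating rays as psh geodesic envelopes of explicit minimal singularity subgeodesic rays. Since $\{u_t\}_t$ is a psh geodesic ray, the test curve $\psi_\tau:=\hat u_\tau$ is maximal and its inverse Legendre transform recovers $u_t=\check\psi_t$. For each $j\in\NN$ I would consider the sublinear subgeodesic ray
\begin{equation*}
w^j_t:=\max\bigl(u_t,\ V_\theta-jt\bigr),\qquad t\geq 0,
\end{equation*}
which is a subgeodesic, being the maximum of the subgeodesic $\{u_t\}_t$ and the linear subgeodesic $\{V_\theta-jt\}_t$, and sublinear because $u_t\leq V_\theta+\tau^+_{\hat u}\,t$ and $V_\theta\leq 0$. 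I would then set $u^j_t:=\sup\{h_t:\{h_t\}_t\in\mathcal R^1(X,\theta),\ h_t\leq w^j_t\ \forall t\}$, the largest psh geodesic ray lying below $\{w^j_t\}_t$; by the maximum--principle (Perron) arguments underlying Section~\ref{sec:rwncorre} (cf.\ \eqref{eq: vabt_eq}, \cite{DDNL3,DX20,RWN14}) this is again a psh geodesic ray.

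Next I would verify the three properties of $\{u^j_t\}_t$. For minimal singularity: the linear geodesic ray $\{V_\theta-jt\}_t$ lies below $\{w^j_t\}_t$, so $u^j_t\geq V_\theta-jt$; and $u^j_t\leq w^j_t\leq V_\theta+\max(\tau^+_{\hat u},0)\,t$; hence $u^j_t-V_\theta$ is bounded for each fixed $t$, i.e.\ $\{u^j_t\}_t$ has minimal singularity type (in particular it lies in $\mathcal R^1(X,\theta)$). For monotone convergence: $\{u_t\}_t$ is a psh geodesic ray below $\{w^j_t\}_t$, so $u^j_t\geq u_t$; the family $\{w^j_t\}_t$ decreases in $j$, hence so does its geodesic envelope $u^j_t$; and $u^j_t\leq w^j_t\searrow u_t$ pointwise as $j\to\infty$ for every $t>0$. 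Therefore $u^j_t\searrow u_t$, and $u^j_0=V_\theta=u_0$. This settles the first assertion.

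For the Monge--Ampère convergence I would pass to Legendre transforms. From $u^j_t\geq u_t$ and $u^j_t\searrow u_t$ one gets $\hat u^j_\tau=\inf_{t>0}(u^j_t-t\tau)\searrow\inf_{t>0}(u_t-t\tau)=\hat u_\tau$ for every $\tau\in\RR$, with $\hat u_\tau\leq\hat u^j_\tau\leq V_\theta$. By Theorem~\ref{thm: max_test_curve_ray_duality}(ii) these are model potentials, $P_\theta[\hat u^j_\tau](V_\theta)=\hat u^j_\tau$ and $P_\theta[\hat u_\tau](V_\theta)=\hat u_\tau$, so by the structure theory of such envelopes from \cite{DDNL2}---in the sharp form asserting that the non-pluripolar Monge--Amp\`ere measure of a model potential $\phi$ equals $\mathbbm 1_{\{\phi=V_\theta\}}\theta_{V_\theta}^n$---one has $\theta_{\hat u^j_\tau}^n=\mathbbm 1_{\{\hat u^j_\tau=V_\theta\}}\theta_{V_\theta}^n$ and $\theta_{\hat u_\tau}^n=\mathbbm 1_{\{\hat u_\tau=V_\theta\}}\theta_{V_\theta}^n$. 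As $\hat u^j_\tau\searrow\hat u_\tau$ with all potentials $\leq V_\theta$, the contact sets $\{\hat u^j_\tau=V_\theta\}$ decrease to $\bigcap_j\{\hat u^j_\tau=V_\theta\}=\{\hat u_\tau=V_\theta\}$, whence $\theta_{\hat u^j_\tau}^n\searrow\theta_{\hat u_\tau}^n$ as measures, in particular weakly.

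The hard part is this last step. The envelope estimates recorded in the preliminaries give only the inequality $\theta_{P[\phi]}^n\leq\mathbbm 1_{\{P[\phi]=V_\theta\}}\theta_{V_\theta}^n$ and the mass identity $\int_X\theta_{P[\phi]}^n=\int_X\theta_\phi^n$, whereas the argument above needs the sharp pointwise identity $\theta_\phi^n=\mathbbm 1_{\{\phi=V_\theta\}}\theta_{V_\theta}^n$ for model potentials (equivalently $\int_X\theta_\phi^n=\int_{\{\phi=V_\theta\}}\theta_{V_\theta}^n$); establishing or carefully citing this---along the lines of \cite{DDNL2} and \cite{WN19}---is where the real content lies, after which the decreasing convergence of the contact sets finishes the proof. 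A secondary technical point, to be settled using Section~\ref{sec:rwncorre}, is that the psh geodesic envelope of a sublinear subgeodesic ray is a psh geodesic ray and depends monotonically on the subgeodesic.
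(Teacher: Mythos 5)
Your construction of $u^j_t$ does not produce a geodesic ray, and this breaks the argument at the start. Since the linear ray $t\mapsto V_\theta-jt$ is itself a psh geodesic ray lying below $w^j_t=\max(u_t,V_\theta-jt)$, and (at least when $u_t$ has finite energy) so is $u_t$, the supremum you define satisfies $u^j_t\geq\max(u_t,V_\theta-jt)=w^j_t$, hence $u^j_t=w^j_t$ exactly. But the maximum of two geodesic rays is in general only a sublinear subgeodesic: it fails the segment-maximality property \eqref{eq: vabt_eq} on any interval $[a,b]$ straddling the region where the two rays cross (the toric picture, where geodesics correspond to affine curves and the max has a kink, already shows this). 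There is no Perron argument to fall back on here: replacing a competitor on $[a,b]$ by the connecting geodesic segment can push it \emph{above} the obstacle $w^j$, so the family of geodesic rays below $w^j$ is not stable under the usual modification, and in general it has no largest element. The envelope machinery of Section~\ref{sec:rwncorre} only provides the opposite operation, the \emph{smallest} geodesic ray above a subgeodesic (the maximization \eqref{eq: maximization_def}, via $P[\hat w_\tau]$); a ``largest geodesic ray below'' has no analogous description, and your appeal to it is not justified. (A further small inconsistency: as written the supremum ranges over $\mathcal R^1(X,\theta)$, which need not contain the given ray $\{u_t\}_t$, so even $u^j_t\geq u_t$ is unsupported in the generality of the statement.)

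The second gap is the one you flag yourself: the Monge--Amp\`ere convergence is reduced to the sharp identity $\theta_\phi^n=\mathbbm 1_{\{\phi=V_\theta\}}\theta_{V_\theta}^n$ for maximal (model) potentials. What the cited literature gives is only the inequality $\theta_{P[u]}^n\leq\mathbbm 1_{\{P[u]=0\}}\theta^n$ (\cite[Theorem 3.8]{DDNL2}) together with the mass identity $\int_X\theta_{P[u]}^n=\int_X\theta_u^n$; the asserted equality, with $\theta_{V_\theta}^n$ on the contact set, is a substantially stronger regularity-of-envelopes statement that is not established in \cite{DDNL2,WN19} for big classes, so the ``hard part'' of your proof is simply missing rather than deferred to a citation. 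The paper's own argument avoids both issues: it works entirely on the test-curve side, setting $\hat u^j_\tau:=P\big[\big(1-\max(0,1+\tau/j)\big)V_\theta+\max(0,1+\tau/j)\,\hat u_\tau\big]$, which is a bounded maximal test curve (hence, by Theorem~\ref{thm: max_test_curve_ray_duality}(iii), corresponds to a geodesic ray with minimal singularity type), and then obtains $u^j_t\searrow u_t$ and the weak convergence $\theta_{\hat u^j_\tau}^n\to\theta_{\hat u_\tau}^n$ by the argument of \cite[Theorem 4.5]{DL20}, which rests on monotonicity/multilinearity of non-pluripolar masses and the convergence theorems of \cite{DDNL2}, not on any identification of the measure with $\mathbbm 1_{\{\cdot=V_\theta\}}\theta_{V_\theta}^n$. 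If you want to salvage your scheme, replacing ``largest geodesic ray below $w^j$'' by the maximization of $w^j$ (whose Legendre transform is $P[\hat w^j_\tau]$, and which is bounded since $\hat w^j_\tau=V_\theta$ for $\tau\leq-j$) leads you back to essentially the same technical core as the paper's proof, not around it.
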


\begin{proof} 
In the K\"ahler case, this result is a particular case of \cite[Theorem 4.5]{DL20}. As the argument in the big case is virtually the same, we will be brief, and only point out how the sequence of approximating rays $\{u^j_t\}_t$ is constructed.

We can assume without loss of generality that $\tau^+_{\hat u} = 0$. For $j \in \NN, \tau < 0$,  set 
	$$
	\psi^{j}_{\tau}(x) := \Big(1-\max\Big(0, 1+\frac{\tau}{j}\Big)\Big) V_\theta + \max\Big(0, 1+\frac{ \tau}{j}\Big) \hat u_{\tau}, \ \text{and} \ \hat u^{j}_{\tau} := P[\psi^{j}_{\tau}]. 
	$$
	We define $\hat u^{j}_0:= \lim_{\tau \to 0^-} \hat u^{j}_{\tau}$. 
	
Since $\tau \to \hat u_{\tau}$ is $\tau$-concave, $\tau$-decreasing, and $\hat u_{\tau}\leq V_\theta$, it is elementary to see that $\tau \to \psi^{j}_{\tau}$ is also $\tau$-concave and $\tau$-decreasing. By elementary properties of $P[\cdot]$ we get that $\tau \to \hat u^{j}_{\tau}$ is also $\tau$-concave and $\tau$-decreasing (see the proof of \cite[Proposition 4.6]{DDNL3}).

Arguing the same way as in the proof of \cite[Theorem 4.5]{DL20} we conclude that $u^j_t \searrow u_t$, $\hat u^j_\tau \searrow \hat u_\tau$, and $\theta_{\hat u^j_\tau}^n \to \theta_{\hat u_\tau}^n$ weakly for $j \to \infty$ for all $t \geq 0,\ \tau \in \RR$.
\end{proof}

Finally we prove the Ross--Witt Nystr\"om correspondence between maximal finite energy test curves and the finite energy rays of $\mathcal R^1(X,\theta)$.

\begin{theorem} \label{thm: max_fin_en_test_curve_ray_duality}
        The Legendre transform $\{\psi_\tau\}_\tau \mapsto \{\check \psi_t\}_t$ gives a bijective map with inverse $\{\phi_t\}_t \mapsto \{\hat \phi_\tau\}_\tau$ between
         maximal finite energy test curves and finite energy geodesic rays. In this case, we additionally have that
        \begin{equation}\label{eq: I_RWN_form}
            I_\theta \{\check \psi_t\}=\frac{1}{\vol(\{\theta\})}\int_{-\infty}^{\tau^+_\psi} \left(\int_X \theta_{\psi_\tau}^n-\int_X \theta_{V_\theta}^n \right) \,\mathrm{d}\tau+\tau_{\psi}^+\,.
        \end{equation}
\end{theorem}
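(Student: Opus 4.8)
The plan is to establish the correspondence by combining Theorem~\ref{thm: max_test_curve_ray_duality}(ii) with a verification that the finite energy condition on test curves matches the finite energy condition on rays, and then to derive the energy formula~\eqref{eq: I_RWN_form} by an approximation argument using Proposition~\ref{prop: psh_ray_bounded_approx}. First I would recall that by part (ii) of Theorem~\ref{thm: max_test_curve_ray_duality} the Legendre transform already gives a bijection between maximal psh test curves and psh geodesic rays, so the only thing to check for the set-theoretic claim is that, under this bijection, a maximal psh test curve $\{\psi_\tau\}_\tau$ is of finite energy (in the sense of Definition~\ref{def: test_curves}(ii)) if and only if $\{\check\psi_t\}_t\in\mathcal R^1(X,\theta)$, i.e. $\check\psi_t\in\mathcal E^1(X,\theta)$ for all $t$. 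The natural route is to prove formula~\eqref{eq: I_RWN_form} first in the case of rays with minimal singularity type, where $\{\psi_\tau\}_\tau$ is a bounded maximal test curve and everything is classical: there the integral $\int_{-\infty}^{\tau^+_\psi}(\int_X\theta^n_{\psi_\tau}-\int_X\theta^n_{V_\theta})\,\mathrm d\tau$ is finite automatically, and the identity with $\vol(\{\theta\})\,I_\theta\{\check\psi_t\}-\vol(\{\theta\})\tau^+_\psi$ follows by differentiating $I_\theta(\check\psi_t)$ in $t$, using the known linearity of $t\mapsto I_\theta(\check\psi_t)$ and the derivative formula $\frac{d}{dt}I_\theta(\check\psi_t)=\frac{1}{\vol}\int_X(\partial_t\check\psi_t)\,\theta^n_{\check\psi_t}$, then rewriting the $t$-derivative via the Legendre duality (the slope $\sup_X\check\psi_t/t$ from Remark~\ref{rem: sup_ray} accounts for the $+\tau^+_\psi$ term). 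Alternatively one can cite \cite[Theorem~1.3]{DDNL3} for this bounded case, as formula~\eqref{eq: I_RWN_form} in that setting is essentially contained there.

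Next I would pass from the bounded case to the general finite energy case via Proposition~\ref{prop: psh_ray_bounded_approx}. Given a psh geodesic ray $\{u_t\}_t$, take the approximating sequence $\{u^j_t\}_t$ of geodesic rays with minimal singularity, so $u^j_t\searrow u_t$ and $\theta^n_{\hat u^j_\tau}\to\theta^n_{\hat u_\tau}$ weakly, hence $\int_X\theta^n_{\hat u^j_\tau}\to\int_X\theta^n_{\hat u_\tau}$ for each $\tau$ (the total masses converge because the limiting measure has no mass at infinity; alternatively monotonicity of masses under $u^j_\tau\searrow u_\tau$ plus the weak convergence pins down the limit of the total masses). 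The functions $\tau\mapsto \int_X\theta^n_{\hat u^j_\tau}-\int_X\theta^n_{V_\theta}$ are nonpositive, nondecreasing in $\tau$, and decreasing in $j$, so monotone convergence gives
\[
\int_{-\infty}^{\tau^+_{\hat u}}\Big(\int_X\theta^n_{\hat u^j_\tau}-\int_X\theta^n_{V_\theta}\Big)\,\mathrm d\tau \;\longrightarrow\; \int_{-\infty}^{\tau^+_{\hat u}}\Big(\int_X\theta^n_{\hat u_\tau}-\int_X\theta^n_{V_\theta}\Big)\,\mathrm d\tau
\]
as $j\to\infty$ (both sides possibly $-\infty$). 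On the other hand, $I_\theta(u^j_t)\to I_\theta(u_t)$ for each fixed $t$ by the monotone-decreasing continuity of $I_\theta$, hence $I_\theta\{u^j_t\}\to I_\theta\{u_t\}$ provided the linear slopes converge — which they do, since for minimal-singularity rays $I_\theta\{u^j_t\}=I_\theta(u^j_t)/t$ for any fixed $t>0$ and one can interchange the limit in $j$ with division by $t$. Applying the already-established bounded-case formula to each $u^j$ and taking $j\to\infty$ yields~\eqref{eq: I_RWN_form} for $\{u_t\}_t$, and simultaneously shows that $\int_{-\infty}^{\tau^+_{\hat u}}(\int_X\theta^n_{\hat u_\tau}-\int_X\theta^n_{V_\theta})\,\mathrm d\tau>-\infty$ if and only if $I_\theta\{u_t\}>-\infty$, i.e. if and only if $u_t\in\mathcal E^1(X,\theta)$ (using that for a psh geodesic ray, finiteness of the radial energy is equivalent to $u_t\in\mathcal E^1$, which follows from the linearity of $t\mapsto I_\theta(u_t)$ together with the fact that $I_\theta$ is finite exactly on $\mathcal E^1$). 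This establishes both the bijection onto $\mathcal R^1(X,\theta)$ and the formula at once.

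The main obstacle I anticipate is the justification of the interchange of limits: specifically, showing that $\int_X\theta^n_{\hat u^j_\tau}\to\int_X\theta^n_{\hat u_\tau}$ as a convergence of \emph{total masses} (weak convergence of measures alone does not give this for free in the big setting, since mass can escape), and then showing that the $\tau$-integrals and the $t$-slope limits commute with $j\to\infty$. I expect the mass convergence to be handled exactly as in \cite[Theorem~4.5]{DL20} — indeed Proposition~\ref{prop: psh_ray_bounded_approx} is stated so as to supply precisely the weak convergence $\theta^n_{\hat u^j_\tau}\to\theta^n_{\hat u_\tau}$, and one combines it with the fact that the masses $\int_X\theta^n_{\hat u^j_\tau}$ are monotone in $j$ and bounded below by $\int_X\theta^n_{\hat u_\tau}$ (monotonicity of masses under increasing singularity, \cite[Theorem 1.2]{WN19}) to force equality in the limit. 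The $\tau$-integral interchange is then pure monotone convergence as indicated, and the slope interchange is elementary once one fixes a single value of $t$. The remaining bookkeeping — the translation normalization $\tau^+_{\hat u}=0$, the $+\tau^+_\psi$ additive term, and the endpoint behaviour of the $\tau$-integral near $\tau^+_\psi$ (where the integrand vanishes, $\hat u_\tau\to V_\theta$ being false but $\int_X\theta^n_{\hat u_\tau}\to\int_X\theta^n_{V_\theta}$ as $\tau\to-\infty$, while near $\tau^+_\psi$ the integrand is bounded) — is routine.
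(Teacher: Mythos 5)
Your reduction of the general case to the bounded case is essentially the paper's own argument: the paper likewise starts from Theorem \ref{thm: max_test_curve_ray_duality}(ii), approximates the psh geodesic ray by the minimal-singularity rays of Proposition \ref{prop: psh_ray_bounded_approx}, and passes to the limit using monotone convergence of the masses $\int_X\theta^n_{\hat u^j_\tau}\searrow\int_X\theta^n_{\hat u_\tau}$ together with $I_\theta(u^j_1)\to I_\theta(u_1)$, obtaining the bijection and \eqref{eq: I_RWN_form} simultaneously. (Your worry about mass escaping is moot: $X$ is compact, so the weak convergence supplied by Proposition \ref{prop: psh_ray_bounded_approx} already gives convergence of total masses; the nontrivial point is the weak convergence of the non-pluripolar products themselves, which that proposition provides.)

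The genuine gap is your treatment of the bounded case, which is where the real work of this theorem lies. The paper proves \eqref{eq: I_RWN_form} for bounded maximal test curves in Proposition \ref{prop: RWN_I_formula}, via the discretization $\check\psi^{N,M}_t$ and a plurifine-locality argument, precisely because no reference covers it in the big setting: \cite[Section~6]{RWN14} treats potentials with small unbounded locus, and \cite[Theorem~1.3]{DDNL3}, which you propose to cite, is invoked in this paper only for the duality statement of Theorem \ref{thm: max_test_curve_ray_duality}(iii), not for the energy formula. Your alternative differentiation argument is a heuristic rather than a proof: from $\frac{d}{dt}I_\theta(\check\psi_t)=\frac{1}{\vol(\{\theta\})}\int_X\dot{\check\psi}_t\,\theta^n_{\check\psi_t}$ one still has to identify the distribution of the velocity $\dot{\check\psi}_t$ under $\theta^n_{\check\psi_t}$ with the mass function $\tau\mapsto\int_X\theta^n_{\psi_\tau}$; ``rewriting the $t$-derivative via Legendre duality'' is exactly this identification, and it is where maximality enters (through \cite[Theorem~3.8]{DDNL2}, which shows $\theta^n_{\psi_\tau}$ is carried by $\{\psi_\tau=0\}$) together with locality of non-pluripolar products in the plurifine topology. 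Without an argument for this identity --- that is, without something equivalent to Proposition \ref{prop: RWN_I_formula} --- the proof is incomplete, since every subsequent step of your proposal (and of the paper's proof) rests on the bounded-case formula.
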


\begin{proof}
As previously, we may assume that $\tau_{\psi}^+=0$. As a preliminary result, in Proposition~\ref{prop: RWN_I_formula} below we prove \eqref{eq: I_RWN_form} for bounded maximal test curves.

Given a finite energy maximal test curve $\{\psi_\tau\}_\tau$, we know that $\{\check \psi_t\}_t$ is a psh geodesic ray. 
By Proposition \ref{prop: psh_ray_bounded_approx} above there exists geodesic rays $\{\check \psi^k_t\}_t$ with minimal singularity type such that $\check \psi^k_t \searrow \check \psi_t$ for any $t 
\geq 0$, and $\int_X \theta^n_{\psi^k_\tau } \searrow \int_X \theta^n_{\psi_\tau }$ for any $\tau < \tau^+_\psi = \tau^+_{\psi^k}=0$.    

By basic properties of the Monge--Amp\`ere energy we have that $I_\theta\{\psi^k_t\} = I_\theta(\psi^k_1) \to I_\theta(\psi_1)=I_\theta\{\psi_t\}.$ By Proposition~\ref{prop: RWN_I_formula} below
\[
I_\theta\{\check \psi^k_t\}=\frac{1}{\vol(\{\theta\})}\int_{-\infty}^{0} \left( \int_X \theta_{\psi^k_\tau}^n-\int_X \theta_{V_\theta}^n\right)\,\mathrm{d}\tau\,.
\]

The right hand side is bounded from below, since $\{\psi_\tau\}_\tau$ is a finite energy test curve.
Since $\int_X \theta^n_{\psi^k_\tau } \searrow \int_X \theta^n_{\psi_\tau }$, we can take the limit on both the left and right hand side, to arrive at \eqref{eq: I_RWN_form}, also implying that $\{\check \psi_t\}_t$ is a finite energy geodesic ray. 

Conversely, assume that $\{\phi_t\}_t$ is a finite energy geodesic ray, with decreasing approximating sequence of rays $\{\phi_t^k\}_t$, as detailed above. For similar reasons we have
$
I_\theta\{\phi^k_t\}=\frac{1}{\vol(\{\theta\})}\int_{-\infty}^{0} \left( \int_X \theta_{\hat \phi^k_\tau}^n-\int_X \theta_{V_\theta}^n\right)\,\mathrm{d}\tau\,.
$
Since ${I}\{\phi_t^k\} \searrow I_\theta\{\phi_t\}$,  the monotone convergence theorem gives that \eqref{eq: I_RWN_form} holds for $\{\hat \phi_\tau\}_{\tau}$, finishing the proof.
\end{proof}

As promised, to complete the argument of Theorem \ref{thm: max_test_curve_ray_duality}, we prove the next proposition, whose argument can be extracted from \cite[Section~6]{RWN14} with additional references to \cite{DDNL2}. We recall the precise details here as the results of \cite{RWN14} were proved in the context of potentials with small unbounded locus.

\begin{proposition}\label{prop: RWN_I_formula} Suppose that $\{\psi_\tau\}_\tau$ is a bounded maximal test curve with $\tau^+_\psi = 0$. Then
\begin{equation}\label{eq: I_bounded_formula_RWN}
\frac{I_\theta(\check \psi_t)}{t}=I_\theta\{\check \psi_t\}= \frac{1}{\vol(\{\theta\})}\int_{-\infty}^{0} \left( \int_X \theta_{\psi_\tau}^n-\int_X \theta^n_{V_\theta}\right)\,\mathrm{d}\tau\,, \quad t >0\,. 
\end{equation} 
\end{proposition}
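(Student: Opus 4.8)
The plan is to reduce everything to a Fubini-type computation for the Monge--Ampère energy of the Legendre transform, exploiting that for bounded maximal test curves all the relevant potentials have minimal (hence locally bounded on $\mathrm{Amp}(\theta)$) singularity type, so classical Bedford--Taylor calculus on the ample locus applies. First I would record that since $\{\psi_\tau\}_\tau$ is bounded, we have $\psi_\tau = V_\theta$ for $\tau \leq \tau_\psi^-$ and $\psi_\tau \equiv -\infty$ for $\tau \geq 0$, and by Theorem~\ref{thm: max_test_curve_ray_duality}(iii) the ray $\{\check\psi_t\}_t$ consists of potentials with minimal singularity type satisfying $V_\theta + \tau_\psi^- t \leq \check\psi_t \leq V_\theta$. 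In particular $\check\psi_t$ is locally bounded on $\mathrm{Amp}(\theta)$, and all Monge--Ampère measures below are honest Bedford--Taylor products on $\mathrm{Amp}(\theta)$, which carries full mass. The linearity $I_\theta(\check\psi_t)/t = I_\theta\{\check\psi_t\}$ is \cite[Theorem 3.12]{DDNL1} (the radial energy is well-defined and linear along finite-energy geodesics), so the content is the identification of this slope with the stated integral.

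Next I would compute $I_\theta(\check\psi_t)$ directly from the defining formula \eqref{eq: I_def}. Fix $t>0$. Using $\check\psi_t(x) = \sup_\tau(\psi_\tau(x) + t\tau)$ and writing $\check\psi_t - V_\theta = \int_{-\infty}^{0}\big(\text{indicator-type integrand}\big)\,\mathrm{d}\tau$ — more precisely, on $\mathrm{Amp}(\theta)$ one has the layer-cake identity $\check\psi_t(x) - V_\theta(x) = \int_{-\infty}^{0} t\,\mathbbm{1}_{\{\check\psi_t(x) > V_\theta(x)+t\tau\}}\,\mathrm{d}\tau$ after the substitution matching the sup with the test curve — I would substitute into each term $\int_X(\check\psi_t - V_\theta)\,\theta_{\check\psi_t}^j\wedge\theta_{V_\theta}^{n-j}$ and apply Fubini to pull the $\tau$-integral outside. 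The point is then to recognize that for each fixed $\tau$, the sublevel-set structure of $\check\psi_t$ relative to $V_\theta + t\tau$ is governed by $\psi_\tau$: one should get, via the Bedford--Taylor comparison/`orthogonality' relations for maximal test curves proved in \cite{DDNL2} (namely $\theta_{P[\psi_\tau]}^n \leq \mathbbm{1}_{\{P[\psi_\tau]=V_\theta\}}\theta_{V_\theta}^n$ together with $\int_X\theta_{\psi_\tau}^n = \int_X\theta_{P[\psi_\tau]}^n$), that the mixed Monge--Ampère masses telescope so that $\frac{1}{n+1}\sum_j \int_X \mathbbm{1}_{\{\check\psi_t > V_\theta + t\tau\}}\theta_{\check\psi_t}^j\wedge\theta_{V_\theta}^{n-j}$ collapses to $\int_X(\theta_{V_\theta}^n - \theta_{\psi_\tau}^n)$, up to the boundary contribution producing the $\tau_\psi^+ = 0$ term (which vanishes here). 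Dividing by $t\cdot\vol(\{\theta\})$ and combining the $\mathrm{d}\tau$ integrals yields \eqref{eq: I_bounded_formula_RWN}.

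The main obstacle I anticipate is justifying the collapse of the mixed-energy sum to the single term $\int_X(\theta_{V_\theta}^n - \theta_{\psi_\tau}^n)$: this is where one genuinely needs the maximality hypothesis $P[\psi_\tau] = \psi_\tau$ and the derivative-of-energy formula together with the Bedford--Taylor chain rule on $\mathrm{Amp}(\theta)$. Concretely, the cleanest route is probably to differentiate in $t$: the curve $t\mapsto I_\theta(\check\psi_t)$ is affine, so its derivative is the constant $\frac{d}{dt}I_\theta(\check\psi_t) = \frac{1}{\vol(\{\theta\})}\int_X \dot{\check\psi_t}\,\theta_{\check\psi_t}^n$, and $\dot{\check\psi_t}(x) = \tau(x)$ where $\tau(x)$ realizes the sup, i.e. $\check\psi_t = \psi_{\tau(x)} + t\tau(x)$; then $\int_X \dot{\check\psi_t}\,\theta_{\check\psi_t}^n$ is recast as a $\tau$-integral of $\int_X \mathbbm{1}_{\{\dot{\check\psi_t} > \tau\}}\theta_{\check\psi_t}^n = \int_X\big(\vol(\{\theta\}) - \theta_{\psi_\tau}^n\big)$, using that $\{\dot{\check\psi_t} \le \tau\}$ coincides (up to pluripolar sets, on the ample locus) with the contact set of $\psi_\tau$. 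I would lift the needed sublevel-set mass identity verbatim from \cite[Section 6]{RWN14}, checking that each step there survives the replacement of "small unbounded locus" by "minimal singularity type on $\mathrm{Amp}(\theta)$" — which it does, since that is exactly the setting in which \cite{DDNL2} reproves the relevant envelope and orthogonality statements. The rest is bookkeeping with Fubini and the substitution $\tau \leftrightarrow \tau_\psi^+ = 0$.
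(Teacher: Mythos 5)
Your high-level plan (work with minimal-singularity potentials on $\mathrm{Amp}(\theta)$, use the linearity of $t\mapsto I_\theta(\check\psi_t)$, and adapt \cite[Section 6]{RWN14} with the envelope/orthogonality results of \cite{DDNL2}) is indeed the paper's roadmap, but the crux is left unproved in your write-up, and the identities you offer in its place are incorrect as stated. First, since $\check\psi_t\leq V_\theta$ (recall $\tau_\psi^+=0$), the layer-cake formula you write diverges; the correct version is $\check\psi_t-V_\theta=-t\int_{-\infty}^0\mathbbm{1}_{\{\check\psi_t\leq V_\theta+t\tau\}}\,\mathrm{d}\tau$. Second, after this correction, the pointwise-in-$\tau$ ``collapse'' you would need, namely $\frac{1}{n+1}\sum_j\int_{\{\check\psi_t>V_\theta+t\tau\}}\theta_{\check\psi_t}^j\wedge\theta_{V_\theta}^{n-j}=\int_X\theta_{\psi_\tau}^n$, is (once integrated in $\tau$) just a reformulation of \eqref{eq: I_bounded_formula_RWN} itself, so invoking ``orthogonality'' to get it is circular; moreover the set $\{\check\psi_t>V_\theta+t\tau\}$ is governed by the average slope $(\check\psi_t-V_\theta)/t$, not by the contact structure of $\psi_\tau$, so there is no reason for a pointwise identity of this kind. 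Third, in your derivative route the mass identity is stated as its complement: the correct statement is $\theta_{\check\psi_t}^n\big(\{\dot{\check\psi}_t>\tau\}\big)=\int_X\theta_{\psi_\tau}^n$ for a.e.\ $\tau$, whereas your version $\theta_{\check\psi_t}^n\big(\{\dot{\check\psi}_t>\tau\}\big)=\vol(\{\theta\})-\int_X\theta_{\psi_\tau}^n$ fails already for the trivial curve $\psi_\tau\equiv V_\theta$, $\tau\leq 0$ (left side $=\vol(\{\theta\})$, right side $=0$), and with it your $\tau$-integral would diverge.

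More importantly, the claim that $\{\dot{\check\psi}_t\leq\tau\}$ coincides, up to pluripolar sets, with the contact set of $\psi_\tau$, and the resulting sublevel-set mass identity, are exactly the hard content here; they are not available in \cite{RWN14} in a form that can be ``lifted verbatim'' to general big classes. What \cite[Section 6]{RWN14} actually proves (for potentials with small unbounded locus), and what the paper redoes in the big setting, is the dyadic discretization $\check\psi^{N,M}_t=\max_{k\leq M}(\psi_{k/2^N}+tk/2^N)$ together with the two-sided sandwich $\frac{t}{2^N}\int_X\theta^n_{\psi_{(M+1)/2^N}}\leq I_\theta(\check\psi^{N,M+1}_t)-I_\theta(\check\psi^{N,M}_t)\leq\frac{t}{2^N}\int_X\theta^n_{\psi_{M/2^N}}$, obtained from \cite[Theorem 2.4(iii)]{DDNL3}, plurifine locality of the non-pluripolar product on the switching region (identified via the $\tau$-concavity of $\tau\mapsto\psi_\tau(x)+t\tau$), and maximality used through $\theta_{\psi_\tau}^n\leq\mathbbm{1}_{\{\psi_\tau=0\}}\theta^n$ from \cite{DDNL2}; one then passes to the Riemann-sum limit using the continuity of $\tau\mapsto\int_X\theta_{\psi_\tau}^n$. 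This adaptation from the small-unbounded-locus setting is precisely the content of the proposition, and it is the step missing from your proposal; without it (or an independent proof of the pushforward identity in the big setting, which would require essentially the same work), the argument does not close.
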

\begin{proof} Without loss of generality we assume that $\vol(\{\theta\})=1$. For $N\in \mathbb{Z}_+,M \in \mathbb{Z}$ and $t >0$, we introduce the following:
\begin{equation}\label{eq: Psi_MN_def}
\check{\psi}^{N,M}_t := \max_{\substack{k \in \mathbb{Z} \\ k \leq M}} \left(\psi_{k/2^N} +  tk/2^N\right)\,.
\end{equation}
It is clear that $\check \psi_t^{N,M}  \in \PSH(X, \theta)$ has minimal singularity type, since it is a maximum of a finite collection of $\theta$-psh potentials (indeed, $\{\psi_\tau\}_\tau$ is a bounded test curve). Following \cite[Lemma 6.6]{RWN14}, we now argue that
\begin{equation}\label{eq: diff_eq_I}
 \frac{t}{2^N} \int_X \theta^n_{\psi_{(M+1)/2^N}}\leq I_\theta(\check \psi_t^{N,M+1})-I_\theta(\check \psi_t^{N,M})\leq \frac{t}{2^N} \int_X \theta^n_{\psi_{M/2^N}}\,.
\end{equation}
Indeed, by \cite[Theorem 2.4(iii)]{DDNL3},
\begin{equation}\label{eq: first_I_ineq}
\int_X \left(\check{\psi}_t^{N,M+1}-\check{\psi}_t^{N,M}\right)\,\theta^n_{\check \psi_t^{N,M+1}}\leq I_\theta(\check{\psi}_t^{N,M+1})-I_\theta(\check{\psi}_t^{N,M})\leq \int_X \left(\check{\psi}_t^{N,M+1}-\check{\psi}_t^{N,M}\right)\,\theta^n_{\check \psi_t^{N,M}}\,.
\end{equation}
Clearly $\check{\psi}_t^{N,M+1} \geq \check{\psi}_t^{N,M}$. Using the $\tau$-concavity of $\tau \mapsto \psi_\tau$ we claim that 
\begin{flalign}\label{eq: set_equal}
 U_t := \left\{\,\check \psi^{N,M+1}_t-\check \psi^{N,M}_t>0\,\right\} \cap \textup{Amp}(\theta)= \left\{\psi_{(M+1)/2^N} +  \frac{t}{2^N} -\psi_{M/2^N}>0\right\} \cap \textup{Amp}(\theta).
\end{flalign}
Indeed, if $\psi^{N,M+1}_t(x)>\check \psi^{N,M}_t(x)$ for some $x \in \textup{Amp}(\theta)$, then the maximum defining $\psi^{N,M+1}_t$ (see \eqref{eq: Psi_MN_def}) is 'uniquely' realized for $k = M+1$, or equivalently,
\begin{equation}\label{eq: many_est}
\psi_{(M+1)/2^N}(x) +  t \frac{M+1}{2^N} > \psi_{k/2^N}(x)+  t \frac{k}{2^N}, \textup{ for all } k \leq M
\end{equation}
The inequality for $k=M$ is equivalent to 
\begin{equation}\label{eq: one_est}
\psi_{(M+1)/2^N}(x) +  \frac{t}{2^N} -\psi_{M/2^N}(x)>0,
\end{equation}
hence the set on the left hand side of \eqref{eq: set_equal} is contained on the right hand side. To argue conversely, by the above, we only have to prove that \eqref{eq: one_est} implies \eqref{eq: many_est}. This is where concavity of $\tau \mapsto u_\tau(x) + t \tau$ is used for some fixed $x \in \textup{Amp}(\theta)$ (c.f. \cite[Lemma 6.5]{RWN14}). More precisely,  \eqref{eq: one_est} implies that
\begin{equation}\label{eq: many_est_2}
\psi_{M/2^N}(x) +  t \frac{M}{2^N} > \psi_{k/2^N}(x)+  t \frac{k}{2^N}, \textup{ for all } k \leq {M-1}.
\end{equation}
Indeed, if for some $k \leq M-1$ the above inequality fails, then so does the $\tau$-concavity of $\tau \to \psi_\tau(x) + t \tau$, evaluated at the interior point $M/2^N$ of the interval $[k/2^{N} , (M+1)/2^{N}]$. This proves the claimed identity \eqref{eq: set_equal}. Moreover, \eqref{eq: many_est} and \eqref{eq: many_est_2} together imply that
\begin{equation}\label{eq: second_eq}
\check \psi_t^{N,M+1} = \psi_{(M+1)/2^N} + t (M+1)/2^{N}\,,\quad \check \psi_t^{N,M} = \psi_{M/2^N} + t M /2^{N}\, \ \  \textup{ on } U_t. 
\end{equation}
Since $U_t$ is an open set in the plurifine topology, we have $\theta^n_{\psi_{(M+1)/2^N}}\big|_{U_t}=\theta^n_{\check \psi^{N,M+1}_t}\big|_{U_t}$ and $\theta^n_{\psi_{M/2^N}}\big|_{U_t}=\theta^n_{\check \psi^{N,M}_t}\big|_{U_t}$ \cite{BT87} (see \cite[Lemma 2.1]{DDNL5}). 

Recall that the measure $\theta^n_{\psi_{(M+1)/2^N}}$ is supported on the set $\{\psi_{(M+1)/2^N} =0\}$, by \cite[Theorem~3.8]{DDNL2}.  Since  $\{\psi_{(M+1)/2^N} =0\} \subseteq U_t$ and $\{\psi_{(M+1)/2^N} =0\} \subset \{\psi_{M/2^N} =0\}$, the first inequality of \eqref{eq: first_I_ineq} implies the first inequality of \eqref{eq: diff_eq_I}:
\begin{flalign*}
\frac{t}{2^N} \int_X \theta^n_{\psi_{(M+1)/2^N}} & =\frac{t}{2^N} \int_{\{\psi_{(M+1)/2^N} =0\}} \theta^n_{\psi_{(M+1)/2^N}} \leq \int_{U_t}\left(\check{\psi}_t^{N,M+1}-\check{\psi}_t^{N,M}\right)\,\theta^n_{\check \psi_t^{N,M+1}} \\
& \leq I_\theta(\check \psi_t^{N,M+1})-I_\theta(\check \psi_t^{N,M}).
\end{flalign*}
We also have that
$$
\int_X\bigg(\check \psi_t^{N,M+1}-\check \psi_t^{N,M}\bigg)\theta^n_{\check \psi_t^{N,M}}=\int_{U_t}(\psi_{(M+1)/2^N}-\psi_{M/2^N}+\frac{t}{2^N})\theta^n_{\psi_{M/2^N}}\leq\frac{t}{2^N}\int_X\theta^n_{\psi_{M/2^N}}
$$
since $\psi_{(M+1)/2^N}\leq\psi_{M/2^N}$, which gives the second inequality of \eqref{eq: diff_eq_I}.

Fixing $N$, let $M = \lfloor 2^{N}\tau^{-}_\psi \rfloor \in \mathbb{Z}$. Repeated application of \eqref{eq: diff_eq_I} gives us
\[
\sum_{M+1 \leq j \leq 0} \frac{t}{2^N} \int_X \theta^n_{\psi_{j/2^N}}\leq I_\theta(\check \psi_t^{N,0})-I_\theta(\check \psi_t^{N,M})\leq \sum_{M \leq j \leq -1} \frac{t}{2^N} \int_X \theta^n_{\psi_{j/2^N}}\,.
\]
Since $M \leq 2^{N}\tau^-_\psi$ we have that $\check \psi^{N,M}_t = \psi_{M/2^N} + t M/2^N=V_\theta + t M/2^N$. As a result $I_\theta(\check \psi^{N,M}_t) = t M/2^N$, and we obtain
\[
\sum_{j=M+1}^0 \frac{t}{2^N} \left(\int_X \theta^n_{\psi_{j/2^N}} - \int_X \theta_{V_\theta}^n\right)\leq I_\theta(\check \psi_t^{N,0})\leq \sum_{j=M}^{-1} \frac{t}{2^N} \left(\int_X \theta^n_{\psi_{j/2^N}} - \int_X \theta_{V_\theta}^n\right)\,.
\]
We have Riemann sums on both the left and right of the above inequality. Using Lemma~\ref{lma:contimass} below, and the fact that $\check \psi^{N,0} \nearrow \check \psi_t$ a.e., it is possible to let $N \to \infty$ and obtain \eqref{eq: I_bounded_formula_RWN}, as desired.
\end{proof}

\begin{lemma}\label{lma:contimass}
Suppose that $\{\psi_\tau \}_{\tau}$ is a psh test curve. Then $\tau \mapsto \int_X \theta_{\psi_\tau}^n>0$ is a continuous function for $\tau \in (-\infty, \tau^+_\psi)$.
\end{lemma}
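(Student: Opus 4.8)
The plan is to show that $m(\tau):=\int_X\theta_{\psi_\tau}^n$, a priori valued in $[0,\vol(\{\theta\})]$, is non-increasing, has no jump on $(-\infty,\tau^+_\psi)$, and is strictly positive there. The first point is immediate: for $\tau<\tau'$ the test curve condition gives $\psi_{\tau'}\le\psi_\tau$, hence $\psi_{\tau'}\preceq\psi_\tau$ (take $C=0$ in the definition of $\preceq$), so $m(\tau')\le m(\tau)$ by the monotonicity of non-pluripolar mass \cite[Theorem 1.2]{WN19}. In particular the one-sided limits $m(\tau_0^{\pm})$ exist everywhere, and continuity on $(-\infty,\tau^+_\psi)$ amounts to ruling out jumps there. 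The one additional tool I would invoke is the mixed Monge--Amp\`ere (Brunn--Minkowski) inequality for non-pluripolar products: for $u,v\in\PSH(X,\theta)$ with $v\not\equiv-\infty$ and $\lambda\in(0,1]$ one has $\theta^n_{\lambda u+(1-\lambda)v}\geq\lambda^n\theta^n_u$ as positive measures --- this follows by expanding $(\lambda\,\ddc u+(1-\lambda)\,\ddc v)^n$ with bounded potentials and then using the locality of the non-pluripolar product (cf.\ \cite{BEGZ10,DDNL2}) --- and therefore $\int_X\theta^n_{\lambda u+(1-\lambda)v}\geq\lambda^n\int_X\theta^n_u$.

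For right-continuity at a fixed $\tau_0\in(-\infty,\tau^+_\psi)$, I would first choose an auxiliary $\tau_1\in(\tau_0,\tau^+_\psi)$, so that $\psi_{\tau_1}\not\equiv-\infty$. For $\tau\in(\tau_0,\tau_1)$, writing $\tau=\lambda\tau_0+(1-\lambda)\tau_1$ with $\lambda=\lambda(\tau)=\frac{\tau_1-\tau}{\tau_1-\tau_0}\in(0,1)$, concavity of $s\mapsto\psi_s(x)$ gives $\psi_\tau\geq\lambda\psi_{\tau_0}+(1-\lambda)\psi_{\tau_1}$, hence $\lambda\psi_{\tau_0}+(1-\lambda)\psi_{\tau_1}\preceq\psi_\tau$, and combining monotonicity of mass with the inequality above, $m(\tau)\geq\int_X\theta^n_{\lambda\psi_{\tau_0}+(1-\lambda)\psi_{\tau_1}}\geq\lambda(\tau)^n\,m(\tau_0)$. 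Since $\lambda(\tau)\to1$ as $\tau\searrow\tau_0$, this gives $\liminf_{\tau\searrow\tau_0}m(\tau)\geq m(\tau_0)$, which together with $m$ non-increasing yields $m(\tau_0^+)=m(\tau_0)$. The left-continuity argument is symmetric: with the same $\tau_1$, for $\tau<\tau_0$ write instead $\tau_0=\lambda(\tau)\tau+(1-\lambda(\tau))\tau_1$ with $\lambda(\tau)=\frac{\tau_1-\tau_0}{\tau_1-\tau}\in(0,1)$, so concavity gives $\psi_{\tau_0}\geq\lambda(\tau)\psi_\tau+(1-\lambda(\tau))\psi_{\tau_1}$, hence $m(\tau_0)\geq\lambda(\tau)^n\,m(\tau)$; letting $\tau\nearrow\tau_0$ makes $\lambda(\tau)\to1$, so $m(\tau_0)\geq m(\tau_0^-)$, and monotonicity again upgrades this to equality. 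Thus $m$ is continuous on $(-\infty,\tau^+_\psi)$.

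Finally, for strict positivity I would use that $\psi_\tau\nearrow V_\theta$ a.e.\ as $\tau\to-\infty$, so by continuity of non-pluripolar mass along increasing sequences (cf.\ \cite{WN19,DDNL2}) one gets $m(\tau)\to\int_X\theta^n_{V_\theta}=\vol(\{\theta\})>0$; fix $\tau_-$ with $m(\tau_-)>0$. Given any $\tau_0\in(-\infty,\tau^+_\psi)$, after replacing $\tau_-$ by a smaller value if needed we may assume $\tau_-<\tau_0$; choosing also $\tau_1\in(\tau_0,\tau^+_\psi)$ and writing $\tau_0=\lambda\tau_-+(1-\lambda)\tau_1$ with $\lambda\in(0,1)$, concavity and the mixed Monge--Amp\`ere inequality give $m(\tau_0)\geq\lambda^n\,m(\tau_-)>0$, as desired.

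I expect the only genuine difficulty to be isolating the right comparison. One cannot simply pass to the pointwise limit of $\psi_\tau$ as $\tau\to\tau_0$ and feed it into $m$: the limit from the right is a \emph{decreasing} limit, and non-pluripolar mass is not continuous along decreasing sequences (for very singular $\varphi\in\PSH(X,\theta)$ the truncations $\max(\varphi,-j)\searrow\varphi$ all have mass $\vol(\{\theta\})$, yet $\int_X\theta^n_\varphi$ may be strictly smaller). Interpolating instead through convex combinations anchored at a parameter $\tau_1<\tau^+_\psi$, where $\psi_{\tau_1}$ is an honest potential rather than $-\infty$, circumvents this, turning the $\tau$-concavity of the test curve into the one-sided mass estimates $m(\tau)\gtrsim\lambda(\tau)^n\,m(\tau_0)$ with $\lambda(\tau)\to1$ via Brunn--Minkowski.
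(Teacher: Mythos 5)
Your proof is correct and is essentially the argument the paper relies on: the paper omits the proof, deferring to the K\"ahler-case Lemma 3.9 of \cite{DX20}, whose argument is precisely this combination of mass monotonicity \cite[Theorem 1.2]{WN19}, the $\tau$-concavity of the test curve turned into the interpolation bound $\int_X\theta^n_{\lambda u+(1-\lambda)v}\geq\lambda^n\int_X\theta^n_u$ via multilinearity of non-pluripolar products, and convergence of masses along the increasing limit $\psi_\tau\nearrow V_\theta$ for positivity. No gaps to report.
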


The proof of this result will be omitted, as it is exactly the same as \cite[Lemma 3.9]{DX20}, that deals with the K\"ahler case.

\medskip 

Lastly, we recall the \emph{maximization} $\{v_t\}_t$ of a finite energy sublinear subgeodesic ray $\{u_t\}_t$, which goes back to \cite[Proposition 4.6]{DDNL3} and is determined by the formula:
\begin{equation}\label{eq: maximization_def}
\hat v_\tau:= P[\hat u_\tau], \ \ \ \tau < \tau^+_{\hat u},
\end{equation}
 $\hat v_{\tau^+_{\hat u}} = \lim_{\tau \nearrow \tau^+_u } \hat v_\tau$ and $\hat v_\tau = -\infty $ for $\tau > \tau^+_{\hat u}$.
By the lemma below $\{\hat v_\tau\}_\tau$ is a maximal finite energy test curve and by Theorem \ref{thm: max_test_curve_ray_duality} we immediately have that $\{v_t\}_t$ is the smallest geodesic ray, satisfying $v_t \geq u_t, \ t > 0$.

\begin{lemma} $\{\hat v_\tau\}_\tau$ constructed above is a maximal psh test curve.
\end{lemma}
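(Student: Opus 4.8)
The plan is to verify, one by one, the defining properties of a maximal psh test curve for $\{\hat v_\tau\}_\tau$, transferring each of them from $\{\hat u_\tau\}_\tau$ through elementary properties of the envelope operator $P_\theta[\cdot]$ (as used in the proof of \cite[Proposition 4.6]{DDNL3}). First I would record what is known about $\{\hat u_\tau\}_\tau$: since $\{u_t\}_t$ is a sublinear subgeodesic ray, Theorem \ref{thm: max_test_curve_ray_duality}(i) shows $\{\hat u_\tau\}_\tau$ is a psh test curve, so $\tau\mapsto \hat u_\tau(x)$ is concave, decreasing and usc, $\tau^+_{\hat u}<\infty$ (sublinearity $u_t\le Ct$ forces $\hat u_\tau\equiv-\infty$ for $\tau>C$), and $\hat u_\tau\nearrow V_\theta$ a.e. as $\tau\to-\infty$. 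I would also use the variational description of the envelope: $P_\theta[w]=P_\theta[w](V_\theta)$ is the largest $\theta$-psh function that is $\le V_\theta$ and satisfies $P_\theta[w]\preceq w$; this is immediate from $P_\theta[w](V_\theta)=\usc(\lim_{C\to+\infty}P_\theta(w+C,V_\theta))$ and monotonicity of $P_\theta(\cdot)$. In particular, for $\tau<\tau^+_{\hat u}$ the function $\hat u_\tau$ is itself $\theta$-psh and $\le V_\theta$, hence a competitor, so $\hat u_\tau\le \hat v_\tau=P_\theta[\hat u_\tau]\le V_\theta$.

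Next I would verify the axioms for $\{\hat v_\tau\}_\tau$. That each $\hat v_\tau$ lies in $\PSH(X,\theta)$ or is $\equiv-\infty$ is clear; $\hat v_\tau\equiv-\infty$ for $\tau>\tau^+_{\hat u}<\infty$ by construction, while $\hat v_\tau\ge\hat u_\tau$ gives conversely $\tau^+_{\hat v}=\tau^+_{\hat u}$. Monotonicity in $\tau$ follows since $\tau_1<\tau_2$ gives $\hat u_{\tau_2}\le\hat u_{\tau_1}$, hence $\hat u_{\tau_2}\preceq\hat u_{\tau_1}$, and $P_\theta[\cdot]$ is monotone for $\preceq$. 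For concavity, fix $\tau_1<\tau_2<\tau^+_{\hat u}$ and $\lambda\in(0,1)$, put $\tau=\lambda\tau_1+(1-\lambda)\tau_2$ and $w=\lambda\hat v_{\tau_1}+(1-\lambda)\hat v_{\tau_2}$; then $w$ is $\theta$-psh, $w\le V_\theta$, and from $\hat v_{\tau_i}\preceq\hat u_{\tau_i}$ together with the concavity of $\tau\mapsto\hat u_\tau$ we get $w\preceq\lambda\hat u_{\tau_1}+(1-\lambda)\hat u_{\tau_2}\preceq\hat u_\tau$, so the variational description yields $w\le P_\theta[\hat u_\tau]=\hat v_\tau$. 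This gives concavity on $(-\infty,\tau^+_{\hat u})$, which extends to $\tau^+_{\hat u}$ by the defining left-limit and, combined with that limit, also gives $\tau$-upper semicontinuity. Finally, $\hat u_\tau\le\hat v_\tau\le V_\theta$ and $\hat u_\tau\nearrow V_\theta$ a.e. force $\hat v_\tau\nearrow V_\theta$ a.e. as $\tau\to-\infty$.

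Maximality is then a formal consequence of the idempotency $P_\theta[P_\theta[w]]=P_\theta[w]$ (one inequality from $P_\theta[w]\preceq w$ and monotonicity, the other because $P_\theta[w]$ is a $\theta$-psh competitor $\le V_\theta$): for $\tau<\tau^+_{\hat u}$ we get $P_\theta[\hat v_\tau]=P_\theta[P_\theta[\hat u_\tau]]=P_\theta[\hat u_\tau]=\hat v_\tau$; the range $\tau>\tau^+_{\hat u}$ is handled by the convention $P[-\infty]=-\infty$, and the endpoint $\tau=\tau^+_{\hat u}$ by noting that a decreasing limit of functions fixed by $P_\theta[\cdot]$ is again fixed by $P_\theta[\cdot]$, by the same competitor argument. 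The only step that is not pure bookkeeping is the concavity of $\tau\mapsto P_\theta[\hat u_\tau]$; I expect this to be the main point, and the resolution is precisely to avoid trying to commute $P_\theta[\cdot]$ past the convex combination and instead use that it depends only on the singularity type together with its variational characterization.
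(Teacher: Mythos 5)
There is a genuine gap: your ``variational description'' of the envelope --- that $P_\theta[w]$ is the largest $\theta$-psh function $h\le V_\theta$ with $h\preceq w$, and in particular that $P_\theta[w]\preceq w$ --- is false, and both your concavity step and (more seriously) your maximality step rest on it. The envelope $P[w]$ is the usc-regularized supremum of the family $\{h\in\PSH(X,\theta):\ h\le V_\theta,\ h\preceq w\}$, but it need not belong to that family: by \cite[Theorem 1.3]{DDNL2} (used in Proposition \ref{prop:c-u=c-P-u}) one has $P[w]=V_\theta$ for every $w\in\mathcal E(X,\theta)$, so whenever $w$ has full mass but non-minimal singularities we get $P[w]=V_\theta\not\preceq w$. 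This situation occurs for Legendre transforms of perfectly ordinary subgeodesic rays: for $u_t=\max(V_\theta-t,\phi)$ with $\phi\in\mathcal E^1(X,\theta)$ not of minimal singularity type, one computes $\hat u_\tau=(1+\tau)\phi-\tau V_\theta$ for $\tau\in(-1,0)$, which is in $\mathcal E^1(X,\theta)$ but strictly more singular than $V_\theta=P[\hat u_\tau]$. Hence your inequality $\hat v_{\tau_i}\preceq\hat u_{\tau_i}$ fails, and the direction $P[P[w]]\le P[w]$ of your ``formal idempotency'' --- which is the entire content of maximality, i.e.\ that $P[w]$ is a model potential --- is exactly the one you deduce from the false premise. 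This is not a formal fact: the paper obtains it only after proving that $\int_X\theta^n_{\hat v_\tau}>0$ for every $\tau<\tau^+_{\hat u}$ (using $\hat v_\tau\nearrow V_\theta$ together with \cite[Theorem 2.1]{DDNL2}, then propagating positivity via $\tau$-concavity, \cite[Theorem 1.2]{WN19} and multilinearity of non-pluripolar products), and then invoking \cite[Lemma 4.7]{DDNL3}. That mass-positivity input is what your argument is missing.

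Two further remarks. Concavity can be repaired without the false lemma by working at finite level: $\lambda P_\theta(\hat u_{\tau_1}+C,V_\theta)+(1-\lambda)P_\theta(\hat u_{\tau_2}+C,V_\theta)$ is $\theta$-psh, lies below $V_\theta$ and below $\hat u_{\lambda\tau_1+(1-\lambda)\tau_2}+C$ by concavity of $\tau\mapsto\hat u_\tau$, hence lies below $P_\theta(\hat u_{\lambda\tau_1+(1-\lambda)\tau_2}+C,V_\theta)$; letting $C\to\infty$ gives the claim --- this is what ``by the definition of $P[\cdot]$'' means in the paper. Also, $\tau$-upper semicontinuity does not follow from concavity plus the endpoint convention, as you assert: a decreasing, concave curve with values in $[-\infty,\infty)$ can jump down at the right endpoint of its finiteness interval, which may well be an interior value of $\tau$ (e.g.\ $g=0$ for $\tau<0$, $g(0)=-5$, $g=-\infty$ afterwards); the paper appeals to the proof of \cite[Lemma 4.3]{DDNL3} for this point. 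Your treatment of the endpoint $\tau=\tau^+_{\hat u}$ (a decreasing limit of potentials fixed by $P[\cdot]$ is again fixed by $P[\cdot]$) is correct and matches the paper, but it presupposes the interior case, which is where the gap lies.
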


\begin{proof} 
The argument is very similar to  \cite[Proposition 4.6]{DDNL3}, so we will be again brief. By the definition of $P[\cdot]$ we see that $\tau \to \hat v_\tau$ is $\tau$-concave. By the proof of \cite[Lemma 4.3]{DDNL3} we see that $\tau \to v_\tau(x)$ is $\tau$-usc for any $x \in X$.

It remains to argue the maximality of $\{\hat v_\tau\}_\tau$. To start, we fix $\tau_1 \in (-\infty,\tau^+_{\hat u})$.
Since $v_\tau \nearrow V_\theta$ as $\tau \to -\infty$, by \cite[Theorem 2.1]{DDNL2} we have that $\int_X \theta_{v_{\tau_0}}^n > 0$ for some $\tau_0 < \tau_1$.
There exists $\alpha \in (0,1)$ such that $\tau_1 = \alpha \tau_0 + (1-\alpha) \tau^+_{\hat u}$. By $\tau$-concavity, we have that
$$\hat v_{\tau_1} \geq \alpha \hat v_{\tau_0} + (1-\alpha) \hat v_{\tau^+_{\hat u}}.$$
By \cite[Theorem 1.2]{WN19} and multilinearity of non-pluripolar products we obtain 
$\int_X \theta_{\hat v_{\tau_1}}^n \geq \alpha^n \int_X \theta_{\hat v_{\tau_0}}^n > 0$. As a result, we can apply \cite[Lemma 4.7]{DDNL3} to conclude that $\hat v_{\tau_1} = P[\hat v_{\tau_1}]$.
Lastly, we address maximality in the case $\tau := \tau^+_{\hat u}$. If $s < \tau=\tau^+_{\hat u}$, then by the above we can write
$P[\hat v_\tau] \leq P[\hat v_s] = \hat v_s.$ 
Letting $s \nearrow \tau = \tau^+_{\hat u}$,  we obtain that $P[\hat v_\tau] \leq \hat v_\tau$. Since the reverse inequality is trivial, we get $P[v_\tau] = v_\tau$, finishing the proof.
\end{proof}

Finally, we note the following identity for the radial  Monge--Amp\`ere energy (recall \eqref{eq: I-rad-subgeodesic}) of a finite energy subgeodesic and its maximization. It implies that the Legendre transform $\{\hat u_\tau\}_\tau$ of a finite energy sublinear subgeodesic ray $\{u_t\}_t$ is a finite energy test curve (recall Definition \ref{def: test_curves}(ii)). The reverse correspondence however might not be true.

\begin{proposition}\label{prop: I-rad_identity} Let $\{u_t\}_t$ be a finite energy sublinear subgeodesic ray and $\{v_t\}_t$ be its maximization constructed in \eqref{eq: maximization_def}. Then we have
\begin{equation}
\label{eq:radial-I-formula-subgeo-rays}
    I_\theta\{v_t\} = I_\theta\{u_t\} = \frac{1}{\vol(\{\theta\})}\int_{-\infty}^{\tau^+_{\hat u}} \left(\int_X \theta_{\hat u_\tau}^n-\int_X \theta_{V_\theta}^n \right) \,\mathrm{d}\tau+\tau_{\hat u}^+.
\end{equation}
\end{proposition}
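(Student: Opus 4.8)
The plan is to realise the maximization $\{v_t\}$ as an increasing limit of geodesic segments dominating $\{u_t\}$, deduce from this that $\{v_t\}$ is a finite energy geodesic ray with $I_\theta\{v_t\}=I_\theta\{u_t\}$, and then feed this into Theorem~\ref{thm: max_fin_en_test_curve_ray_duality} to obtain the integral formula. After a translation we may assume $\tau^+_{\hat u}=0$. By the lemma just proved $\{\hat v_\tau\}_\tau$ is a maximal psh test curve, so $\{v_t\}_t=\{\check{\hat v}_t\}_t$ is a psh geodesic ray (Theorem~\ref{thm: max_test_curve_ray_duality}(ii)), and, as recalled above, it is the smallest psh geodesic ray with $v_t\ge u_t$ on $(0,\infty)$.

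For $0<a<S$ let $\{v^{a,S}_t\}_{t\in[a,S]}$ be the geodesic segment joining $u_a$ and $u_S$. Since $t\mapsto u_t$ restricted to $[a,S]$ is one of the competing subgeodesics defining $v^{a,S}$, we have $v^{a,S}_t\ge u_t$; since $u_a\le v_a$ and $u_S\le v_S$, the comparison principle for geodesic segments (immediate from their sup-definition) gives $v^{a,S}_t\le v_t$, and likewise $v^{a,S}_t$ is non-decreasing in $S$ and as $a\downarrow0$. Hence $w_t:=\sup_{a,S}v^{a,S}_t$ exists with $u_t\le w_t\le v_t$; being an increasing and locally bounded limit of geodesic segments on exhausting intervals with $w_t\to_{L^1}V_\theta$ as $t\to0$, it is a psh geodesic ray (standard, cf.\ \cite{DDNL3}), so minimality of $\{v_t\}$ forces $w_t=v_t$, i.e.\ $v^{a,S}_t\nearrow v_t$. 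Now fix $t>0$. As $v^{a,S}_\cdot$ is a geodesic segment with endpoint values $u_a,u_S\in\mathcal E^1(X,\theta)$, its energy is affine on $[a,S]$:
$$I_\theta(v^{a,S}_t)=\frac{S-t}{S-a}\,I_\theta(u_a)+\frac{t-a}{S-a}\,I_\theta(u_S).$$
Convexity of $s\mapsto I_\theta(u_s)$ makes $I_\theta\{u_t\}=\lim_{s\to\infty}I_\theta(u_s)/s$ (recall \eqref{eq: I-rad-subgeodesic}) well defined, and finite since $I_\theta(u_s)\le\sup_X(u_s-V_\theta)\le Cs$; upper semicontinuity of $I_\theta$ gives $\limsup_{a\to0}I_\theta(u_a)\le I_\theta(V_\theta)=0$. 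From the displayed formula, $I_\theta(v^{a,S}_t)$ is bounded above uniformly in $(a,S)$. Since $v^{a,S}_t\in\mathcal E^1(X,\theta)$ (a geodesic segment between finite-energy endpoints) and $v^{a,S}_t\nearrow v_t$ with uniformly bounded energy, the net converges in $d_1$ (a standard property of $(\mathcal E^1,d_1)$), whence $v_t\in\mathcal E^1(X,\theta)$ and $I_\theta(v^{a,S}_t)\to I_\theta(v_t)$. Letting $S\to\infty$ then $a\to0$ in the displayed formula gives $I_\theta(v_t)=\ell+t\,I_\theta\{u_t\}$, where $\ell:=\lim_{a\to0}I_\theta(u_a)$ exists and $\ell\le0$; but $v_t\ge u_t$ forces $I_\theta\{v_t\}=I_\theta(v_t)/t\ge I_\theta\{u_t\}$, so $\ell\ge0$. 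Hence $\ell=0$, $I_\theta\{v_t\}=I_\theta\{u_t\}$, and $\{v_t\}_t$ is a finite energy geodesic ray.

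It remains to apply Theorem~\ref{thm: max_fin_en_test_curve_ray_duality} to $\{v_t\}=\{\check{\hat v}_t\}$: $\{\hat v_\tau\}_\tau=\{P[\hat u_\tau]\}_\tau$ is a maximal finite energy test curve and
$$I_\theta\{v_t\}=\frac{1}{\vol(\{\theta\})}\int_{-\infty}^{0}\Big(\int_X\theta_{\hat v_\tau}^n-\int_X\theta_{V_\theta}^n\Big)\,\mathrm d\tau.$$
Because $\int_X\theta_{\hat v_\tau}^n=\int_X\theta_{P[\hat u_\tau]}^n=\int_X\theta_{\hat u_\tau}^n$ for every $\tau$ by \cite[Proposition 2.3 and Remark 2.5]{DDNL2} (both sides vanishing for $\tau\ge0$, with the convention $P[-\infty]=-\infty$), the right-hand side is unchanged upon replacing $\theta_{\hat v_\tau}^n$ by $\theta_{\hat u_\tau}^n$; together with $I_\theta\{v_t\}=I_\theta\{u_t\}$ and the translation back to general $\tau^+_{\hat u}$, this is precisely \eqref{eq:radial-I-formula-subgeo-rays}, and in particular $\{\hat u_\tau\}_\tau$ is a finite energy test curve.

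The crux is the second paragraph: identifying $\{v_t\}$ with the increasing limit of the geodesic segments $v^{a,S}$ — which couples the comparison principle for geodesic segments with the minimality characterisation of the maximization — and then the limiting argument for the energies, where one must control simultaneously $I_\theta(u_a)$ as $a\to0$, the slopes $I_\theta(u_S)/S$ as $S\to\infty$, and the $d_1$-continuity of $I_\theta$ along the increasing limit; the final self-improvement $\ell=0$, forced by $v_t\ge u_t$, is what ultimately pins down $I_\theta\{v_t\}=I_\theta\{u_t\}$.
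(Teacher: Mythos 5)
Your proposal is correct and follows essentially the same strategy as the paper's proof: you establish $I_\theta\{v_t\}=I_\theta\{u_t\}$ by realizing the maximization as the increasing limit of finite-energy geodesic segments built from $\{u_t\}$ (the paper joins $V_\theta$ to $u_k$ on $[0,k]$ and extends by $u_t$, while you join $u_a$ to $u_S$ and run a double limit with the self-improvement $\ell=0$), and then you apply Theorem \ref{thm: max_fin_en_test_curve_ray_duality} together with the mass identity $\int_X\theta^n_{P[\hat u_\tau]}=\int_X\theta^n_{\hat u_\tau}$, exactly as in the paper. The differences are only in the bookkeeping of the approximating segments, and both arguments rest on the same standard inputs (monotone stability of geodesics, linearity of $I_\theta$ through the origin along the geodesic ray).
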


\begin{proof} That $I_\theta\{v_t\} = \frac{1}{\vol(\{\theta\})}\int_{-\infty}^{\tau^+_{\hat u}} \left(\int_X \theta_{\hat u_\tau}^n-\int_X \theta_{V_\theta}^n \right) \,\mathrm{d}\tau+\tau_{\hat u}^+$ follows from \eqref{eq: I_RWN_form} and the fact that $\int_X \theta_{\hat v_\tau}^n = \int_X \theta_{P[\hat u_\tau]}^n = \int_X \theta_{\hat u_\tau}^n$ (\cite[Proposition 2.3 and Remark 2.5]{DDNL2}).

To finish the proof, we utilize an alternative construction for the maximization $\{v_t\}_t$. Namely, for $k \in \mathbb{N}$, let $\{v^k_t\}_t$ be the sublinear subgeodesic ray such that $v^k_t = u_t$ for $t \geq k$ and $[0,k] \ni t \to v^k_t \in \mathcal E^1(X,\theta)$ is the finite energy geodesic segment joining $V_\theta$ and $u_k$.
By the comparison principle we obtain that $\{v^k_t\}_t$ is indeed a sublinear subgeodesic ray, moreover $v_t\geq v^k_t \geq u_t$. Since the $k$-limit of $\{v^k_t\}_t$ is the smallest finite energy geodesic ray dominating $\{u_t\}_t$, we obtain that $v^k_t \nearrow v_t$.
Hence 
$I_\theta\{v_t\}\geq I_\theta\{u_t\}=\lim_{k}I_\theta\{v^k_t\}\geq\lim_{k}I_\theta(v^k_1)=I_\theta(v_1)=I_\theta\{v_t\}$,
where we used $I_\theta\{u_t\}=I_\theta\{v^k_t\}$ for all $k$,
$t \mapsto I_\theta(v^k_t)$ is convex and $t \mapsto I_\theta(v_t)$ is linear. So  $I_\theta\{v_t\}=I_\theta\{u_t\}$, finishing the proof.
\end{proof}

\section{Delta invariant and geodesic semistability}
\label{sec:delta=geodesic-stability}

In this section we prove Theorem \ref{mthm: main_radial_ding} and Theorem \ref{thm:delta=sup-D>0}. Let $\mu:=e^{\chi-\psi}$ be the tame measure defined in \eqref{eq:def-mu} and fix $\lambda\in(0,c_\mu[V_\theta])$. Here we recall that $\chi, \psi$ are qpsh functions on $X$ with $\chi$ assumed to have analytic singularity type.

Our first result gives a precise formula for the slope of the $\lambda$-Ding functional (see \eqref{def: twisted_Ding}) along subgeodesic rays, rooted in the ideas of \cite[Section 4]{DX20}:

\begin{theorem}[=Theorem \ref{mthm: main_radial_ding}]
\label{thm: radial_ding} Let $\{u_t\}_t \subset \mathcal E^1(X,\theta)$ be a sublinear subgeodesic ray. Then
\begin{flalign*}
\liminf_{t \to \infty} \frac{\mathcal  D_\mu^\lambda(u_t)}{t} = -I_\theta\{u_t\} + \sup\{\tau  :  \int_X e^{-\lambda \hat u_\tau } d \mu <\infty\} = -I_\theta\{u_t\} + \sup\{\tau  :  c_{\mu}[\hat u_\tau] \geq \lambda\}.
\end{flalign*} 
\end{theorem}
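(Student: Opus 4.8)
The plan is to separate the term $I_\theta(u_t)$, which behaves routinely, from the logarithmic integral term, which carries all the content. Write $L(t):=-\tfrac1\lambda\log\int_X e^{-\lambda u_t}\,d\mu$ and $\tau^\star:=\sup\{\tau\in\RR:\int_X e^{-\lambda\hat u_\tau}\,d\mu<\infty\}$. Since $u_t\in\mathcal E^1\subset\mathcal E$, Proposition~\ref{prop:c-u=c-P-u} gives $c_\mu[u_t]=c_\mu[V_\theta]>\lambda$, so every $\int_X e^{-\lambda u_t}\,d\mu$ is finite and $L(t)\in\RR$. As $t\mapsto I_\theta(u_t)$ is convex with asymptotic slope $I_\theta\{u_t\}$ (recall \eqref{eq: I-rad-subgeodesic}), we have $I_\theta(u_t)/t\to I_\theta\{u_t\}$, so the theorem reduces to proving $\liminf_{t\to\infty}L(t)/t=\tau^\star$. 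Moreover $\tau\mapsto c_\mu[\hat u_\tau]$ is decreasing (since $\hat u_\tau$ decreases in $\tau$) and continuous by Lemma~\ref{lem:1/c-convex}, so $\{\tau:c_\mu[\hat u_\tau]\ge\lambda\}$ and $\{\tau:\int_X e^{-\lambda\hat u_\tau}\,d\mu<\infty\}$ differ by at most one endpoint and have the same supremum $\tau^\star$; this gives the last equality in the statement.

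For the lower bound $\liminf_t L(t)/t\ge\tau^\star$ I would use the elementary inequality $u_t\ge\hat u_\tau+t\tau$, coming from $\hat u_\tau=\inf_{s>0}(u_s-s\tau)$: it yields $\int_X e^{-\lambda u_t}\,d\mu\le e^{-\lambda t\tau}\int_X e^{-\lambda\hat u_\tau}\,d\mu$, hence $L(t)\ge t\tau-\tfrac1\lambda\log\int_X e^{-\lambda\hat u_\tau}\,d\mu$, an affine function of $t$ with slope $\tau$ and finite constant term (for $\tau<\tau^\star$ the integral is finite, and it is $\ge\mu(X)>0$ because $\hat u_\tau\le0$). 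Dividing by $t$, letting $t\to\infty$ and then $\tau\nearrow\tau^\star$ gives the bound.

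The upper bound $\liminf_t L(t)/t\le\tau^\star$ is the crux, and I would reach it by two reductions followed by a core estimate. First, replace $\{u_t\}_t$ by its maximization $\{v_t\}_t$ from \eqref{eq: maximization_def}: this is a finite energy geodesic ray with $v_t\ge u_t$, so $L_u(t)\le L_v(t)$ and $\liminf_t L_u(t)/t\le\liminf_t L_v(t)/t$; and $\hat v_\tau=P[\hat u_\tau]$ gives $c_\mu[\hat v_\tau]=c_\mu[\hat u_\tau]$ by Proposition~\ref{prop:c-u=c-P-u}, so $\tau^\star$ is unchanged. Combined with the lower bound for $\{u_t\}_t$ it suffices to show $\liminf_t L_v(t)/t\le\tau^\star$. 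Second, use Proposition~\ref{prop: psh_ray_bounded_approx} to pick geodesic rays $\{v^k_t\}_t$ of minimal singularity type with $v^k_t\searrow v_t$ and $\hat v^k_\tau\searrow\hat v_\tau$; then $\liminf_t L_v(t)/t\le\inf_k\liminf_t L_{v^k}(t)/t$, as $v^k_t\ge v_t$. Writing $\hat v^k_\tau=P[(1-s_k)\hat v_\tau+s_k V_\theta]$ with $s_k\to 0$ as $k\to\infty$ (for fixed $\tau$), the additivity of Lelong numbers under convex combinations together with Theorem~\ref{thm:c=inf-A/S} and Proposition~\ref{prop:c-u=c-P-u} yields $c_\mu[\hat v_\tau]\le c_\mu[\hat v^k_\tau]\le(1-s_k)^{-1}c_\mu[\hat v_\tau]$, so $c_\mu[\hat v^k_\tau]\to c_\mu[\hat v_\tau]$ and hence the thresholds $\tau^\star_{v^k}$ decrease to $\tau^\star$. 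Thus it remains only to prove $\liminf_t L_w(t)/t\le\tau^\star_\psi$ for a minimal singularity geodesic ray $\{w_t\}_t=\{\check\psi_t\}_t$ dual, via Theorem~\ref{thm: max_test_curve_ray_duality}(iii), to a bounded maximal test curve $\{\psi_\tau\}_\tau$.

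For this last step I would follow the discretization from the proof of Proposition~\ref{prop: RWN_I_formula}: with $w^{N,M}_t=\max_{k\le M}(\psi_{k/2^N}+tk/2^N)\nearrow w_t$ and the plurifine-open sets on which $w^{N,M}_t$ equals a single branch $\psi_{k/2^N}+tk/2^N$ (where $e^{-\lambda w^{N,M}_t}=e^{-\lambda tk/2^N}e^{-\lambda\psi_{k/2^N}}$), one fixes $\tau<\tau^\star_\psi$, for which $0<\int_X e^{-\lambda\psi_\tau}\,d\mu<\infty$, and uses that $\theta^n_{\psi_\tau}$ is carried by the set where $\psi_\tau=V_\theta$ and that $\tau\mapsto\int_X\theta^n_{\psi_\tau}$ is continuous (Lemma~\ref{lma:contimass}) in order to produce, for a cofinal set of times $t$, a constant $c(\tau)>0$ with $\int_X e^{-\lambda w_t}\,d\mu\ge c(\tau)e^{-\lambda t\tau}$, that is $L_w(t)\le t\tau-\tfrac1\lambda\log c(\tau)$; letting $\tau\nearrow\tau^\star_\psi$ then finishes the argument. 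I expect this estimate to be the main obstacle: every elementary inequality (stemming from $w_t\ge\psi_\tau+t\tau$) controls $\int_X e^{-\lambda w_t}\,d\mu$ only from above, i.e.\ bounds $L_w(t)$ from below, whereas here one needs a genuine lower bound on $\int_X e^{-\lambda w_t}\,d\mu$, equivalently the fact that for suitable $t$ the inverse Legendre transform $w_t$ does not exceed $\psi_\tau+t\tau$ on a set of positive $\mu$-mass; establishing this is where the fine structure of the Ross--Witt Nystr\"om correspondence developed in Section~\ref{sec:rwncorre} enters.
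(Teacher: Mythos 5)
Your lower bound, the identification of the two suprema, and the two reductions (passing to the maximization $\{v_t\}_t$ via $L_u\le L_v$ and $c_\mu[\hat v_\tau]=c_\mu[P[\hat u_\tau]]=c_\mu[\hat u_\tau]$, then approximating by minimal-singularity rays $\{v^k_t\}_t$ with $\tau^\star_{v^k}\searrow\tau^\star_v$) are fine. The problem is the ``core estimate'' you reduce everything to, and it is a genuine gap in two ways. First, as stated it is false: you claim that for a fixed $\tau<\tau^\star_\psi$ there is $c(\tau)>0$ with $\int_X e^{-\lambda w_t}\,\mathrm{d}\mu\ge c(\tau)e^{-\lambda t\tau}$ for a cofinal set of $t$. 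But for any $\tau'\in(\tau,\tau^\star_\psi)$ the elementary inequality $w_t\ge\psi_{\tau'}+t\tau'$ gives $\int_X e^{-\lambda w_t}\,\mathrm{d}\mu\le e^{-\lambda t\tau'}\int_X e^{-\lambda\psi_{\tau'}}\,\mathrm{d}\mu$, and comparing the two bounds forces $c(\tau)\le C_{\tau'}e^{-\lambda t(\tau'-\tau)}\to0$, a contradiction; equivalently, your estimate would yield $\liminf_t L_w(t)/t\le\tau<\tau^\star_\psi$, contradicting the lower bound you have already proved. The statement you actually need is on the other side of $\tau^\star_\psi$: for every $\varepsilon>0$ one must show $L_w(t)\le t(\tau^\star_\psi+\varepsilon)+C_\varepsilon$ along arbitrarily large times, i.e.\ that the integral cannot decay faster than every rate $e^{-\lambda t(\tau^\star_\psi+\varepsilon)}$, exploiting that $\int_X e^{-\lambda\psi_\tau}\,\mathrm{d}\mu=\infty$ for $\tau>\tau^\star_\psi$. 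Second, even in this corrected form you give no proof: the discretization of Proposition \ref{prop: RWN_I_formula}, the support property $\theta^n_{\psi_\tau}\le\mathbbm{1}_{\{\psi_\tau=V_\theta\}}\theta^n$ and Lemma \ref{lma:contimass} concern Monge--Amp\`ere masses, not the $\mu$-mass of sublevel-type sets, and you yourself flag the needed lower bound on $\int_X e^{-\lambda w_t}\,\mathrm{d}\mu$ as the main obstacle. Since your reductions merely bring the theorem down to a special case of itself (minimal-singularity rays), the crux of the proof is missing.

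For comparison, the paper proves the hard inequality directly for an arbitrary sublinear subgeodesic ray, with no maximization, approximation or discretization. After normalizing so that $\tau^+_{\hat u}=0$ (which makes $t\mapsto u_t$ decreasing), one fixes $p<\mathcal L^\lambda_\mu\{u_t\}$; the definition of the liminf gives $\int_X e^{-\lambda u_t}\,\mathrm{d}\mu\le e^{-(p+\varepsilon)\lambda t}$ for large $t$, hence $\int_0^\infty e^{p\lambda t}\int_X e^{-\lambda u_t}\,\mathrm{d}\mu\,\mathrm{d}t<\infty$. Fubini--Tonelli swaps the integrals, and for a.e.\ $x$ the $t$-monotonicity of $u_t(x)$ yields the pointwise bound $\int_0^\infty e^{\lambda(pt-u_t(x))}\,\mathrm{d}t\ge e^{\lambda(p-1)}e^{-\lambda\hat u_p(x)}$, so that $\int_X e^{-\lambda\hat u_p}\,\mathrm{d}\mu<\infty$ and $p\le\tau^\star$. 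This integration-in-$t$ argument is precisely the mechanism that converts the asymptotic decay hypothesis into the integrability of $\hat u_p$, i.e.\ the step your proposal leaves open; you would either need to adopt it (it applies verbatim to your minimal-singularity rays, making the reductions unnecessary) or supply a genuinely different proof of the corrected core estimate.
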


\begin{proof}
By basic properties of the Legendre transform, $\frac{\sup_X (u_t - V_\theta)}{t}  \nearrow \tau^+_{\hat u}$, as $t \to \infty$. Hence, after adding a $t$-linear term to $u_t$ we can assume that $\frac{\sup_X (u_t - V_\theta)}{t} \nearrow \tau^+_{\hat u} = 0$. In particular, $t \mapsto u_t$ is $t$-decreasing.

Comparing \eqref{eq: I-rad-subgeodesic} and the definition $\mathcal D^\lambda_\mu$, it is enough to investigate the following radial functional:
\begin{equation}
    \label{eq:def-L}
    \mathcal L^\lambda_\mu\{u_t\}:=\liminf_{t \to \infty} \frac{-1}{\lambda t }\log \int_X e^{-\lambda u_t} \mathrm{d}\mu. 
\end{equation}
It amounts to showing that
\
$$\mathcal L^\lambda_\mu\{u_t\}= \tau_D := \sup\{\tau \ : \  \int_X e^{-\lambda \hat u_\tau} \mathrm{d}\mu <\infty\}.$$ 
We note that $\tau_D>-\infty$ by Theorem \ref{thm: openness}, since $\hat{u}_\tau\nearrow V_\theta$ in $L^1$ as $\tau\rightarrow-\infty$ and $\int_Xe^{-\lambda V_\theta}\mathrm{d}\mu<\infty$ by our assumption on $\lambda$.

Let $\tau<\tau_D$ and $C:=\int_X e^{-\lambda \hat{u}_{\tau}}\, d\mu<\infty$. For any $t\geq 0$ we have $\hat{u}_{\tau}\leq u_t-\tau t$, so $C\geq \int_X e^{-\lambda(u_t-\tau t)}\,\mathrm{d}\mu$. As a result, $\mathcal L^\lambda_\mu\{u_t\} \geq \tau$, hence
\[
\mathcal L^\lambda_\mu\{u_t\} \geq \tau_D\,.
\]
Now we prove the reverse inequality. We fix $p< L^\lambda_\mu\{u_t\}$ and $\epsilon >0$ satisfying $p+\varepsilon <  \mathcal L^\lambda_\mu\{u_t\}$. We can find $t_0>0$ such that 
\[
 \int_X e^{-\lambda u_t} d\mu <e^{-(p+\varepsilon)\lambda t}\,,\quad t \geq t_0\,.
\]
Hence $\int_0^{\infty}e^{p\lambda t}\int_X e^{-\lambda u_t} d\mu \,\mathrm{d}t<\infty$.
By Fubini--Tonelli this is equivalent to
\begin{equation}\label{eq:finiteness}
\int_X \int_0^{\infty} e^{\lambda (p t-u_t)} \mathrm{d}t\,\mathrm{d}\mu<\infty\,.
\end{equation}

Before proceeding further, we notice that the following estimate holds, using the fact that $u_t \leq V_\theta + t \sup_X (u_t - V_\theta) \leq V_\theta \leq 0$:
\[
p < \mathcal L^\lambda_\mu\{u_t\} \leq 0. \]
As a result, $\hat  u_{p} = \inf (u_t - p t)$ is not identically equal to $-\infty$, since $\tau_u^+=0$. Let $x \in X$ such that $\hat u_{p}(x)$, $\chi(x)$ and $\psi(x)$ are finite. 
By definition of $\hat{u}_{p}$, we can find $t_0>0$ so that $\hat u_{p}(x)+1 \geq  u_{t_0}(x)-p t_0$.
Since $t \mapsto u_t(x)$ is decreasing,  we have $\hat{u}_{p}(x)-p+1\geq u_{t}(x)-p t$ for $t\in [t_0,t_0+1]$. 
Hence
\begin{flalign*}
\int_0^{\infty}e^{\lambda (pt - u_t(x))+\chi(x)-\psi(x)}\,\mathrm{d}t&\geq \int_{t_0}^{t_0 + 1}e^{\lambda(pt-u_t(x))+\chi(x)-\psi(x)}\,\mathrm{d}t\\
&\geq \int_{t_0}^{t_0+1}e^{-\lambda \hat u_p(x)}e^{\lambda (p-1)+\chi(x)-\psi(x)}\,\mathrm{d} t\geq e^{\lambda(p-1)}e^{-\lambda \hat{u}_{p}(x)+\chi(x)-\psi(x)}\,.
\end{flalign*}
By this and \eqref{eq:finiteness} we obtain that $\int_X e^{-\lambda \hat u_p} d\mu < \infty$, 
hence $p\leq \tau_D$, concluding the proof.
\end{proof}

Motivated by the above result, we introduce  
$$\mathcal D_\mu^\lambda \{u_t\} := \liminf_{t \to \infty} \frac{\mathcal  D_\mu^\lambda(u_t)}{t},$$  
and we will call this expression the radial $\lambda$-Ding functional of the finite energy sublinear subgeodesic ray $\{u_t\}_t$.

\begin{remark}
In the Fano case, when the geodesic ray $\{u_t\}_t$ is induced by a filtration of the section ring, $\mathcal{D}^\lambda\{u_t\}$ actually coincides with the $\beta_\lambda$-invariant introduced in \cite[\S 4]{XZ20}.
\end{remark}

Next, we move on to prove Theorem \ref{thm:delta=sup-D>0}.
We start with the following preliminary estimate regarding the $\delta_\mu$ invariant defined in \eqref{eq: delta_mu_def}:
\begin{lemma}\label{lem: c_delta_est}We have
$c_\mu[V_\theta]\geq\delta_\mu$.
\end{lemma}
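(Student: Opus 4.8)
The plan is to compare the two valuative descriptions: $c_\mu[V_\theta]$ is given by Theorem~\ref{thm:c=inf-A/S} as $\inf_E A_{\chi,\psi}(E)/\nu(V_\theta,E)$, while $\delta_\mu = \inf_E A_{\chi,\psi}(E)/S_\theta(E)$ by definition \eqref{eq: delta_mu_def}. Both infima run over the same set of prime divisors $E$ over $X$ (in smooth bimeromorphic models), and $A_{\chi,\psi}(E)>0$ in all cases, so it suffices to prove the pointwise inequality $\nu(V_\theta,E)\le S_\theta(E)$ for every such $E$: then the corresponding ratios satisfy $A_{\chi,\psi}(E)/\nu(V_\theta,E)\ge A_{\chi,\psi}(E)/S_\theta(E)$ (with the convention $1/0=\infty$ when $\nu(V_\theta,E)=0$, harmlessly), and taking infima gives $c_\mu[V_\theta]\ge\delta_\mu$.

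So the core task is the estimate $\nu(V_\theta,E)\le S_\theta(E)$. Fix a model $\pi:Y\to X$ with $E\subset Y$ a prime divisor, and write $\nu:=\nu(V_\theta,E)=\nu(\pi^*V_\theta,E)$, the generic Lelong number of $\pi^*V_\theta$ along $E$. By the definition of Lelong number, $\pi^*V_\theta$ is $\pi^*\theta$-psh and carries at least $\nu$ units of mass along $E$; concretely $\pi^*V_\theta - \nu\,[\text{local equation of }E]$ stays bounded above generically near $E$, which says that for every $x<\nu$ the class $\{\pi^*\theta\}-x\{E\}$ still carries a psh potential, hence is big (more precisely pseudoeffective, and big since we can stay strictly below $\tau_\theta(E)$ — one should check $\nu\le\tau_\theta(E)$, which is exactly this remark). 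Consequently $\vol(\{\pi^*\theta\}-x\{E\})>0$ for $0\le x<\nu$. Moreover, since $V_\theta$ has minimal singularity, the potential $\pi^*V_\theta$ is the minimal-singularity potential upstairs away from the exceptional locus, and the non-pluripolar mass is a birational invariant, so $\vol(\{\pi^*\theta\})=\vol(\{\theta\})$. Now estimate
\[
S_\theta(E)=\frac{1}{\vol(\{\theta\})}\int_0^{\tau_\theta(E)}\vol(\{\pi^*\theta\}-x\{E\})\,\mathrm dx
\ \ge\ \frac{1}{\vol(\{\theta\})}\int_0^{\nu}\vol(\{\pi^*\theta\}-x\{E\})\,\mathrm dx.
\]
To finish one wants the integrand on $[0,\nu)$ to be at least $\vol(\{\theta\})$ — i.e. the twisted class loses no mass up to the Lelong number of the minimal potential. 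This should follow because subtracting $x\{E\}$ with $x<\nu$ does not decrease the non-pluripolar volume: the current $\pi^*\theta_{V_\theta}$ already dominates $x[E]$ in the appropriate sense, so $\pi^*V_\theta$ (minus the correction) is a full-mass potential for the smaller class. Granting $\vol(\{\pi^*\theta\}-x\{E\})\ge\vol(\{\theta\})$ for $x\in[0,\nu)$, the displayed integral is $\ge\nu$, giving $S_\theta(E)\ge\nu=\nu(V_\theta,E)$ as desired.

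The main obstacle I anticipate is precisely this last volume monotonicity claim, $\vol(\{\pi^*\theta\}-x\{E\})\ge\vol(\{\theta\})$ for $x<\nu(V_\theta,E)$: one must argue carefully, in the transcendental non-pluripolar setting of \cite{BEGZ10}, that peeling off $x\{E\}$ from the big class does not lose non-pluripolar mass as long as the minimal-singularity potential $V_\theta$ itself has Lelong number exceeding $x$ along $E$. The clean way is to produce, for each such $x$, a $(\pi^*\theta-x[E])$-psh function of full mass $\vol(\{\theta\})$, built from $\pi^*V_\theta$ by subtracting the appropriate multiple of the $[E]$-current and invoking that the non-pluripolar product ignores the pluripolar set $E$ and is a bimeromorphic invariant; then monotonicity of mass under comparison of singularity types (\cite[Theorem 1.2]{WN19}) pins the volume from below. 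Alternatively one can bypass the volume computation and argue directly at the level of the function $x\mapsto \vol(\{\pi^*\theta\}-x\{E\})$, which is continuous and non-increasing, equals $\vol(\{\theta\})$ at $x=0$, and — by the full-mass property of $\pi^*V_\theta$ combined with the Lelong-number bound — stays equal to $\vol(\{\theta\})$ on $[0,\nu)$; this is the step that genuinely uses that $V_\theta$ is the \emph{minimal} singularity potential and not just any $\theta$-psh function.
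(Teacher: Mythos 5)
Your proposal is correct and follows essentially the same route as the paper: reduce via Theorem~\ref{thm:c=inf-A/S} to the divisorwise estimate $\nu(V_\theta,E)\le S_\theta(E)$, and obtain it from $\vol(\{\pi^*\theta\}-x\{E\})\ge\vol(\{\theta\})$ for $x\le\nu(V_\theta,E)$, produced exactly as you outline by viewing $\pi^*V_\theta - x\log|S_E|^2_{h_E}$ as a potential for the twisted class, using that the non-pluripolar product ignores $E$, and invoking the mass monotonicity of \cite[Theorem 1.2]{WN19}. The only cosmetic difference is that the paper establishes the equality $\vol(\{\pi^*\theta\}-x\{E\})=\vol(\{\theta\})$ on $[0,\nu(V_\theta,E)]$ (a known fact from \cite{Bou04}) via a bijection of the relevant psh spaces preserving minimal singularity types, whereas you only need, and only argue, the inequality.
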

\begin{proof} Let $\pi: Y \to X$ be a smooth bimeromorphic model over $X$ and $E$ be a prime divisor in $Y$. Recall that $\tau_\theta(E)  = \sup \{\tau>0  \textup{ s.t. } \{\pi^*\theta\} - \tau\{E\} \textup{ is a big class}\}$ is the pseudo-effective threshold of $E$. Equivalently, one has
\begin{equation*}\label{eq: tau_theta_formula}
\tau_\theta(E)=\sup_{u\in\textup{PSH}(X,\theta)}\nu(u,E).
\end{equation*}
So in particular, $\nu(V_\theta,E)\leq\tau_\theta(E)$. A  well-known fact that we shall use is (see \cite{Bou04})
$$
\vol(\{\pi^*\theta\}-t\{E\})=\vol(\{\theta\}),\ t\in[0,\nu(V_\theta,E)].
$$

We reproduce the proof of this identity for the reader's convenience.
Let $h_E$ be a smooth hermitian metric on $\mathcal O_Y(E)$, with canonical section $S_E$ and $\Theta(h_E):=-\ddc\log h_E$. We notice that 
$$[V_{\pi^*\theta  - t \Theta(h_E)}] = [V_{\pi^*\theta} - t \log |S_E|_{h_E}^2]=[V_{\theta} \circ \pi - t \log |S_E|_{h_E}^2], \ \ t \in [0, \nu(V_\theta, E)].$$
This follows from the fact that the map $U \mapsto U - t \log |S_E|_{h_E}^2$ gives a monotone increasing bijection between $\textup{PSH}(Y,\pi^*\theta)$ and $\textup{PSH}(Y,\pi^*\theta - t \Theta(h_E))$ for $t \in [0, \nu(V_\theta, E)]$, hence preserves potentials of minimal singularity type.

As a result, using \cite[Theorem 1.2]{WN19}, for $t \in [0, \nu(V_\theta, E)]$ we get that
\begin{flalign*}
\vol(\{\pi^*\theta\} - t \{E\}) &= \int_Y ( \pi^* \theta - t\Theta(h_E) + \ddc V_{\pi^*\theta  - t \Theta(h_E)})^n \\
&= \int_Y ( \pi^* \theta - t\Theta(h_E) + \ddc (V_{\pi^*\theta} - t \log |S_E|_{h_E}^2))^n= \int_X 
\theta_{V_\theta}^n,
\end{flalign*}
where in the last line we used $\Theta(h_E) + \ddc \log |S_E|^2_{h_E} = 0$ on $Y \setminus E$. Then from $\nu(V_\theta, E) \leq \tau_\theta(E)$ and the above identity we obtain that  
\begin{equation}\label{eq: S_nu_ineq}
S_\theta(E):=\frac{1}{\vol(\{\theta\})}\int_0^{\tau_\theta(E)}\vol(\{\pi^*\theta\}-x\{E\})dx \geq  \nu(V_\theta, E).
\end{equation}
Using Theorem \ref{thm:c=inf-A/S}, this allows to conclude:
$c_\mu[V_\theta] = \inf_E \frac{A_{\chi,\psi}(E)}{\nu(V_\theta,E)} \geq \inf_E \frac{A_{\chi,\psi}(E)}{S_\theta(E)} = \delta_\mu.$
\end{proof}

Let $E\subset Y\xrightarrow{\pi}X$ be a prime divisor over $X$. We shall make use of the following bounded test curve naturally associated to $E$, going back to \cite{RWN14}:
\begin{equation}
\label{eq:psi=P[u]}
 \hat u^E_\tau:=
    \begin{cases}
  V_\theta,\ &\tau\leq 0;\\
  v^E_\tau,\ &0 < \tau<\tau_\theta(E);\\
  \lim_{\tau\nearrow\tau_\theta(E)}v^E_\tau,\ &\tau=\tau_\theta(E);\\
  -\infty,\ &\tau>\tau_\theta(E),
\end{cases}
\end{equation}
where $v^E_\tau$ is given by
$ v^E_\tau:=\sup\big\{\varphi\in\PSH(X, \theta)\big| \nu(\varphi,E) \geq \tau, \  
\varphi \leq 0 \big\}.
$
Using basic properties of Lelong numbers, one can show that $v^E_\tau\in\textup{PSH}(X,\theta)$ and that
\begin{equation}
\label{eq:nu-v-E-tau=tau}
    \nu(v^E_\tau,E)\geq\tau.
\end{equation}
The test curve $\{\hat u^E_\tau\}_\tau$ is actually maximal, but we do not need this in what follows.

Let $\{u^E_t\}_t$ be the finite energy (sub)geodesic ray associated to the test curve $\{\hat u^E_\tau\}_\tau$.
We then have the following observation.
\begin{lemma} 
\label{lem:I=S}
We have $I_\theta\{u^E_t\} = S_\theta (E)$.
\end{lemma}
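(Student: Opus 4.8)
The plan is to compute $I_\theta\{u^E_t\}$ directly from the Ross--Witt Nyström formula for the radial Monge--Ampère energy, Proposition~\ref{prop: I-rad_identity} (equivalently \eqref{eq: I_RWN_form}), applied to the bounded maximal test curve $\{\hat u^E_\tau\}_\tau$ defined in \eqref{eq:psi=P[u]}. Since $\tau^+_{\hat u^E} = \tau_\theta(E)$ and $\hat u^E_\tau = V_\theta$ for $\tau \leq 0$, the formula reads
\begin{equation*}
I_\theta\{u^E_t\} = \frac{1}{\vol(\{\theta\})}\int_0^{\tau_\theta(E)}\Big(\int_X \theta_{\hat u^E_\tau}^n - \int_X \theta_{V_\theta}^n\Big)\,\mathrm d\tau + \tau_\theta(E).
\end{equation*}
So everything reduces to identifying $\int_X \theta_{v^E_\tau}^n$ with a volume: concretely, the claim will follow if I show
\begin{equation}\label{eq:mass=vol-shifted}
\int_X \theta_{v^E_\tau}^n = \vol(\{\pi^*\theta\} - \tau\{E\}), \qquad \tau \in [0,\tau_\theta(E)).
\end{equation}
Granting \eqref{eq:mass=vol-shifted}, substitute and compare with the definition of $S_\theta(E)$: writing $V := \vol(\{\theta\}) = \int_X\theta_{V_\theta}^n$, we get $I_\theta\{u^E_t\} = \frac{1}{V}\int_0^{\tau_\theta(E)}\vol(\{\pi^*\theta\}-\tau\{E\})\,\mathrm d\tau - \frac{1}{V}\int_0^{\tau_\theta(E)} V\,\mathrm d\tau + \tau_\theta(E) = S_\theta(E) - \tau_\theta(E) + \tau_\theta(E) = S_\theta(E)$, which is exactly the desired identity.

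To prove \eqref{eq:mass=vol-shifted}, the key point is that $v^E_\tau$ is, up to the standard pullback correspondence, the potential of minimal singularity for the shifted class $\{\pi^*\theta\} - \tau\{E\}$. Reusing the mechanism from the proof of Lemma~\ref{lem: c_delta_est}: let $h_E$ be a smooth hermitian metric on $\mathcal O_Y(E)$ with section $S_E$ and curvature $\Theta(h_E) = -\ddc\log h_E$. The map $U \mapsto U - \tau\log|S_E|_{h_E}^2$ is a monotone increasing bijection $\textup{PSH}(Y,\pi^*\theta) \to \textup{PSH}(Y, \pi^*\theta - \tau\Theta(h_E))$, and since $\log|S_E|_{h_E}^2$ has Lelong number $1$ along $E$ (and $0$ elsewhere), the condition $\nu(\varphi,E)\geq\tau$ on $\textup{PSH}(X,\theta)$ matches exactly the set of potentials in $\textup{PSH}(Y,\pi^*\theta)$ pulling back to things that become genuinely $(\pi^*\theta - \tau\Theta(h_E))$-psh after the twist. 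Hence $\pi^* v^E_\tau - \tau\log|S_E|_{h_E}^2$ has the same singularity type as $V_{\pi^*\theta - \tau\Theta(h_E)}$, and computing the non-pluripolar mass of $\pi^*\theta - \tau\Theta(h_E) + \ddc(\pi^* v^E_\tau - \tau\log|S_E|_{h_E}^2)$ via \cite[Theorem 1.2]{WN19} (mass depends only on singularity type) together with $\Theta(h_E) + \ddc\log|S_E|^2_{h_E} = 0$ off $E$ yields $\int_X\theta_{v^E_\tau}^n = \vol(\{\pi^*\theta\}-\tau\{E\})$. One should also note that continuity of $\tau\mapsto\int_X\theta_{v^E_\tau}^n$ on $(-\infty,\tau_\theta(E))$ (Lemma~\ref{lma:contimass}) makes the integrand measurable and the integral well-defined, and handles the single endpoint $\tau = \tau_\theta(E)$ harmlessly.

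The main obstacle I anticipate is the precise matching between $v^E_\tau$ and the minimal-singularity envelope of the shifted class — in particular verifying that the supremum defining $v^E_\tau$ (over $\varphi \in \textup{PSH}(X,\theta)$ with $\nu(\varphi,E)\geq\tau$, $\varphi\leq 0$) really corresponds under the twist to the full envelope $V_{\pi^*\theta - \tau\Theta(h_E)}$, rather than to a strictly more singular object. This requires the observation that $\nu(\varphi,E)\geq\tau$ is \emph{equivalent} to $\pi^*\varphi - \tau\log|S_E|_{h_E}^2$ being locally bounded above near generic points of $E$ in the appropriate sense — i.e.\ genuinely $(\pi^*\theta-\tau\Theta(h_E))$-psh — which is where the properties of Lelong numbers behind \eqref{eq:nu-v-E-tau=tau} are used; and that pulling back does not lose mass because $\pi$ is bimeromorphic. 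Everything else is bookkeeping with the Legendre/RWN formula already established in Section~\ref{sec:rwncorre}.
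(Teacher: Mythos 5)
Your proposal is correct and follows essentially the same route as the paper: apply the radial energy formula \eqref{eq:radial-I-formula-subgeo-rays} to the bounded test curve \eqref{eq:psi=P[u]}, and identify $\int_X\theta^n_{\hat u^E_\tau}$ with $\vol(\{\pi^*\theta\}-\tau\{E\})$ via the twist $U\mapsto U-\tau\log|S_E|^2_{h_E}$, which is a monotone bijection between $\{w\in\PSH(Y,\pi^*\theta):\nu(w,E)\geq\tau\}$ and $\PSH(Y,\pi^*\theta-\tau\Theta(h_E))$ preserving minimal singularity types, combined with \cite[Theorem 1.2]{WN19} and $\Theta(h_E)+\ddc\log|S_E|^2_{h_E}=0$ off $E$. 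The only difference is that you spell out the final bookkeeping with $S_\theta(E)$, which the paper leaves implicit.
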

\begin{proof}
Let $h_E$ be a smooth hermitian metric on $\mathcal O(E)$, with canonical section $S_E$ and $\Theta(h_E):=-\ddc\log h_E$. Using \eqref{eq:nu-v-E-tau=tau} we notice the following identity of singularity types:
$$[\pi^*v^\tau_E -\tau \log|S_E|^2_{h_E}] = [V_{\pi^*\theta - \tau \Theta(h_E)}], \ \ \tau \in [0, \tau_\theta(E)].$$
This follows from the fact that the map $U \mapsto U - \tau \log |S_E|_{h_E}^2$ gives a monotone increasing bijection between $\{w \in \textup{PSH}(Y,\pi^*\theta), \ \ \nu(w,E) \geq \tau\}$ and $\textup{PSH}(Y,\pi^*\theta - \tau \Theta(h_E))$, hence preserves qpsh potentials of minimal singularity type in each set.

Since $\Theta(h_E) + \ddc \log|S_E|^2_{h_E} = 0$ on $Y \setminus E$, we have that 
$$(\pi^*\theta - \tau \Theta(h_E) + \ddc (\pi^*v_\tau^E -\tau \log|S_E|^2_{h_E}))^n  = (\pi^*\theta + \ddc \pi^* v_\tau^E )^n.$$
As a result, using \cite[Theorem 1.2]{WN19}, we notice that for $\tau < \tau_\theta(E)$ we have 
\begin{equation}
    \label{eq:int-u=vol(theta-tau-E)}
    \int_X \theta^n_{\hat u^E_\tau}= \int_Y \pi^*\theta^n_{\pi^*v^E_\tau} = \vol(\{\pi^* \theta\} - \tau \{E\}).
\end{equation}
That $I_\theta\{u^E_t\} = S_\theta (E)$, follows after comparing  \eqref{eq: SE_def} and \eqref{eq:radial-I-formula-subgeo-rays}.
\end{proof}

Next we introduce a new valuative invariant attached to certain test curves.
Given a sublinear subgeodesic ray $\{u_t\}_t\subset\mathcal{E}^1(X,\theta)$, it follows from \eqref{eq:radial-I-formula-subgeo-rays} that the Legendre transform $\{\hat u_\tau\}_\tau$ of $\{u_t\}_t$ is a finite energy test curve. For any prime divisor $E$ over $X$, we define \emph{the expected Lelong number} $\nu(\{\hat u_\tau\}_\tau,E)$ of the test curve $\{\hat u_\tau\}_\tau$ along $E$ to be
\begin{equation*}
\label{eq:def-Z-psi}
\nu(\{\hat u_\tau\}_\tau,E):=
    \begin{cases}
    \nu(\hat u_{I_\theta\{u_t\}},E), &\text{ if }I_\theta\{u_t\}<\tau^+_{\hat u},\\
    \nu(V_\theta,E), &\text{ if }I_\theta\{u_t\}=\tau^+_{\hat u}.\\
    \end{cases}
\end{equation*}


The following estimate for the expected Lelong number will play a key role, and should be compared with \cite[Lemma 2.2]{FO18}.
\begin{proposition}
\label{prop:nu-psi-E<S-E}
For any sublinear subgeodesic ray $\{u_t\}_t\subset\mathcal{E}^1(X,\theta)$ and any prime divisor $E$ over $X$ we have
$$
\nu(\{\hat u_\tau\}_\tau,E)\leq S_\theta(E).
$$
\end{proposition}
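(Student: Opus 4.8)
The plan is to reduce the estimate to the case of the special test curves $\{\hat u^E_\tau\}_\tau$ attached to $E$ in \eqref{eq:psi=P[u]}, for which Lemma \ref{lem:I=S} already identifies the radial energy with $S_\theta(E)$. The key point is a comparison: given an arbitrary finite energy sublinear subgeodesic ray $\{u_t\}_t$, I want to bound $\nu(\hat u_\tau, E)$ from above in terms of $\tau$ using that $\{\hat u^E_\tau\}_\tau$ is, in a suitable sense, the ``maximal'' test curve with a prescribed Lelong number along $E$. Concretely, fix $\tau$ with $\nu(\hat u_\tau, E) =: c$. Since $\hat u_\tau \leq V_\theta \leq 0$ and $\nu(\hat u_\tau, E) \geq c$, the defining supremum for $v^E_c$ shows $\hat u_\tau \leq v^E_c = \hat u^E_c$ (after normalizing so that $\tau^+_{\hat u} = 0$, using the translation freedom as in the proof of Theorem \ref{thm: radial_ding}, and noting $c \leq \tau_\theta(E)$ because $\tau_\theta(E) = \sup_{\varphi \in \PSH(X,\theta)} \nu(\varphi, E)$). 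Thus the whole test curve $\{\hat u_\tau\}_\tau$ is dominated, in an appropriately reparametrized sense, by $\{\hat u^E_\tau\}_\tau$.

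Next I would turn this pointwise domination into an inequality of radial energies. The function $\tau \mapsto \nu(\hat u_\tau, E)$ is nonincreasing (as $\tau \mapsto \hat u_\tau$ is decreasing) and, being a Legendre-type transform of the convex energies, the relevant quantity is $\nu(\{\hat u_\tau\}_\tau, E) = \nu(\hat u_{I_\theta\{u_t\}}, E)$. Set $\sigma := I_\theta\{u_t\}$ and $c := \nu(\hat u_\sigma, E)$; I want $c \leq S_\theta(E)$. By the domination above, $\hat u_\sigma \leq \hat u^E_c$, and monotonicity of non-pluripolar masses under $\preceq$ (Theorem 1.2 of \cite{WN19}) gives $\int_X \theta^n_{\hat u_\tau} \leq \int_X \theta^n_{\hat u^E_\tau}$ for the reparametrized curves — more precisely, one should compare $\int_X \theta^n_{\hat u_\tau}$ with $\vol(\{\pi^*\theta\} - \tau\{E\})$ via \eqref{eq:int-u=vol(theta-tau-E)} and \eqref{eq:nu-v-E-tau=tau}. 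Feeding this into the integral formula \eqref{eq:radial-I-formula-subgeo-rays} for $I_\theta\{u_t\}$, i.e.
\begin{equation*}
\sigma = \frac{1}{\vol(\{\theta\})}\int_{-\infty}^{0}\Big(\int_X \theta^n_{\hat u_\tau} - \int_X \theta^n_{V_\theta}\Big)\,\mathrm d\tau,
\end{equation*}
together with the fact that $\int_X\theta^n_{\hat u_\tau} = \vol(\{\theta\})$ for $\tau < -$ (something controlling when the Lelong number vanishes), yields a lower bound for $\sigma$ in terms of an integral of $\vol(\{\pi^*\theta\} - x\{E\})$ over $x \in [0, c]$, which after dividing by $\vol(\{\theta\})$ is precisely a truncated version of $S_\theta(E)$; since $\vol(\{\pi^*\theta\} - x\{E\})$ is nonincreasing in $x$ and positive up to $\tau_\theta(E)$, extending the integral to $[0,\tau_\theta(E)]$ only increases it, giving $c \leq S_\theta(E)$.

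I expect the main obstacle to be bookkeeping the reparametrization cleanly: the ``valuative'' parameter (the value $c$ of the Lelong number) and the ``energy'' parameter $\tau$ do not coincide, and one must be careful that the domination $\hat u_\tau \leq \hat u^E_{\nu(\hat u_\tau,E)}$ combined with the mass/volume identities produces exactly the integrand $\vol(\{\pi^*\theta\}-x\{E\})$ against $\mathrm dx$ rather than some other monotone rearrangement. A secondary subtlety is the boundary case $I_\theta\{u_t\} = \tau^+_{\hat u}$, handled separately by the definition of $\nu(\{\hat u_\tau\}_\tau, E)$ as $\nu(V_\theta, E)$, for which the desired inequality $\nu(V_\theta, E) \leq S_\theta(E)$ is exactly \eqref{eq: S_nu_ineq} proved in Lemma \ref{lem: c_delta_est}. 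Modulo these points, the argument is a direct consequence of the Ross--Witt Nyström machinery developed in Section \ref{sec:rwncorre} and is a transcendental analogue of \cite[Lemma 2.2]{FO18}.
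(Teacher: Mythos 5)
Your reduction assembles the correct ingredients (the domination $\hat u_\tau \leq v^E_{\nu(\hat u_\tau,E)}$, the mass bound via \cite{WN19} and \eqref{eq:int-u=vol(theta-tau-E)}, the radial energy formula \eqref{eq:radial-I-formula-subgeo-rays}, and the boundary case $I_\theta\{u_t\}=\tau^+_{\hat u}$ via \eqref{eq: S_nu_ineq}), but the central quantitative step --- passing from the $\tau$-integral of mass deficits to the $x$-integral defining $S_\theta(E)$ --- is exactly the step you leave open, and your sketch of it does not work as stated. Three concrete problems: (a) the monotonicity is backwards: since $\tau\mapsto\hat u_\tau$ decreases, $f(\tau):=\nu(\hat u_\tau,E)$ is non-\emph{decreasing}, and, more importantly, it is \emph{convex} in $\tau$ (by $\tau$-concavity of the test curve and linearity of Lelong numbers under convex combinations) --- a property your argument never uses; (b) the asserted identity ``$\int_X\theta^n_{\hat u_\tau}=\vol(\{\theta\})$ for $\tau$ below [something]'' is false in general, because the masses can drop due to singularities of $\hat u_\tau$ having nothing to do with $E$, and it is left as a literal placeholder; (c) the Lelong-number comparison only yields the \emph{upper} bound $\int_X\theta^n_{\hat u_\tau}\leq\vol(\{\pi^*\theta\}-f(\tau)\{E\})$, hence an upper bound on $I_\theta\{u_t\}$, not the claimed lower bound by a truncated $S_\theta(E)$; and even if one granted an inequality of the form $c\leq\frac{1}{\vol(\{\theta\})}\int_0^c\vol(\{\pi^*\theta\}-x\{E\})\,\mathrm{d}x$, it would force $\vol(\{\pi^*\theta\}-x\{E\})=\vol(\{\theta\})$ a.e.\ on $[0,c]$, so the ``truncate then extend to $[0,\tau_\theta(E)]$'' route can only succeed when the volume does not drop at all on $[0,c]$. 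The obstruction you flag is real: a direct change of variables $x=f(\tau)$ produces an uncontrolled Jacobian, and no monotone-rearrangement argument is supplied to remove it.

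The missing idea, which is how the paper closes this gap, is to \emph{linearize} $f$ using its convexity. Writing $\sigma:=I_\theta\{u_t\}<\tau^+_{\hat u}$ and $b:=f'_-(\sigma)$, one replaces $f$ by its tangent-line minorant $g$ at $\tau=\sigma$ (equal to $0$ for $\tau<\sigma-b^{-1}f(\sigma)$ and affine of slope $b$ afterwards). Since $f\geq g$ and the volume is decreasing in the coefficient of $\{E\}$, the mass bound improves to $\int_X\theta^n_{\hat u_\tau}\leq\vol(\{\pi^*\theta\}-g(\tau)\{E\})$, and the \emph{affine} change of variables $x=g(\tau)$ now carries the energy estimate, with no Jacobian loss, into $\sigma\leq b^{-1}\bigl(S_\theta(E)-f(\sigma)\bigr)+\sigma$, i.e.\ $f(\sigma)\leq S_\theta(E)$ since $b>0$. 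The degenerate case $b=0$, i.e.\ $f\equiv a$ on $(-\infty,\sigma]$, must be treated separately: there, finiteness of the energy integral over the infinite interval forces $\vol(\{\pi^*\theta\}-a\{E\})=\vol(\{\theta\})$, whence $S_\theta(E)\geq a$ by monotonicity of the volume. Your treatment of the remaining case $I_\theta\{u_t\}=\tau^+_{\hat u}$ by reducing to \eqref{eq: S_nu_ineq} is correct, but as it stands the core of the proposition --- the transcendental analogue of \cite[Lemma 2.2]{FO18} --- is not proved.
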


\begin{proof}
Put $f(\tau):=\nu(\hat u_\tau,E)$ for $\tau\in(-\infty,\tau^+_{\hat u})$.
Then it is clear that $f(\tau)$ is a non-negative convex non-decreasing function defined on $(-\infty,\tau^+_{\hat u})$. So in particular we may set
$
f(-\infty):=\lim_{\tau\searrow-\infty}f(\tau).
$
When $I_\theta\{u_t\}=\tau^+_{\hat u}$, the desired inequality is $\nu(\{\hat u_\tau\},E)=\nu(V_\theta,E)\leq S_\theta(E)$, which is  \eqref{eq: S_nu_ineq}. 

So in what follows we can assume that $I_\theta\{u_t\}<\tau^+_{\hat u}$. We first consider the case when $f(-\infty)=f(I_\theta\{u_t\})$. This means that $f(\tau)=a \in \mathbb{R}$, for $\tau\in(-\infty,I_\theta\{u_t\}]$. Namely, $\nu(\hat u_\tau,E)\equiv a$ for $\tau\in(-\infty,I_\theta\{u_t\}]$. So by \cite[Theorem 1.2]{WN19} and \eqref{eq:int-u=vol(theta-tau-E)} we have
$$
\int_X\theta^n_{\hat u_\tau}\leq\int_X\theta^n_{v^E_{a}}=\vol(\{\pi^*\theta\}-a\{E\})\text{ for }\tau\in(-\infty,I_\theta\{u_t\}],
$$
where $v^E_{a}$ is defined below \eqref{eq:psi=P[u]}.
Thus, by \eqref{eq:radial-I-formula-subgeo-rays}
$$
I_\theta\{u_t\}\leq\frac{1}{\vol(\{\theta\})}\int_{-\infty}^{I_\theta\{u_t\}}\bigg(\vol(\{\pi^*\theta\}-a\{E\})-\vol(\{\theta\})\bigg)\mathrm{d}\tau+\tau^+_{\hat u},
$$
which forces that $\vol(\{\pi^*\theta\}-a\{E\})=\vol(\{\theta\})$ since $\{\hat u_\tau\}_\tau$ has finite energy.
So we have
$$
S_\theta(E)=\frac{1}{\vol(\{\theta\})}\int_0^{\tau_\theta(E)}\vol(\{\pi^*\theta\}-x\{E\})\mathrm{d}x\geq\frac{1}{\vol(\{\theta\})}\int_0^a\vol(\{\pi^*\theta\}-a\{E\})\mathrm{d}x=a,
$$
what we aimed to prove. Thus, we can assume that $f(-\infty)<f(I_\theta\{u_t\})$. Put
$$
b:=f^\prime_-(I_\theta\{u_t\}):=\lim_{h\rightarrow 0^+}\frac{f(I_\theta\{u_t\})-f(I_\theta\{u_t\}-h)}{h},
$$
which has to be a finite positive number by the convexity of $f$. Now we put
\begin{equation*}
    g(\tau):=
    \begin{cases}
    0,&\tau\in(-\infty,I_\theta\{u_t\}-b^{-1} f(I_\theta\{u_t\})),\\
    b(\tau-I_\theta\{u_t\})+f(I_\theta\{u_t\}),&\tau\in[I_\theta\{u_t\}-b^{-1} f(I_\theta\{u_t\}),\tau^+_{\hat u}].\\
    \end{cases}
\end{equation*}
Due to convexity, we have $f(\tau)\geq g(\tau), \ \tau\in(-\infty,\tau^+_{\hat u})$. By \cite[Theorem 1.2]{WN19} and \eqref{eq:int-u=vol(theta-tau-E)},
$$
\int_X\theta^n_{\hat u_\tau}\leq\int_X\theta^n_{v^E_{g(\tau)}}=\vol(\{\pi^*\theta\}-g(\tau)\{E\})\text{ for }\tau\in(-\infty,\tau^+_{\hat u}).
$$
Thus using \eqref{eq:radial-I-formula-subgeo-rays} we have that
\begin{equation*}
    \begin{aligned}
    I_\theta\{u_t\}&
     \leq\frac{1}{\vol(\{\theta\})}\int_{-\infty}^{\tau^+_{\hat u}}\bigg(\vol(\{\pi^*\theta\}-g(\tau)\{E\})-\vol(\{\theta\})\bigg)\mathrm{d}\tau+\tau^+_{\hat u}\\
    &\leq\frac{1}{\vol(\{\theta\})}\int_{I_\theta\{u_t\}-b^{-1} f(I_\theta\{u_t\})}^{\tau^+_{\hat u}}\bigg(\vol(\{\pi^*\theta\}-g(\tau)\{E\})-\vol(\{\theta\})\bigg)\mathrm{d}\tau+\tau^+_{\hat u}\\
    &\leq\frac{1}{\vol(\{\theta\})b}\int_0^{\tau_\theta(E)}\vol(\{\pi^*\theta\}-x\{E\})\mathrm{d}x+I_\theta\{u_t\}-b^{-1}f(I_\theta\{u_t\})\\
    &=b^{-1}(S_\theta(E)-f(I_\theta\{u_t\})+I_\theta\{u_t\},
    \end{aligned}
\end{equation*}
Using the above inequality, since $b >0,$ we finally arrive at
$
\nu(\{\hat u_\tau\}_\tau,E)=\nu(\hat u_{I_\theta\{u_t\}},E)=f(I_\theta\{u_t\})\leq S_\theta(E),
$
finishing the proof.
\end{proof}

Inspired by the above, for any sublinear subgeodesic ray $\{u_t\}_t\subset\mathcal{E}^1(X,\theta)$ we put
\begin{equation*}
    \label{def:eq-c-psi-tau}
    c_\mu\{\hat u_\tau\}:=
    \begin{cases}
    c_\mu[\hat u_{I_\theta\{u_t\}}], & \text{ if }I_\theta\{u_t\}<\tau^+_{\hat u},\\
    c_\mu[V_\theta], & \text{ if }I_\theta\{u_t\}=\tau^+_{\hat u},\\
    \end{cases}
\end{equation*}
which is called the \emph{expected complex singularity exponent} of $\{\hat u_\tau\}_\tau$.

We show the following  analytic characterization of the $\delta$-invariant that should be compared with Fujita--Odaka's basis-divisor formulation \cite{FO18}.

\begin{theorem}
\label{thm:delta=inf-c[psi]'}
We have
$$
\delta_\mu=\inf_{\{u_t\}}c_\mu\{\hat u_\tau\},
$$
where the inf is over all sublinear subgeodesic rays $\{u_t\}_t\subset\mathcal{E}^1(X,\theta)$.
\end{theorem}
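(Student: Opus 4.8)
The plan is to establish the two inequalities separately, in each case reducing everything to the valuative formula $c_\mu[\varphi]=\inf_E A_{\chi,\psi}(E)/\nu(\varphi,E)$ of Theorem \ref{thm:c=inf-A/S}, combined with the two structural facts already proved: the energy identity $I_\theta\{u^E_t\}=S_\theta(E)$ (Lemma \ref{lem:I=S}) and the upper bound $\nu(\{\hat u_\tau\}_\tau,E)\le S_\theta(E)$ (Proposition \ref{prop:nu-psi-E<S-E}). Once these are in hand the argument is essentially bookkeeping, the only subtle points being which branch of the two-case definitions one lands in, and the handling of divisors with vanishing Lelong number via the convention $1/0=\infty$ (which, as noted after Theorem \ref{thm:c=inf-A/S}, causes no ambiguity).

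For the inequality $\delta_\mu\le\inf_{\{u_t\}}c_\mu\{\hat u_\tau\}$, I would fix an arbitrary sublinear subgeodesic ray $\{u_t\}_t\subset\mathcal E^1(X,\theta)$ and split according to the definition of $c_\mu\{\hat u_\tau\}$. If $I_\theta\{u_t\}=\tau^+_{\hat u}$, then $c_\mu\{\hat u_\tau\}=c_\mu[V_\theta]\ge\delta_\mu$ is precisely Lemma \ref{lem: c_delta_est}. If $I_\theta\{u_t\}<\tau^+_{\hat u}$, then $\hat u_{I_\theta\{u_t\}}$ is a genuine $\theta$-psh function (not identically $-\infty$, by definition of $\tau^+_{\hat u}$), so Theorem \ref{thm:c=inf-A/S} applies and, using that in this case $\nu(\{\hat u_\tau\}_\tau,E)=\nu(\hat u_{I_\theta\{u_t\}},E)$, gives $c_\mu\{\hat u_\tau\}=\inf_E A_{\chi,\psi}(E)/\nu(\{\hat u_\tau\}_\tau,E)$. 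Feeding in $\nu(\{\hat u_\tau\}_\tau,E)\le S_\theta(E)$ from Proposition \ref{prop:nu-psi-E<S-E} termwise bounds each quotient below by $A_{\chi,\psi}(E)/S_\theta(E)$, hence $c_\mu\{\hat u_\tau\}\ge\delta_\mu$. Taking the infimum over all rays completes this direction.

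For the reverse inequality $\inf_{\{u_t\}}c_\mu\{\hat u_\tau\}\le\delta_\mu$, I would test against the specific rays constructed above. Given $\varepsilon>0$, choose a prime divisor $E$ over $X$ with $S_\theta(E)>0$ and $A_{\chi,\psi}(E)/S_\theta(E)<\delta_\mu+\varepsilon$, and let $\{u^E_t\}_t$ be the finite energy geodesic ray attached to the bounded (maximal) test curve $\{\hat u^E_\tau\}_\tau$ of \eqref{eq:psi=P[u]}; being associated to a bounded test curve, this ray has minimal singularity type, so in particular $\{u^E_t\}_t\subset\mathcal E^1(X,\theta)$. By Lemma \ref{lem:I=S} its radial energy is $I_\theta\{u^E_t\}=S_\theta(E)$, while $\tau^+_{\hat u^E}=\tau_\theta(E)$ and $S_\theta(E)<\tau_\theta(E)$ because the volume function $x\mapsto\vol(\{\pi^*\theta\}-x\{E\})$ is continuous and drops strictly below $\vol(\{\theta\})$ near the pseudoeffective threshold; hence we are in the first branch and $c_\mu\{\hat u^E_\tau\}=c_\mu[\hat u^E_{S_\theta(E)}]=c_\mu[v^E_{S_\theta(E)}]$. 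Since $\nu(v^E_{S_\theta(E)},E)\ge S_\theta(E)$ by \eqref{eq:nu-v-E-tau=tau}, Theorem \ref{thm:c=inf-A/S} yields $c_\mu[v^E_{S_\theta(E)}]\le A_{\chi,\psi}(E)/\nu(v^E_{S_\theta(E)},E)\le A_{\chi,\psi}(E)/S_\theta(E)<\delta_\mu+\varepsilon$, and letting $\varepsilon\searrow0$ concludes.

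I do not anticipate a genuine obstacle here: the real content was already absorbed into Proposition \ref{prop:nu-psi-E<S-E} (the divisorial estimate for the expected Lelong number) and Lemma \ref{lem:I=S}. The one place where a moment of care is warranted is verifying $S_\theta(E)<\tau_\theta(E)$ so that one stays in the correct case of the definition of $c_\mu\{\hat u_\tau\}$; this is immediate from continuity and vanishing of the volume at the pseudoeffective threshold, and in any event only divisors with $S_\theta(E)>0$ are relevant to $\delta_\mu$ since $A_{\chi,\psi}(E)>0$ always.
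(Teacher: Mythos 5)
Your argument for the inequality $\delta_\mu\le\inf_{\{u_t\}}c_\mu\{\hat u_\tau\}$ is exactly the paper's: split on whether $I_\theta\{u_t\}=\tau^+_{\hat u}$, use Lemma \ref{lem: c_delta_est} in the first case and Theorem \ref{thm:c=inf-A/S} together with Proposition \ref{prop:nu-psi-E<S-E} in the second. The reverse inequality also starts like the paper (test against the rays $\{u^E_t\}_t$ attached to the curves \eqref{eq:psi=P[u]} and use Lemma \ref{lem:I=S} plus \eqref{eq:nu-v-E-tau=tau}), but there is a genuine gap: you claim that one is always in the branch $I_\theta\{u^E_t\}<\tau^+_{\hat u^E}$ because ``$S_\theta(E)<\tau_\theta(E)$, since the volume function is continuous and drops strictly below $\vol(\{\theta\})$ near the pseudoeffective threshold.'' That continuity-and-vanishing statement at the boundary of the big cone is only known when the classes involved are in the N\'eron--Severi space; here $\{\theta\}$ is an arbitrary transcendental big class and $\{\pi^*\theta\}-x\{E\}$ lives in $H^{1,1}(Y,\RR)$ of a bimeromorphic model. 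For such classes the BEGZ volume is only known to be continuous on the big cone, and the assertion that it tends to $0$ (or even just drops below $\vol(\{\theta\})$) as $x\nearrow\tau_\theta(E)$ is a transcendental Morse-inequality-type statement that is not available; note that the proof of Lemma \ref{lem: c_delta_est} shows the volume is \emph{constant}, equal to $\vol(\{\theta\})$, on $[0,\nu(V_\theta,E)]$, and nothing proved in the paper rules out $\nu(V_\theta,E)=\tau_\theta(E)$, i.e.\ $S_\theta(E)=\tau_\theta(E)$.

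This is why the paper treats that case separately rather than excluding it: when $S_\theta(E)=\tau_\theta(E)$, formula \eqref{eq:radial-I-formula-subgeo-rays} forces $\hat u^E_\tau\in\mathcal E(X,\theta)$ for every $\tau<\tau_\theta(E)$; since full mass potentials have the same Lelong numbers as $V_\theta$ (\cite[Theorem 1.1]{DDNL1}), \eqref{eq:nu-v-E-tau=tau} gives $\nu(V_\theta,E)\ge\tau$ for all such $\tau$, hence $\nu(V_\theta,E)=\tau_\theta(E)=S_\theta(E)$, and then Theorem \ref{thm:c=inf-A/S} yields $c_\mu\{\hat u^E_\tau\}=c_\mu[V_\theta]\le A_{\chi,\psi}(E)/S_\theta(E)$, which is what the $\varepsilon$-argument needs. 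So your proof is complete only after replacing the claimed strict inequality $S_\theta(E)<\tau_\theta(E)$ by this additional case analysis (or by restricting to the algebraic setting, where your shortcut is legitimate); your treatment of the other branch, $S_\theta(E)<\tau_\theta(E)$, agrees with the paper and is fine.
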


This result implies that  $\delta_\mu>0$. Indeed,  we have
$
\delta_\mu\geq\inf_{u\in\textup{PSH}(X,\theta)}c_\mu[u].
$
By strong openness \cite{GuZh15} the tame measure $\mu$ has $L^p$ density for some $p>1$. So H\"older's inequality and \cite{DK01,Tia87} imply that
$
c_\mu[u]\geq\frac{p-1}{p} c_{\omega^n}[u] >\alpha
$ for some uniform $\alpha >0$.

\begin{proof}
Given $\{u_t\}_t$, if  
$
I_\theta\{u_t\}=\tau^+_{\hat u},
$
then by Lemma \ref{lem: c_delta_est}
$
c_\mu\{\hat u_\tau\}=c_\mu[V_\theta]\geq\delta_\mu
$
. If $I_\theta\{u_t\}<\tau^+_{\hat u}$, then
Theorem \ref{thm:c=inf-A/S} and Proposition \ref{prop:nu-psi-E<S-E} imply that
$$
c_\mu\{\hat u_\tau\}=c_\mu[\hat u_{I_\theta\{u_t\}}]=\inf_E\frac{A_{\chi,\psi}(E)}{\nu(\hat u_{I_\theta\{u_t\}},E)}\geq\inf_E\frac{A_{\chi,\psi}(E)}{S_\theta(E)}=\delta_\mu.
$$
So we obtain that
$
\inf_{\{u_t\}}c_\mu\{\hat u_\tau\}\geq\delta_\mu.
$

To see the reverse direction, let $E$ be any prime divisor over $X$ and let $\{\hat u^E_\tau\}_\tau$ be the bounded test curve constructed in \eqref{eq:psi=P[u]}. Then by Lemma \ref{lem:I=S} we have
$
I_\theta\{u^E_t\}=S_\theta(E),\text{ and } \tau^+_{\hat u^E}=\tau_\theta(E).
$
If $I_\theta\{u^E_t\}=\tau^+_{\hat u^E}$, i.e., $S_\theta(E)=\tau_\theta(E)$, then $\hat u_\tau^E\in\mathcal{E}(X,\theta)$ for all $\tau\in(-\infty,\tau_\theta(E))$ in light of \eqref{eq:radial-I-formula-subgeo-rays}. So \cite[Theorem 1.1]{DDNL1} and \eqref{eq:nu-v-E-tau=tau}
imply that $\nu(V_\theta,E)=\nu(\hat u^E_\tau,E)\geq\tau$ for all $\tau\in(-\infty,\tau_\theta(E)),$ and hence $\nu(V_\theta,E)=\tau_\theta(E)=S_\theta(E).$ So by Theorem \ref{thm:c=inf-A/S} again,
$$
c_\mu\{\hat u^E_\tau\}=c_\mu[V_\theta]\leq\frac{A_{\chi,\psi}(E)}{\nu(V_\theta,E)}=\frac{A_{\chi,\psi}(E)}{S_\theta(E)}.
$$
If $I_\theta\{u^E_t\}<\tau^+_{\hat u^E}$, i.e., $S_\theta(E)<\tau_\theta(E)$, then from \eqref{eq:nu-v-E-tau=tau} we also have $\nu(\hat u^E_{S_\theta(E)},E)\geq S_\theta(E)$ (this is actually an equality, in view of Proposition \ref{prop:nu-psi-E<S-E}). The we derive that 
$$
c_\mu\{\hat u^E_\tau\}=c_\mu[\hat u^E_{I_\theta\{u^E_t\}}]=c_\mu[\hat u^E_{S_\theta(E)}]\leq\frac{A_{\chi,\psi}(E)}{\nu(\hat u^E_{S_\theta(E)},E)}\leq\frac{A_{\chi,\psi}(E)}{S_\theta(E)},
$$
by Lemma \ref{lem:I=S} and Theorem \ref{thm:c=inf-A/S}. This implies the reverse direction.
\end{proof}

Now we are in the position to prove the following key result.

\begin{theorem}[=Theorem \ref{thm:delta=sup-D>0}]
\label{thm:delta=sup=rad-Ding>0'}
One has
\begin{equation}\label{eq: geod_stab_threshold_id}
\delta_\mu=\sup\{\lambda>0|\mathcal{D}^\lambda_\mu\{u_t\}\geq 0\text{ for all sublinear subgeodesic ray }\{u_t\}_t\subset\mathcal{E}^1(X,\theta)\}.
\end{equation}
\end{theorem}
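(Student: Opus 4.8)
The plan is to read off \eqref{eq: geod_stab_threshold_id} from two facts already established: the slope formula of Theorem~\ref{thm: radial_ding} and the valuative identity of Theorem~\ref{thm:delta=inf-c[psi]'}. The bridge between them is the elementary equivalence
\[
\mathcal{D}^\lambda_\mu\{u_t\}\geq 0 \ \Longleftrightarrow\ c_\mu\{\hat u_\tau\}\geq\lambda,\qquad \lambda\in(0,c_\mu[V_\theta]),
\]
which I claim holds for every sublinear subgeodesic ray $\{u_t\}_t\subset\mathcal{E}^1(X,\theta)$. Granting this and writing $\delta'$ for the right-hand side of \eqref{eq: geod_stab_threshold_id}, one computes
\[
\delta'=\sup\{\lambda\in(0,c_\mu[V_\theta]) : c_\mu\{\hat u_\tau\}\geq\lambda \text{ for every sublinear subgeodesic ray }\{u_t\}_t\}=\sup\{\lambda\in(0,c_\mu[V_\theta]) : \delta_\mu\geq\lambda\}
\]
using Theorem~\ref{thm:delta=inf-c[psi]'}, and since $\delta_\mu\leq c_\mu[V_\theta]$ by Lemma~\ref{lem: c_delta_est}, the last supremum is exactly $\delta_\mu$. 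So the remaining work is to (a) prove the equivalence, and (b) check that allowing $\lambda\geq c_\mu[V_\theta]$ in \eqref{eq: geod_stab_threshold_id} cannot push $\delta'$ above $\delta_\mu$.

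For (a) I would start from Theorem~\ref{thm: radial_ding} in the form $\mathcal{D}^\lambda_\mu\{u_t\}=-I_\theta\{u_t\}+\sigma(\lambda)$, where $\sigma(\lambda):=\sup\{\tau : c_\mu[\hat u_\tau]\geq\lambda\}$, and record three facts about $\tau\mapsto c_\mu[\hat u_\tau]$: it is non-increasing (more singular potentials have smaller singularity exponents, and $\hat u_\tau$ is $\tau$-decreasing); it is positive and continuous on $(-\infty,\tau^+_{\hat u})$ — positivity from Theorem~\ref{thm: openness}, continuity because $\tau\mapsto 1/c_\mu[\hat u_\tau]$ is convex there by Lemma~\ref{lem:1/c-convex} applied to the $\tau$-concave family $\tau\mapsto\hat u_\tau$; and it equals $0$ for $\tau>\tau^+_{\hat u}$. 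If $I_\theta\{u_t\}<\tau^+_{\hat u}$, then $c_\mu\{\hat u_\tau\}=c_\mu[\hat u_{I_\theta\{u_t\}}]$, monotonicity gives $\sigma(\lambda)\geq I_\theta\{u_t\}$ when $c_\mu[\hat u_{I_\theta\{u_t\}}]\geq\lambda$, and monotonicity together with continuity at $I_\theta\{u_t\}$ gives $\sigma(\lambda)<I_\theta\{u_t\}$ when $c_\mu[\hat u_{I_\theta\{u_t\}}]<\lambda$; this is precisely the equivalence. If $I_\theta\{u_t\}=\tau^+_{\hat u}$, the finite-energy identity \eqref{eq:radial-I-formula-subgeo-rays}, whose integrand is non-positive, forces $\int_X\theta^n_{\hat u_\tau}=\int_X\theta^n_{V_\theta}$ for all $\tau<\tau^+_{\hat u}$, i.e. $\hat u_\tau\in\mathcal{E}(X,\theta)$, so $c_\mu[\hat u_\tau]=c_\mu[V_\theta]>\lambda$ by Proposition~\ref{prop:c-u=c-P-u}; hence $\sigma(\lambda)=\tau^+_{\hat u}=I_\theta\{u_t\}$, so $\mathcal{D}^\lambda_\mu\{u_t\}=0$, while also $c_\mu\{\hat u_\tau\}=c_\mu[V_\theta]\geq\lambda$, and the equivalence is trivial.

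For (b), if $\lambda>c_\mu[V_\theta]$ the constant ray $u_t\equiv V_\theta$ lies in $\mathcal{E}^1(X,\theta)$ and has $\int_X e^{-\lambda V_\theta}\mathrm{d}\mu=\infty$, whence $\mathcal{D}^\lambda_\mu(V_\theta)=-\infty$ and $\mathcal{D}^\lambda_\mu\{V_\theta\}=-\infty<0$, so no such $\lambda$ contributes to $\delta'$. The borderline $\lambda=c_\mu[V_\theta]$ is harmless: if $\delta_\mu=c_\mu[V_\theta]$ nothing needs checking, while if $\delta_\mu<c_\mu[V_\theta]$ then Theorem~\ref{thm:delta=inf-c[psi]'} together with part (a) produce, for some $\lambda'\in(\delta_\mu,c_\mu[V_\theta])$, a ray with $\mathcal{D}^{\lambda'}_\mu\{u_t\}<0$; since $\lambda\mapsto\mathcal{D}^\lambda_\mu(\varphi)$ is non-increasing — $\mu$ being a probability measure makes $\log\int_X e^{-\lambda\varphi}\mathrm{d}\mu$ convex in $\lambda$ and equal to $0$ at $\lambda=0$, so $\lambda\mapsto-\tfrac{1}{\lambda}\log\int_X e^{-\lambda\varphi}\mathrm{d}\mu$ decreases — the same ray violates $\mathcal{D}^{\lambda}_\mu\{u_t\}\geq 0$ at $\lambda=c_\mu[V_\theta]$. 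Together with the displayed computation this gives $\delta'=\delta_\mu$.

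The substantive inputs, Theorems~\ref{thm: radial_ding} and \ref{thm:delta=inf-c[psi]'}, are already in hand, so I expect this to be mostly bookkeeping; the two points needing genuine care are the boundary case $I_\theta\{u_t\}=\tau^+_{\hat u}$, where one must use that finite energy of the associated test curve forces full Monge--Amp\`ere mass and hence constancy of $c_\mu[\hat u_\tau]$ on the whole interval, and the regime $\lambda\geq c_\mu[V_\theta]$, which is outside the hypotheses of Theorem~\ref{thm: radial_ding} and is dispatched by the constant ray plus monotonicity of $\mathcal{D}^\lambda_\mu$ in $\lambda$.
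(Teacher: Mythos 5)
Your proposal is correct and takes essentially the same route as the paper: both arguments rest on the slope formula of Theorem \ref{thm: radial_ding} and the identity $\delta_\mu=\inf_{\{u_t\}}c_\mu\{\hat u_\tau\}$ of Theorem \ref{thm:delta=inf-c[psi]'}, with the identical case split $I_\theta\{u_t\}<\tau^+_{\hat u}$ versus $I_\theta\{u_t\}=\tau^+_{\hat u}$ and the same appeals to Lemma \ref{lem:1/c-convex} (continuity of $\tau\mapsto c_\mu[\hat u_\tau]$) and Proposition \ref{prop:c-u=c-P-u} (full-mass potentials). The only difference is organizational: you phrase the two inequalities as a per-ray equivalence and explicitly dispatch the regime $\lambda\geq c_\mu[V_\theta]$ via the constant ray and monotonicity of $\lambda\mapsto\mathcal{D}^\lambda_\mu$, a boundary case the paper leaves implicit through its standing assumption $\lambda\in(0,c_\mu[V_\theta])$.
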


\begin{proof}
Assume that $
\mathcal{D}^\lambda\{u_t\}\geq0$ holds for any sublinear subgeodesic ray $\{u_t\}_t \subset \mathcal E^1(X,\theta).
$
Consider the associated test curve $\{\hat{u}_\tau\}_\tau$.
Set for simplicity
$
\beta:=\sup\{\tau:\ \int_Xe^{-\lambda\hat{u}_\tau}\mathrm{d}\mu<\infty\}.
$
Then by Theorem \ref{thm: radial_ding},
$
\beta-I_\theta\{u_t\}=\mathcal{D}^\lambda_\theta\{u_t\}\geq0,
$
so that $\beta\geq I_\theta\{u_t\}$. This means that for any $\tau<I_\theta\{u_t\}$, $c_\mu[\hat u_\tau]\geq\lambda$. 

If $I_\theta\{\hat u_\tau\}=\tau^+_{\hat u}$, then
$c_\mu\{\hat u_\tau\}=c_\mu[V_\theta]\geq c_\mu[\hat u_\tau]\geq\lambda$. 
If $I_\theta\{\hat u_\tau\}<\tau^+_{\hat u}$, then by Lemma \ref{lem:1/c-convex} we also have $c_\mu\{\hat u_\tau\}=c_\mu[\hat u_{I_\theta\{u_t\}}]\geq\lambda$. Taking inf over all such $\{u_t\}_t$ then yields $\delta_\mu\geq\lambda$ by the previous theorem. Hence, 
the right hand side of \eqref{eq: geod_stab_threshold_id} is dominated by $\delta_\mu$.

It remains to argue that for any $\lambda\in(0,\delta_\mu)$, 
$
\mathcal{D}^\lambda_\mu\{u_t\}\geq0$ holds for any sublinear subgeodesic ray $\{u_t\}_t \subset \mathcal E^1(X,\theta)$. To start we note by Proposition \ref{prop:c-u=c-P-u} and Lemma \ref{lem: c_delta_est} that $\mathcal D^\lambda_\mu(v)$ is finite for any $v \in \mathcal E^1(X,\theta)$.

Consider the associated test curve $\{\hat{u}_\tau\}_\tau$ and put again
$
\beta:=\sup\{\tau:\ \int_Xe^{-\lambda\hat{u}_\tau}\mathrm{d}\mu<\infty\}.
$
First, if 
$I_\theta\{u_t\}=\tau^+_{\hat u}$, then in view of \eqref{eq:radial-I-formula-subgeo-rays} $\hat u_\tau$ has full mass for any $\tau<\tau^+_{\hat u}$, so that
$
\int_Xe^{-\lambda\hat u_\tau}\mathrm{d}\mu<\infty\text{ for all }\tau\in(-\infty,\tau^+_{\hat u})
$
by Proposition \ref{prop:c-u=c-P-u} and Lemma \ref{lem: c_delta_est}. Thus by Theorem \ref{thm: radial_ding},
$
\mathcal{D}_\mu^\lambda\{u_t\}=\beta-I_\theta\{u_t\}=\tau^+_{\hat u}-\tau^+_{\hat u}=0,
$
as desired.
Next, if $I_\theta\{u_t\}<\tau^+_{\hat{u}}$, then we can apply Theorem \ref{thm:delta=inf-c[psi]'} to conclude that
$
c_\mu[\hat u_{I_\theta\{u_t\}}]\geq\delta_\mu>\lambda,
$
so that
$
\beta\geq I_\theta\{u_t\}
$.
Thus
$
\mathcal{D}_\mu^\lambda\{u_t\}=\beta-I_\theta\{u_t\}\geq I_\theta\{u_t\}-I_\theta\{u_t\}=0,
$
finishing the proof.
\end{proof}


\begin{proposition}
\label{prop:delta>1=>geodesic-stable}
For any $\lambda\in(0,\delta_\mu)$, then there exists $\varepsilon>0$ such that
\begin{equation}\label{eq: D_rad_est}
\mathcal{D}^\lambda_\mu\{w_t\}\geq-\varepsilon I_\theta\{w_t\}.
\end{equation}
for all finite energy geodesic rays $\{w_t\}_t\in\mathcal{R}^1(X,\theta)$ with $\sup_X w_t=0$. In particular, if $\{w_t\}_t$ is also $d_1$-unit speed, then $
\mathcal{D}^\lambda_\mu\{w_t\}\geq \varepsilon$.
\end{proposition}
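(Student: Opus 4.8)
The plan is to interpolate between the given geodesic ray $\{w_t\}_t$ and the trivial ray $\{V_\theta\}_t$, with interpolation weight calibrated to the ratio $\lambda/\delta_\mu$. Fix $\lambda\in(0,\delta_\mu)$ and put $s_0:=\lambda/\delta_\mu\in(0,1)$; I will show \eqref{eq: D_rad_est} holds with $\varepsilon:=(1-s_0)^n>0$. If $\{w_t\}_t$ is constant equal to $V_\theta$ both sides of \eqref{eq: D_rad_est} vanish, so I may assume it is non-constant. Since $\sup_X w_t=0$, Remark~\ref{rem: sup_ray} gives $\sup_X(w_t-V_\theta)=0$ for all $t$, hence $\tau^+_{\hat w}=0$; then \eqref{eq:radial-I-formula-subgeo-rays} becomes $I_\theta\{w_t\}=\vol(\{\theta\})^{-1}\int_{-\infty}^0\big(\int_X\theta_{\hat w_\tau}^n-\vol(\{\theta\})\big)\,d\tau$, which is strictly negative because the ray is non-constant. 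This normalization $\tau^+_{\hat w}=0$ will be used repeatedly.

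Next I would introduce the auxiliary finite energy sublinear subgeodesic ray
\[
u_t:=s_0\,w_{t/s_0}+(1-s_0)\,V_\theta\in\mathcal E^1(X,\theta),\qquad t>0,
\]
a convex combination of the positively time-rescaled geodesic $\{w_{t/s_0}\}_t$ and the constant ray $\{V_\theta\}_t$ (so it is indeed a sublinear subgeodesic ray, with $u_t\le0$ since $\sup_X w_{t/s_0}=0$). A direct computation with Legendre transforms gives $\hat u_\tau=s_0\hat w_\tau+(1-s_0)V_\theta$, whence $\tau^+_{\hat u}=\tau^+_{\hat w}=0$ and $\theta_{\hat u_\tau}=s_0\theta_{\hat w_\tau}+(1-s_0)\theta_{V_\theta}$ as positive currents. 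Applying Theorem~\ref{thm:delta=inf-c[psi]'} to $\{u_t\}_t$ yields $c_\mu\{\hat u_\tau\}\ge\delta_\mu$. If $I_\theta\{u_t\}<\tau^+_{\hat u}$ this reads $c_\mu[s_0\hat w_{I_\theta\{u_t\}}+(1-s_0)V_\theta]\ge\delta_\mu$; combining $\nu(\hat u_\tau,E)=s_0\nu(\hat w_\tau,E)+(1-s_0)\nu(V_\theta,E)\ge s_0\nu(\hat w_\tau,E)$ with $s_0\delta_\mu=\lambda$, Theorem~\ref{thm:c=inf-A/S} then forces $\nu(\hat w_{I_\theta\{u_t\}},E)\le A_{\chi,\psi}(E)/\lambda$ for every prime divisor $E$ over $X$, that is $c_\mu[\hat w_{I_\theta\{u_t\}}]\ge\lambda$. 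In the remaining case $I_\theta\{u_t\}=\tau^+_{\hat u}=0$ one argues directly: by \eqref{eq:radial-I-formula-subgeo-rays} each $\hat u_\tau$ with $\tau<0$ is of full mass, so $\nu(\hat w_\tau,E)=\nu(V_\theta,E)\le A_{\chi,\psi}(E)/\delta_\mu\le A_{\chi,\psi}(E)/\lambda$ via Lemma~\ref{lem: c_delta_est}, hence again $c_\mu[\hat w_\tau]\ge\lambda$ for all $\tau<0$. Either way, Theorem~\ref{thm: radial_ding} gives
\[
\mathcal D^\lambda_\mu\{w_t\}=-I_\theta\{w_t\}+\sup\{\tau:c_\mu[\hat w_\tau]\ge\lambda\}\ \ge\ -I_\theta\{w_t\}+I_\theta\{u_t\}.
\]

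It then remains to bound $I_\theta\{u_t\}-I_\theta\{w_t\}$ from below. Using \eqref{eq:radial-I-formula-subgeo-rays} and $\tau^+_{\hat u}=\tau^+_{\hat w}=0$ one has $I_\theta\{u_t\}-I_\theta\{w_t\}=\vol(\{\theta\})^{-1}\int_{-\infty}^0\big(\int_X\theta_{\hat u_\tau}^n-\int_X\theta_{\hat w_\tau}^n\big)\,d\tau$. Expanding $\int_X\theta_{\hat u_\tau}^n=\int_X(s_0\theta_{\hat w_\tau}+(1-s_0)\theta_{V_\theta})^n$ by multilinearity of the non-pluripolar product, keeping the pure $\theta_{V_\theta}^n$-term exactly and estimating every mixed term by $\int_X\theta_{\hat w_\tau}^k\wedge\theta_{V_\theta}^{n-k}\ge\int_X\theta_{\hat w_\tau}^n$ (for $1\le k\le n$, valid since $\hat w_\tau\le V_\theta$), one obtains the pointwise-in-$\tau$ bound $\int_X\theta_{\hat u_\tau}^n\ge(1-s_0)^n\vol(\{\theta\})+(1-(1-s_0)^n)\int_X\theta_{\hat w_\tau}^n$, hence $\int_X\theta_{\hat u_\tau}^n-\int_X\theta_{\hat w_\tau}^n\ge(1-s_0)^n\big(\vol(\{\theta\})-\int_X\theta_{\hat w_\tau}^n\big)$. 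Integrating over $\tau<0$ and using $\tau^+_{\hat w}=0$ once more yields $I_\theta\{u_t\}-I_\theta\{w_t\}\ge(1-s_0)^n(-I_\theta\{w_t\})$, and together with the previous display this gives $\mathcal D^\lambda_\mu\{w_t\}\ge(1-s_0)^n(-I_\theta\{w_t\})=-\varepsilon I_\theta\{w_t\}$. Finally, a $d_1$-unit speed ray with $\sup_X w_t=0$ has $w_t\preceq V_\theta$, so $d_1(w_t,V_\theta)=-I_\theta(w_t)=t$, giving $I_\theta\{w_t\}=-1$ and therefore $\mathcal D^\lambda_\mu\{w_t\}\ge\varepsilon$.

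The main obstacle I anticipate is the energy estimate of the last paragraph, and in particular its \emph{uniformity}: what makes the constant $(1-\lambda/\delta_\mu)^n$ independent of the ray is precisely the normalization $\tau^+_{\hat w}=0$ forced by $\sup_X w_t=0$, which makes the $\tau^+$-terms in the radial energy formula \eqref{eq:radial-I-formula-subgeo-rays} cancel cleanly; one also has to make sure the two regimes $I_\theta\{u_t\}<\tau^+_{\hat u}$ and $I_\theta\{u_t\}=\tau^+_{\hat u}$ are handled uniformly. Everything else is a bookkeeping consequence of Theorems~\ref{thm: radial_ding}, \ref{thm:c=inf-A/S}, \ref{thm:delta=inf-c[psi]'} together with the radial energy formula.
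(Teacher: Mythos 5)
Your argument is correct, but it is a genuinely different route from the paper's. You interpolate at the level of rays: averaging $\{w_t\}_t$ with the trivial ray $V_\theta$ at weight $s_0=\lambda/\delta_\mu$, computing $\hat u_\tau=s_0\hat w_\tau+(1-s_0)V_\theta$, and then feeding the auxiliary ray into Theorem \ref{thm:delta=inf-c[psi]'}, Theorem \ref{thm:c=inf-A/S} and the radial formula of Theorem \ref{mthm: main_radial_ding}; the quantitative gain comes from expanding $\theta_{\hat u_\tau}^n$ by multilinearity of non-pluripolar products and the mixed-mass monotonicity $\int_X\theta_{\hat w_\tau}^k\wedge\theta_{V_\theta}^{n-k}\ge\int_X\theta_{\hat w_\tau}^n$ (this is the mixed version of the monotonicity theorem of Witt Nystr\"om/Darvas--Di Nezza--Lu, which you should cite explicitly, as should the fact that full mass potentials have the Lelong numbers of $V_\theta$, used in your boundary case $I_\theta\{u_t\}=\tau^+_{\hat u}$; both are within the paper's toolkit). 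The paper instead interpolates in the exponent: it introduces Tian's $\alpha_\mu$-invariant \eqref{eq:def-alpha}, whose positivity follows from \cite[Proposition 1.4]{BBEGZ16}, applies H\"older's inequality to compare $\mathcal D^\lambda_\mu$ with $\mathcal D^\eta_\mu$ for an intermediate $\eta\in(\lambda,\delta_\mu)$ and with the $\alpha$-part, and then invokes Theorem \ref{thm:delta=sup-D>0} to get $\mathcal D^\eta_\mu\{w_t\}\ge0$. Each approach buys something: yours gives the explicit, $\alpha$-free constant $\varepsilon=(1-\lambda/\delta_\mu)^n$ (dimension-dependent, degenerating as $n\to\infty$) and uses only the valuative machinery already established; the paper's H\"older argument produces the potential-level inequality \eqref{eq: D_twist_ineq}, not just a radial one, and that finite-level inequality is reused later (in the openness argument of Proposition \ref{prop:proper-open}), so your proof could not simply replace it there. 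Your final reduction in the unit-speed case ($d_1(V_\theta,w_t)=-I_\theta(w_t)=t$, hence $I_\theta\{w_t\}=-1$) matches the paper's.
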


\begin{proof}
Consider Tian's $\alpha$-invariant \cite{Tia87} adapted to our setting:
\begin{equation}
    \label{eq:def-alpha}
    \alpha_\mu:=\sup\bigg\{\alpha>0:\sup_{u\in\textup{PSH}(X,\theta)}\int_Xe^{-\alpha(u-\sup_X u)}\mathrm{d}\mu<\infty\bigg\},
\end{equation}
which is positive. Indeed, pick $A>0$ sufficiently large such that $\theta+A\omega$ is K\"ahler, so that $\textup{PSH}(X,\theta)\subset\textup{PSH}(X,\theta+A\omega)$. The positivity of $\alpha_\mu$ follows from \cite[Proposition 1.4]{BBEGZ16}. 

Fix $\eta\in(\lambda,\delta_\mu)$ and $\alpha\in(0,\min\{\lambda,\alpha_\mu\})$. By H\"older's inequality, for all $w \in \mathcal E^1(X,\theta)$ we have
\begin{equation*}
	\begin{aligned}
		-\frac{1}{\lambda}\log\int_Xe^{-\lambda w}\mathrm{d}\mu &\geq-\frac{\lambda-\alpha}{\lambda(\eta-\alpha)}\log\int_Xe^{-\eta w}\mathrm{d}\mu-\frac{\eta-\lambda}{\lambda(\eta-\alpha)}\log \int_Xe^{-\alpha w}\mathrm{d}\mu.\\
	\end{aligned}
\end{equation*}
Therefore, one can find $C>0$ such that for all $w \in \mathcal E^1(X,\theta)$ with $\sup_X w = 0$ we have
\begin{equation} \label{eq: D_twist_ineq}
{\mathcal{D}^\lambda_\mu(w)}\geq\frac{\eta(\lambda-\alpha)}{\lambda(\eta-\alpha)} \mathcal{D}^\eta_\mu(w)-\frac{\alpha(\eta-\lambda)}{\lambda(\eta-\alpha)} I_\theta(w) - C.
\end{equation}
Putting $w_t$ in the above inequality, dividing with $t$ and letting $t\rightarrow\infty$, we find that
$$
\mathcal{D}^\lambda_\mu\{w_t\}\geq\frac{\eta(\lambda-\alpha)}{\lambda(\eta-\alpha)}\mathcal{D}^\eta_\mu\{w_t\}-\frac{\alpha(\eta-\lambda)}{\lambda(\eta-\alpha)}I_\theta\{w_t\}.
$$
So we conclude \eqref{eq: D_rad_est} by Theorem \ref{thm:delta=sup=rad-Ding>0'}. In case $\{w_t\}_t$ is unit speed, we have that $1 = d_1(V_\theta,w_1) = -I_\theta(w_1).$ By linearity of $t \mapsto I_\theta(w_t)$ the last statement also follows.
\end{proof}

\section{Twisted K\"ahler--Einstein metrics}
\label{sec:big-YTD}

\subsection{Energy properness of the Ding functionals}

As in the previous section, let $\{\theta\}$ be a big class and $\mu$ be a tame measure defined in \eqref{eq:def-mu}.
We will be interested in the following KE type complex Monge--Amp\`ere equation: 
\begin{equation}
    \label{eq:tKE-MA-equation}
    \theta_u^n=e^{-\lambda u}\mathrm{d}\mu
\end{equation}
for $\lambda>0$. To make sense of the above equation, we need to further assume that 
$$
\lambda\in(0,c_{\mu}[V_\theta]).
$$
Due to Proposition \ref{prop:c-u=c-P-u} we have $c_\mu[v] > \lambda$ for any $v \in \mathcal E^1(X,\theta)$, hence the $\lambda$-twisted Ding energy $\mathcal{D}^\lambda_\mu$ corresponding to \eqref{eq:tKE-MA-equation} can be defined as follows, having an exponential part $\mathcal L^\lambda_\mu$ and an energy part:
\begin{equation}\label{eq: D_L_def}
\mathcal{D}^\lambda_\mu(u):=\mathcal L_\mu^\lambda(u) - I_\theta(u)= -\frac{1}{\lambda}\log \int_Xe^{-\lambda u}\mathrm{d}\mu-I_\theta(u)\text{ for }u\in\mathcal{E}^1(X,\theta).
\end{equation}
\begin{definition}
    We say that $\mathcal{D}^\lambda_\mu$ is coercive/proper if there exists $\varepsilon>0$ and $C>0$,
$$
\mathcal{D}^\lambda_\mu(u)\geq\varepsilon(\sup_X u-I_\theta(u))-C, \ \ \ u\in\mathcal{E}^1(X,\theta).$$
\end{definition}

 As is well known (cf. \cite{BBGZ13,BBEGZ16}), properness of the twisted Ding energy $\mathcal D^\lambda_\mu$ implies that  \eqref{eq:tKE-MA-equation} admits a finite energy solution:

\begin{proposition}\label{prop: proper_implies_minimizer}
	If $\mathcal D^\lambda_\mu$ is proper, and $\mathcal L_\mu^\lambda$ is $L^1$-continuous, then $\mathcal D^\lambda_\mu$ has a global minimizer in $\mathcal E^1(X,\theta)$. Any such minimizer is a solution to \eqref{eq:tKE-MA-equation}, up to an additive constant.
\end{proposition}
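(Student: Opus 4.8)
The plan is to run the now-standard variational argument of Berman--Boucksom--Guedj--Zeriahi \cite{BBGZ13} in the present degenerate big setting, cf.\ \cite{BBEGZ16,BEGZ10}. It has two parts: first, a direct-method proof that $\mathcal D^\lambda_\mu$ attains its infimum on $\mathcal E^1(X,\theta)$; second, an Euler--Lagrange computation showing that any minimizer solves \eqref{eq:tKE-MA-equation} after subtracting a constant. Throughout I would use that $c_\mu[v]=c_\mu[V_\theta]>\lambda$ for every $v\in\mathcal E^1(X,\theta)$ (Proposition \ref{prop:c-u=c-P-u}), so that $\int_Xe^{-\lambda v}\,\mathrm{d}\mu<\infty$ and $\mathcal D^\lambda_\mu(v)$ is finite.

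For the existence of a minimizer, note first that $\mathcal D^\lambda_\mu$ is invariant under $v\mapsto v+c$, since both $\mathcal L^\lambda_\mu$ and $I_\theta$ increase by $c$. Hence a minimizing sequence $\{u_j\}$ may be normalized so that $\sup_X u_j=0$, which forces $u_j\le V_\theta\le 0$ and, by monotonicity of $I_\theta$, $I_\theta(u_j)\le I_\theta(V_\theta)=0$; thus $\sup_X u_j-I_\theta(u_j)=-I_\theta(u_j)\ge 0$. Coercivity then gives $\mathcal D^\lambda_\mu\ge -C$ on this normalized set, hence everywhere, while $\mathcal D^\lambda_\mu(V_\theta)=-\tfrac1\lambda\log\int_Xe^{-\lambda V_\theta}\,\mathrm{d}\mu<\infty$ bounds the infimum above, so $d:=\inf_{\mathcal E^1}\mathcal D^\lambda_\mu$ is finite. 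Coercivity also yields a uniform bound $I_\theta(u_j)\ge -M$, and since $\{v\in\PSH(X,\theta):\sup_X v=0\}$ is compact in the $L^1$ topology I would pass to a subsequence with $u_j\to u_\infty$ in $L^1$ and $\sup_X u_\infty=0$. Upper semicontinuity of $I_\theta$ gives $I_\theta(u_\infty)\ge\limsup_j I_\theta(u_j)\ge -M>-\infty$, so $u_\infty\in\mathcal E^1(X,\theta)$; and since $\mathcal L^\lambda_\mu$ is $L^1$-continuous and $I_\theta$ is $L^1$-upper semicontinuous, $\mathcal D^\lambda_\mu$ is $L^1$-lower semicontinuous, whence $\mathcal D^\lambda_\mu(u_\infty)\le\liminf_j\mathcal D^\lambda_\mu(u_j)=d$, i.e.\ $u_\infty$ is a global minimizer.

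For the Euler--Lagrange step, let $u_0\in\mathcal E^1(X,\theta)$ be any minimizer and fix $\phi\in C^\infty(X)$. For small $t$ I would use the envelope competitor $u_t:=P_\theta(u_0+t\phi)$; since $u_0-|t|\|\phi\|_\infty\le u_t\le u_0+|t|\|\phi\|_\infty$, we have $u_t\in\mathcal E^1(X,\theta)$ and $\int_Xe^{-\lambda u_t}\,\mathrm{d}\mu<\infty$. From $u_t\le u_0+t\phi$ we get $\mathcal L^\lambda_\mu(u_t)\le\mathcal L^\lambda_\mu(u_0+t\phi)$ (extending $\mathcal L^\lambda_\mu$ to measurable functions by the same formula), so minimality of $u_0$ gives, for $t>0$,
\[
I_\theta(u_t)-I_\theta(u_0)\ \le\ \mathcal L^\lambda_\mu(u_0+t\phi)-\mathcal L^\lambda_\mu(u_0).
\]
Dividing by $t$ and letting $t\searrow 0$, the left-hand side tends to $\tfrac1{\vol(\{\theta\})}\int_X\phi\,\theta_{u_0}^n$ by differentiability of the Monge--Amp\`ere energy along the envelope path $t\mapsto P_\theta(u_0+t\phi)$ at $t=0$ (where it equals $u_0$) \cite{BBGZ13}, while the right-hand side tends to $\big(\int_Xe^{-\lambda u_0}\,\mathrm{d}\mu\big)^{-1}\int_X\phi\,e^{-\lambda u_0}\,\mathrm{d}\mu$ by dominated convergence (here boundedness of $\phi$ is what makes this immediate). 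Replacing $\phi$ by $-\phi$ yields the reverse inequality, so
\[
\frac1{\vol(\{\theta\})}\int_X\phi\,\theta_{u_0}^n=\frac{\int_X\phi\,e^{-\lambda u_0}\,\mathrm{d}\mu}{\int_Xe^{-\lambda u_0}\,\mathrm{d}\mu}\qquad\text{for all }\phi\in C^\infty(X),
\]
hence the Radon measures $\theta_{u_0}^n$ and $\tfrac{\vol(\{\theta\})}{\int_Xe^{-\lambda u_0}\,\mathrm{d}\mu}\,e^{-\lambda u_0}\,\mathrm{d}\mu$ coincide. Finally, with $c:=\vol(\{\theta\})\big/\int_Xe^{-\lambda u_0}\,\mathrm{d}\mu>0$, replacing $u_0$ by $u_0-\tfrac1\lambda\log c$ leaves $\theta_{u_0}^n$ unchanged and multiplies $e^{-\lambda u_0}$ by $c$, turning the identity into $\theta_{u_0}^n=e^{-\lambda u_0}\,\mathrm{d}\mu$, which is \eqref{eq:tKE-MA-equation}.

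The hard part will be the Euler--Lagrange step: justifying the differentiation in the degenerate big setting. Concretely, one must check that the envelope competitors $P_\theta(u_0+t\phi)$ remain in $\mathcal E^1(X,\theta)$ with uniformly integrable exponential weight (done above), that $t\mapsto I_\theta(P_\theta(u_0+t\phi))$ is genuinely differentiable at $t=0$ with the stated derivative — this relies on the orthogonality of $\theta_{P_\theta(f)}^n$ with the non-contact set and the differentiability results of \cite{BBGZ13}, which remain valid for $u_0\in\mathcal E^1(X,\theta)$ — and that testing against $\phi\in C^\infty(X)$ is enough to identify the two measures. A secondary point to be careful about in the existence step is that the $L^1$-limit of the minimizing sequence lies in $\mathcal E^1(X,\theta)$ and not merely in $\PSH(X,\theta)$; this is precisely where the energy bound $I_\theta(u_j)\ge -M$ coming from coercivity, together with the upper semicontinuity of $I_\theta$, is used.
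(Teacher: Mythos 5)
Your proposal is correct and follows essentially the same route as the paper: a direct method argument (normalized minimizing sequence, coercivity bound on $I_\theta$, $L^1$-compactness, upper semicontinuity of $I_\theta$ plus the assumed $L^1$-continuity of $\mathcal L^\lambda_\mu$ giving lower semicontinuity of $\mathcal D^\lambda_\mu$), followed by the Euler--Lagrange step via the envelope competitor $P_\theta(u_0+t\phi)$, the inequality $\mathcal L^\lambda_\mu(P_\theta(u_0+t\phi))\le\mathcal L^\lambda_\mu(u_0+t\phi)$, and the differentiability of $t\mapsto I_\theta(P_\theta(u_0+t\phi))$ from \cite{BBGZ13} (the paper invokes \cite[Lemma 4.2]{BBGZ13}), concluding by testing against bounded functions and adjusting by a constant.
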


We believe that the $L^1$-continuity of $\mathcal L_\mu^\lambda$ holds unconditionally. However to stay focused, we do not attempt to prove this here. We do point out later that this always holds in case of our applications, as an easy consequence of \cite[Theorem 0.2(2)]{DK01}. 

\begin{proof}
	Let $u_i\in\mathcal E^1(X,\theta)$ with $\sup_X u_i=0$ be a minimizing sequence:
	$$
	\lim_{i\rightarrow\infty}\mathcal D^\lambda_\mu(u_i)=\inf_{u\in\mathcal E^1(X,\theta)}\mathcal D^\lambda_\mu(u).
	$$
	The properness condition implies that we can find $C>0$ such that $I_\theta(u_i)\geq -C$ for all $i$.
	Then up to a subsequence one can find $u_\infty\in\mathcal E^1(X,\theta)$ such that $u_i\rightarrow u_\infty$ in $L^1$. Note that by our assumption $\mathcal D^\lambda_\mu$ is $L^1$-lsc on $\mathcal{E}^1(X,\theta)$.
	So $u_\infty$ is a minimizer of $\mathcal D^1_\mu$. 
	
	We now proceed to show that $u_\infty$ solves \eqref{eq: KE_scal_eq}. Indeed, for any $f\in C^0(X,\RR)$, we have
	$$
	\mathcal D^\lambda_\mu(u_\infty)\leq \mathcal D^\lambda(P_\theta(u_\infty+tf))\leq\frac{-1}{\lambda}\log \int_X e^{-\lambda(u_\infty+tf)}\mathrm d\mu-I_\theta(P_\theta(u_\infty+tf))
	$$
	for all $t\in\RR$. So the expression
	$
	\frac{-1}{\lambda}\log \int_X e^{-\lambda(u_\infty+tf)}\mathrm d\mu-I_\theta(P_\theta(u_\infty+tf))
	$
is minimized at $t=0$. By \cite[Lemma 4.2]{BBGZ13} we can take derivaties and conclude that
$$
\frac{\int_X f e^{-\lambda u_\infty}\mathrm d\mu}{\int_Xe^{-\lambda u_\infty}\mathrm d\mu}=\frac{\int_X f\theta^n_{u_\infty}}{\vol(\{\theta\})}
$$
for any $f\in C^0(X,\RR)$. This implies that
$
\frac{e^{-\lambda u_\infty}\mathrm d\mu}{\int_Xe^{-\lambda u_\infty}\mathrm d\mu}=\frac{\theta^n_{u_\infty}}{\vol(\{\theta\})},
$
finishing the argument.
\end{proof}

Now we relate the properness of $\lambda$-twisted Ding functional to destabilizing geodesic rays. For $\lambda = 1$ a similar result appears in \cite[Theorem 2.16]{BBJ21} in the ample case. One of the novelties of our work is that the argument below does not use the (convexity of the) K-energy, allowing us to extend the scope of the result significantly.

\begin{theorem}
\label{thm:non-proper-imply-destablizing-ray} Let $\lambda\in(0,c_\mu[V_\theta])$. Assume that $\mathcal{D}^\lambda_\mu$ is not proper, $\mathcal L_\mu^\lambda$ is $L^1$-continuous on $\mathcal E^1(X,\theta)$, and  $\mathcal{D}^\lambda_\mu$ is convex along the geodesics of $\mathcal{E}^1(X,\theta)$. Then there exists a non-trivial finite energy geodesic ray $\{v_t\}_t\in\mathcal{R}^1(X,\theta)$ such that $\mathcal{D}_\mu^\lambda\{v_t\}\leq0$.
\end{theorem}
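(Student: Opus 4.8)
The plan is to run the classical variational argument producing a destabilizing geodesic ray from a non‑minimizing sequence (as in \cite[\S 2]{BBJ21}), but phrased entirely in terms of the $\lambda$‑Ding functional so that only \emph{convexity of $\mathcal D^\lambda_\mu$ along geodesics} is used, never the K‑energy.

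\emph{Step 1: extract a sequence.} Both $\mathcal D^\lambda_\mu$ and $u\mapsto \sup_X u-I_\theta(u)$ are translation invariant, so non‑properness yields, for each $j\in\NN$, a potential $u_j\in\mathcal E^1(X,\theta)$ normalized by $\sup_X u_j=0$ with
\[
\mathcal D^\lambda_\mu(u_j)<\tfrac1j\bigl(\sup_X u_j-I_\theta(u_j)\bigr)-j=-\tfrac1j I_\theta(u_j)-j .
\]
Set $T_j:=-I_\theta(u_j)\ge 0$. I would first check $T_j\to\infty$: since $\mathcal L^\lambda_\mu$ is $L^1$‑continuous and $I_\theta$ is $L^1$‑usc, $\mathcal D^\lambda_\mu=\mathcal L^\lambda_\mu-I_\theta$ is $L^1$‑lsc on $\mathcal E^1(X,\theta)$, hence bounded below on the $L^1$‑compact set $\{u\in\PSH(X,\theta):\sup_X u=0,\ I_\theta(u)\ge -M\}$ for every $M$; if $T_j\le M$ along a subsequence, then $\mathcal D^\lambda_\mu(u_j)<M/j-j\to-\infty$ there, a contradiction. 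Note also $\mathcal D^\lambda_\mu(V_\theta)=-\tfrac1\lambda\log\int_X e^{-\lambda V_\theta}\,\mathrm d\mu\in\RR$ since $\lambda<c_\mu[V_\theta]$.

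\emph{Step 2: build the ray.} For $j$ with $T_j>0$, let $[0,T_j]\ni t\mapsto w^j_t\in\mathcal E^1(X,\theta)$ be the unit‑speed finite‑energy geodesic from $w^j_0=V_\theta$ to $w^j_{T_j}=u_j$; as $u_j\le 0$ forces $u_j\le V_\theta$, we have $P_\theta(V_\theta,u_j)=u_j$, so $d_1(V_\theta,u_j)=-I_\theta(u_j)=T_j$. Along $w^j$ the energy is affine, so $I_\theta(w^j_t)=-t$, and $t\mapsto\sup_X w^j_t$ is affine (as in Remark~\ref{rem: sup_ray}, which applies verbatim to geodesic segments), hence $\equiv 0$. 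Then I would invoke the standard compactness procedure for finite‑energy geodesic rays (\cite[Theorem~4.5]{DL20}, \cite{DDNL3}): after passing to a subsequence, $w^j$ converges locally uniformly in $t$ (in particular $w^j_t\to v_t$ in $L^1$ for each $t$) to some $\{v_t\}_{t\ge 0}\in\mathcal R^1(X,\theta)$ with $v_0=V_\theta$, $I_\theta(v_t)=-t$ and $\sup_X v_t=0$. Since $I_\theta(v_t)=-t\ne 0$ for $t>0$, the ray $\{v_t\}_t$ is non‑trivial.

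\emph{Step 3: conclude via convexity.} Fix $t\ge 0$ and $j$ with $T_j>t$. Convexity of $\mathcal D^\lambda_\mu$ along the geodesic $w^j$ gives
\[
\mathcal D^\lambda_\mu(w^j_t)\le\Bigl(1-\tfrac{t}{T_j}\Bigr)\mathcal D^\lambda_\mu(V_\theta)+\tfrac{t}{T_j}\mathcal D^\lambda_\mu(u_j)<\Bigl(1-\tfrac{t}{T_j}\Bigr)\mathcal D^\lambda_\mu(V_\theta)+\tfrac{t}{T_j}\Bigl(\tfrac{T_j}{j}-j\Bigr)\le\Bigl(1-\tfrac{t}{T_j}\Bigr)\mathcal D^\lambda_\mu(V_\theta)+\tfrac{t}{j}.
\]
Letting $j\to\infty$ (so $T_j\to\infty$) and using the $L^1$‑lsc of $\mathcal D^\lambda_\mu$ together with $w^j_t\to v_t$ in $L^1$,
\[
\mathcal D^\lambda_\mu(v_t)\le\liminf_{j\to\infty}\mathcal D^\lambda_\mu(w^j_t)\le\mathcal D^\lambda_\mu(V_\theta).
\]
As this bound is uniform in $t$, $\mathcal D^\lambda_\mu\{v_t\}=\liminf_{t\to\infty}\mathcal D^\lambda_\mu(v_t)/t\le 0$, which is the claim.

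The one genuinely delicate point is Step 2 — extracting an honest element of $\mathcal R^1(X,\theta)$ from the segments $w^j$ with the correct affine behavior of $I_\theta$ and $\sup_X$ along the limit; this relies on completeness of $(\mathcal E^1,d_1)$, compactness of energy sublevel sets, and stability of finite‑energy geodesics under $L^1$‑limits, all drawn from \cite{DL20,DDNL3}, and is independent of the Ding functional itself. The two hypotheses enter only through the $L^1$‑lower semicontinuity of $\mathcal D^\lambda_\mu$ (used both to force $T_j\to\infty$ and in the last display) and convexity along geodesics (used in Step 3); the convexity, which in the setting of Theorem~\ref{thm: main_KE_twist_exist} is supplied by $\eta_\psi\ge0$ via \cite{BePa08}, is exactly what converts the ``bad'' potentials $u_j$ into a controlled ray.
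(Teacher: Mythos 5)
There is a genuine gap, and it sits exactly at the point you yourself flag as ``delicate'': Step 2. No result in the literature you cite gives the compactness statement you invoke. \cite[Theorem 4.5]{DL20} approximates a \emph{given} psh geodesic ray by rays of minimal singularity type; neither it nor \cite{DDNL3} says that a sequence of unit-speed finite energy geodesic segments $w^j$ over growing intervals has a subsequence converging in $L^1$ to an element of $\mathcal{R}^1(X,\theta)$ with $I_\theta(v_t)=-t$ and $\sup_X v_t=0$. What the weak ($L^1_{\mathrm{loc}}$) compactness of $\theta$-psh functions actually yields is only a sublinear \emph{subgeodesic} ray. Worse, $I_\theta$ is merely usc under $L^1$-convergence, so along the limit one only gets $0\geq I_\theta(v_t)\geq -t$; the energy can jump up, the limit need not be unit speed, need not be a geodesic, and could perfectly well be the trivial ray $v_t\equiv V_\theta$. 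This destroys both of the conclusions you need from Step 2: non-triviality (your argument ``$I_\theta(v_t)=-t\neq 0$'' has no justification) and membership in $\mathcal{R}^1(X,\theta)$. Note also that Step 3 alone cannot rescue this: the bound you obtain there, $\mathcal D^\lambda_\mu(v_t)\leq\mathcal D^\lambda_\mu(V_\theta)$ uniformly in $t$, is satisfied by the trivial ray, so it gives $\mathcal D^\lambda_\mu\{v_t\}\leq 0$ only \emph{after} one already knows the limit is a genuine non-trivial geodesic ray.

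The paper's proof is designed precisely to avoid this trap. It only claims that the limit $\{u_t\}_t$ is a finite energy sublinear subgeodesic ray, and it transfers to the limit the \emph{strong} estimate $\mathcal L^\lambda_\mu(u_t)\leq -t$ (this is where the $L^1$-continuity of $\mathcal L^\lambda_\mu$ is used, together with $\sup_X u_t=0$ and $I_\theta(u_t)\geq -t$). It then replaces $\{u_t\}_t$ by its maximization $\{v_t\}_t$ from \eqref{eq: maximization_def}, which \emph{is} a geodesic ray by the extended Ross--Witt Nystr\"om correspondence (Theorem \ref{thm: max_fin_en_test_curve_ray_duality}), and uses the radial formulas — Proposition \ref{prop: I-rad_identity} for $I_\theta$, and Theorem \ref{thm: radial_ding} together with Proposition \ref{prop:c-u=c-P-u} for the Ding part — to see that $I_\theta\{v_t\}=I_\theta\{u_t\}$ and $\mathcal L^\lambda_\mu\{v_t\}=\mathcal L^\lambda_\mu\{u_t\}\leq -1$, hence $\mathcal D^\lambda_\mu\{v_t\}\leq 0$; non-triviality then follows because the trivial ray would force $\mathcal L^\lambda_\mu\{v_t\}=0$. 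If you want to keep your outline, you must either prove the compactness/geodesic-stability statement you assert (which is not known in this setting), or replace Step 2 by the subgeodesic-limit-plus-maximization mechanism above — in which case you also need the invariance of the radial Ding functional under maximization, i.e.\ the Legendre-transform machinery of Sections 3--4, which your proposal does not use.
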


\begin{proof}
    
Since $\mathcal L_\mu^\lambda (u) = \mathcal D^\lambda_\mu(u) + I_\theta(u)$, we notice that $\mathcal L_\mu^\lambda$ is also convex along geodesics, as $I_\theta$ is linear along them. Recall also that $I_\theta$ is $L^1$-usc \cite{BBEGZ16}, so 
$\mathcal{D}^\lambda_\mu$ is $L^1$-lsc.

Since $\mathcal D^\lambda_\mu$ is not proper, there exists $\varphi^j \in \mathcal E^1(X,\theta)$ with $\sup_X\varphi^j=0$ satisfying
$$
\mathcal{D}^\lambda_\mu(\varphi^j)\leq\frac{1}{j}d_1(V_\theta,\varphi^j)-j \text{ for }j\in\mathbb{N}_{>0},
$$
where $d_1(V_\theta,\varphi^j)=I_\theta(V_\theta)-I_\theta(\varphi^j)=-I_\theta(\varphi^j)$.
We claim that  $d_1(V_\theta,\varphi^j)\rightarrow\infty$. Indeed, suppose that $I_\theta(\varphi^j)\geq -C$ for all $j$. Since $I_\theta$ is $L^1$-usc, after possibly passing to a subsequence, one can find $\varphi^\infty\in\mathcal{E}^1(X,\theta)$ with $\varphi^j\to\varphi^\infty$ in the $L^1$ topology. Using that $\mathcal{D}^\lambda_\mu$ is $L^1$-lsc, we deduce $-\infty<\mathcal{D}^\lambda_\mu(\varphi^\infty)\leq\liminf_j\mathcal{D}^1_\mu(\varphi^j)=-\infty$, a contradiction.

As a result, using $d_1(V_\theta,\varphi^j) = -I_\theta(\varphi^j)$, we get that  
$$\limsup_j \frac{\mathcal L_\mu^\lambda(\varphi^j)}{d_1(V_\theta,\varphi^j)} =\limsup_j \frac{\mathcal D_\mu^\lambda(\varphi^j) - d_1(V_\theta,\varphi^j)}{d_1(V_\theta,\varphi^j)} \leq -1 \ \ \textup{ and } \ \ d_1(V_\theta, \varphi^j) \to \infty.$$

Let $[0,d_1(V_\theta,\varphi^j)] \ni t \mapsto u^j_t \in \mathcal E^1(X,V_\theta)$ be the unit speed finite energy geodesic joining $V_\theta, \varphi^j$. Due to the convexity of $\mathcal L_\mu^\lambda$ along such geodesic segments we have that
$$\limsup_j \frac{\mathcal L_\mu^\lambda(u^j_t)}{t} \leq -1, \ \ d_1(V_\theta,u^j_t) = -I_\theta(u^j_t) = t  \ \ \textup{ for all }t \in [0,d_1(V_\theta,\varphi^j)].$$

As the geodesic segments $t \to u^j_t$ emenate from $V_\theta$ we have $\sup_X(u^j_t - V_\theta) = \sup_X u^j_t=0, t \in [0,d_1(V_\theta,\varphi^j)]$, as follows from  Remark \ref{rem: sup_ray}.
It is well known that the condition $\sup_X u^j_t=0$ implies that the $L^1$ norm on $X$ of $u^j_t$ is uniformly bounded  with respect to $j,t$ (see for example \cite[Lemma 3.45]{Dar19}). By Fubini's theorem, the $L^1$ norm of $(t,x) \to u^j_t(x)$ is uniformly bounded on any compact subset $K \subset (0,\infty) \times X$  (for high enough $j$ we have $K \subset (0,d_1(V_\theta,\varphi^j)) \times X)$).

Let $S_j := (0,d_1(V_\theta,\varphi^j)) + i \mathbb R, \ S := (0,\infty) + i \mathbb R \subset \mathbb C$. We get that 
$$U^j(s,x) := u^j_{\textup{Re }s}(x) \in \textup{PSH}(S_j \times X, \pi^* \theta).$$

Due to $i \mathbb R$-invariance, on any compact subset of $S \times X$ the $L^1$-norms of $U^j$ are uniformly bounded. As a result, we can apply \cite[Proposition I.5.9]{Dem12} to conclude existence of $U \in \textup{PSH}(S \times X, \pi^* \theta)$ such that $U^j \to U$ with respect to $L^1_{\textup{loc}}(S \times X)$. Since $U$ is $i\mathbb R$-invariant we obtain existence of a subgeodesic ray $\{u_t\}_t$ such that $U(s,x) = u_{\textup{Re }s}(x)$.

Next we argue (among other things) that $\{u_t\}$ is actually a finite energy sublinear subgeodesic. By $i\mathbb R$-invariance we have that $u^j_t(x) \to u_t(x)$ with respect to $L^1_\textup{loc}((0,\infty) \times X)$, hence by Fubini's theorem we obtain that $u^j_t$ converges to $u_t$ with respect to $L^1(X)$, for all $t \in (0,\infty) \setminus E$, where $E$ is a set of Lebesgue measure zero.
Thus
\begin{equation}\label{eq: limit_ray_estimates}
\frac{\mathcal L_\mu^\lambda (u_t)}{t} \leq - 1, \ \ \sup_X u_t =0, \ \ 0 \geq I_\theta(u_t) \geq -t, \textup{ for all } t \in (0,\infty) \setminus E,
\end{equation}
as $\mathcal L_\mu^\lambda$ is $L^1$-continuous, $\sup_X(\cdot)$ is $L^1$-continuous and $I_\theta$ is $L^1$-usc \cite{BBEGZ16}.

Let $l_j \in (0,\infty) \setminus E$ be a sequence converging to some $t \in (0,\infty)$. 
Due to $t$-convexity of $\{u_t\}_t$, and the fact that $\sup_X u_{l_j} =0$, we must have that $u_{l_j}$ $L^1$-converges to $u_t$. As a result, we have that \eqref{eq: limit_ray_estimates} must hold for all $t \in (0,\infty)$.

In addition,  due to \cite[Theorem 3.11]{DDNL3}, we see that $u_t\to V_\theta$ in $L^1$ as $t \to 0$, as we have $d_1(u_t,V_\theta) = - I_\theta(u_t) \to 0$ as $t \to 0$. By the definition of $\mathcal L_\mu^\lambda$ and \eqref{eq:def-L} we get that 
$$
\mathcal L_\mu^\lambda\{u_t\} = I_\theta\{u_t\} + {\mathcal D}^\lambda_\mu\{u_t\}\leq -1.
$$ 

Let $\{v_t\}_t$ be the maximization of $\{u_t\}_t$ defined in \eqref{eq: maximization_def}. By \eqref{eq:radial-I-formula-subgeo-rays} and \eqref{eq: limit_ray_estimates}, we have that
$I_\theta\{v_t\}=I_\theta\{u_t\}\geq-1.
$
By Theorem \ref{thm: radial_ding} and Proposition \ref{prop:c-u=c-P-u}, we then have that
$
{\mathcal D}^\lambda_\mu\{v_t\}={\mathcal D}^\lambda_\mu\{u_t\}\leq 0$ and also ${\mathcal L}^\lambda_\mu\{v_t\}={\mathcal L}^\lambda_\mu\{u_t\} \leq -1.
$

To finish we argue that $\{v_t\}_t$ is non-trivial. Since $\sup_X v_t = \sup_X u_t =0$, we only need to rule out that $v_t= V_\theta, \ t \geq 0$. If this were the case, then ${\mathcal L}^\lambda_\mu\{v_t\}=0$  for all $t\geq0$. This would imply that $0={\mathcal L}^\lambda_\mu\{v_t\} = \mathcal L_\mu^\lambda\{u_t\}  \leq -1$, which is absurd. 
\end{proof}

Lastly, using a `thermodynamic argument' (cf. \cite[Propsition 4.11]{BBEGZ16} and \cite[Proposition 3.5]{Zhang21}), one can  show that properness of the $\lambda-$Ding functional is an open condition. This result will only be used to prove Proposition \ref{prop:unique-imply-delta>1}.

\begin{proposition}
\label{prop:proper-open}
    If $\mathcal{D}^\lambda_\mu$ is proper, then so is $\mathcal{D}^{\lambda + \tau}_\mu$ for $\tau>0$ small. In particular,   $\delta_\mu>\lambda$.
\end{proposition}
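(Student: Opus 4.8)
The plan is a standard ``thermodynamic'' interpolation argument: first reduce to normalized potentials, then bound $\mathcal D^{\lambda+\tau}_\mu$ from below by a convex combination of $\mathcal D^\lambda_\mu$ and an auxiliary Ding-type term evaluated at a fixed exponent where everything is uniformly controlled, and finally feed in the hypothesis. To begin, I would note that adding a constant $c$ to $u$ shifts both $\mathcal L^\lambda_\mu$ and $I_\theta$ by $c$, so that $\mathcal D^\lambda_\mu$ and the $J$-type quantity $\sup_X u-I_\theta(u)$ are translation invariant; hence it suffices to verify all inequalities for $u\in\mathcal E^1(X,\theta)$ with $\sup_X u=0$, for which $u\le V_\theta\le 0$.

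Since $\lambda<c_\mu[V_\theta]$, I would fix an auxiliary exponent $\lambda'\in(\lambda,c_\mu[V_\theta])$ once and for all, and allow only $\tau\in(0,\lambda'-\lambda)$; this guarantees $\lambda+\tau<\lambda'<c_\mu[V_\theta]$, so that $\mathcal D^{\lambda+\tau}_\mu$ is well defined and finite on $\mathcal E^1(X,\theta)$ by Proposition \ref{prop:c-u=c-P-u}. For $u$ with $\sup_X u=0$ one has $u\le V_\theta$, hence $1\le\int_X e^{-\lambda' u}\,\mathrm d\mu\le\int_X e^{-\lambda' V_\theta}\,\mathrm d\mu<\infty$, so that $\mathcal L^{\lambda'}_\mu(u)\ge -C'$ uniformly, with $C':=\tfrac1{\lambda'}\log\int_X e^{-\lambda' V_\theta}\,\mathrm d\mu$. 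Writing $\lambda+\tau$ as a convex combination of $\lambda$ and $\lambda'$ and using the convexity of $t\mapsto\log\int_X e^{-tu}\,\mathrm d\mu$ (Hölder's inequality, exactly as in the displayed estimate in the proof of Proposition \ref{prop:delta>1=>geodesic-stable}), a short rearrangement gives
\[
\mathcal L^{\lambda+\tau}_\mu(u)\ \ge\ a\,\mathcal L^{\lambda}_\mu(u)+b\,\mathcal L^{\lambda'}_\mu(u),
\]
where $a,b>0$, $a+b=1$ depend only on $\lambda,\lambda',\tau$. Subtracting $I_\theta(u)$, using the identity $a\mathcal L^\lambda_\mu(u)-I_\theta(u)=a\mathcal D^\lambda_\mu(u)-bI_\theta(u)$ together with the lower bound on $\mathcal L^{\lambda'}_\mu(u)$, and then inserting the hypothesis $\mathcal D^\lambda_\mu(u)\ge-\varepsilon I_\theta(u)-C$ (valid when $\sup_X u=0$), I obtain $\mathcal D^{\lambda+\tau}_\mu(u)\ge (a\varepsilon+b)(\sup_X u-I_\theta(u))-(aC+bC')$. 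As $a\varepsilon+b>0$ and the estimate is translation invariant, this establishes properness of $\mathcal D^{\lambda+\tau}_\mu$ for every $\tau\in(0,\lambda'-\lambda)$, in particular for $\tau$ small.

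For the final clause, I would observe that properness of $\mathcal D^{\lambda+\tau}_\mu$ together with $\sup_X u-I_\theta(u)\ge 0$ (which holds since $I_\theta(u-\sup_X u)\le I_\theta(V_\theta)=0$) yields a uniform lower bound $\mathcal D^{\lambda+\tau}_\mu(u)\ge-C''$ on all of $\mathcal E^1(X,\theta)$; hence $\mathcal D^{\lambda+\tau}_\mu\{u_t\}=\liminf_{t\to\infty}\mathcal D^{\lambda+\tau}_\mu(u_t)/t\ge 0$ for every sublinear subgeodesic ray $\{u_t\}_t\subset\mathcal E^1(X,\theta)$, and Theorem \ref{thm:delta=sup=rad-Ding>0'} then gives $\delta_\mu\ge\lambda+\tau>\lambda$. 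The argument is soft and I do not expect a serious obstacle; the only point requiring care is selecting the auxiliary exponent $\lambda'$ strictly below $c_\mu[V_\theta]$ and controlling $\int_X e^{-\lambda' u}\,\mathrm d\mu$ directly via $u\le V_\theta$ (rather than via Tian's $\alpha$-invariant), and, correspondingly, restricting to $\tau<\lambda'-\lambda$ so that $\mathcal D^{\lambda+\tau}_\mu$ is defined at all.
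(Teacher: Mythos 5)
Your interpolation scheme breaks down at the one step that carries all the weight: the claimed uniform lower bound $\mathcal L^{\lambda'}_\mu(u)\ge -C'$ for normalized $u$. From $u\le V_\theta\le 0$ you get $e^{-\lambda' u}\ge e^{-\lambda' V_\theta}$, hence $\int_X e^{-\lambda' u}\,\mathrm d\mu\ \ge\ \int_X e^{-\lambda' V_\theta}\,\mathrm d\mu$; the inequality you wrote goes the wrong way, and what monotonicity actually yields is an \emph{upper} bound on $\mathcal L^{\lambda'}_\mu(u)$, not a lower one. A uniform upper bound on $\int_X e^{-\lambda' u}\,\mathrm d\mu$ over all $u\in\mathcal E^1(X,\theta)$ with $\sup_X u=0$ is essentially the statement that Tian's invariant $\alpha_\mu$ of \eqref{eq:def-alpha} exceeds $\lambda'$ (Proposition \ref{prop:c-u=c-P-u} gives finiteness of each individual integral, never uniformity), and such a bound at an exponent $\lambda'>\lambda$ is precisely the kind of information the proposition is trying to produce — assuming it is circular. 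Once that bound is removed, your Hölder interpolation cannot help: convexity of $t\mapsto\log\int_X e^{-tu}\,\mathrm d\mu$ only controls the functional at an exponent lying \emph{between} two exponents where you already have uniform control, so it transfers coercivity downward in the exponent (this is exactly how \eqref{eq: D_twist_ineq} and Proposition \ref{prop:delta>1=>geodesic-stable} are used, with the small endpoint supplied by $\alpha_\mu$), but it can never produce a gain above $\lambda$. Any choice of endpoints for which your argument is legitimate ($\alpha<\alpha_\mu$ and $\lambda$) places the target strictly below $\lambda$.

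This is why the paper's proof is not soft: to gain the factor $(1+\tau)$ it solves the auxiliary Monge--Amp\`ere equation \eqref{eq:u-solve-MA} via \cite[Theorem B]{BEGZ10}, bounds the entropy $H_\mu(u)$ of the solution, and plays the coercivity estimate \eqref{eq:Ding-proper-esti} against two applications of Jensen's inequality to arrive at the uniform Moser--Trudinger type bound \eqref{eq:MT-ineq} at the strictly larger exponent $\lambda(1+\tau)$; only then does the downward interpolation \eqref{eq: D_twist_ineq} give properness of $\mathcal D^{\lambda'}_\mu$ for all $\lambda'<\lambda(1+\tau)$, and Theorem \ref{thm:delta=sup-D>0} gives $\delta_\mu\ge\lambda(1+\tau)>\lambda$. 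Your final paragraph (properness at $\lambda+\tau$ implies the radial functional is nonnegative, hence $\delta_\mu\ge\lambda+\tau$ by Theorem \ref{thm:delta=sup-D>0}) is fine, but it is conditional on the first part, which as written does not go through; some substitute for the thermodynamic argument is needed there.
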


\begin{proof}

Since $\mathcal{D}^\lambda_\mu$ is coercive, there exist $\varepsilon>0$ and $C_0>0$ such that
$$
-\frac{1}{\lambda}\log\int_Xe^{-\lambda v}\mathrm{d}\mu-I_\theta(v)\geq\varepsilon\big(\sup_X v-I_\theta(v)\big)-C_0\text{ for all }v\in\mathcal{E}^1(X,\theta).
$$
Using that $\sup_X v=\sup_X(v-V_\theta)$, we derive
$$
\sup_X v-I_\theta(v)\geq\frac{1}{\vol(\{\theta\})}\int_X(v-V_\theta)\theta_{V_\theta}^n-I_\theta(v)\geq\frac{1}{n}\bigg(I_\theta(v)-\frac{1}{\vol(\{\theta\})}\int_X(v-V_\theta)\theta_v^n\bigg),
$$
where the last inequality follows from \cite[(2.7)]{BBGZ13}. Thus we have, for any $v\in\mathcal{E}^1(X,\theta)$,
\begin{equation}
\label{eq:Ding-proper-esti}
    -\frac{1}{\lambda}\log\int_Xe^{-\lambda v}\mathrm{d}\mu-E_\theta(v)\geq\frac{\varepsilon}{n}\bigg(E_\theta(v)-\frac{1}{\vol(\{\theta\})}\int_X(v-V_\theta)\theta_v^n\bigg)-C_0.
\end{equation}
Recall that $\lambda<c_\mu[V_\theta]$,
we may fix $\varepsilon$ small enough so that $\lambda(1+2\varepsilon/n)<c_\mu[V_\theta]$ and put
\begin{equation*}
    \label{eq: c1_est}
    C_1:=\int_Xe^{-\lambda(1+\varepsilon/n)V_\theta}\mathrm{d}\mu<\infty.
\end{equation*}

Our goal is to find a constant $\tau>0$ such that
\begin{equation}
    \label{eq:MT-ineq}
    \sup_{\varphi\in\mathcal{E}^1(X,\theta)}\int_Xe^{-\lambda(1+\tau)(\varphi-I_\theta(\varphi))}\mathrm{d}\mu<\infty.
\end{equation}
Note that this is equivalent to $\inf_{\varphi\in\mathcal{E}^1(X,\theta)}D^{\lambda(1+\tau)}_\mu(\varphi)>-\infty$, which implies that $\delta_\mu\geq\lambda(1+\tau)$ by Theorem \ref{thm:delta=sup-D>0}. Moreover, in view of \eqref{eq: D_twist_ineq}, $\mathcal{D}_\mu^{\lambda^\prime}$ is proper for any $\lambda^\prime\in(0,\lambda(1+\tau))$, hence finishing the proof.

So the rest of the argument is devoted to showing \eqref{eq:MT-ineq}. Given any $\varphi\in\mathcal{E}^1(X,\theta)$, \cite[Theorem B]{BEGZ10} ensures that one can find $u\in\mathcal{E}^1(X,\theta)$ solving the equation
\begin{equation}
\label{eq:u-solve-MA}
    \theta_{u}^n=e^{c-\lambda(1+\varepsilon/n)\varphi+\frac{\varepsilon\lambda}{n}V_\theta}\mathrm{d}\mu
\end{equation}
for some suitable normalization constant $c\in\RR$, since the above right hand side has $L^p$ density for some $p>1$ (recall $\lambda(1+\varepsilon/n)<c_\mu[V_\theta]=c_\mu[\varphi]$ and openness \cite{Bern15o,GuZh15}). 

We put
$$
H_\mu(u):=\frac{1}{\vol(\{\theta\})}\int_X\log\bigg(\frac{\theta_u^n}{\vol(\{\theta\})\mathrm{d}\mu}\bigg)\theta_{u}^n.
$$
One can then easily check the following:
\begin{flalign*}
H_\mu(u) &\leq\frac{1}{\vol(\{\theta\})}\int_X (-\lambda(1+\varepsilon/n)\varphi)e^{c-\lambda(1+\frac{\varepsilon\lambda}{n})\varphi}\mathrm{d}\mu+A\leq A \int_X e^{-\lambda(1+\frac{2\varepsilon}{n})\varphi}\omega^n + A< \infty
\end{flalign*}
for some constant $A>0$,
where the first inequality is due to $V_\theta\leq0$, in the penultimate estimate we have used that 
$t e^{-\frac{\varepsilon}{n} t}$ is always bounded for all $t \geq 0$, and in the last estimate we have used $\lambda(1+2\varepsilon/n)<c_\mu[V_\theta]$ and Proposition \ref{prop:c-u=c-P-u}. So $H_\mu(u)$ does make sense. Moreover, Jensen's inequality implies that, for any $\alpha\in(0,\alpha_\mu)$ (recall \eqref{eq:def-alpha}) and $\phi\in\textup{PSH}(X,\theta)$ with $\sup_X\phi=0$,
$$
\int_X(-\alpha \phi)\theta^n_u\leq\log\int_Xe^{-\alpha \phi}\mathrm{d}\mu+H_\mu(u)<\infty.
$$
In other words, any $\phi\in\textup{PSH}(X,\theta)$ is $L^1$-integrable against the measure $\theta^n_{u}$.

Applying Jensen's inequality once again, we deduce from
\eqref{eq:Ding-proper-esti} that
$$
\frac{1}{\vol(\{\theta\})}\int_X u\theta_u^n+\frac{1}{\lambda}H_\mu(u)-I_\theta(u)\geq\frac{\varepsilon}{n}\bigg(I_\theta(u)-\frac{1}{\vol(\{\theta\})}\int_X(u-V_\theta)\theta_u^n\bigg)-C_0.
$$
Namely,
$$
H_\mu(u)\geq\lambda(1+\frac{\varepsilon}{n})\bigg(I_\theta(u)-\frac{1}{\vol(\{\theta\})}\int_X(u-V_\theta)\theta^n_{u}\bigg)-C_0\lambda-\frac{\lambda}{\vol(\{\theta\})}\int_XV_\theta\theta^n_{u}.
$$
Combining \eqref{eq:u-solve-MA} with this estimate, we can write
\begin{equation*}
    \begin{aligned}
    \log\int_X& e^{-\lambda(1+\frac{\varepsilon}{n})(\varphi-I_\theta(\varphi))+\frac{\varepsilon\lambda}{n}V_\theta}\mathrm{d}\mu=\log\int_X\frac{e^{-\lambda(1+\frac{\varepsilon}{n})\varphi+\frac{\varepsilon\lambda}{n}V_\theta}\mathrm{d}\mu}{\theta^n_{u}/\vol(\{\theta\})}\frac{\theta^n_{u}}{\vol(\{\theta\})}+\lambda(1+\frac{\varepsilon}{n})I_\theta(\varphi)\\
    &=\int_X\log\bigg(\frac{e^{-\lambda(1+\frac{\varepsilon}{n})\varphi+\frac{\varepsilon\lambda}{n}V_\theta}\mathrm{d}\mu}{\theta^n_{u}/\vol(\{\theta\})}\bigg)\frac{\theta^n_{u}}{\vol(\{\theta\})}+\lambda(1+\frac{\varepsilon}{n})I_\theta(\varphi)\\
    &=\int_X\big(-\lambda(1+\frac{\varepsilon}{n})\varphi\big)\frac{\theta^n_{u}}{\vol(\{\theta\})}-H_\mu(u)+\frac{\varepsilon\lambda}{n}\int_XV_\theta\frac{\theta^n_{u}}{\vol(\{\theta\})}+\lambda(1+\frac{\varepsilon}{n})I_\theta(\varphi)\\
    &\leq\lambda(1+\frac{\varepsilon}{n})\bigg(I_\theta(\varphi)-I_\theta(u)-\int_X(\varphi-u)\frac{\theta^n_{u}}{\vol(\{\theta\})}\bigg)+C_0\lambda.\\
    \end{aligned}
\end{equation*}
Now using that 
$
I_\theta(\varphi)-I_\theta(u)\leq\frac{1}{\vol(\{\theta\})}\int_X(\varphi-u)\theta_u^n\text{ for }\varphi,u\in\mathcal{E}^1(X,\theta),
$ (see e.g. \cite[Theorem 2.4(3)]{DDNL3})
we arrive at
$$
\int_Xe^{-\lambda(1+\frac{\varepsilon}{n})(\varphi-I_\theta(\varphi))+\frac{\varepsilon\lambda}{n}V_\theta}\mathrm{d}\mu\leq e^{C_0\lambda} \text{ for any }\varphi\in\mathcal{E}^1(X,\theta). 
$$

Now we are able to prove \eqref{eq:MT-ineq}. Take $\tau:=\frac{\varepsilon^2}{n^2+2\varepsilon n}$, then
$
1+\tau=\frac{(1+\varepsilon/n)^2}{1+2\varepsilon/n}<1+\frac{\varepsilon}{n}.
$
So H\"older's inequality with $p=\frac{1+2\varepsilon/n}{1+\varepsilon/n}$ and $q  = \frac{1+2\varepsilon/n}{\varepsilon/n}$ implies that
\begin{equation*}
    \begin{aligned}
    \int_X & e^{-\lambda(1+\tau)(\varphi-I_\theta(\varphi))}\mathrm{d}\mu=\int_Xe^{-\lambda(1+\tau)(\varphi-I_\theta(\varphi))+\frac{\lambda\varepsilon(1+\tau)}{n+\varepsilon
    }V_\theta-\frac{\lambda\varepsilon(1+\tau)}{n+\varepsilon
    }V_\theta}\mathrm{d}\mu\\
    &\leq\bigg(\int_Xe^{-\lambda(1+\frac{\varepsilon}{n})(\varphi-I_\theta(\varphi))+\frac{\varepsilon\lambda}{n}V_\theta}\mathrm{d}\mu\bigg)^{\frac{1+\varepsilon/n}{1+2\varepsilon/n}}\bigg(\int_Xe^{-\lambda(1+\frac{\varepsilon}{n})V_\theta}\mathrm{d}\mu\bigg)^{\frac{\varepsilon/n}{1+2\varepsilon/n}}\\
    &\leq\big(e^{C_0\lambda}\big)^{\frac{1+\varepsilon/n}{1+2\varepsilon/n}}\cdot C_1^{\frac{\varepsilon/n}{1+2\varepsilon/n}}.
    \end{aligned}
\end{equation*}
This completes the proof.
\end{proof}

\subsection{The proof of Theorem \ref{thm: main_KE_twist_exist}}
In this part we return to the assumptions at the very begining of this paper, that  $c_1(-K_X)$ admits a decomposition
$
c_1(-K_X)=\{\theta\}+\{\eta\},
$
where $\eta$ is a smooth representative of a pseudoeffective class and $\psi \in \textup{PSH}(X,\eta)$. In particular, $-K_X$ is big (thus $X$ is projective). Further assume that  $\eta + \ddc \psi \in\{\eta\}$ has klt singularities, i.e., $\int_Xe^{-\psi}\omega^n<\infty.$ Our goal is to search for a twisted KE potential $u\in\mathcal{E}^1(X,\theta)$ satisfying
$$
\Ric \theta_{u}=\theta_{u}+\eta_\psi.
$$
This amounts to solving
\begin{equation}\label{eq: KE_scal_eq}
\theta_u^n=e^{f-u-\psi}\omega^n,
\end{equation}
with $f \in C^\infty(X)$ a Ricci potential satisfying $\Ric \omega = \theta + \eta + \ddc f.$
So in what follows we choose our tame measure $\mu$ to be
$$
\mu:=e^{f-\psi}\omega^n.
$$

\begin{proposition}
\label{thm:geodesic_stability_implies_properness} Suppose that $c_\mu[V_\theta]>1$ and that $\mathcal D^1_\mu$ is not proper. Then $\delta_\mu\leq 1$. 
\end{proposition}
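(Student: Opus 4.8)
The plan is to combine Theorem~\ref{thm:non-proper-imply-destablizing-ray} with Proposition~\ref{prop:delta>1=>geodesic-stable}. First I would check that the hypotheses of Theorem~\ref{thm:non-proper-imply-destablizing-ray} hold here with $\lambda=1$: since $\psi\in\textup{PSH}(X,\eta)$ we have $\eta_\psi=\eta+\ddc\psi\geq0$, so by the main result of \cite{BePa08} the functional $\mathcal D^1_\mu$ is convex along the finite energy geodesics of $\mathcal E^1(X,\theta)$; since $\int_X e^{-\psi}\omega^n<\infty$, Guan--Zhou strong openness \cite{GuZh15} shows that $\mu=e^{f-\psi}\omega^n$ has $L^p$ density with respect to $\omega^n$ for some $p>1$, which together with \cite[Theorem~0.2(2)]{DK01} yields the $L^1$-continuity of $\mathcal L^1_\mu$ on $\mathcal E^1(X,\theta)$ (as promised in the remark following Proposition~\ref{prop: proper_implies_minimizer}); and the assumption $c_\mu[V_\theta]>1$ gives $1\in(0,c_\mu[V_\theta])$, so that $\mathcal D^1_\mu$ is finite on $\mathcal E^1(X,\theta)$ while by hypothesis it is not proper. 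Theorem~\ref{thm:non-proper-imply-destablizing-ray} then produces a finite energy geodesic ray $\{v_t\}_t\in\mathcal R^1(X,\theta)$ which, inspecting its proof, may be taken with $\sup_X v_t=0$ for all $t$, $v_t\not\equiv V_\theta$, and $\mathcal D^1_\mu\{v_t\}\leq0$.

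Then I would argue by contradiction. Suppose $\delta_\mu>1$. By Lemma~\ref{lem: c_delta_est} we also have $c_\mu[V_\theta]\geq\delta_\mu>1$, so Proposition~\ref{prop:delta>1=>geodesic-stable} applies with $\lambda=1\in(0,\delta_\mu)$ to the ray $\{v_t\}_t$ and furnishes $\varepsilon>0$ with $\mathcal D^1_\mu\{v_t\}\geq-\varepsilon\,I_\theta\{v_t\}$. Since $\mathcal D^1_\mu\{v_t\}\leq0$ this forces $I_\theta\{v_t\}\geq0$; but $\sup_X v_t=0$ means $v_t\leq V_\theta$, hence $I_\theta(v_t)\leq I_\theta(V_\theta)=0$ and therefore $I_\theta\{v_t\}\leq0$. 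Thus $I_\theta\{v_t\}=0$, and since $t\mapsto I_\theta(v_t)$ is linear with $I_\theta(v_0)=I_\theta(V_\theta)=0$, we get $I_\theta(v_t)=0$ for all $t$; hence $d_1(V_\theta,v_t)=-I_\theta(v_t)=0$ and $v_t\equiv V_\theta$, contradicting non-triviality. Therefore $\delta_\mu\leq1$, as claimed.

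The proof is thus essentially an assembly of the two preceding results, and the step requiring the most attention is verifying the hypotheses of Theorem~\ref{thm:non-proper-imply-destablizing-ray}, specifically the $L^1$-continuity of $\mathcal L^1_\mu$, which is where the $L^p$-regularity of the tame measure $\mu$ and the Demailly--Koll\'ar estimate \cite[Theorem~0.2(2)]{DK01} enter. As an alternative to the contradiction in the second paragraph one may instead reparametrize $\{v_t\}_t$ to $d_1$-unit speed --- legitimate since non-triviality together with $\sup_X v_t=0$ already forces $I_\theta\{v_t\}<0$ --- and then directly contradict the ``in particular'' conclusion $\mathcal D^1_\mu\{w_t\}\geq\varepsilon>0$ of Proposition~\ref{prop:delta>1=>geodesic-stable}.
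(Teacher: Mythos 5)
Your proposal is correct and follows essentially the same route as the paper: verify the hypotheses of Theorem \ref{thm:non-proper-imply-destablizing-ray} (convexity of $\mathcal D^1_\mu$ via \cite{BePa08} using $\eta_\psi\geq 0$ and $c_\mu[V_\theta]>1$, and $L^1$-continuity of $\mathcal L^1_\mu$ via openness and \cite[Theorem 0.2(2)]{DK01}), extract the destabilizing ray, and contradict Proposition \ref{prop:delta>1=>geodesic-stable} if $\delta_\mu>1$. Your explicit handling of the degenerate case $I_\theta\{v_t\}=0$ and the unit-speed reparametrization just spells out details the paper leaves implicit.
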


\begin{proof} 
Since $-K_X$ is big under our conditions, we have that $X$ is Moishezon \cite{JiSh93}, hence also projective \cite{Mo66}. Also, the condition $c_\mu[V_\theta] > 1$ implies that 
$c_\mu[v] > 1$ for any $v \in \mathcal E^1(X,\theta)$ (Proposition \ref{prop:c-u=c-P-u}). As a result, given a finite energy geodesic  $t \mapsto v_t$, 
both conditions of  \cite[Theorem 0.1]{BePa08} are satisfied, and we obtain that $t \mapsto \mathcal D^1_\mu(v_t)$ is convex. 

By openness \cite{Bern15o,GuZh15} and \cite[Theorem 0.2(2)]{DK01} we get that $\mathcal L_\mu^1(u)$ (recall \eqref{eq: D_L_def}) is $L^1$-continuous. Now, applying Theorem \ref{thm:non-proper-imply-destablizing-ray}, we find a unit speed finite energy geodesic ray $\{w_t\}_t$ with $\sup_X w_t=0$ such that $\mathcal D^1_\mu\{w_t\}\leq 0$.
Then by Proposition \ref{prop:delta>1=>geodesic-stable} one must have $\delta_\mu\leq 1$, as asserted.
\end{proof}

A direct consequence is the following:

\begin{corollary}[=Theorem \ref{thm: main_KE_twist_exist}]\label{cor: main_KE}
If $\delta_\mu > 1$ then $\mathcal D^1_\mu$ is proper. In particular,  \eqref{eq: KE_scal_eq} admits a solution $u \in \textup{PSH}(X,\theta)$ with minimal singularity type.
\end{corollary}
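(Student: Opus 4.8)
The plan is to assemble the statement from the results already established in this subsection; there is essentially no new content, only bookkeeping. First I would note that the hypothesis $\delta_\mu>1$ already implies $c_\mu[V_\theta]\geq\delta_\mu>1$ by Lemma~\ref{lem: c_delta_est}, so all the standing assumptions of Proposition~\ref{thm:geodesic_stability_implies_properness} are in force. I would then deduce properness of $\mathcal D^1_\mu$ by contraposition: if $\mathcal D^1_\mu$ were not proper, Proposition~\ref{thm:geodesic_stability_implies_properness} would force $\delta_\mu\leq 1$, contradicting $\delta_\mu>1$. Hence $\mathcal D^1_\mu$ is proper, which is the first assertion.

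For the existence of a finite energy solution I would first recall that $\mathcal L^1_\mu$ is $L^1$-continuous on $\mathcal E^1(X,\theta)$: this is exactly the input used inside the proof of Proposition~\ref{thm:geodesic_stability_implies_properness}, i.e.\ by strong openness \cite{Bern15o,GuZh15} the tame measure $\mu=e^{f-\psi}\omega^n$ has $L^p$ density for some $p>1$, and \cite[Theorem~0.2(2)]{DK01} then provides the uniform control turning weak convergence into $L^1$-continuity of $u\mapsto-\log\int_X e^{-u}\,\mathrm d\mu$. Combining properness with this continuity, Proposition~\ref{prop: proper_implies_minimizer} yields a global minimizer $u\in\mathcal E^1(X,\theta)$ of $\mathcal D^1_\mu$ solving \eqref{eq:tKE-MA-equation} with $\lambda=1$ up to an additive constant, i.e.\ $\theta_u^n=c\,e^{-u}\,\mathrm d\mu$ with $c=\vol(\{\theta\})/\int_X e^{-u}\,\mathrm d\mu>0$; replacing $u$ by $u-\log c$ leaves $\theta_u^n$ unchanged and converts the equation into $\theta_u^n=e^{f-u-\psi}\omega^n$, which is precisely \eqref{eq: KE_scal_eq}.

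It remains to upgrade this solution to one of minimal singularity type. Here I would use that $c_\mu[u]=c_\mu[V_\theta]>1$ by Proposition~\ref{prop:c-u=c-P-u}, together with the fact that $e^{-\psi}\in L^{1+\delta}(\omega^n)$ for some $\delta>0$ by strong openness \cite{GuZh15} applied to the klt function $\psi$. A Hölder argument, splitting the exponent on $\psi$ and exploiting the slack in both integrability statements, then shows that the density $e^{f-u-\psi}$ of $e^{-u}\,\mathrm d\mu$ lies in $L^p(\omega^n)$ for some $p>1$; since its total mass equals $\int_X e^{-u}\,\mathrm d\mu=\int_X\theta_u^n=\vol(\{\theta\})$, \cite[Theorem~B]{BEGZ10} furnishes a solution of the corresponding Monge--Amp\`ere equation with minimal singularity type, and by uniqueness of finite energy solutions this potential coincides with $u$ up to an additive constant, so $u$ itself has minimal singularity type. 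Since all the genuine difficulty has been localized in Propositions~\ref{thm:geodesic_stability_implies_properness} and~\ref{prop: proper_implies_minimizer}, the only step above requiring any care is this last $L^p$-integrability check needed to invoke \cite{BEGZ10}.
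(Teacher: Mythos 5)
Your proof is correct and takes essentially the same route as the paper's: Lemma \ref{lem: c_delta_est} plus the contrapositive of Proposition \ref{thm:geodesic_stability_implies_properness} gives properness together with the $L^1$-continuity of $\mathcal L^1_\mu$, Proposition \ref{prop: proper_implies_minimizer} then produces a finite energy solution of \eqref{eq: KE_scal_eq}, and the $L^p$ density of the right-hand side (via openness) combined with \cite[Theorem B]{BEGZ10} yields minimal singularity type. The only harmless deviation is your final detour through uniqueness of solutions with fixed density, which is unnecessary, since the Kolodziej-type estimate of \cite[Theorem B]{BEGZ10} applies directly to the solution $u$ itself.
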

\begin{proof} Lemma \ref{lem: c_delta_est} gives that $c_\mu[V_\theta]>1$. By the previous result and its proof, $\mathcal D_\mu^1$ is proper and $\mathcal{L}^1_\mu$ is $L^1$-continuous. Thus, by Proposition \ref{prop: proper_implies_minimizer}, \eqref{eq: KE_scal_eq} admits a solution $u \in \mathcal E^1(X,\theta)$. 
That $u$ has minimal singularity type follows from Kolodzej's estimate (see \cite[Theorem B]{BEGZ10}), as the right hand side of \eqref{eq: KE_scal_eq} has $L^p$ density for some $p>1$ by openness \cite{Bern15o,GuZh15}.
\end{proof}

\paragraph{The (partial) converse. }

In this paragraph we give a (partial) converse for Corollary \ref{cor: main_KE}, and show the role of uniqueness in a full converse. We begin with the next standard result, following \cite[Theorem 6.6]{BBGZ13}.

\begin{proposition} If $u \in \mathcal E^1(X,\theta)$ is a solution to \eqref{eq: KE_scal_eq} then $u$ minimizes $\mathcal D^1_\mu$.
\end{proposition}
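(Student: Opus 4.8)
The plan is to run the variational argument of \cite[Theorem~6.6]{BBGZ13} in our setting: since $u$ solves the Euler--Lagrange equation of $\mathcal{D}^1_\mu$ and $\mathcal{D}^1_\mu$ is convex along finite-energy geodesics of $\mathcal{E}^1(X,\theta)$ (this is exactly where the hypothesis $\eta_\psi\geq0$ enters, through \cite[Theorem~0.1]{BePa08}, just as in the proof of Proposition~\ref{thm:geodesic_stability_implies_properness}), a critical point of $\mathcal{D}^1_\mu$ is automatically a global minimizer. So the proof reduces to a first-variation computation along a geodesic issuing from $u$.

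In detail, I would first normalize $u$ by an additive constant so that $\int_X e^{-u}\,\mathrm{d}\mu=\vol(\{\theta\})$; then \eqref{eq: KE_scal_eq} becomes $\theta_u^n/\vol(\{\theta\})=e^{-u}\,\mathrm{d}\mu/\!\int_X e^{-u}\,\mathrm{d}\mu$. Recall (as in the proof of Corollary~\ref{cor: main_KE}, via Kolodziej's estimate \cite[Theorem~B]{BEGZ10} and openness \cite{Bern15o,GuZh15}) that $u$ has minimal singularity type, hence $c_\mu[V_\theta]=c_\mu[u]>1$ by Proposition~\ref{prop:c-u=c-P-u} and strong openness, so $\mathcal{D}^1_\mu$ is finite on $\mathcal{E}^1(X,\theta)$, $\mathcal{L}^1_\mu$ is $L^1$-continuous (\cite[Theorem~0.2(2)]{DK01}), and $t\mapsto\mathcal{D}^1_\mu(\cdot_t)$ is convex along finite-energy geodesics. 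Now fix an arbitrary competitor $v\in\mathcal{E}^1(X,\theta)$, let $[0,1]\ni t\mapsto u_t$ be the finite-energy geodesic with $u_0=u$, $u_1=v$, and put $g(t):=\mathcal{D}^1_\mu(u_t)=\mathcal{L}^1_\mu(u_t)-I_\theta(u_t)$. Since $t\mapsto I_\theta(u_t)$ is affine \cite[Theorem~3.12]{DDNL1} and $t\mapsto\mathcal{L}^1_\mu(u_t)$ is convex, $g$ is convex on $[0,1]$, so it suffices to prove $g'(0^+)\geq0$: then $g$ is nondecreasing and $\mathcal{D}^1_\mu(v)=g(1)\geq g(0)=\mathcal{D}^1_\mu(u)$. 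Writing $\dot u_0:=\tfrac{\mathrm{d}}{\mathrm{d}t}\big|_{0^+}u_t$, the standard first-variation formulas along the geodesic give $\tfrac{\mathrm{d}}{\mathrm{d}t}\big|_{0^+}I_\theta(u_t)=\tfrac{1}{\vol(\{\theta\})}\int_X\dot u_0\,\theta_u^n$ and $\tfrac{\mathrm{d}}{\mathrm{d}t}\big|_{0^+}\mathcal{L}^1_\mu(u_t)=\big(\!\int_X\dot u_0\,e^{-u}\,\mathrm{d}\mu\big)\big/\big(\!\int_X e^{-u}\,\mathrm{d}\mu\big)$, and by the normalized equation \eqref{eq: KE_scal_eq} these two quantities coincide; hence $g'(0^+)=0$. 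As $v$ was arbitrary, $u$ minimizes $\mathcal{D}^1_\mu$.

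The only genuinely technical point — and the one I expect to be the main obstacle to write carefully — is the differentiation under the integral sign in the formula for $\tfrac{\mathrm{d}}{\mathrm{d}t}\big|_{0^+}\mathcal{L}^1_\mu(u_t)$, i.e.\ the right-differentiability of $t\mapsto\int_X e^{-u_t}\,\mathrm{d}\mu$ at $t=0$ and the identification of its derivative as $-\!\int_X\dot u_0\,e^{-u}\,\mathrm{d}\mu$. This is a routine dominated-convergence argument that uses the $t$-convexity of $t\mapsto u_t(x)$ (so that the difference quotients $(u_t-u)/t$ are monotone in $t$ and bounded above by $v-u$, giving the pointwise limit $\dot u_0$ and the tangent bound $u_t\geq u+t\dot u_0$) together with the integrability $c_\mu[V_\theta]>1$ to produce an $L^1(\mu)$ dominating function near $t=0$; exactly this step is carried out in \cite{BBGZ13} and \cite{BePa08}, and I would simply invoke it. Everything else needed — affineness of $I_\theta$ along geodesics \cite[Theorem~3.12]{DDNL1}, convexity of $\mathcal{L}^1_\mu$ along geodesics \cite[Theorem~0.1]{BePa08}, $L^1$-continuity of $\mathcal{L}^1_\mu$, finiteness of $\mathcal{D}^1_\mu$ on $\mathcal{E}^1(X,\theta)$, and the minimal-singularity property of $u$ — is already available in the text above.
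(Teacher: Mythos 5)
Your overall strategy coincides with the paper's (Berndtsson--P\u aun convexity of $\mathcal{D}^1_\mu$ along finite-energy geodesics, plus a first-variation argument at $t=0^+$), but the step you yourself flag as the ``main obstacle'' is exactly where a genuine gap sits: the dominated-convergence argument does not work for an \emph{arbitrary} competitor $v\in\mathcal{E}^1(X,\theta)$ as you set it up. Writing $q_t:=(u_t-u)/t$, $t$-convexity gives $q_t\searrow\dot u_0^+$ as $t\searrow 0$ and the tangent bound $u_t\geq u+t\dot u_0^+$; but for general finite-energy $v$ the slope $\dot u_0^+$ is only bounded \emph{above} (by $v-u\leq C$, since $u$ has minimal singularity), not below. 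The upper bound you need on the difference quotients of $t\mapsto e^{-u_t}$ is $\frac{e^{-u_t}-e^{-u}}{t}\leq e^{-u}\frac{e^{-t\dot u_0^+}-1}{t}\leq e^{-u}\bigl(e^{-\dot u_0^+}-1\bigr)$, so the natural dominator is essentially $e^{-u-\dot u_0^+}$; the tangent bound at $t=1$ only gives $u+\dot u_0^+\leq v$, i.e.\ $e^{-u-\dot u_0^+}\geq e^{-v}$, which is the \emph{wrong} direction, and neither $c_\mu[V_\theta]>1$ nor Proposition \ref{prop:c-u=c-P-u} applies to $u+\dot u_0^+$, which is not qpsh. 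The paper's proof avoids precisely this: using $d_1$-continuity of $\mathcal{D}^1_\mu$ and the $d_1$-density of minimal-singularity potentials it first reduces to $v$ with minimal singularity type, and after adding a constant so that $u>v$ the geodesic becomes uniformly $t$-Lipschitz with $u-u_t$ nonnegative and $O(t)$; then $\dot u_0^+$ is bounded and both the monotone-convergence step (for $I_\theta$) and the dominated-convergence step (for $\mathcal{L}^1_\mu$) are immediate. If you keep your structure you must insert this reduction (or produce a genuinely different integrable dominator); ``invoking'' \cite{BBGZ13} does not cover the general finite-energy endpoint.

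A second, smaller issue: you assert the exact formula $\frac{d}{dt}\big|_{0^+}I_\theta(u_t)=\frac{1}{\vol(\{\theta\})}\int_X\dot u_0^+\,\theta_u^n$ and conclude $g'(0^+)=0$. In the big, finite-energy setting this equality is not a freely quotable fact; the paper only uses the one-sided estimate $\frac{I_\theta(u_t)-I_\theta(u)}{t}\leq\frac{1}{\vol(\{\theta\})}\int_X q_t\,\theta_u^n$ from \cite[Theorem 2.4(iii)]{DDNL3}, which after monotone convergence gives $\frac{d}{dt}\big|_{0^+}I_\theta(u_t)\leq\frac{1}{\vol(\{\theta\})}\int_X\dot u_0^+e^{-u}\,\mathrm{d}\mu$; combined with the exact limit for the $\mathcal{L}^1_\mu$-part this already yields $g'(0^+)\geq 0$, which is all the convexity argument needs. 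Replace your claimed equality by this inequality, or else you owe a separate proof of the exact derivative formula along big-class geodesics.
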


\begin{proof}
Given $u,v\in\mathcal{E}^1(X,\theta)$, let $[0,1]\ni t\mapsto u_t\in\mathcal E^1(X,\theta)$ be the geodesic segment connecting $u_0 = u$ and $u_1 =v$, constructed in \cite{DDNL3}. 
If $u$ satisfies 
$
\theta^n_u=e^{-u}\mu
$
then  $c_\mu[V_\theta]>1$ by openness \cite{Bern15o,GuZh15}, so $\mathcal{D}^1_\mu$ is well defined on $\mathcal{E}^1(X,\theta)$ by Proposition \ref{prop:c-u=c-P-u}.
We introduce
$$
f(t):=\mathcal D^1_\mu(u_t),\ t\in[0,1].
$$
By $d_1$-continuity of the Ding energy and \cite[Theorem 0.1]{BePa08}, $f$ is continuous and convex.

We want to show that $\mathcal D^1_\mu(u)\leq \mathcal D^1_\mu(v)$. Note that $u$ has minimal singularity by \cite{BEGZ10}. We can assume that $v$ also has minimal singularity type, since such potentials are $d_1$-dense in $\mathcal E^1(X,\theta)$. By the above considerations, it is enough to argue that
$$
\frac{d}{dt}\bigg|_{t=0^+} f(t)\geq0.
$$

After adding a constant to $v$, we may further assume that $u>v$ on $X$. 
Consider
$
g(t):=\frac{u_t-u}{t}\text{ for }t\in(0,1].
$
Since $u,v$ have minimal singularity type,  $t \to u_t$ is uniformly $t$-Lipschitz. As a result, $\lim_{t \to 0} g(t) =: \dot u_0^+$ is uniformly bounded and well defined away from a pluripolar set. We notice that (by \cite[Theorem 2.4(iii)]{DDNL3})
$$
\frac{I_\theta(u_t)-I_\theta(u)}{t}\leq\frac{1}{\vol(\{\theta\})}\int_{X} g(t)\theta^n_{u}=\frac{1}{\vol(\{\theta\})}\int_X g(t) e^{-u}\mathrm d\mu.
$$
Letting $t\rightarrow 0$ by monotone convergence we get
\begin{equation}\label{eq: I_deriv}
\frac{d}{dt}\bigg|_{t=0^+} I_\theta(u_t)\leq\frac{1}{\vol(\{\theta\})}\int_{X} \dot u_0^+ e^{-u}\mathrm d\mu.
\end{equation}
On the other hand, using dominated convergence we conclude that
\begin{equation}\label{eq: L_deriv}
\lim_{t \searrow 0}\frac{\int_Xe^{-u_t}\mathrm d\mu-\int_Xe^{-u}\mathrm d\mu}{t}= \lim_{t \searrow 0 }\int_X\frac{u-u_t}{t}\frac{e^{u-u_t}-1}{u-u_t}e^{-u}\mathrm d\mu=\int_X(-\dot u_0^+)e^{-u}\mathrm d\mu.
\end{equation}
Putting together \eqref{eq: I_deriv} and \eqref{eq: L_deriv}, we conclude that  $
\frac{d}{dt} \big| _{t=0^+} \mathcal D^1_\mu(u_t)\geq0,
$ finishing the proof.
\end{proof}
We now record a partial converse to Corollary \ref{cor: main_KE}:

\begin{proposition}\label{prop: partial_conv}
Assume that \eqref{eq: KE_scal_eq} is solvable. Then $\delta_\mu\geq 1$.
\end{proposition}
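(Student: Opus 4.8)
The plan is to deduce this from Theorem \ref{thm:delta=sup-D>0} together with the proposition immediately preceding, which says that a solution of \eqref{eq: KE_scal_eq} minimizes $\mathcal D^1_\mu$. Recall that Theorem \ref{thm:delta=sup-D>0} identifies $\delta_\mu$ with $\sup\{\lambda>0 : \mathcal D^\lambda_\mu\{u_t\}\geq 0 \text{ for all sublinear subgeodesic rays } \{u_t\}_t\subset\mathcal E^1(X,\theta)\}$. So to conclude $\delta_\mu\geq 1$ it is enough to check that $\lambda=1$ belongs to this set, i.e. that $\mathcal D^1_\mu\{u_t\}\geq 0$ for every sublinear subgeodesic ray $\{u_t\}_t\subset\mathcal E^1(X,\theta)$.

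First I would record the preliminary fact that $\mathcal D^1_\mu$ is well defined and finite on $\mathcal E^1(X,\theta)$: if $u\in\mathcal E^1(X,\theta)$ solves $\theta_u^n=e^{-u}\mu$, then $\int_X e^{-u}\,\mathrm d\mu=\int_X\theta_u^n=\vol(\{\theta\})<\infty$, so strong openness \cite{GuZh15,Bern15o} gives $c_\mu[u]>1$; since the solution has full mass (by \cite[Theorem B]{BEGZ10} it has minimal singularity type), Proposition \ref{prop:c-u=c-P-u} yields $c_\mu[V_\theta]=c_\mu[u]>1$, so indeed $\lambda=1\in(0,c_\mu[V_\theta])$ and $\mathcal D^1_\mu$ makes sense on $\mathcal E^1(X,\theta)$.

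Next, by the preceding proposition $u$ is a global minimizer of $\mathcal D^1_\mu$ on $\mathcal E^1(X,\theta)$, so $\mathcal D^1_\mu(v)\geq \mathcal D^1_\mu(u)=:c>-\infty$ for all $v\in\mathcal E^1(X,\theta)$. Given any sublinear subgeodesic ray $\{u_t\}_t\subset\mathcal E^1(X,\theta)$, we then have $\mathcal D^1_\mu(u_t)\geq c$ for all $t>0$, hence
\[
\mathcal D^1_\mu\{u_t\}=\liminf_{t\to\infty}\frac{\mathcal D^1_\mu(u_t)}{t}\geq\lim_{t\to\infty}\frac{c}{t}=0.
\]
As this holds for every such ray, Theorem \ref{thm:delta=sup-D>0} forces $\delta_\mu\geq 1$.

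I do not expect a genuine obstacle here: all the substance has already been absorbed into the preceding proposition (whose proof in turn relies on the Berndtsson--P\u{a}un convexity of $\mathcal D^1_\mu$ along geodesics) and into Theorem \ref{thm:delta=sup-D>0}. The only points requiring care are bookkeeping ones — confirming $c_\mu[V_\theta]>1$ so that $\mathcal D^1_\mu$ is defined, and observing that boundedness below of $\mathcal D^1_\mu$ is exactly what is needed to kill the radial functional along every ray.
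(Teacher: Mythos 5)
Your argument is correct and follows essentially the same route as the paper: the paper's proof likewise deduces from the preceding proposition (and its argument) that $\mathcal D^1_\mu$ is well defined and bounded below on $\mathcal E^1(X,\theta)$, and then concludes $\delta_\mu\geq 1$ via Theorem \ref{thm:delta=sup-D>0}, exactly as you do by noting that a finite lower bound kills the radial slope along every sublinear subgeodesic ray. Your bookkeeping of $c_\mu[V_\theta]>1$ via strong openness and Proposition \ref{prop:c-u=c-P-u} is the same device used in the proof of the preceding proposition, so nothing is missing.
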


\begin{proof}
As follows from the previous result (and its argument), $\mathcal{D}^1_\mu$ is well defined and bounded from below on $\mathcal{E}^1(X,\theta)$. This gives $\delta_\mu\geq 1$ in light of Theorem \ref{thm:delta=sup-D>0}.
\end{proof}

We end this section with the following full converse to Corollary \ref{cor: main_KE}, provided the uniqueness of the solution.

\begin{proposition}
\label{prop:unique-imply-delta>1}
    If the twisted KE metric found in Corollary \ref{cor: main_KE} is unique, then $\delta_\mu>1$
\end{proposition}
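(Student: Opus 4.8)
The plan is to show that, under the uniqueness hypothesis, the Ding functional $\mathcal D^1_\mu$ is proper, and then to conclude $\delta_\mu>1$ by Proposition \ref{prop:proper-open}. (For context, Proposition \ref{prop: partial_conv} already gives $\delta_\mu\geq 1$.) Write $u\in\mathcal E^1(X,\theta)$ for the twisted K\"ahler--Einstein potential: by Corollary \ref{cor: main_KE} it has minimal singularity type and solves \eqref{eq: KE_scal_eq}, so by the proposition preceding Proposition \ref{prop: partial_conv} it minimizes $\mathcal D^1_\mu$ on $\mathcal E^1(X,\theta)$; in particular $\mathcal D^1_\mu$ is bounded below. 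Since $u$ has minimal singularity and $\int_X e^{-u}\mathrm d\mu<\infty$, strong openness \cite{GuZh15} together with Proposition \ref{prop:c-u=c-P-u} gives $c_\mu[V_\theta]=c_\mu[u]>1$; hence $\mathcal D^1_\mu$ is well defined on $\mathcal E^1(X,\theta)$, $\mathcal L^1_\mu$ is $L^1$-continuous (by \cite{Bern15o,GuZh15} and \cite[Theorem 0.2(2)]{DK01}), and $\mathcal D^1_\mu$ is convex along finite energy geodesics (by \cite[Theorem 0.1]{BePa08}), exactly as in the proof of Proposition \ref{thm:geodesic_stability_implies_properness}.

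Assume for contradiction that $\mathcal D^1_\mu$ is not proper. The key step is to produce a destabilizing finite energy geodesic ray \emph{emanating from the minimizer $u$}, by running the proof of Theorem \ref{thm:non-proper-imply-destablizing-ray} with $V_\theta$ replaced by $u$ throughout: the only properties of $V_\theta$ used there are that it has minimal singularity, that $c_\mu[V_\theta]>1$, and that $d_1(V_\theta,\varphi^j)/d_1(u,\varphi^j)\to 1$ along the sequence $\{\varphi^j\}$ witnessing non-properness — all of which hold for $u$ — while the maximization at the end of that proof is carried out relative to $u$. This yields a non-trivial finite energy geodesic ray $\{w_t\}_t$ with $w_0=u$, $\mathcal L^1_\mu\{w_t\}\leq -1$, $I_\theta\{w_t\}\geq -1$ (so that $\mathcal D^1_\mu\{w_t\}\leq 0$), and $\sup_X w_t=\sup_X u$ for all $t\geq 0$; the last equality holds because along the unit-speed geodesic segments joining $u$ to $\varphi^j$ the quantity $\sup_X(\cdot)$ is affine (Remark \ref{rem: sup_ray}), running from $\sup_X u$ to $\sup_X\varphi^j=0$ over an interval of length $d_1(u,\varphi^j)\to\infty$.

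Now combine the convexity of $t\mapsto\mathcal D^1_\mu(w_t)$ with the fact that $w_0=u$ minimizes $\mathcal D^1_\mu$: this convex function attains its minimum at $t=0$, hence is non-decreasing, while its slope at infinity equals $\mathcal D^1_\mu\{w_t\}$, which lies in $[0,0]$ by boundedness below and by the previous paragraph; a non-decreasing convex function whose slope tends to $0$ at infinity is constant, so $\mathcal D^1_\mu(w_t)\equiv\mathcal D^1_\mu(u)$. Thus every $w_t$ minimizes $\mathcal D^1_\mu$, hence (by the variational argument of Proposition \ref{prop: proper_implies_minimizer}) solves \eqref{eq: KE_scal_eq} up to an additive constant, and Kolodziej's estimate \cite[Theorem B]{BEGZ10} shows $w_t$ has minimal singularity; therefore $\theta_{w_t}$ is a twisted K\"ahler--Einstein metric. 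By the assumed uniqueness, $\theta_{w_t}=\theta_u$, i.e.\ $w_t=u+c_t$ for some $c_t\in\RR$; since $\{w_t\}_t$ is a geodesic with $w_0=u$, $t\mapsto c_t$ is affine with $c_0=0$, say $c_t=ct$. Then $\sup_X w_t=\sup_X u+ct$, and comparing with $\sup_X w_t=\sup_X u$ forces $c=0$, so $w_t\equiv u$ — contradicting the non-triviality of $\{w_t\}_t$ (for instance, $\mathcal L^1_\mu\{w_t\}\leq -1$, whereas a constant ray has $\mathcal L^1_\mu\{w_t\}=0$). Hence $\mathcal D^1_\mu$ is proper, and Proposition \ref{prop:proper-open} gives $\delta_\mu>1$.

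The main obstacle is the construction in the second paragraph: one must verify that the compactness-and-maximization argument of Theorem \ref{thm:non-proper-imply-destablizing-ray}, together with its quantitative $\mathcal L^1_\mu$-, $I_\theta$- and $\sup_X$-estimates, carries over after replacing the base point $V_\theta$ by the minimal-singularity minimizer $u$ (in particular, that the maximization step still produces a geodesic ray based at $u$ with the stated bounds). Everything beyond that is a formal consequence of the convexity of $\mathcal D^1_\mu$ along geodesics, the variational characterization of its minimizers, and the uniqueness hypothesis.
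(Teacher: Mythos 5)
Your outer skeleton coincides with the paper's: prove that $\mathcal D^1_\mu$ is proper and then invoke Proposition \ref{prop:proper-open}. The paper, however, gets properness in one stroke by citing the properness/existence principle of Darvas--Rubinstein \cite[Theorem 3.7]{DR17}, whose hypotheses (convexity of $\mathcal D^1_\mu$ along finite energy geodesics, lower semicontinuity, uniqueness of the minimizer) are exactly what you verify in your first paragraph; that principle works with geodesic \emph{segments} emanating from the minimizer and a compactness argument, and never needs a geodesic ray. Your third paragraph (constancy of a convex function with minimum at $t=0$ and asymptotic slope $0$, each $w_t$ a minimizer hence a twisted KE potential, uniqueness forcing $w_t=u+ct$, and the normalization killing $c$) is correct as far as it goes, and is a legitimate alternative endgame.

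The genuine gap is the second paragraph. Theorem \ref{thm:non-proper-imply-destablizing-ray} does not go through ``with $V_\theta$ replaced by $u$ throughout'': the entire ray/test-curve apparatus it rests on is anchored at $V_\theta$. Sublinear subgeodesic rays are \emph{defined} by $u_t\to_{L^1}V_\theta$ as $t\to 0$ (Definition \ref{def: lin_subgeod}); the maximization \eqref{eq: maximization_def} uses $P[\cdot]=P[\cdot](V_\theta)$; and the step you actually need --- that the bound $\mathcal L^1_\mu\{u_t\}\le -1$ survives maximization --- is obtained in the paper by combining the radial Ding slope formula (Theorem \ref{thm: radial_ding}), Proposition \ref{prop:c-u=c-P-u} and Proposition \ref{prop: I-rad_identity}, all stated and proved for $V_\theta$-based rays and $V_\theta$-relative envelopes. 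Note that maximization only increases the ray, so $\mathcal L^1_\mu$ can only go up; the slope bound is \emph{not} inherited pointwise, which is precisely why those three results are invoked. A ``maximization relative to $u$'' is not defined in the paper, and to use it you would have to re-derive $c_\mu[P[\phi](u)]=c_\mu[\phi]$, a $u$-anchored analogue of Theorem \ref{thm: radial_ding}, the invariance of the radial energy under $u$-relative maximization, and the persistence of $\sup_X w_t=\sup_X u$ after maximizing (your appeal to Remark \ref{rem: sup_ray} only covers the pre-maximization segments). All of this is plausible because $[u]=[V_\theta]$, but it amounts to redoing a sizable part of Sections \ref{sec:rwncorre}--\ref{sec:delta=geodesic-stability} relative to a new base potential, not a notational substitution; as written, the central construction of your argument is unproved. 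The quickest repair is the paper's own: cite \cite[Theorem 3.7]{DR17} to get properness directly from convexity plus uniqueness of the minimizer, after which your destabilizing-ray machinery is unnecessary.
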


\begin{proof}
    Indeed, if a unique twisted KE metric exists then the conditions of the properness/existence principle of the first named author and Y. Rubinstein  \cite[Theorem 3.7]{DR17} (c.f. \cite[Theorem 4.7]{Dar19}) are satisfied, implying that the Ding functional $\mathcal{D}^1_\mu$ is proper.  Then Proposition \ref{prop:proper-open} ensures that $\delta_\mu>1$, as asserted.
\end{proof}

\section{The case of Fano type varieties}
\label{sec:log-Fano}

In this section we show  how our techniques yield a simplified proof of the YTD existence theorem of Li--Tian--Wang. Conversely, we also elaborate on the recent note of C. Xu, pointing out how the Li--Tian--Wang result together with deep results of the minimal model gives an alternative argument of our Corollary 1.3.

A log Fano pair $(Z,\Delta)$ consists of the following data:
$Z$ is a normal projective variety and $\Delta$ an effective Weil divisor on $Z$ such that  $-K_Z-\Delta$ is an ample $\mathbb Q$-Cartier divisor and that $(Z,\Delta)$ has klt singularities.

\subsection{The YTD existence theorem of Li--Tian--Wang.}
In what follows we give a simplified proof of the following result due to Li--Tian--Wang \cite{LTW21b}, using the techniques of our paper:

\begin{theorem}
\label{thm:LTW}
The log Fano pair $(Z,\Delta)$ admits a KE metric if it is uniformly K-stable.
\end{theorem}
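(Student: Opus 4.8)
The plan is to deduce Theorem~\ref{thm:LTW} from Corollary~\ref{cor: main_KE} by passing to a resolution. First I would fix a log resolution $\pi\colon X\to Z$ of $(Z,\Delta)$, so that $X$ is a smooth projective manifold and $\mathrm{Supp}(\tilde\Delta)\cup\bigcup_iE_i$ has simple normal crossings, $\tilde\Delta$ being the strict transform of $\Delta$ and the $E_i$ the $\pi$-exceptional prime divisors. Writing $K_X+\tilde\Delta=\pi^*(K_Z+\Delta)+\sum_ia_iE_i$ with $a_i>-1$ (since $(Z,\Delta)$ is klt) and $\Delta=\sum_jd_j\Delta_j$ with $d_j\in(0,1)$, set $\{\theta\}:=\pi^*c_1(-(K_Z+\Delta))$, which is big (in fact nef and big) because $-(K_Z+\Delta)$ is ample, and fix a smooth semipositive representative $\theta=\pi^*\theta_0$. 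Choosing an adapted volume form $\Omega$ on $Z$ --- normalized so that $\Ric\,\Omega=\theta_0+[\Delta]$, equivalently so that the KE equation of $(Z,\Delta)$ reads $(\theta_0+\ddc\varphi)^n=e^{-\varphi}\Omega$ --- the Radon measure $\mu:=\pi^*\Omega$ on $X$ has the form $\mu=e^{\chi-\psi}\omega^n$ with
$$
\chi=\sum_{i:\,a_i>0}a_i\log|s_{E_i}|^2+(\textup{smooth}),\qquad \psi=\sum_jd_j\log|s_{\tilde\Delta_j}|^2+\sum_{i:\,a_i<0}(-a_i)\log|s_{E_i}|^2+(\textup{smooth}).
$$
As the $a_i$ are rational, $\chi$ is qpsh of analytic singularity type, $\psi$ is qpsh, and $\chi-\psi$ is klt since $\int_Xe^{\chi-\psi}\omega^n=\int_Z\Omega<\infty$; thus the lifted KE equation of $(Z,\Delta)$ is precisely \eqref{eq: KE_scalar equation_general} with $\lambda=1$ and this $\mu$.

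The second step is to verify $\delta_\mu(\{\theta\})=\delta(Z,\Delta)$. Since $\pi$ is birational, the prime divisors over $X$ and over $Z$ coincide, and for such an $E$ the identity $K_X=\pi^*(K_Z+\Delta)+\sum_ia_iE_i-\tilde\Delta$ gives, after pulling back to a model containing $E$,
$$
A_{\chi,\psi}(E)=A_X(E)+\nu(\chi,E)-\nu(\psi,E)=A_X(E)+\ord_E\Big(\textstyle\sum_ia_iE_i-\tilde\Delta\Big)=A_{(Z,\Delta)}(E),
$$
while $\{\theta\}$ lies in the N\'eron--Severi space, so by \cite[Proposition~1.18]{BEGZ10} the transcendental volumes entering $S_\theta(E)$ equal the algebraic ones and $S_\theta(E)=S_{-(K_Z+\Delta)}(E)$. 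Hence $\delta_\mu(\{\theta\})=\inf_EA_{(Z,\Delta)}(E)/S_{-(K_Z+\Delta)}(E)=\delta(Z,\Delta)$, and uniform K-stability of $(Z,\Delta)$ --- equivalently $\delta(Z,\Delta)>1$, by the valuative criterion of \cite{BJ17,FO18} --- gives $\delta_\mu(\{\theta\})>1$.

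With $\delta_\mu>1$ in hand, I would check that the proof of Corollary~\ref{cor: main_KE} (through Proposition~\ref{thm:geodesic_stability_implies_properness}, Theorem~\ref{thm:non-proper-imply-destablizing-ray}, Propositions~\ref{prop:delta>1=>geodesic-stable} and \ref{prop: proper_implies_minimizer}) applies verbatim to the present tame measure $\mu$, all the relevant statements of Sections~\ref{sec:delta=geodesic-stability} and \ref{sec:big-YTD} being already formulated for general $\chi$. Its only two analytic inputs must be re-verified for this $\mu$: that $\mathcal L^1_\mu$ is $L^1$-continuous on $\mathcal E^1(X,\theta)$ --- which follows from \cite[Theorem~0.2(2)]{DK01}, since $\mu$ has $L^p$ density for some $p>1$ by strong openness \cite{GuZh15} --- and that $\mathcal D^1_\mu$ is convex along geodesics of $\mathcal E^1(X,\theta)$ --- which should follow from \cite[Theorem~0.1]{BePa08} as in Proposition~\ref{thm:geodesic_stability_implies_properness}; note that $\mathrm{PSH}(X,\pi^*\theta_0)=\pi^*\mathrm{PSH}(Z,\theta_0)$, so this convexity is equivalent to that of the Ding functional of the log Fano pair $(Z,\Delta)$. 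Granting these, Lemma~\ref{lem: c_delta_est} gives $c_\mu[V_\theta]\ge\delta_\mu>1$, $\mathcal D^1_\mu$ is proper, and Proposition~\ref{prop: proper_implies_minimizer} together with Ko\l odziej's estimate \cite[Theorem~B]{BEGZ10} produce $u\in\mathcal E^1(X,\theta)$ of minimal singularity type solving $\theta_u^n=e^{-u+\chi-\psi}\omega^n$.

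Finally I would descend to $Z$: since $\theta=\pi^*\theta_0$ is semipositive we have $V_\theta=0$, so $u$ is bounded on $\mathrm{Amp}(\theta)$ and, being constant along the connected fibers of $\pi$, equals $\pi^*\varphi$ for a bounded $\varphi\in\mathrm{PSH}(Z,\theta_0)$; as $\pi$ is an isomorphism over a dense Zariski-open set and the non-pluripolar measures involved are absolutely continuous, $\varphi$ solves $(\theta_0+\ddc\varphi)^n=e^{-\varphi}\Omega$ on $Z$, i.e.\ $\omega_{\mathrm{KE}}:=\theta_0+\ddc\varphi$ is a KE metric for $(Z,\Delta)$ with $\Ric\,\omega_{\mathrm{KE}}=\omega_{\mathrm{KE}}+[\Delta]$. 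I expect the delicate steps to be the discrepancy bookkeeping behind $A_{\chi,\psi}(E)=A_{(Z,\Delta)}(E)$ and $S_\theta(E)=S_{-(K_Z+\Delta)}(E)$, the verification that \cite[Theorem~0.1]{BePa08} genuinely applies to the twisted measure $\mu=e^{\chi-\psi}\omega^n$ with $\chi$ of analytic singularity type (or the equivalent descent of the convexity statement to $Z$), and the passage from the minimal-singularity solution on $X$ back to a genuine KE metric on the possibly singular $(Z,\Delta)$.
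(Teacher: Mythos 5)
Your proposal is correct in substance and shares the paper's core mechanism: pass to a log resolution, encode $(Z,\Delta)$ by a tame measure $\mu=e^{\chi-\psi}\omega^n$ on $X$ with $\{\theta\}=\pi^*c_1(-(K_Z+\Delta))$, identify $\delta_\mu(\{\theta\})$ with $\delta(Z,\Delta)$, and then run the machinery of Sections \ref{sec:delta=geodesic-stability}--\ref{sec:big-YTD} (Theorem \ref{thm:non-proper-imply-destablizing-ray} against Proposition \ref{prop:delta>1=>geodesic-stable}). Where you diverge is the endgame: the paper argues by contradiction that the Li--Tian--Wang Ding functional $\mathbf{D}$ on $Z$ itself is proper (identifying it with $\mathcal D^1_\mu$ on $X$ through $\mathrm{PSH}(X,\pi^*\theta_0)=\pi^*\mathrm{PSH}(Z,\theta_0)$) and then quotes the variational framework of \cite{BBJ21,LTW21b} to convert properness on $Z$ into a KE metric; you instead prove properness of $\mathcal D^1_\mu$ on $X$, solve the Monge--Amp\`ere equation there via Proposition \ref{prop: proper_implies_minimizer} and \cite[Theorem B]{BEGZ10}, and descend the bounded solution to $Z$ by a fiberwise-constancy argument, exactly in the spirit of the paper's own Proposition \ref{prop: KE_X_Z}. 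Your route is somewhat more self-contained (it does not invoke the ``properness implies existence'' step on the singular $Z$ from the literature), at the price of carrying out the descent of potentials and non-pluripolar measures yourself; the paper's route is shorter because it outsources exactly that step.

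Two points you flagged as delicate deserve a firmer treatment. First, the convexity of $\mathcal D^1_\mu$ along geodesics of $\mathcal E^1(X,\theta)$ does \emph{not} follow from a direct application of \cite[Theorem 0.1]{BePa08} on $X$ as in Proposition \ref{thm:geodesic_stability_implies_properness}: the twisting current on $X$ is $[\tilde\Delta]+\sum_{a_i<0}(-a_i)[E_i]-\sum_{a_i>0}a_i[E_i]$, which fails to be positive whenever some discrepancy $a_i$ is positive, so the positivity hypothesis behind Berndtsson--P\u{a}un convexity breaks on $X$. Your fallback --- descending the exponential term to $Z$ via $\mathrm{PSH}(X,\pi^*\theta_0)=\pi^*\mathrm{PSH}(Z,\theta_0)$ and using convexity of the Ding functional of the log Fano pair --- is the correct fix, and is precisely what the paper cites as \cite[Theorem 11.1]{BBEGZ16}; you should make that the primary argument rather than an alternative. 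Second, your identification $\delta_\mu(\{\theta\})=\delta(Z,\Delta)$ quietly equates the infimum over prime divisors on arbitrary smooth \emph{bimeromorphic} models over $X$ (which is how $\delta_\mu$ is defined in \eqref{eq: delta_mu_def}) with the infimum over \emph{projective birational} models over $Z$; the paper addresses this mismatch via \cite[Theorem B.7]{BBJ21} and \cite[Proposition 1.18]{BEGZ10} by rerunning its arguments with the algebraic divisor class, and your write-up should either do the same or note that for projective $X$ every K\"ahler bimeromorphic model is itself projective, so the two divisor classes coincide. With these two repairs your argument goes through.
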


Put for simplicity $L:=-K_Z-\Delta$.
Then the pair $(Z,\Delta)$ being uniformly K-stable means exactly that $\delta_\Delta(L)>1$ (cf. \cite{BJ17}), where
$$
\delta_\Delta(L):=\inf_E\frac{A_{Z,\Delta}(E)}{S_L(E)}.
$$
Here the infimum is over all prime divisors $E\subset Y\xrightarrow{\pi}Z$ over $Z$, where $Y$ is a smooth projective manifold and $\pi$ is a projective birational morphism.  The log discrepancy is 
$
A_{Z,\Delta}(E):=1+\ord_E(K_Y+\pi^*L)
$
and the expected vanishing order $S_L(E)$ is given by
$$
S_L(E):=\frac{1}{\vol(L)}\int_0^{\tau_L(E)}\vol(\pi^*L-xE)\mathrm{d}x,
$$
with $\tau_L(E):=\sup\{x>0:\pi^*L-xE\text{ big}\}$ being the pseudoeffective threshold. The volume function $\vol(\cdot)$ here is understood in the sense of algebraic geometry (cf. \cite{Laz-bookI}).

To prove that $(Z,\Delta)$ admits a KE metric, we use the variational framework in \cite{BBJ21,LTW21b}. The goal is to argue that the corresponding Ding functional $\textbf{D}$ defined in \cite[(40)]{LTW21b} for the pair $(Z,\Delta)$ is proper. Suppose on the contrary that $\textbf{D}$ is not proper. To derive a contradiction out of this, the Li--Tian--Wang argument takes five steps; see \cite[\S 4.1-4.5]{LTW21b}, where the first step showing the convexity of the Mabuchi functional is particularly difficult to establish. 

However, relying on the results in this paper we can obtain a contradiction almost immediately. Indeed, take a log resolution $X\xrightarrow{\sigma}Z$ of the pair $(Z,\Delta)$, so in particular $X$ is a smooth birational model over $Z$.
 We take our big class $\{\theta\}$ to be $c_1(\sigma^*L)$ and let $\mu=e^{\chi-\psi}\omega^n$ be the tame measure on $X$ that is associated to the log Fano pair $(Z,\Delta)$ (see \cite[Lemma 3.2(i)]{BBEGZ16}, with $\chi:=\psi^+$,  $\psi:=\psi^-$ and $\omega^n:=dV$ for some K\"ahler form $\omega$ on $X$). Then we consider the following algebraic $\delta$-invariant:
 $$
 \Tilde{\delta}_\mu(\sigma^*L):=\inf_E\frac{A_{\chi,\psi}(E)}{S_{\sigma^*L}(E)},
 $$
 where the inf is over all prime divisors in smooth birational models over $X$. Note that this invariant is potentially different from the $\delta_\mu$ invariant defined in \eqref{eq: delta_mu_def}, as the latter involves inf over all smooth bimeromorphic models over $X$. However due to \cite[Theorem B.7]{BBJ21} and  the  non-pluripolar volume being the same as the  algebraic volume in the Ner\'on--Severi space (see \cite{Bo02} and \cite[Proposition 1.18]{BEGZ10}), we find that all the previous arguments in this paper hold without change when restricting to divisors in smooth birational models over $X$. Hence, Theorem \ref{thm:delta=sup-D>0}  (or simply Theorem \ref{thm:delta=inf-c[psi]'}) gives that
 $$
\delta_\mu(\{\theta\})=\Tilde{\delta}_\mu(\sigma^*L).
 $$
 Moreover, as any prime divisor $E$ over $X$ is a prime divisor over $Z$, one has $A_{\chi,\psi}(E)=A_{Z,\Delta}(E)$ and $S_{\sigma^*L}(E)=S_L(E)$. So we find that
$$
\delta_\mu(\{\theta\})\geq\delta_\Delta(L).
$$
(They are actually equal, as by Hironaka's theorem any prime divisor over $Z$ can be viewed as a prime divisor over $X$ as well.)

Next, consider the Ding functional $\mathcal{D}^1_\mu$ associated to the above chosen triple $(X,\{\theta\},\mu)$. It follows from Berndtsson's convexity that $\mathcal{D}^1_\mu$ is convex along geodesics in $\mathcal{E}^1(X,\theta)$ (see \cite[Theorem 11.1]{BBEGZ16} for the precise version that we need). Note also that $V_\theta=0$ (as $\{\theta\}$ is semipositive) and $\int_Xe^{-p\psi}\omega^n<\infty$ for some $p>1$ (by \cite[Lemma 3.2(i)]{BBEGZ16}). So by Proposition \ref{prop:c-u=c-P-u} one has $\int_Xe^{-pu-p\psi}\omega^n<\infty$ for any $u\in\mathcal{E}^1(X,\theta)$.
As a result, \cite[Theorem 0.2(2)]{DK01} implies that $u\mapsto\int_Xe^{-u-\psi}\omega^n$ is $L^1$-continuous on $\mathcal{E}^1(X,\theta)$. This argument also implies that $\mathcal L^1_\mu$ is $L^1$-continuous (recall \eqref{eq: D_L_def}), as $\chi$ is bounded from above.

Moreover, $\mathcal{D}^1_\mu$ is not proper since we assumed the Ding functional $\mathbf{D}$ on $Z$ is not proper. 
Then by Theorem \ref{thm:non-proper-imply-destablizing-ray}, we obtain a unit speed finite energy geodesic ray $\{w_t\}_t$ with $\sup_X w_t=0$ such that
$
\mathcal{D}^1_\mu\{w_t\}\leq0.
$

On the other hand, $(Z,\Delta)$ being uniformly K-stable implies that $\delta_\mu(\{\theta\})\geq\delta_\Delta(L)>1$. So we derive from Proposition \ref{prop:delta>1=>geodesic-stable} that
$
\mathcal{D}^1_\mu\{w_t\}>0.
$
This immediately gives us a contradiction, hence proving Theorem \ref{thm:LTW}.

\subsection{Connections with birational geometry}
\label{sec:ample-model}

In this part we restrict ourselves to the case when $\{\theta\}=c_1(-K_X)$. So $X$ is projective and $-K_X$ is big. In this particular setting, following \cite{Xu22} closely, we point out how our work connects with birational geometry to give an algebraic proof of Corollary 1.3. 


When $-K_X$ is big, the anti-canonical ring $
R(X,-K_X):=\bigoplus_{m\geq 0}H^0(X,-mK_X)
$ might not be finitely generated. Surprisingly, C. Xu recently observed that when $\delta(-K_X)>1$, the ring $R(X,-K_X)$ is indeed finitely generated:

\begin{lemma}\cite[Lemma 3.1]{Xu22} \label{lem: Xu_lemma}
    Assume that $-K_X$ is big and $\delta(X,-K_X)>1$. Then there exists an effective $\mathbb{Q}$-divisor $D\sim_\mathbb{Q}-K_X$ such that the pair $(X,D)$ is klt. In particular $X$ is a Mori dream space, hence $
R(X,-K_X)$ is finitely generated.
\end{lemma}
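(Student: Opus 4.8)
The plan is to extract from the hypothesis $\delta(X,-K_X)>1$ an effective $\mathbb{Q}$-divisor $D\sim_{\mathbb{Q}}-K_X$ with $(X,D)$ klt, and then to invoke the Fano-type machinery: such a $D$ makes $X$ of Fano type, hence a Mori dream space by \cite{BCHM10}, whence $R(X,-K_X)$ is finitely generated. It is worth recording at the outset that $-K_X$ big together with $X$ K\"ahler forces $X$ to be projective and smooth, hence $\mathbb{Q}$-factorial, so the Mori dream space formalism applies.

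To produce $D$ I would argue at finite level. Since $-K_X$ is big and lies in $N^1(X)$, all volumes coincide with the algebraic ones, so we are in the classical Fujita--Odaka setting \cite{FO18,BJ17}: for $m$ large one has $N_m:=h^0(X,-mK_X)>0$, and any basis $s_1,\dots,s_{N_m}$ of $H^0(X,-mK_X)$ gives an effective $m$-basis-type divisor $D_m:=\frac{1}{mN_m}\sum_i\{s_i=0\}\sim_{\mathbb{Q}}-K_X$. Writing $\delta_m:=\inf_{D_m}\lct(X;D_m)=\inf_E A_X(E)/S_m(E)$ for the truncated invariant, the key input is the approximation theorem of Blum--Jonsson, valid for any big class (see \cite{BJ17,FO18}; compare Theorem~\ref{thm:delta=inf-c[psi]'} of this paper), which yields $\delta_m\to\delta(X,-K_X)$ as $m\to\infty$. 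Hence $\delta_m>1$ for some $m$, and for that $m$ \emph{every} $m$-basis-type divisor $D$ satisfies $\lct(X;D)>1$, i.e.\ $(X,D)$ is klt; any such $D$ does the job, proving the first assertion.

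Given such a $D$, I would check that $X$ is of Fano type. By Kodaira's lemma $-K_X\sim_{\mathbb{Q}}A+E_0$ with $A$ ample $\mathbb{Q}$-Cartier and $E_0\geq 0$; for $0<\varepsilon\ll 1$ the pair $(X,\Delta)$ with $\Delta:=(1-\varepsilon)D+\varepsilon E_0$ is again klt (klt being stable under small perturbation by an effective divisor), and $-(K_X+\Delta)\sim_{\mathbb{Q}}\varepsilon(-K_X-E_0)\sim_{\mathbb{Q}}\varepsilon A$ is ample, so $(X,\Delta)$ is a klt log Fano pair. Then \cite{BCHM10} shows $X$ is a Mori dream space, and the finite generation of $R(X,-K_X)=\bigoplus_{m\geq 0}H^0(X,-mK_X)$ follows.

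The main obstacle is conceptual and is precisely the content of the lemma: a priori $R(X,-K_X)$ is not finitely generated, so the log Fano / MMP machinery cannot be applied directly. The device that breaks the circle is the asymptotic identity $\delta_m\to\delta$, which does hold for arbitrary big classes and lets one first extract a klt anticanonical $\mathbb{Q}$-divisor; after that, $X$ is of Fano type and the remaining steps are standard.
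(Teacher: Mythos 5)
The paper itself offers no proof of this lemma: it is imported from \cite[Lemma 3.1]{Xu22}, with only the remark that the Mori dream space/finite generation part follows from \cite{BCHM10}, and your argument correctly reconstructs precisely that source's proof --- use the Blum--Jonsson approximation $\delta_m\to\delta$ (which the paper itself invokes for the big line bundle $-K_X$ in \S 5.2) to pick $m$ with $\delta_m>1$, take any $m$-basis-type divisor $D\sim_{\mathbb Q}-K_X$, which is then klt since $\lct(X;D)\geq\delta_m>1$, and conclude via the Fano-type perturbation and \cite{BCHM10}. One small precision: the convergence $\delta_m\to\delta$ is a statement about big \emph{line bundles} (or big classes in the N\'eron--Severi space, where non-pluripolar and algebraic volumes agree and one may restrict to projective birational models), not about arbitrary transcendental big classes as your phrasing suggests, but since $-K_X$ is a line bundle this does not affect your argument.
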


The last statement follows from \cite[Corollary 1.3.1]{BCHM10}. In what follows we point out how this result gives an alternative proof for Corollary 1.3. What is more, using Proposition \ref{prop:unique-imply-delta>1}, in this case we get a full characterization:

\begin{theorem}
\label{thm:unique-KE}
Suppose that $\{\theta\}=c_1(-K_X)$ is big. Then $\Ric \theta_u=\theta_u$ has a unique solution $u \in \textup{PSH}(X,\theta)$ with minimal singularity if and only if $\delta(-K_X) > 1$.    
\end{theorem}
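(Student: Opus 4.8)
The plan is to prove the two implications separately. Throughout we are in the case $\{\eta\}=0$, $\psi=0$, so (choosing $\eta\equiv0$) the relevant tame measure is $\mu=e^{f}\omega^{n}$ with $\Ric\omega=\theta+\ddc f$ and $f\in C^{\infty}(X)$. Since $f$ is smooth, $\nu(f,E)=0$ for every prime divisor $E$ over $X$, hence $A_{\chi,\psi}(E)=A_{X}(E)$ and therefore $\delta_{\mu}=\inf_{E}A_{X}(E)/S_{\theta}(E)=\delta(-K_{X})$. Given this identification, the direction ``unique solution $\Rightarrow\delta(-K_{X})>1$'' is immediate: if $\Ric\theta_{u}=\theta_{u}$ has a (necessarily minimal singularity) solution that is moreover unique, then the hypotheses of Proposition~\ref{prop:unique-imply-delta>1} are met and we obtain $\delta(-K_{X})=\delta_{\mu}>1$.

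For the converse, assume $\delta(-K_{X})>1$. Existence of a minimal singularity solution is Corollary~\ref{cor: main_KE}, so only uniqueness remains, and the strategy is to descend to the ample model. By Lemma~\ref{lem: Xu_lemma}, $X$ is a Mori dream space and $R(X,-K_{X})$ is finitely generated; set $Z:=\mathrm{Proj}\,R(X,-K_{X})$, a $\mathbb{Q}$-Fano variety, and let $\pi\colon X\dashrightarrow Z$ be the associated birational contraction. First I would verify that $\delta(-K_{Z})\geq\delta(-K_{X})>1$. Writing $-K_{X}=P+N$ for the divisorial Zariski decomposition, $P$ is semiample and induces $\pi$ with $P=\pi^{*}(-K_{Z})$, while $N\geq0$ is $\pi$-exceptional; then for every divisorial valuation $v$ over $X$ one has $A_{X}(v)=A_{Z}(v)+\mathrm{ord}_{v}(N)$ and $S_{-K_{X}}(v)=S_{-K_{Z}}(v)+\mathrm{ord}_{v}(N)$, the second identity because the fixed part $N$ merely shifts the $v$-filtration on the common section ring $R(X,-K_{X})=R(Z,-K_{Z})$; the elementary mediant inequality $\frac{a}{b}\geq\frac{a+c}{b+c}$ (valid for $a\geq b$, $c\geq0$) then gives $\frac{A_{Z}(v)}{S_{-K_{Z}}(v)}\geq\frac{A_{X}(v)}{S_{-K_{X}}(v)}\geq\delta(-K_{X})$, i.e.\ $\delta(-K_{Z})\geq\delta(-K_{X})$; this is the numerical content of \cite{Xu22}. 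Hence $(Z,0)$ is a uniformly K-stable $\mathbb{Q}$-Fano variety, so by Theorem~\ref{thm:LTW} it carries a K\"ahler--Einstein metric; moreover $\delta(-K_{Z})>1$ forces $\mathrm{Aut}^{0}(Z)$ to be trivial, so that metric --- equivalently, the minimizer of the Ding functional on $Z$ --- is unique. Finally, the one-to-one correspondence between K\"ahler--Einstein metrics on $Z$ and minimal singularity solutions of $\Ric\theta_{u}=\theta_{u}$ on $X$ established in \S\ref{sec:ample-model} transports uniqueness on $Z$ to uniqueness of $u$ on $X$.

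The main obstacle is the converse direction, specifically the interface between birational geometry and pluripotential theory: on the algebraic side, checking the identity $S_{-K_{X}}(v)=S_{-K_{Z}}(v)+\mathrm{ord}_{v}(N)$ rigorously, together with the facts that $N$ is genuinely $\pi$-exceptional and that the polarization on $Z$ equals $-K_{Z}$, so that $\delta(-K_{Z})\geq\delta(-K_{X})$ as claimed; and on the analytic side, setting up --- or citing in exactly the form needed --- the bijection of \S\ref{sec:ample-model}, i.e.\ verifying that a minimal singularity solution on $X$ pushes forward to an honest K\"ahler--Einstein current on $Z$ and that the pullback of the K\"ahler--Einstein current from $Z$ has minimal singularity and solves \eqref{eq: KE_scal_eq}. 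It is worth noting that one cannot shortcut this by staying on $X$: $\delta_{\mu}>1$ does give properness of $\mathcal{D}^{1}_{\mu}$ (Proposition~\ref{prop:proper-open}) and, since $\eta_{\psi}=0\geq0$, convexity of $\mathcal{D}^{1}_{\mu}$ along geodesics, but properness together with convexity does not by itself force a unique minimizer absent control of the minimizer set --- precisely the Bando--Mabuchi/Berndtsson uniqueness statement, which in the big setting is exactly what the passage to $Z$ supplies. By contrast, uniqueness on the genuinely $\mathbb{Q}$-Fano side is classical, and the forward direction is immediate from Proposition~\ref{prop:unique-imply-delta>1}.
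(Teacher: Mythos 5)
Your proposal is correct and is essentially the paper's own argument: the forward direction via Proposition \ref{prop:unique-imply-delta>1}, and the converse via Lemma \ref{lem: Xu_lemma}, passage to the $\mathbb{Q}$-Fano ample model $Z$, the comparison $\delta(-K_Z)\geq\delta(-K_X)>1$, Theorem \ref{thm:LTW} together with triviality of $\mathrm{Aut}^0(Z)$ to get a unique KE metric on $Z$, and Proposition \ref{prop: KE_X_Z} to transport it back to $X$. The only cosmetic deviation is that you compare the deltas directly through $A_X(v)=A_Z(v)+\ord_v(N)$ and $S_{-K_X}(v)=S_{-K_Z}(v)+\ord_v(N)$, while the paper works with the quantized invariants $\delta_{mr}$ and lets $m\to\infty$; just note that the hypothesis $a\geq b$ in your mediant inequality, i.e.\ $A_Z(v)\geq S_{-K_Z}(v)$, is equivalent to $A_X(v)\geq S_{-K_X}(v)$ and hence is supplied by $\delta(-K_X)>1$.
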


By Proposition \ref{prop:unique-imply-delta>1} we only need to argue existence and uniqueness. So we assume that $\delta(-K_X) > 1$ and let $r>0$ be sufficiently divisible such that
$\bigoplus_{m\geq 0}H^0(X,-mrK_X)$ is generated by $H^0(X,-rK_X)$. Standard results in birational geometry (see e.g., \cite[Proposition 1.16]{Kol13}) imply a number of facts that we now recall. First, $|-rK_X|$ induces a birational map, say $\phi:X\dasharrow Z$, from $X$ to a normal projective variety $Z$. Next, letting $H$ denote the hyperplane class on $Z$, then $-rK_Z\sim H$, so in particular $-K_Z$ is an ample $\mathbb{Q}$-line bundle. Moreover, letting $X\xleftarrow{\pi} W\xrightarrow{\tau} Z$ be a resolution of the base locus of $|-rK_X|$,  one has a following Zariski decomposition     $\pi^*|-mrK_X|=\tau^*|mH|+mF\text{ for all }m\geq0,$
where $F\geq0$ is a $\tau$-exceptional divisor on $W$. So in particular, one has a Zariski decomposition for the big line bundle $-K_X$:
\begin{equation}
    \label{eq:Zariski-decom-line-bundle}
    -\pi^*K_X=-\tau^*K_Z+\frac{1}{r}F.
\end{equation}
Finally, one can show that $Z$ has klt singularities. Indeed, it is enough to argue that $A_Z(E)>0$ for any prime divisor $E$ over $Z$. For this, pick $D\in|-K_X|_\mathbb{Q}$ such that $(X,D)$ is a klt pair, whose existence is guaranteed by Lemma \ref{lem: Xu_lemma}. So one has $A_X(E)-\ord_E(D)>0$ for all $E$. Using the Zariski decomposition \eqref{eq:Zariski-decom-line-bundle} this implies that
\begin{equation*}
    \begin{aligned}
        A_Z(E)&=A_W(E)+\ord_E(K_W-\pi^*K_X)-\ord_E(r^{-1}F)\\
        &\geq A_W(E)+\ord_E(K_W-\pi^*K_X)-\ord_E(\pi^*D)=A_X(E)-\ord_E(D)>0.
    \end{aligned}
\end{equation*}
So $Z$ has klt singularities, as asserted. Therefore, $Z$ is a $\mathbb{Q}$-Fano variety.

\smallskip
Using the Zariski decomposition \eqref{eq:Zariski-decom-line-bundle} one more time, and ideas from \cite{BBEGZ16,BEGZ10}, we deduce the following result:

\begin{proposition}\label{prop: KE_X_Z}
    The KE metrics in $c_1(-K_X)$ are in one-to-one correspondence with the singular KE metrics in $c_1(-K_Z)$.
\end{proposition}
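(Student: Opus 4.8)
The plan is to transfer the twisted KE equation on $X$ to an (un-twisted) KE equation on the $\mathbb{Q}$-Fano variety $Z$, using the Zariski decomposition $-\pi^*K_X=-\tau^*K_Z+\tfrac{1}{r}F$ on the common resolution $X\xleftarrow{\pi}W\xrightarrow{\tau}Z$, and then invoke the pull-back/push-forward dictionary for non-pluripolar Monge--Amp\`ere measures and $\mathcal E^1$-potentials from \cite{BEGZ10,BBEGZ16,DDNL3}. First I would fix a smooth reference form $\theta\in c_1(-K_X)$ on $X$ and a smooth form $\theta_Z\in c_1(-K_Z)$ on $Z$, together with a smooth hermitian metric $h_F$ on $\mathcal O_W(F)$ with section $s_F$, so that $\pi^*\theta=\tau^*\theta_Z+\tfrac1r\Theta(h_F)+\ddc g$ for some $g\in C^\infty(W)$. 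The key algebraic input is that the map $v\mapsto v+\tfrac1r\log|s_F|^2_{h_F}$ (composed with $\pi_*$ on $X$ and with pushing forward along $\tau$ on $Z$) gives an inclusion-preserving bijection between $\textup{PSH}(X,\theta)$ (pulled back to $W$) and $\textup{PSH}(Z,\theta_Z)$ (pulled back to $W$), because $\Theta(h_F)+\ddc\log|s_F|^2_{h_F}=0$ off $F$ and $F$ is $\tau$-exceptional; this is the exact same mechanism used in Lemma~\ref{lem: c_delta_est} and Lemma~\ref{lem:I=S}. Under this correspondence minimal singularity types match (so $V_\theta$ on $\textup{Amp}(\theta)$ corresponds to a bounded potential on $Z_{\mathrm{reg}}$), and, since $F$ is $\tau$-exceptional and contracts to a set of measure zero, one has the identity of non-pluripolar measures $\theta_u^n=(\pi^*\theta+\ddc(u\circ\pi))^n$ and its push-forward to $Z$ equals $(\theta_Z+\ddc\varphi)^n$ for the corresponding potential $\varphi$, together with the preservation of total mass $\vol(-K_X)=\vol(-K_Z)$ and of the finite energy condition $\mathcal E^1(X,\theta)\cong\mathcal E^1(Z,\theta_Z)$.

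Next I would match the right-hand sides of the two Monge--Amp\`ere equations. On $X$ the equation is $\theta_u^n=e^{-u}\mu$ with $\mu=e^{f-\psi}\omega^n$ chosen so that $\Ric\omega=\theta+\eta+\ddc f$; in the present untwisted situation ($\eta=0$, $\psi=0$) this is just $\theta_u^n=e^{f-u}\omega^n$, i.e. $\Ric\theta_u=\theta_u$. The Zariski decomposition forces the adjunction relation $K_W-\pi^*K_X = K_W+\tau^*K_Z+\tfrac1r F$ up to the divisor $F$, which, after pushing to $Z$, turns the weighted volume form $e^f\omega^n$ on $X$ into the canonical measure attached to the $\mathbb{Q}$-Fano $Z$ (this is precisely the ``tame measure attached to a log Fano pair'' construction of \cite[Lemma 3.2]{BBEGZ16} that is already invoked in the previous subsection), and the klt property of $Z$ established just above guarantees this measure has $L^p$ density for some $p>1$. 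Hence a solution $u\in\textup{PSH}(X,\theta)$ of $\Ric\theta_u=\theta_u$ with minimal singularity corresponds bijectively to a finite energy $\varphi\in\mathcal E^1(Z,\theta_Z)$ solving the singular KE equation $(\theta_Z+\ddc\varphi)^n=e^{-\varphi}\,\mathrm{d}\mu_Z$ which, by Kolodziej-type regularity \cite[Theorem B]{BEGZ10}, automatically has minimal singularity, i.e. is a genuine singular KE metric on $Z$ in the sense of \cite{BBEGZ16,EGZ09}.

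The main obstacle I anticipate is not the formal bijection of potentials but the bookkeeping needed to show that the measures on the two sides truly correspond under the $\tau$-exceptional modification: one must check that $(\pi^*\theta+\ddc(u\circ\pi))^n$ puts no mass on $F$ (true because $u$ has minimal singularity, hence is bounded on $\textup{Amp}$, and $F$ is pluripolar/negligible for the non-pluripolar product), that $\tau_*$ of this measure is the non-pluripolar MA measure of the corresponding potential on $Z$ including across $Z_{\mathrm{sing}}$, and that the density factors $e^{f}$ vs. the canonical density on $Z$ match exactly up to a constant absorbed into the normalization — this requires carefully comparing $K_W$, $\pi^*K_X$, $\tau^*K_Z$ and $F$ as $\mathbb{Q}$-divisors. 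I would handle this by citing the analogous computations in \cite[\S2--\S3]{BBEGZ16} (singular KE on $\mathbb{Q}$-Fano via resolution) and in the proofs of Lemma~\ref{lem: c_delta_est} and Lemma~\ref{lem:I=S} of the present paper, rather than redoing them. The uniqueness half of the correspondence is then immediate: a unique solution on one side yields a unique solution on the other, since the bijection is explicit and order-preserving; combined with Theorem~\ref{thm:unique-KE}'s existence part this also re-proves Corollary~\ref{cor: main_KE_twist_exist} via the known solvability of the singular KE equation on uniformly K-stable $\mathbb{Q}$-Fano varieties.
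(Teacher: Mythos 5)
Your proposal follows essentially the same route as the paper's proof: it transfers the equation to the common resolution $W$ via the Zariski decomposition $-\pi^*K_X=-\tau^*K_Z+\tfrac1r F$, identifies potentials through the shift by $\tfrac1r\log|s_F|^2_{h_F}$ (the same mechanism as in Lemmas~\ref{lem: c_delta_est} and \ref{lem:I=S}), and matches the tame measures via the adjunction computation of \cite[Lemma 3.2]{BBEGZ16}, exactly as the paper does. The only difference is presentational (you invoke the cited computations rather than writing out the density identity $\tilde\mu_1=e^{C}(|S_F|^2_h)^{1/r}\tilde\mu_2$ explicitly), so the argument is correct and equivalent.
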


\begin{proof} We use the terminology underlying formula \eqref{eq:Zariski-decom-line-bundle}.  Let $\omega \in c_1(-K_Z)$ be a smooth background metric on $Z$. Put $\beta:=\tau^*\omega$, which is a smooth semi-positive form in $c_1(-\tau^*K_Z)$. We choose tame measures $\mu_1$ and $\mu_2$ on $X$ and $Z$ respectively in the sense of \cite[Definition 3.1]{BBEGZ16}. More specifically, $\mu_1$ is determined by $\theta\in c_1(-K_X)$ and $\mu_2$ by $\omega\in c_1(-K_Z)$. One can naturally lift two measures to $W$, which will be denoted by $\tilde\mu_1$ and $\tilde\mu_2$ respectively. We further assume that $h$ is a smooth Hermitian metric on $\mathcal{O}_W(F)$ so that $\pi^*\theta=\beta+r^{-1}\ddc\log h$. Let $S_F$ be the canonical section of $\mathcal O_W(F)$.

There exists divisors $E$ and $E'$ on $W$ respectively so that 
$K_W=\tau^*K_Z+E$ and 
$K_W=\pi^*K_X+E^\prime$. Consequently, using \eqref{eq:Zariski-decom-line-bundle} we obtain that $E^\prime=E+\frac{1}{r}F$. Using (the proof of)  \cite[Lemma 3.2(1)]{BBEGZ16}
we obtain that 
$$
\tilde\mu_1=e^{C}(|S_F|^2_h)^{1/r}\tilde\mu_2,
$$
where $C\in\RR$ is some suitable normalization constant.

Now, a KE metric in $c_1(-K_Z)$ corresponds to a bounded potential $u\in PSH(W,\beta)$ such that
$
(\beta+\ddc u)^n=e^{C_1-u}\tilde\mu_2
$
for some constant $C_1\in\RR$.
Let $v:=u+r^{-1}\log|S_F|^2_{h}$, then $v\in PSH(W,\pi^*\theta)$ has minimal singularity type and solves
$$
(\pi^*\theta+\ddc v)^n=e^{C_1-C-\Tilde{u}}\tilde\mu_1,
$$
which gives a KE metric in $c_1(-K_X)$. Conversely, by the same argument, a KE metric in $c_1(-K_X)$ also gives rise to a KE metric in $c_1(-K_Z)$. 
\end{proof}

Thanks to \eqref{eq:Zariski-decom-line-bundle}, it is easy to see that the inequality $\delta_{mr}(-K_Z)\geq\delta_{mr}(-K_X)$ holds as long as one has $\delta_{mr}(-K_X)\geq1$ (here we are using Fujita--Odaka's quantized delta invariant \cite{FO18}). So by \cite[Theorem 4.4]{BJ17} we can let  $m\rightarrow\infty$ to conclude the following result, that is part of \cite[Theorem 1.2]{Xu22}:
\begin{equation}\label{eq: delta_ineq_ample_model}
\delta(Z,-K_Z)\geq\delta(X,-K_X)>1.
\end{equation}

Putting Proposition \ref{prop: KE_X_Z} and \eqref{eq: delta_ineq_ample_model} together it is enough to show that $Z$ has a unique KE metric when $\delta(-K_Z)>1$. This latter condition implies that $Z$ is uniformly K-stable, so $Z$ does not support any non-trivial holomorphic vector field. Theorem \ref{thm:LTW} and \cite[Theorem 11.1]{BBEGZ16} now yield existence of a unique KE metric on $Z$, as desired.

\begingroup
\setstretch{1.1}
\setlength\bibitemsep{0pt}
\setlength\biblabelsep{4pt}
\printbibliography
\endgroup

\small
\noindent {\sc Department of Mathematics, University of Maryland}\\
{\tt tdarvas@umd.edu}\vspace{0.1in}

\noindent {\sc School of Mathematical Sciences, Beijing Normal University}\\
{\tt kwzhang@bnu.edu.cn}
\end{document}